\numberwithin{equation}{section}
\numberwithin{footnote}{section}
\newtheorem{thm}{Theorem}[section]
\newtheorem*{introthm*}{Theorem}
\newtheorem{cor}[thm]{Corollary}
\newtheorem*{lem*}{Lemma}
\newtheorem{prop}[thm]{Proposition}
\newtheorem{defn}[thm]{Definition}
\newtheorem{war}[thm]{Warning}
\newtheorem{rem}[thm]{Remark}
\newtheorem*{notation*}{Notation}
\newcommand{\blue}{\color{blue}}
\newcommand{\red}{\color{red}}
\newcommand{\cyan}{\color{cyan}}
\newcommand{\ee}{{\mathrm e}}
\newcommand{\ii}{{\mathrm i}}
\newcommand{\Gr}{{\rm Gr}}
\renewcommand{\P}{{\mathbb P}}
\newcommand{\R}{{\mathbb R}}
\newcommand{\Q}{{\mathbb Q}}
\newcommand{\Z}{{\mathbb Z}}
\renewcommand{\H}{{\mathbb H}}
\newcommand{\C}{{\mathbb C}}
\newcommand{\A}{\mathbb A}
\newcommand{\CP}{{\mathbb C \mathbb P}}
\DeclareMathOperator{\rk}{rk}
\begin{document}
{\center{\Large{\bf Adventures in Algebraic Geometry}\\[.2em] \hskip 150pt  \normalsize{by Lev Borisov}}}

\section{Preface.}
\indent

\vskip -1em

It is a truth universally\footnote{ in grant applications}  acknowledged that algebraic geometry deals with solution sets of systems of polynomial equations. However, it has become significantly more abstract, and study often bifurcates between very basic examples and rigorous treatment in the language of schemes. I don't think either approach is ideal for an introductory course in the subject, and these notes are an attempt of providing some meaningful algebraic geometry content while avoiding or suppressing many of the technical details.
They are loosely based on the introductory course in algebraic geometry run at Rutgers University in the Spring of 2024. The intended audience are advanced undergraduate and beginner graduate students who are interested in algebraic geometry, as well as researchers of all levels in adjacent areas. When it comes to mathematical background, I am assuming basic knowledge of commutative algebra and complex analysis. Some familiarity with differential geometry and algebraic topology is also helpful. 

\smallskip
The defining feature of these notes is the liberal use of hand waving arguments where precise arguments would use too much technique, such as schemes and cohomology, or just where I get lazy. Thus, it is not meant to serve as a substitute of thorough study of standard texts such as Robin Hartshorne's book \cite{Hartshorne} or Ravi Vakil's notes \cite{Vakil}. Instead, it is a highly idiosyncratic attempt to highlight some of the beautiful examples of algebraic geometry and to introduce the reader to the vast expanse of the field.

\smallskip
Most of the time, we will be working over the field $\C$ of complex numbers, so that intuition and ideas from complex analysis and differential geometry can be applied. Occasional references are made to varieties over other fields of characteristics zero, but we really don't venture into positive characteristics. Each section roughly corresponds to one 80 minute lecture; two sections can be covered in a week. 
Exercises after each section are meant to be rather easy, with few exceptions.

\smallskip
These notes are intended to be a free resource and will be available on the arXiv  and on my home page. Comments and suggestions are appreciated.

\tableofcontents
 
 \newpage

\section{Algebraic subsets of $\mathbb C^n$ and Hilbert's Nullstellensatz.}
We start with some examples of systems of polynomial equations that have been studied by algebraic geometers.
\begin{align*}
\bullet~~&xy-1=0 \mathrm{~in~} \C^2.
\hskip 80pt~~\bullet~~x^2-y^2=1 \mathrm{~in~} \Q^2.
 \\
\bullet~~&\left\{
\begin{array}{r}
xt-yz=0\\
xz-y^2=0\\
yt-z^2=0
\end{array}\right.  \mathrm{~in~} \C^2.
\hskip 50pt~~\bullet~~y^2-y=x^3-x \mathrm{~in~} \Q^2,\R^2, \mathrm{~or~} \C^2.
 \\
\bullet~~&x^2=y^3 \mathrm{~in~} \C^2. 
\hskip 94pt~~\bullet~~\left\{
\begin{array}{l}
\det A =0\\
\mathrm{tr} \,A =0
\end{array}\right.  \mathrm{~in~} \C^{n^2}. 
\\
\bullet~~&\mathrm{all~}3\times 3 \mathrm{~minors~of~skew-symmetric~}
n\times n\mathrm{~matrices~are~zero,} \mathrm{~in~}\C^{\frac {n(n-1)}2}. 
\end{align*}

\noindent
Typical questions that one can ask about these equations and their solution sets include the following.

\begin{itemize}
\item Find all solutions. 

\item Given two solution sets, are they ``the same"? Similar?

\item If the solution set is a (real or complex) manifold, what is its topology?

\item What are the automorphisms of the solution set (in the  appropriate sense)?

\end{itemize}

\smallskip
{\blue \hrule}\footnote{ There are a lot of horizontal lines in these notes to remind the reader to take a sip of their drink of choice.}

\smallskip\noindent
We will now focus on $\C^n$ and the ring of polynomial functions $\C[x_1,\ldots,x_n]$ on it. 

\begin{defn}
For a subset $S$ of $\C[x_1,\ldots,x_n]$, we define its {\blue set of zeros} by 
$$
Z(S):=\{(a_1,\ldots,a_n)\in \C^n,~{\rm such~that~}\forall f\in S {\rm~there~holds~}f(a_1,\ldots,a_n)=0\}.
$$
\end{defn}

\noindent
We list some easy properties of $Z(S)$ below. The reader is encouraged to prove these assertions on their own.
\begin{itemize}
\item $Z(S)$ depends only on the ideal $I=(S)$. Thus, one can always replace $S$ by its finite subset without changing $Z(S)$, because 
$\C[x_1,\ldots,x_n]$ is a Noetherian ring. 

\item If $I \subseteq J$ then $Z(I)\supseteq Z(J)$.

\item $Z(I+J) = Z(I)\cap Z(J)$. More generally, $Z(\sum_\alpha I_\alpha) = \bigcap_\alpha Z(I_\alpha)$.

\item For ideals $I$ and $J$ there holds $Z(I\cap J) = Z(IJ) = Z(I)\cup Z(J)$.

\item  
We can moreover restrict to the case of {\blue radical} ideals. Recall that 
$$
\sqrt I = \{f\in \C[x_1,\ldots,x_n],~{\rm such~that~} f^k \in I{\rm ~for ~some ~}k\geq 1\}
$$
and an ideal $I$ is called radical if $\sqrt I =I$. There holds $Z(\sqrt I) = Z(I)$, and the ideal $\sqrt I$ is radical by Exercise 1.

\end{itemize}

\smallskip\noindent
{\bf Examples.} $\C^n=Z(\{0\})$;~~$\emptyset = Z(\C[x_1,\ldots,x_n]) = Z(\{1\})$;~~

$\{(a_1,\ldots,a_n)\} = Z((x_1-a_1,\ldots,x_n-a_n))$.

\smallskip
\smallskip
We can also go back, from subsets of $\C^n$ to ideals in $\C[x_1,\ldots,x_n]$.

\smallskip
\begin{defn}
For a subset $V\subseteq \C^n$ we define the ideal 
$$
I(V) := \{f\in \C[x_1,\ldots,x_n],~{\rm such~that~}f(p)=0{\rm~for~all~}p\in V\}.
$$
\end{defn}

\begin{rem}\label{maxpts}
For any 
point $p=(a_1,\ldots,a_n)\in \C^n$ the kernel of the evaluation map $f\in \C[x_1,\ldots,x_n]\mapsto f(p)\in \C$ is the maximal ideal
$(x-a_1,\ldots,x-a_n)$. Thus, $I(V)$ is the intersection of all maximal ideals that correspond to points $p\in V$.
\end{rem}

\smallskip\smallskip
\begin{defn}
We define {\blue Zariski topology} on $\C^n$ by declaring sets $Z(I)$  to be its closed sets.
\end{defn}

This is indeed a topology because finite unions and arbitrary intersections of $Z(I)$ are also sets of this form. Note that Zariski topology is very weak, in the sense that it does not have a lot of closed or open sets. For example, when $n=1$, proper Zariski closed subsets of $\C$ are precisely the finite subsets. In particular, it is (horribly) non-Hausdorff, i.e. one can not find disjoint open neighborhoods of distinct points. Still, Zariski topology provides an important conceptual viewpoint. 

\smallskip
{\blue \hrule}

\smallskip
Remark \ref{maxpts} in fact describes all maximal ideals of  $\C[x_1,\ldots,x_n]$.

\smallskip
\begin{thm}\label{whn}(weak Hilbert's Nullstellensatz)
All maximal ideals of the ring $\C[x_1,\ldots,x_n]$ are of the form $(x-a_1,\ldots,x-a_n)$.
\end{thm}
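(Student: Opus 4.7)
The plan is to study the quotient field $K := \C[x_1,\ldots,x_n]/\fm$ and to argue that it must equal $\C$ itself. Once this is established, the images $a_i\in\C$ of the classes $[x_i]$ satisfy $x_i-a_i\in\fm$ for all $i$, so $(x_1-a_1,\ldots,x_n-a_n)\subseteq \fm$; since the left-hand ideal is itself maximal (its quotient being already $\C$, by Remark \ref{maxpts}), equality follows.

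To prove $K=\C$, I would exploit the fact that $\C$ is uncountable while $K$ is a quotient of the countably-infinite-dimensional $\C$-vector space $\C[x_1,\ldots,x_n]$, whose monomials form a countable $\C$-basis. Suppose for contradiction that there is some $t\in K\setminus \C$. If $t$ were algebraic over $\C$, then since $\C$ is algebraically closed $t$ would already lie in $\C$; hence $t$ must be transcendental over $\C$, so that $\C[t]\subseteq K$ is a polynomial ring and each $t-\lambda$ with $\lambda\in\C$ is invertible in the field $K$.

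The key technical step is to check that the uncountable family $\{(t-\lambda)^{-1}\}_{\lambda\in\C}$ is linearly independent over $\C$. Given a putative finite relation $\sum_{i=1}^m c_i(t-\lambda_i)^{-1}=0$ with distinct $\lambda_i$, I would clear denominators by multiplying through by $\prod_j(t-\lambda_j)$ to get the polynomial identity $\sum_i c_i\prod_{j\neq i}(t-\lambda_j)=0$ in $\C[t]$; substituting $t=\lambda_k$ isolates the $k$-th summand and forces $c_k=0$. This produces uncountably many $\C$-linearly independent elements of $K$, contradicting the countability of $\dim_\C K$, so $K=\C$ as claimed.

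The main obstacle is conceptual rather than computational: the proof crucially combines the two distinct special features of $\C$, namely that it is algebraically closed \emph{and} uncountable. Over a countable algebraically closed field such as $\overline{\Q}$ the cardinality step collapses, and one would need to fall back on Zariski's lemma or Noether normalization to reach the same conclusion; I am happy to dodge that machinery by working over $\C$.
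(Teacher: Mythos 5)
The paper deliberately omits a proof of this theorem, stating only ``We do not give a proof of Theorem \ref{whn} but refer the reader to \cite{DF} instead,'' so there is no in-text argument to compare yours against; the reference proves the general weak Nullstellensatz via Noether normalization (valid over any algebraically closed field). Your argument is a correct and self-contained alternative: the cardinality trick works because $\C[x_1,\ldots,x_n]$ has a countable monomial basis, so any residue field $K=\C[x_1,\ldots,x_n]/\fm$ has at most countable $\C$-dimension, and the uncountable family $\{(t-\lambda)^{-1}\}_{\lambda\in\C}$ is indeed $\C$-independent by the partial-fractions computation you give (the evaluation step is legitimate because $t$ transcendental means $\C[t]\cong\C[X]$ is an honest polynomial ring embedded in $K$, so the cleared-denominator identity is a genuine polynomial identity). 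Once $K=\C$, your observation that $(x_1-a_1,\ldots,x_n-a_n)\subseteq\fm$ with the left side already maximal by Remark \ref{maxpts} closes the argument. What your route buys is elementarity and brevity at the cost of generality: it hinges on $\C$ being both algebraically closed \emph{and} uncountable, and you correctly flag that it degenerates over $\overline{\Q}$, where one would need Zariski's lemma or Noether normalization instead. Since these notes work throughout over $\C$, that trade-off is entirely appropriate here, and arguably your proof would have been a nice inclusion in the text precisely because it avoids the commutative-algebra machinery the paper chooses to suppress.
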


\smallskip
\begin{cor}\label{corwhn}
For an ideal $I$ of $\C[x_1,\ldots,x_n]$ we have 
$$
(Z(I)=\emptyset) \iff  (I = (1)).
$$
\end{cor}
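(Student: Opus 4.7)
The plan is to prove the two implications of the biconditional separately, with the forward direction ($Z(I)=\emptyset \Rightarrow I=(1)$) being the substantive one that leverages Theorem \ref{whn}.

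For the easy direction, suppose $I=(1)$. Then $1 \in I$, but the constant polynomial $1$ has no zeros in $\C^n$, so $Z(I) \subseteq Z(\{1\}) = \emptyset$.

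For the forward direction, I would argue by contrapositive: assume $I \neq (1)$, i.e.\ $I$ is a proper ideal, and exhibit a point of $Z(I)$. Since every proper ideal in a commutative ring with unity is contained in some maximal ideal (a standard Zorn's lemma argument), there exists a maximal ideal $\fm \supseteq I$. By Theorem \ref{whn}, $\fm = (x_1-a_1,\ldots,x_n-a_n)$ for some $(a_1,\ldots,a_n) \in \C^n$. Using the monotonicity property listed after Definition~1 (if $I \subseteq J$ then $Z(I) \supseteq Z(J)$), together with the computation in the examples that $Z((x_1-a_1,\ldots,x_n-a_n)) = \{(a_1,\ldots,a_n)\}$, we conclude $(a_1,\ldots,a_n) \in Z(\fm) \subseteq Z(I)$, so $Z(I) \neq \emptyset$.

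There is no real obstacle here once Theorem \ref{whn} is in hand; the whole content of the corollary is repackaging the weak Nullstellensatz through the correspondence between points and maximal ideals noted in Remark \ref{maxpts}. The only ingredient beyond what is explicitly listed in the excerpt is the existence of maximal ideals containing a given proper ideal, which is standard commutative algebra and falls within the assumed background.
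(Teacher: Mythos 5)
Your proof is correct and follows exactly the same route as the paper: Zorn's lemma to place a proper $I$ inside a maximal ideal, then Theorem \ref{whn} to identify that maximal ideal with a point of $Z(I)$. You have simply spelled out the monotonicity step and the easy converse that the paper leaves implicit.
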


\begin{proof}
It is a standard consequence of Zorn's lemma that an ideal $I$ is proper if and only if it is contained in some maximal ideal. This ideal corresponds to a point in $Z(I)$ by Theorem \ref{whn}.\end{proof}

We do not give a proof of Theorem \ref{whn} but refer the reader to \cite{DF} instead. The statement fails for fields which are not algebraically closed, already for $n=1$.
As a consequence of the weak Hilbert's Nullstellensatz, the set $Z(I)$ can be identified with the set of maximal ideals of $\C[x_1,\ldots,x_n]$ which contain $I$, which in turn are in natural bijection to the set of maximal ideals of the quotient ring $\C[x_1,\ldots,x_n]/I$.

\smallskip
\begin{thm}\label{hn}(regular strength Hilbert's Nullstellensatz)
For any ideal $I$ in $\C[x_1,\ldots,x_n]$ there holds
$$
I(Z(I))=\sqrt I.
$$
\end{thm}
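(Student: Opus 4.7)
The plan is to use the classical Rabinowitsch trick, which reduces the full Nullstellensatz to the weak form already established in Corollary \ref{corwhn}.

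First I would dispense with the easy inclusion $\sqrt I \subseteq I(Z(I))$: if $f^k \in I$, then for every $p \in Z(I)$ we have $f(p)^k = 0$, hence $f(p) = 0$, so $f \in I(Z(I))$. This needs no real input beyond the definitions.

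For the hard inclusion $I(Z(I)) \subseteq \sqrt I$, fix $f \in I(Z(I))$ and introduce an auxiliary variable $t$. Form the ideal
$$
J := I \cdot \C[x_1,\ldots,x_n,t] + (1 - t f) \subseteq \C[x_1,\ldots,x_n,t].
$$
I claim $Z(J) = \emptyset$ in $\C^{n+1}$. Indeed, any $(a_1,\ldots,a_n,b) \in Z(J)$ satisfies $(a_1,\ldots,a_n) \in Z(I)$, so $f(a_1,\ldots,a_n) = 0$ by choice of $f$; but then $1 - b f(a_1,\ldots,a_n) = 1 \ne 0$, contradicting that $1 - tf$ vanishes at the point. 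By Corollary \ref{corwhn}, $J = (1)$, so we can write
$$
1 = \sum_{i} g_i(x,t)\, f_i(x) + h(x,t)\,\bigl(1 - t f(x)\bigr)
$$
for some $f_i \in I$ and $g_i, h \in \C[x_1,\ldots,x_n,t]$.

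Now I would apply the $\C[x_1,\ldots,x_n]$-algebra homomorphism $\C[x_1,\ldots,x_n,t] \to \C(x_1,\ldots,x_n)$ defined by $t \mapsto 1/f$; this kills the $(1 - tf)$ term and yields an identity $1 = \sum_i g_i(x, 1/f)\, f_i(x)$ in $\C(x_1,\ldots,x_n)$. Multiplying through by a sufficiently high power $f^N$ (large enough to clear all denominators coming from the $g_i(x, 1/f)$) produces
$$
f^N = \sum_i \widetilde{g}_i(x)\, f_i(x) \in I,
$$
so $f \in \sqrt I$, completing the proof.

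The main conceptual obstacle is recognizing that the weak Nullstellensatz alone looks too weak to control the vanishing locus of a single polynomial, and that the trick of enlarging the ring by one variable to force emptiness is what unlocks everything. The computational step of clearing denominators after substituting $t = 1/f$ is routine once the correct ideal $J$ has been written down.
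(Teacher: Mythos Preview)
Your proof is correct and follows essentially the same route as the paper: the Rabinowitsch trick with the auxiliary ideal $J = I + (1 - tf)$, emptiness of $Z(J)$ via Corollary~\ref{corwhn}, and then the substitution $t \mapsto 1/f$ followed by clearing denominators. The only cosmetic difference is the paper writes $ft - 1$ rather than $1 - tf$ and explicitly notes $f \neq 0$ before dividing by it, which you might add for completeness (the case $f = 0$ being trivial).
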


\begin{proof}
If $f\in \sqrt I$, then $f^k\in I$ for some $k\geq 1$, therefore $f^k(p)=0$ for all $p\in Z(I)$. Thus $f(p)=0$ for all $p\in Z(I)$, so $f\in I(Z(I))$. This proves $\supseteq$ inclusion.

The other direction is harder. We will use what's known as the Rabinowitz trick, which is a particular case of localization. Consider $0\neq f\in I(Z(I))$. In $\C[x_1,\ldots,x_n,t]$ the ideal 
$$\hat I = (ft-1)+(I)
$$
has no zeros. Here $(I)$ denotes the ideal in the ring $ \C[x_1,\ldots,x_n,t]$ generated by $I$. Indeed, any point $(a_1,\ldots,a_n,b)$ in the set of zeros of  $\hat I$ must have $(a_1,\ldots,a_n)$ in the set of zeros of $I$. Then $ft-1$ takes value $0\,b-1\neq 0$ on $(a_1,\ldots,a_n,b)$, contradiction.

By Corollary \ref{corwhn}, the ideal $\hat I$ is the whole ring, and in particular it contains $1$.\footnote{ It is a common trick to replace the statement that an ideal is equal to a ring by the statement that this ideal contains $1$. If you study commutative algebra long enough, you will get sick of it.}
In other words, we have in $\C[x_1,\ldots,x_n,t]$
$$
1 = (f(x_1,\ldots,x_n)t-1)g_0(x_1,\ldots,x_n,t) + \sum_i f_i(x_1,\ldots,x_n) g_i(x_1,\ldots,x_n,t)
$$
where $f_i\in I$. 
Consider the image of this equality in the field of rational functions in $n$ variables $\C(x_1,\ldots,x_n)$ under the algebra map 
$$\C[x_1,\ldots,x_n,t]\to \C(x_1,\ldots,x_n)$$which sends $x_i$ to $x_i$ and $t$ to 
$f(x_1,\ldots,x_n)^{-1}$. Then the above equation becomes
$$
1 = 0 + \sum_i f_i(x_1,\ldots,x_n) \frac { h_i(x_1,\ldots,x_n) }{ f(x_1,\ldots,x_n)^{k_i}}
$$
for some polynomial $h_i$ and some $k_i\geq 0$.
After clearing denominators, we conclude that $f^k$ lies in $I$, which proves the $\subseteq$ inclusion.
\end{proof}

As a corollary of Theorem \ref{hn}, we see that Zariski closed subsets $Z(I)$ of $\C^n$ are in bijection with the radical ideals of $\C[x_1,\ldots,x_n]$ (including the whole ring).

\smallskip

{\bf Exercise 1.} Prove that $\sqrt{\sqrt I} = \sqrt{I} $.

{\bf Exercise 2.} Prove that $Z(I(V)) = \overline V$ (the closure in Zariski topology).

{\bf Exercise 3.} Find counterexamples to Theorems \ref{whn}, \ref{hn} and Corollary \ref{corwhn} for $\C$ replaced by $\R$.

\section{Irreducible components. Dimension theory. Projective spaces. Algebraic varieties.}

Let $I$ be a proper ideal in the polynomial ring  $\C[x_1,\ldots,x_n]$ and let $Z(I)$ be the corresponding nonempty Zariski closed subset in $\C^n$. 

\smallskip
\begin{defn}
We call $Z(I)$ {\blue reducible} if it can be written as a union of two proper closed subsets $Z(I)= Z(I_1)\cup Z(I_2)$ with $Z(I_i)\neq Z(I)$. It is called {\blue irreducible} otherwise.
\end{defn}

For example, for $n=2$ and $I=(x_1x_2)$ we have 
$$Z(I) =\{(a_1,a_2), ~a_1a_2=0\} = Z(I_1) \cup Z(I_2)$$
where $I_1=(x_1)$ and $I_2=(x_2)$.

\smallskip
We remark that 
while disconnected algebraic sets are always reducible, the converse is not true, as the below picture indicates.

\begin{tikzpicture}
\draw[blue] plot[domain=-1.2:{1.2},samples=100,variable=\x] 
(2.3-\x*\x,.8+\x- \x*\x*\x);
\draw (1.5,0) node[anchor=north]{irreducible};
\draw[blue] plot[domain=-1.2:{1.2},samples=100,variable=\x] 
(6.3-\x*\x,.8+\x- \x*\x*\x);
\draw[red](4.5,1)--(6.5,1);
\draw (5.5,0) node[anchor=north]{reducible, connected};
\draw[blue] plot[domain=-1.2:{1.2},samples=100,variable=\x] 
(10.3-\x*\x,.8+\x- \x*\x*\x);
\draw  (8.7,1) node [anchor=north]{\circle* 2};
\draw[red](8.5,1)--(10.5,1);
\draw (9.5,0) node[anchor=north]{reducible, disconnected};
\end{tikzpicture}

\smallskip
\begin{thm}
Every closed subset $Z(I)$ can be uniquely written as a union of irreducible closed subsets of $Z(I)$ none of which is contained in another one.
\end{thm}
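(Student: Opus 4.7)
The plan is to prove existence by Noetherian induction and uniqueness by a short argument from irreducibility. The essential tool underneath both halves is the Hilbert basis theorem: $\C[x_1,\ldots,x_n]$ is Noetherian, so its ideals satisfy the ascending chain condition; via the correspondence $J \mapsto Z(J)$, which reverses inclusions, this translates to a descending chain condition on Zariski-closed subsets of $\C^n$. Equivalently, every nonempty family of closed subsets has a minimal element.

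For existence, I will argue by contradiction. Let $\mathcal F$ be the family of closed subsets of $\C^n$ that cannot be written as a finite union of irreducible closed subsets. If $\mathcal F$ is nonempty, pick a minimal element $V \in \mathcal F$. Then $V$ cannot itself be irreducible, so it decomposes as $V = V_1 \cup V_2$ with each $V_i$ closed and properly contained in $V$. By minimality, neither $V_i$ lies in $\mathcal F$, so each $V_i$ is a finite union of irreducible closed subsets; combining these expresses $V$ as a finite union of irreducibles, contradicting $V \in \mathcal F$. Hence $\mathcal F$ is empty, and in particular $Z(I) = W_1 \cup \cdots \cup W_r$ for some irreducible closed subsets $W_i$. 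Discarding any $W_i$ that is contained in some other $W_j$ leaves a decomposition in which no component is contained in another.

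For uniqueness, suppose $Z(I) = W_1 \cup \cdots \cup W_r = W_1' \cup \cdots \cup W_s'$ are two such irredundant decompositions. For each $i$, write $W_i = \bigcup_j (W_i \cap W_j')$. Each intersection is closed in $W_i$, and there are finitely many of them, so irreducibility of $W_i$ forces $W_i = W_i \cap W_j'$ for some $j$, i.e.\ $W_i \subseteq W_j'$. Applying the same reasoning to $W_j'$ inside the $W_k$-decomposition yields $W_j' \subseteq W_k$ for some $k$, whence $W_i \subseteq W_k$. The no-containment hypothesis forces $i = k$, so all three sets are equal and in particular $W_i = W_j'$. Running this over all $i$ gives a bijection between the two collections.

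The only real subtlety I anticipate is terminological: the statement says \emph{disjoint} union, but genuine disjointness typically fails — for instance, the two components of $Z(x_1 x_2) \subset \C^2$ share the origin. I will interpret the word ``disjoint'' in the informal sense that the components are listed without repetition (and with no containment relations), which is exactly what the argument above delivers. Everything else is routine once the descending chain condition on closed subsets is in hand.
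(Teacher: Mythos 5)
Your proof is correct and takes essentially the same approach as the paper's: both existence arguments rest on the descending chain condition for Zariski-closed sets (you phrase it as a minimal counterexample, the paper as an informal ``keep decomposing'' argument that must terminate), and the uniqueness argument is word-for-word the same containment chase via irreducibility. Your remark about ``disjoint'' is well taken — the paper's own example $Z(x_1x_2)$ and its picture of a ``reducible, connected'' curve show the word is being used loosely to mean an irredundant union, exactly as you interpret it.
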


\begin{proof}
In the existence direction, we  start with $Z(I)$ and keep decomposing it into unions of smaller subsets.
If we could keep doing this indefinitely, we would get a decreasing chain of closed subsets which are properly contained in one another. This would correspond to an increasing chain of ideals of $\C[x_1,\ldots,x_n]$, in contradiction with it being Noetherian. Thus, we can write $Z(I)$ as a finite union of irreducible subsets, and it remains to remove the subsets that are contained in others.

In the uniqueness direction, suppose we have two finite decompositions
$$Z(I)= \bigcup_\alpha Z(I_\alpha) = \bigcup_\beta Z(I_\beta).$$
For each $\alpha$ we have 
$$
Z(I_\alpha) = \bigcup_\beta (Z(I_\beta)\cap Z(I_\alpha)).
$$
By irreducibility of $Z(I_\alpha)$, one of  $Z(I_\beta)\cap Z(I_\alpha)$ must equal $Z(I_\alpha)$. This implies that for every $\alpha$ there exists $\beta$ such that $Z(I_\alpha)\subseteq Z(I_\beta)$. Of course, the same is true for each $\beta$, so we get $Z(I_\alpha)\subseteq Z(I_\beta)\subseteq Z(I_{\alpha'})$. Then $Z(I_\alpha)\subseteq Z(I_{\alpha'})$ implies $\alpha = \alpha'$ and we get $Z(I_\alpha) = Z(I_\beta)$. Since this works for each $\alpha$ (and for each $\beta$ in the other direction), we have proved uniqueness.
\end{proof}

\smallskip
\begin{rem}
The decomposition of $Z(I)$ into irreducible components is a particular example of the commutative algebra statement that for any ideal $I$ in a Noetherian ring, the radical $\sqrt I$ can be uniquely written as 
$$
\sqrt I = \bigcap_i p_i
$$
where $p_i$ are a finite set of minimal primes that contain $I$. See \cite{AM} for details, or just trust me.\footnote{ I am building up your tolerance for accepting random technical claims.}
\end{rem}

\smallskip
{\blue \hrule}

\smallskip
We will now discuss the concept of dimension of algebraic subsets $Z(I)$ of $\C^n$. If $Z(I)$ is irreducible, we will also call it an {\blue algebraic subvariety} of $\C^n$. We have a concept of dimension for smooth complex manifolds, and algebraic subvarieties are not that far from them, so it is natural to expect some kind of definition here. The technical arguments in commutative algebra that one needs in order to prove the following claims are rather formidable, but we will just take it for granted that dimension works as one expects. 

\smallskip
\begin{defn} For an irreducible $Z(I)$ the ideal $I$ is prime (see Exercise 1) so one can look at the quotient ring $A=\C[x_1,\ldots,x_n]/I$. Its quotient field $QF(A)$ is a finitely generated field extension of $\C$ and we define 
$$
\dim_\C Z(I) := {\rm tr.deg.}_\C \,QF(A)
$$
where ${\rm tr.deg.}$ stands for the transcendence degree of the extension.
\end{defn}

\smallskip
\begin{defn}\label{rationalfunctions}
The quotient field $QF(A)$ is called the field of {\blue rational functions on $X=Z(I)$}.
\end{defn}

Clearly, dimension of $\C^n$ is the transcendence degree of the field of rational functions $\C(x_1,\ldots,x_n)$ which is $n$. Furthermore, every chain
of irreducible subvarieties can be enhanced to a chain that has exactly one member of each dimension.
$$
{\rm point} \subsetneq {\rm curve} \subsetneq {\rm surface } \subsetneq \cdots  \subsetneq \C^n
$$
The corresponding property of the chains of prime ideals in $\C[x_1,\ldots, x_n]$ goes under the name of {\blue catenary}.

\smallskip
\begin{war}
Algebraic geometers have a very annoying habit of drawing complex varieties of dimension $n$ as objects of {\blue real} dimension $n$, with the notable exception of Riemann surfaces (complex dimension one) which are sometimes drawn as real surfaces. Pictures below are illustration of that.
Of course, once the dimension is three or higher, the paper itself is a limitation.
\end{war}

\begin{tikzpicture}
\fill[gray!40!white](0,0) rectangle  (3,3);
\draw (1.5,0) node[anchor=north]{$\C$};
\draw (4,1) -- (7,1);
\draw(5.5,0) node[anchor=north]{Also $\C$};
\draw (8,0.5) -- (11,1.5);
\draw(9.5,0) node[anchor=north]{The rising $\C$};
\draw(9.5,-0.5) node[anchor=north]{(sorry, couldn't resist)};
\end{tikzpicture}

\begin{tikzpicture}
\fill[gray!40!white](0,0) rectangle  (3,3);
\draw (1.5,0) node[anchor=north]{$\C^2$};
\fill[gray!40!white](4,0) rectangle  (7,3);
\draw (4.5,2) -- (6.5,1);
\draw (4.5,1) -- (6.5,2);
\draw (4.5,1.5) ellipse (.15 and .5);
\draw (6.5,1.5) ellipse (.15 and .5);
\draw(5.5,0) node[anchor=north]{$\{x^2 +y^2 = z^2\}\subset \C^3$};
\fill[gray!40!white](8,0) rectangle  (11,3);
\fill[red!20!white] (8.2,1.3) -- (10.2,2.3)--(10.8,1.7)-- (8.8,0.7)--cycle;
\draw[red] (8.5,1) -- (10.5,2);
\draw(9.5,0) node[anchor=north]{$\C\subset \C^2 \subset \C^3$};
\end{tikzpicture}

\noindent
Unfortunately, there is not much we can do about it. It is still better than trying to draw complex surfaces as real $4$-dimensional objects that they are.

\smallskip{\blue \hrule}
We now turn our attention to projective spaces and projective varieties.

\smallskip
\begin{defn}
A complex projective space $\C\P^n$ is defined as the set 
$$
(\C^{n+1}\setminus\{{\mathbf 0}\})/\sim
$$
of nonzero $(n+1)$-tuples of complex numbers under the equivalence
$$
(x_0,\ldots,x_n)\sim (\lambda x_0,\ldots,\lambda x_n), ~\lambda \in \C^*.
$$
In other words, $\C\P^n$ is the set of orbits of the scaling  action of the multiplicative group $\C^*$ on $\C^{n+1}\setminus\{{\mathbf 0}\}$.
Points on $\C\P^n$ are denoted by $(x_0:\ldots:x_n)$ with the colons used to distinguish them
from points in $\C^{n+1}$.
\end{defn}

As stated, $\C\P^n$ only has a structure of a set, but it is easy to see that it is in fact a complex manifold.
Specifically, for each $i\in \{0,\ldots,n\}$ we consider a subset $U_i$ of $\C\P^n$ with $x_i\neq 0$. Points of 
$U_i$ can be uniquely scaled to 
$$
\left(\frac {x_0}{x_i}:\ldots:1:\ldots: \frac{x_n}{x_i}\right)
$$
so $U_i$ can be identified with $\C^n$. One can then use this covering to get $\C\P^n$ the structure of a complex manifold, since transition functions can be easily seen to be holomorphic.

\smallskip
The transition functions are in fact rational, and this gives $\C\P^n$ a structure of an algebraic variety. {\red Wait, what?}

\smallskip
{\blue We have not yet defined complex algebraic varieties, what are they?} I will avoid putting down a formal definition, but one should think of them as sets glued from a finite number of charts $U_i$, with each of these charts identified 
with an algebraic subvariety $Z(I_i)$ in $\C^{k_i}$ for some $k_i$. The intersections $U_i\cap  U_j$ should be  
Zariski open subsets in the induced topologies on $U_i$ and $U_j$ and should themselves be identified with some 
irreducible subsets $Z(I_{ij})$ in $\C^{k_{ij}}$. The transition functions should be induced from ratios of polynomials on $\C^{k_{ij}}$,
with denominators that are nonzero on $U_i\cap U_j$.\footnote{ A remark for the specialists is that we ignore the issues of separatedness.} Complex algebraic varieties come with two topologies, the usual (a.k.a. strong) one and the Zariski one, both induced from the corresponding topologies on ambient $\C^{k_i}$ on the charts $U_i$.

\smallskip
As one expects, morphisms of algebraic varieties should be Zariski locally given by some ratios of polynomials. Overall, one should think of complex algebraic varieties as being very similar to complex manifolds, but with some singularities allowed. 
However, the morphisms are more restricted than those of complex manifolds. For example, the map $\C\to \C^2$ given by
$$
f(x) = (x,\ee^x)
$$
is holomorphic but not algebraic and its image is not an algebraic subvariety of $\C^2$. {\red The reader is warned that some smooth complex manifolds can not be given an algebraic structure. In the other extreme, there exist nonisomorphic smooth complex algebraic varieties which are biholomorphic to each other, see \cite{Hartshorne}.}

\smallskip
\begin{rem}
One can show that two different closed subvarieties $Z(I)\subseteq \C^n$ and $Z(J)\subseteq \C^m$ are isomorphic 
as algebraic varieties if and only if the corresponding finitely generated algebras $\C[x_1,\ldots,x_n]/I$ and $\C[x_1,\ldots, x_m]/J$ are isomorphic. It is a natural statement, since points are in bijections with maximal ideals in these rings.  
We think of them as just the same variety in two different realizations. Such varieties are called {\blue affine}.
\end{rem}

\smallskip
\begin{rem}
It is possible to show that for an algebraic variety $X$ covered by $U_i\cong Z(I_i)$, the quotients fields
of $\C[x_1,\ldots,x_{k_i}]/I$ are the same for each $i$. This associates to every algebraic variety a field of rational functions on it. You should think of this as an algebraic analog of the field of meromorphic functions on a complex manifold.
\end{rem}

\smallskip
{\blue \hrule}
The main advantage of $\C\P^n$ as compared to $\C^n$ is the following.

\smallskip
\begin{thm}
$\C\P^n$ is a compact complex manifold.
\end{thm}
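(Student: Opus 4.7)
The plan is to handle the two assertions---complex manifold structure and compactness---separately, noting that the former has been essentially sketched in the surrounding exposition. For the manifold part I would simply verify on the charts $U_i = \{x_i \neq 0\}$ with coordinate maps $\phi_i(x_0:\ldots:x_n) = (x_0/x_i, \ldots, \widehat{x_i/x_i}, \ldots, x_n/x_i)$ that the transition $\phi_j \circ \phi_i^{-1}$ on $\phi_i(U_i \cap U_j)$ is a rational function whose denominator is a power of the coordinate corresponding to $x_j/x_i$, which is nonvanishing on that open set; hence the transitions are holomorphic and the atlas defines a complex manifold structure.

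The substance of the theorem is compactness. I would equip $\C\P^n$ with the quotient topology from the natural surjection $\pi: \C^{n+1}\setminus\{\mathbf{0}\} \to \C\P^n$ and then observe that every $\C^*$-orbit meets the unit sphere $S^{2n+1} = \{\mathbf{x} \in \C^{n+1} : \sum_{i=0}^n |x_i|^2 = 1\}$, namely at $\mathbf{x}/\|\mathbf{x}\|$. Hence the restriction $\pi|_{S^{2n+1}}: S^{2n+1} \to \C\P^n$ is continuous and surjective. Since $S^{2n+1}$ is a closed bounded subset of $\R^{2n+2}$, it is compact by Heine--Borel, and its continuous image $\C\P^n$ is therefore compact.

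The one point that requires care---the main obstacle, if there is one---is confirming that the quotient topology on $\C\P^n$ from $\pi$ coincides with the manifold topology coming from the atlas, so that the compactness established above refers to the same object as the manifold. This reduces to checking that each chart map $\phi_i: U_i \to \C^n$ is a homeomorphism when $U_i$ carries the subspace quotient topology: the map $\phi_i \circ \pi$ (on $\{x_i \neq 0\}$) is continuous as a rational map with nonvanishing denominator, and the section $(y_1,\ldots,y_n) \mapsto \pi(y_1,\ldots,1,\ldots,y_n)$ provides a continuous inverse to $\phi_i$. Hausdorffness, tacit in the word ``manifold,'' can be verified in parallel by producing, for $[\mathbf{x}] \neq [\mathbf{y}]$, disjoint saturated open neighborhoods in $\C^{n+1} \setminus \{\mathbf{0}\}$---easiest to see directly on the sphere where the equivalence relation becomes the compact $S^1$-action.
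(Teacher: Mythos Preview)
Your proof is correct and follows essentially the same approach as the paper: both arguments exhibit $\C\P^n$ as the continuous image of the compact sphere $S^{2n+1}$. The paper phrases this as taking the quotient by $\C^*$ in two steps---first by $\R_{>0}$ to land on $S^{2n+1}$, then by $S^1$---while you directly restrict the quotient map to the sphere; these are the same idea. You are more careful than the paper about checking that the quotient topology agrees with the manifold topology and about Hausdorffness, which the paper, in keeping with its informal style, simply takes for granted.
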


\begin{proof}
We can take the quotient by $\C^*$ in two steps, first by $\R_{>0}$ and then by $S^1=\{z\in \C^*,|z|=1\}$. We immediately see that 
(as real manifolds)
$$
(\C^{n+1}\setminus\{{\mathbf 0}\})/\R_{>0}\cong \{(x_0,\ldots,x_n)\in \C^{n+1},~|x_0|^2 + \ldots + |x_n|^2 = 1\} \cong S^{2n+1}.
$$
Then $\C\P^n$ is an image of the compact set $S^{2n+1}$ under a continuous map and is therefore compact.
\end{proof}

Let us look at small examples in more detail. For $n=1$ we have $\C\P^1 = \C^1\cup \C^1$. As a set, it is $\C\sqcup\{\infty\}$. The
two open sets $U_0$ and $U_1$ have coordinates $z = \frac {x_1}{x_0}$ and $z^{-1} = \frac {x_0}{x_1}$ respectively. 
It is often called the Riemann sphere. 

\begin{tikzpicture}
\draw[blue] (1.5,1.5) + (85:1.1) arc (85:95-360:1.1);
\draw[red] (1.5,1.5) + (-85:1.2) arc (-85:-95+360:1.2);
\draw (1.5,0) node[anchor=north]{$\C\P^1$};
\draw (1.5, 2.6) node {$\circ$};
\draw (1.5, 0.3) node {$\circ$};
\draw[red] (3, 2.0) node {$U_0$};
\draw[blue] (1.0, 2.0) node {$U_1$};
\draw (4.28,1.5)..controls(5,.8) and (6,.8)..(6.72,1.5);
\draw [dashed](4.28,1.5)..controls(5,2.2) and (6,2.2)..(6.72,1.5);
\draw (5.5,1.5) circle (35pt);
\draw (5.5,0) node[anchor=north]{$\C\P^1$};
\draw (5.45,2.7) node[anchor=south]{$\infty$};
\draw (5.5,2.73) node {\circle* 2};
\draw (5.45,0.27) node[anchor=south]{$0$};
\draw (5.5,0.27) node {\circle* 2};
\filldraw[gray!40!white] (9.5,1.5) ellipse (35pt and 30pt);
\draw[blue](9.5,1.5) ellipse (35pt and 30pt);
\draw (9.5,0.0) node [anchor=north]{$\C\P^2$};
\draw (9.5,2.0) node [anchor=north]{$\C^2$};
\draw[blue] (10.5,3.0) node [anchor=north]{$\C\P^1$};
\end{tikzpicture}

For $n=2$,  $\C\P^2$ is not just $\C^2\sqcup \{{\rm point}\}$. Rather, we have to add a whole line at infinity, because the complement to 
$\C\P^2 \setminus U_0$ consists of points $(0:x_1:x_2)$, which is a copy of $\C\P^1$. In fact, more generally we have 
$\C\P^n = \C^n\sqcup \C\P^{n-1}$.

\smallskip
The reason why algebraic geometers like $\C\P^n$ so much is because it is the ``smallest" way to compactify $\C^n$ to a compact complex manifold. In contrast, if we just wanted a compactification as a real manifold, we could do it with adding a single point.

\smallskip
Now  we will talk about algebraic subvarieties in $\C\P^n$. Consider the ring $\C[x_0,\ldots,x_n]$ equipped with the grading by the total degree of monomials
$$
\deg  \left(x_0^{a_0}\cdot\ldots \cdot x_n^{a_n}\right) = a_0+\ldots + a_n,
$$
in other words, each $x_i$ is homogeneous of degree $1$. 

\smallskip
An ideal $I \subseteq \C[x_0,\ldots,x_n]$ is called {\blue homogeneous} if for all $f\in I$ all of the homogeneous components of $f$ lie in $I$. Equivalently, an ideal $I$ is homogeneous if it has a set of homogeneous generators.
Similarly to the case of $\C^n$,  for a homogeneous ideal $J$ we define the set $Z(J)\subseteq \C\P^n$ of its zeros and for a set $V\in \C\P^n$ we define the homogeneous ideal $I(V)$ of polynomials that vanish on $V$. As in the case of $\C^n$, the sets $Z(J)$ define Zariski topology on $\C\P^n$ (these sets are declared to be closed). 

\smallskip
The relationship between ideals and sets is now more complicated. The ring $\C[x_0,\ldots, x_n]$  has a unique maximal homogeneous ideal $(x_0,\ldots,x_n)$ called the {\blue irrelevant ideal}.\footnote{ As the reader may guess, the irrelevant ideal is in fact very important, although we will not really see it in these notes.} The zero set of the irrelevant ideal is empty. Points in $\C\P^n$ correspond instead to homogeneous ideals given by $n$ linearly independent degree one generators. For example
 $$
 (x_1-\alpha_1x_0, \ldots, x_n-\alpha_n x_0),~(\alpha_1,\ldots, \alpha_n)\in \C^n
 $$
gives a point in $U_0\subset \C\P^n$.

\smallskip
We conclude by writing several examples of algebraic varieties in projective spaces.

$\bullet$ $~\{x_0x_2-x_1^2 =0\}\subset \C\P^2.$ We will later see that it is isomorphic to $\C\P^1$.

$\bullet$ $~\{x_0x_3-x_1x_2=0\}\subset \C\P^3.$ This one is isomorphic to $\C\P^1\times \C\P^1$.

$\bullet$ $~\{x_0x_2-x_1^2 =x_0 x_3 -x_1 x_2 = x_1 x_3 - x_2^2 =0\}\subset \C\P^2.$  Twisted cubic, isomorphic to  $\C\P^1$.

$\bullet$ $~\{y^2-4x^3-g_2\, x\,z^2 - g_3\,z^3=0\}\subset \C\P^2$ with coordinates $(x:y:z).$ Elliptic curve.

$\bullet$ $~\{x^3y+y^3z+z^3x=0\}\subset \C\P^2.$  Klein quartic; it has $168$ automorphisms.

$\bullet$ Variables  $x_{ij}$ for $1\leq i<j\leq n$ are homogenous coordinates in $\C\P^{\frac {n(n-1)}2-1}$. The equations 
are $x_{ij}x_{kl}-x_{ik}x_{jl}+x_{il}x_{jk} = 0$ for all $1\leq i<j<k<l\leq n$. This is the Grassmannian $\Gr(2,n)$ in its Pl\"ucker embedding.

\smallskip
{\bf Exercise 1.} Prove that for a proper ideal $I\subset \C[x_1,\ldots, x_n]$ the set $Z(I)\subseteq \C^n$ is irreducible if and only if $\sqrt I$ is a prime ideal.

{\bf Exercise 2.} Prove that $Z(I)$ is reducible if and only if it contains a proper nonempty subset which is both open and closed in the induced Zariski topology.

{\bf Exercise 3.} Verify that the transition functions for the cover of $\C\P^n$ by $U_i$ are indeed holomorphic.

\section{Hilbert polynomial. Statement of Bezout's Theorem on intersections of plane curves.}\label{sec.B1}
The algebraic manifestation of $\C\P^n$ and closed subsets $Z(I)\subseteq \C\P^n$ being compact is that the graded ring
$$
\C[x_0,\ldots,x_n]/I
$$
for a homogeneous ideal $I$ has finite-dimensional (as $\C$-vector spaces) graded components. This  observation should be contrasted with the $\C^n$ case where $\C[x_1,\ldots,x_n]/I$ is just an infinite-dimensional $\C$-vector space. Moreover, it turns out that the integers
$$
a_d := \dim_{\C} (\C[x_0,\ldots,x_n]/I)_{\deg = d}
$$
satisfy a number of fascinating properties. We will first state these properties, then give examples, and then prove them.

\smallskip
\begin{thm}\label{Hilbertpoly}
For a homogenous ideal $I$, the numbers $a_d$ defined above satisfy the following.
\begin{itemize}
\item There exists a one-variable polynomial $P$ with rational coefficients such that for all large enough $d$ we have $P(d)=a_d$. This $P$ is called the {\blue Hilbert polynomial} of $\C[x_0,\ldots,x_n]/I$ or of $Z(I)$.

\item Degree of $P$ is equal to the dimension of $Z(I)$.

\item The {\blue Hilbert function}
$
f(t): = \sum_{d\geq 0} a_d t^d
$
is a rational function of the form 
$$
f(t) = \frac {g(t)}{(1-t)^{\dim Z(I) + 1}}
$$
with a polynomial $g(t)$ such that $g(1)\neq 0$.
\end{itemize}
\end{thm}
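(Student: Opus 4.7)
The plan is to analyze the Hilbert series $f(t) = \sum_{d\geq 0} a_d t^d$ by pushing it through graded short exact sequences, then to match the order of its pole at $t=1$ with the dimension of $Z(I)$. I would first record two preliminary inputs: counting monomials gives $H_S(t) = 1/(1-t)^{n+1}$ for $S = \C[x_0,\ldots,x_n]$, and Hilbert series are additive over short exact sequences of graded modules with degree-preserving maps, directly from rank-nullity in each degree. Setting $R = S/I$, multiplication by $x_n$ gives the four-term graded exact sequence
$$0 \to K \to R(-1) \xrightarrow{\,x_n\,} R \to R/x_n R \to 0,$$
where $R(-1)_d := R_{d-1}$ and $K$ is the $x_n$-torsion. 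Both $K$ and $R/x_n R$ are annihilated by $x_n$, hence are finitely generated graded modules over $\C[x_0,\ldots,x_{n-1}]$. By induction on the number of variables (carried out at the level of arbitrary finitely generated graded modules, not just cyclic quotients), their Hilbert series are of the form $p(t)/(1-t)^n$. Alternating dimensions in each degree yields $(1-t)H_R(t) = H_{R/x_nR}(t) - H_K(t)$, so $f(t) = g(t)/(1-t)^{n+1}$ for some $g \in \Z[t]$, giving a preliminary version of the third bullet.

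Next I would cancel all possible factors of $(1-t)$ from $g$ to write $f(t) = \tilde g(t)/(1-t)^m$ in lowest terms with $\tilde g(1)\neq 0$. Expanding $(1-t)^{-m} = \sum_d \binom{m-1+d}{m-1}\,t^d$ and convolving against $\tilde g$ shows that $a_d$ agrees, for all $d$ past $\deg \tilde g$, with a polynomial $P(d)$ of degree exactly $m-1$ and leading coefficient $\tilde g(1)/(m-1)!$. This simultaneously delivers the Hilbert polynomial and the rational form demanded by the third bullet, provided $m$ can be identified with $\dim Z(I) + 1$.

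The identification $m = \dim Z(I)+1$ is the main obstacle, and it is where I would call on black-box commutative algebra. The tool is graded Noether normalization: after a generic linear change of coordinates on $\C^{n+1}$, there exist algebraically independent linear forms $y_1,\ldots,y_m \in R_1$, with $m$ equal to the Krull dimension of $R$, such that $R$ is a finitely generated graded module over $A = \C[y_1,\ldots,y_m]$. For a finitely generated graded domain, $m$ also equals $\mathrm{tr.deg.}_\C\,\mathrm{QF}(R)$, which is $\dim Z(I)+1$ (the extra $+1$ coming from the scaling/degree variable of the affine cone over $Z(I)$). A graded surjection $\bigoplus_i A(-\delta_i) \twoheadrightarrow R$ combined with additivity of Hilbert series bounds the pole order at $t=1$ above by $m$, and a matching lower bound follows by exhibiting a free rank-one graded $A$-submodule of $R$ after passing to an irreducible component of maximal dimension. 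In the informal spirit of these notes, I would quote graded Noether normalization and the Krull-dimension-equals-transcendence-degree equivalence as black boxes with pointers to \cite{AM}, and present the inductive Hilbert series argument of the first two paragraphs in full so the reader sees concretely where the rational form comes from.
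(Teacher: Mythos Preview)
Your proposal is correct and matches the paper's approach almost exactly for the part the paper actually proves: the same multiplication-by-$x_n$ four-term exact sequence, the same identity $(1-t)f_M(t) = f_C(t) - f_K(t)$, and the same induction on the number of variables over finitely generated graded modules to obtain $f(t) = g(t)/(1-t)^{n+1}$. Your extraction of the Hilbert polynomial from the lowest-terms rational form is the content of Exercise~1 of that section, which the paper leaves to the reader.

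Where you diverge is only in going further: the paper punts entirely on the identification $m = \dim Z(I)+1$, writing ``The relation to $\dim Z(I)$ is significantly more difficult to prove and the interested reader is referred to \cite{AM} or \cite{Eisenbud}.'' Your sketch via graded Noether normalization (upper bound from a finite graded surjection $\bigoplus_i A(-\delta_i)\twoheadrightarrow R$, lower bound from a free rank-one $A$-submodule on a top-dimensional component) is a standard and correct route to this, and is more informative than the paper's deferral. The only caveat is that the theorem is stated for arbitrary homogeneous $I$, so your ``passing to an irreducible component of maximal dimension'' is doing real work in the non-prime case; this is fine, but worth flagging explicitly since the paper only defines $\dim Z(I)$ for irreducible $Z(I)$.
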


\smallskip
The first example to consider is $I=\{ 0\}$. Then $a_d$ equals to the number of monomials of total degree $d$ in $n+1$ variables, which in turn equals to the number of ways of picking $n$ separators among $n+d$ dots. So we get for all $d\geq 0$
$$
a_d = \left(\begin{array}{c}n+d\\n\end{array}\right) = \frac 1{n!} (d+n)\ldots (d+1)
$$
which is clearly a degree $n$ polynomial in $d$. The generating function
\begin{align*}
&\sum_{d\geq 0}  \frac {t^d}{n!} (d+n)\ldots (d+1) =
\sum_{d\geq -n}  \frac {t^d}{n!} (d+n)\ldots (d+1) \\&
= \frac 1{n!} \left( \frac {\partial}{\partial t}\right)^n\left(\frac 1{1-t}\right)
=\frac 1{(1-t)^{n+1}}
\end{align*}
so the claims of Theorem \ref{Hilbertpoly} hold. 

\smallskip
Our next example is a curve in $\C\P^2$. Let $f(x_0,x_1,x_2)$ be a homogeneous polynomial of degree $k$ without repeated factors.\footnote{ As in the $\C^n$ case, we want our ideal $I$ to be radical.} Let $I=(f)$. Since multiplication by $f$ is an injective map 
in $\C[x_0,x_1,x_2]$, we have 
$$
(\C[x_0,x_1,x_2]/I)_{\deg = d} = (\C[x_0,x_1,x_2]_{\deg = d})/ (\C[x_0,x_1,x_2]_{\deg = d-k})
$$
and 
$$
a_d = \left\{ \begin{array}{ll}
\frac {(d+2)(d+1)}2 -  \frac {(d-k+2)(d-k+1)}2,&{\rm if~} d\geq k;\\[.2em]
\frac {(d+2)(d+1)}2,&{\rm if~} 0\leq d< k.
\end{array}
\right.
$$
Consequently, for $d$ large enough we get 
$$
a_d = dk  - \frac {k(k-3)}2
$$
which is linear in $d$.
For the generating function we get 
$$f(t) = \frac 1{(1-t)^3} - \frac {t^k}{(1-t)^3} = 
\frac {\sum_{i=0}^{k-1}t^i}{(1-t)^2},
$$
so again the conditions of Theorem \ref{Hilbertpoly} hold.

\smallskip
We will now discuss the proof of Theorem \ref{Hilbertpoly}. First of all, it is convenient to  consider not just $\C[x_0,\ldots,x_n]/I$ but all finitely generated graded modules $M$ over $\C[x_0,\ldots,x_n]$. We will also introduce the {\blue grading shift} notation:
For a graded module $M$, the graded module $M(r)$ has the same module structure, but the grading is redefined by
$$
M(r)_{\deg = k} = M_{\deg = k+r}.
$$
Then for any graded module $M$, multiplication by $x_n$ gives a map of graded modules\footnote{ It is good for one's mental health to only consider degree zero maps of graded modules, and use grading shifts when needed.} 
$$
M(-1) \stackrel {x_n}\longrightarrow M.
$$
The kernel and cokernel of this map are whimsically denoted by $K$ and $C$ respectively.
The exact sequence 
$$
0\to K\to M(-1)  \stackrel {x_n}\longrightarrow M \to C\to 0
$$
induces exact sequences at each degree, so we get\footnote{ This is Exercise 2.} for the generating functions
\begin{equation}\label{ex2}
f_{K}(t) - f_{M(-1)}(t) + f_M(t) - f_{C}(t) = 0.
\end{equation}
We then observe that $f_{M(-1)}(t) = t f_M(t)$ and we get 
$$(1-t)f_M(t) = f_C(t) - f_K(t).$$ It remains to argue that $K$ and $C$ are finitely generated modules over $\C[x_0,\ldots,x_{n-1}]$ and apply induction on $n$ (with $n=0$ case just graded finite dimensional vector spaces) to see that $f(t)$ is a rational function with denominator that is a power of $(1-t)$. The relation to $\dim Z(I)$ is significantly more difficult to prove and the interested reader is referred to 
\cite{AM} or \cite{Eisenbud}. 

\smallskip
\begin{rem}\label{rationalfunctionsprojective}
One can compute the field of rational functions on $Z(I)$ in a manner analogous to the affine case (Remark \ref{rationalfunctions}).
Specifically, for a prime homogeneous ideal $I$ we consider the graded integral domain $A=\C[x_0,\ldots,x_n]/I$ and the field of degree zero fractions, i.e. fractions of homogeneous elements of the same degree:
$$
{\rm Rat}(Z(I))= \left\{\frac {g_1}{g_2},~\deg g_1 = \deg g_2,~g_1,g_2\in A\right\} \subset QF(A).
$$
\end{rem}

\smallskip{\blue \hrule}
We will now talk about the Bezout's theorem. To begin with, one can show that all algebraic curves $C$ in $\C\P^2$ are given by a single homogeneous polynomial equation $f(x_0,x_1,x_2)=0$ of some degree $\deg C$, with $f$ not having any repeated irreducible factors.  Irreducible factors of $f$ correspond to the irreducible components of the curve. 
\smallskip
\begin{thm}\label{Bezout}(Bezout's theorem)
Let $C_1,C_2\subset \C\P^2$ be curves with no common components. Then the number of intersection points of $C_1$ and $C_2$, counted with multiplicities, is equal to $(\deg C_1)(\deg C_2)$. 
\end{thm}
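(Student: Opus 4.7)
The plan is to apply the Hilbert polynomial machinery of Theorem \ref{Hilbertpoly} to the ideal $I = (f_1, f_2)$ in $R = \C[x_0, x_1, x_2]$, where $f_i$ is the defining polynomial of $C_i$ and $d_i := \deg f_i = \deg C_i$. The hypothesis that $C_1, C_2$ share no components means $\gcd(f_1, f_2) = 1$ in the UFD $R$, so $f_2$ is a nonzerodivisor on $R/(f_1)$; equivalently $(f_1, f_2)$ is a regular sequence. This yields the Koszul short exact sequence of graded $R$-modules
$$
0 \to R(-d_1 - d_2) \xrightarrow{g \mapsto (-gf_2,\,gf_1)} R(-d_1) \oplus R(-d_2) \xrightarrow{(a,b) \mapsto af_1 + bf_2} R \to R/I \to 0.
$$
Applying the additivity of Hilbert series (the four-term version of \eqref{ex2}) gives
$$
f_{R/I}(t) \;=\; \frac{1 - t^{d_1} - t^{d_2} + t^{d_1+d_2}}{(1-t)^3} \;=\; \frac{(1 + t + \cdots + t^{d_1 - 1})(1 + t + \cdots + t^{d_2 - 1})}{1-t}.
$$
This has the form $g(t)/(1-t)$ with $g(1) = d_1 d_2 \neq 0$, so by Theorem \ref{Hilbertpoly}, $\dim Z(I) = 0$ and the Hilbert polynomial of $R/I$ is the constant $d_1 d_2$. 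In particular, $C_1 \cap C_2$ is a finite set $\{p_1, \ldots, p_r\}$.

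Next I would define the intersection multiplicity of $C_1$ and $C_2$ at a point $p$ as
$$
\mu_p := \dim_{\C}\bigl(\cO_{\C\P^2, p} / (f_1, f_2)\bigr),
$$
the dimension of the local ring at $p$ modulo the dehomogenized equations, computed in any affine chart containing $p$ (the result is intrinsic since the local ring is). The theorem then reduces to establishing
$$
\sum_{p \in C_1 \cap C_2} \mu_p = d_1 d_2.
$$

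To bridge the graded computation with these local multiplicities, after a generic linear change of coordinates I may assume the line $\{x_0 = 0\}$ misses $C_1 \cap C_2$. Dehomogenizing via $y_j := x_j/x_0$ produces a natural degree-$d$ evaluation map
$$
\phi_d : (R/I)_d \to A := \C[y_1, y_2]/(\bar f_1, \bar f_2), \qquad F \mapsto F(1, y_1, y_2).
$$
Because no point of $Z(I)$ lies on $\{x_0 = 0\}$, the element $x_0$ is a nonzerodivisor on $R/I$ in high degrees, and one checks that $\phi_d$ is an isomorphism for $d \gg 0$. The finite-dimensional, hence Artinian, algebra $A$ decomposes by the Chinese remainder theorem as $A \cong \prod_p \cO_{\C\P^2, p}/(f_1, f_2)$, so $\dim_{\C} A = \sum_p \mu_p$. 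Equating this with $\dim_{\C}(R/I)_d = d_1 d_2$ for large $d$ completes the proof.

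The hard part will be justifying this final bridging step: proving that $\phi_d$ is an isomorphism for $d \gg 0$ and that $A$ splits as the claimed product of local rings. The Koszul computation and the Hilbert series manipulation are routine given the framework already in place, but relating the graded piece $(R/I)_d$ in high degree to the affine coordinate ring on a chart that captures every intersection point requires a bit of care with associated primes of $R/I$ (to guarantee injectivity of multiplication by $x_0$) or, alternatively, a hand wave in the spirit of the notes asserting that the graded dimension stabilizes to the total local contribution. In a more rigorous treatment this stabilization is the statement that $H^0(\C\P^2, \mathcal{O}(d)|_{C_1 \cap C_2})$ is independent of $d$ for $d \gg 0$, but at this point in the notes one should probably just trust it.
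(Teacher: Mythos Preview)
Your argument is correct and follows a genuinely different route from the paper's. Both approaches compute the Hilbert polynomial of $R/(f_1,f_2)$ as the constant $d_1d_2$ via essentially the same exact sequence (you package it as the Koszul resolution, the paper uses the two-step short exact sequence with $R/(f_1)$), but the connection to local multiplicities diverges. The paper invokes a general filtration lemma (every finitely generated graded module admits a filtration with subquotients $(A/p_i)(d_i)$ for prime $p_i$) and then localizes at each intersection point separately, counting how many times the corresponding prime appears in the filtration. You instead arrange all intersection points into a single affine chart, pass to the Artinian affine coordinate ring $A$, and split it by the Chinese Remainder Theorem.

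Your route is more elementary in that it avoids the filtration machinery and the detour through localization of graded modules, but it front-loads the commutative algebra into the bridging isomorphism $(R/I)_d \cong A$ for $d\gg 0$. As you correctly flag, the clean way to justify injectivity of multiplication by $x_0$ is via associated primes: since $(f_1,f_2)$ is a regular sequence in the Cohen--Macaulay ring $R$, the quotient $R/I$ is Cohen--Macaulay of dimension one, hence unmixed, so the irrelevant ideal is not associated and $x_0$ is a nonzerodivisor. The paper's filtration approach sidesteps this by absorbing any irrelevant-ideal contributions into filtration steps with zero Hilbert polynomial, which is arguably more robust but less direct.
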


We will delay the proof of Theorem \ref{Bezout} until the next section, but before we even start, 
the statement of the theorem makes no sense without us defining the multiplicity of intersection of two curves. For simplicity, let us assume that $C_1$ and $C_2$ intersect at a point $(1:0:0)$. We will write their equations in the open set $U_0$ as polynomials $f$ and $g$ in $\C[y_1,y_2]$ where $y_i=\frac {x_i}{x_0}$. Consider the subring $R$ of the field of rational functions $\C(y_1,y_2)$ given by
\begin{equation}\label{R}
R=\left\{\frac {F(y_1,y_2)}{G(y_1,y_2)}, {\rm ~such~that~} F,G\in \C[y_1,y_2],\,G(0,0)\neq 0\right\}.
\end{equation}
Consider $R/(f,g)R$. Because $f$ and $g$ have no common factors, it turns out\footnote{ We will actually see it as part of the proof of the Bezout's theorem in the next section.} to be a finite-dimensional vector space over $\C$, and we define
\begin{equation}\label{mult}
{\rm mult}_{(1:0:0)}(\{f=0\}\cap\{g=0\}) = \dim_\C R/(f,g)R.
\end{equation}

Let us now do some examples of intersection multiplicities to confirm that they do what we would like them to do.

\smallskip
First, we will check that two distinct lines intersect with multiplicity one. Without loss of generality, we may assume that these lines are $\{x_1=0\}$ and $\{x_2=0\}$. Then the ideal generated by $y_1$ and $y_2$ in $R$ is precisely the kernel of the evaluation map $R\to \C$ with sends $F(y_1,y_2)/G(y_1,y_2)$ to $F(0,0)/G(0,0)$, and thus the quotient is one-dimensional. 

\smallskip
Now let us look at the intersection at $(1:0:0)$ of $\{x_2=0\}$ and $\{x_0x_2-x_1^2=0\}$. In the affine coordinates $y_1,y_2$ on $U_0$ this becomes the intersection of the line $\{y_2=0\}$ and the standard parabola $\{y_2-y_1^2=0\}$. We will take the quotient of $R$ by $y_2$ and $y_2-y_1^2$ in two steps. At the first step, we mod out by $y_2$ to get the subring of the field $C(y_1)$ which consists of all rational functions with denominators that are nonzero at $y_1=0$. Then we need to further take a quotient by $y_1^2$ (since $y_2-y_1^2$ is now simply $y_1^2$) which gives us a two dimensional space, with basis being the images of $1$ and $y_1$. So we see that the horizontal line and the standard parabola intersect with multiplicity $2$, which makes sense.

\smallskip
{\bf Exercise 1.} Prove that for a sequence of integers $a_d,\,d\geq 0$ the first two conditions of Theorem \ref{Hilbertpoly} are equivalent to the last condition of it. 

{\bf Exercise 2.} Prove \eqref{ex2}.

{\bf Exercise 3.} Prove that the intersection multiplicity of $\{y_2=0\}$ and $\{g(y_1,y_2)=0\}$ at $(0,0)$ is the multiplicity of the zero of 
$g(y_1,0)$ at $y_1=0$.

\section{Proof of Bezout's Theorem on intersection of plane curves.}\label{sec.B2}
In this section, we will give a proof of Bezout's theorem along the lines of the first chapter of \cite{Hartshorne}. Some commutative algebra concepts will be explained as we go along.

\smallskip
As in the statement of Theorem \ref{Bezout}, we have two homogeneous polynomials $f_1(x_0,x_1,x_2)$ and $f_2(x_0,x_1,x_2)$ without common factors, which give plane curves $C_1=\{f_1=0\}$ and $C_2=\{f_2=0\}$. The main idea is to consider the ring $\C[x_0,x_1,x_2]/(f_1,f_2)$  as a graded module 
over  $\C[x_0,x_1,x_2]$.

\smallskip
\begin{prop}\label{filter}
For every nonzero finitely generated graded module $M$ over a graded Noetherian ring $A$ there exists a finite filtration
$$
0=M_0\subsetneq M_1\subsetneq \ldots \subsetneq M_k=M
$$
so that for each $i\in \{1,\ldots,k\}$ the quotient module
$M_i/M_{i-1}$ is isomorphic to $(A/p_i)(d_i)$ where $p_i$ is a homogeneous prime ideal in $A$ and $d_i\in\Z$ is a grading shift.
\end{prop}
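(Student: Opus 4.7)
The plan is to build the filtration one step at a time by finding a cyclic subquotient of the form $(A/p)(d)$ at each stage, and then to argue termination from Noetherianity of $M$. The key technical lemma, which I would state first, is that $M$ contains a homogeneous element whose annihilator is a prime ideal. Granting this, if $m_1 \in M$ is homogeneous of degree $d$ with $\mathrm{Ann}_A(m_1) = p_1$ prime and homogeneous, then the map $A \to M$ sending $1 \mapsto m_1$ is a degree-$d$ graded homomorphism with kernel $p_1$, so setting $M_1 = A m_1 \subseteq M$ yields $M_1 \cong (A/p_1)(-d)$, which is of the required form.

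To prove the existence of such an element, I would consider the collection
$$
\Sigma = \{\mathrm{Ann}_A(m) : m\in M \text{ homogeneous}, m\neq 0\},
$$
which is nonempty because $M\neq 0$. Since $A$ is Noetherian, $\Sigma$ has a maximal element $p = \mathrm{Ann}_A(m_1)$ for some nonzero homogeneous $m_1$. I would then show $p$ is prime by the standard argument: $p$ is proper (as $m_1\neq 0$) and homogeneous (as the annihilator of a homogeneous element is homogeneous), so it suffices to check primality on homogeneous elements $a,b\in A$. Suppose $ab\in p$ and $b\notin p$, so that $bm_1\neq 0$. Then $\mathrm{Ann}_A(bm_1)$ lies in $\Sigma$ and visibly contains $p$, so by maximality $\mathrm{Ann}_A(bm_1)=p$; but $a\cdot bm_1 = (ab)m_1 = 0$, so $a\in p$. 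Hence $p$ is prime.

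With the base step in hand, I would iterate. Apply the construction to $M/M_1$: if it is nonzero, it is still a finitely generated graded $A$-module (quotient of a f.g.\ module), so it contains a cyclic graded submodule isomorphic to $(A/p_2)(d_2)$; pull this back along $M\twoheadrightarrow M/M_1$ to get $M_1\subsetneq M_2\subseteq M$ with $M_2/M_1\cong (A/p_2)(d_2)$. Continuing, one obtains a strictly ascending chain $0=M_0\subsetneq M_1\subsetneq M_2\subsetneq\cdots$ inside $M$. Because $A$ is Noetherian and $M$ is finitely generated, $M$ itself is a Noetherian module, so every strictly ascending chain of submodules stabilizes. Hence the process terminates at some $M_k=M$, producing the desired filtration.

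The main obstacle is the primality of the maximal annihilator; once that is in place, the rest is a routine Noetherian-induction bookkeeping exercise. A minor subtlety worth flagging is that one must stay in the graded category throughout: choose $m_1$ homogeneous (so $\mathrm{Ann}(m_1)$ is automatically a homogeneous ideal), check primality only against homogeneous $a,b$ (which suffices for homogeneous ideals), and track degrees carefully so that the identification $Am_1\cong (A/p_1)(-\deg m_1)$ is an equality of \emph{graded} modules and not merely of underlying modules.
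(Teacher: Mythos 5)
Your proof is correct and follows essentially the same route as the paper: find a nonzero homogeneous element whose annihilator is maximal among annihilators of nonzero homogeneous elements, show that maximality forces primality, observe that the cyclic submodule it generates is $(A/p)(-\deg m)$, and iterate using Noetherianity to terminate. The only cosmetic difference is that the paper phrases termination via a maximal submodule admitting such a filtration rather than via a strictly ascending chain, and you are slightly more explicit than the paper in restricting the primality check to homogeneous $a,b$ (which the paper uses implicitly).
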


\begin{proof}
We first observe that it is enough to show that every nonzero module $M$ has a submodule $M_1$ isomorphic to $(A/p)(d)$ for some homogeneous prime ideal $A$ and some degree shift $d$. Indeed, by the Noetherian property of $M$ there exists a maximum graded submodule $M'$ of $M$ for which the filtration of Proposition \ref{filter} exists. If this submodule is not $M$, then there would be a submodule in $M/M'$ isomorphic to $(A/p)(d)$ and then its preimage in $M$ would  contradict maximality of $M'$.

To find a submodule of $M$ isomorphic to $(A/p)(d)$, let us ask ourselves what this means. Such isomorphism would send $1\in (A/p)(d)$ to some element $m\in M$. The degree of $1$ in $(A/p)(d)$ is, by the definition of the grading shift,\footnote{ Never trust any signs in any mathematical text ever!} equal to $(-d)$, and the ideal $p$ can be recovered as 
$$
p = {\rm Ann}(m):= \{a\in A,{\rm ~such~that~}a\hskip 1pt m=0\}.
$$
Conversely, given any nonzero homogeneous element $m\in M$, the submodule $(m)\subseteq M$  generated by $m$ is
isomorphic to 
$(A/ {\rm Ann}(m))(-\deg m)$.

So now we are simply looking for a nonzero homogeneous element $m\in M$ whose annihilator is prime. As with many things in mathematics, we will try to find such $m$ by imposing on it some maximum or minimum property. As it happens, the right thing to consider is nonzero homogeneous $m$ such that ${\rm Ann}(m)$ is maximum among all such annihilators, in the sense that there are no nonzero homogeneous $m'$ with ${\rm Ann}(m')\supsetneq {\rm Ann}(m)$. Existence of such $m$ is assured  by $A$ being Noetherian. Now suppose that the ideal ${\rm Ann}(m)$ is not prime. It means that there exist $a,b\in A$ with $a\hskip 1pt b\hskip 1ptm=0$, $a\hskip 1pt m\neq 0$, $b\hskip 1pt m\neq 0$. Then ${\rm Ann}(b\hskip 1pt m)$ contains ${\rm Ann}(m)$ and it is strictly larger because it contains $a$ while ${\rm Ann}(m)$ does not. This finishes the proof.
\end{proof}

Filtrations  of Proposition \ref{filter} are typically non-unique, see Exercise 1.
We also remark that the same statement applies to finitely generated modules over usual (non-graded) Noetherian rings, and the argument is essentially the same. 

\smallskip
Returning to the Bezout's theorem,
what primes can appear in a filtration of $M=\C[x_0,x_1,x_2]/(f_1,f_2)$? The annihilator of any element of $M$ contains $(f_1,f_2)$, so we have $p\supseteq (f_1,f_2)$. Dimension theory then implies that these are primes that correspond to intersection points of $\{f_1=0\}$ and $\{f_2=0\}$ or the irrelevant ideal $(x_0,x_1,x_2)$. 

We will now exploit the fact that when we have a filtration of finitely generated graded modules as in Proposition \ref{filter}, the Hilbert polynomial of $M$ is equal to the sum of the Hilbert polynomials of $(A/p_i)(d)$. The Hilbert polynomial of $\C[x_0,x_1,x_2]/(f_1,f_2)$ can be computed from the short exact sequence\footnote{ Exercise 2 shows that the multiplication map by $f_2$ is injective on $\C[x_0,x_1,x_2]/(f_1).$}
\begin{align*}
0\to\C[x_0,x_1,x_2]/(f_1) (-\deg f_2) \stackrel {f_2}\longrightarrow \C[x_0,x_1,x_2]/(f_1) 
\to \C[x_0,x_1,x_2]/(f_1,f_2)\to 0.
\end{align*}
By the computation of the previous section, the Hilbert polynomial $P(k)$ of $\C[x_0,x_1,x_2]/(f_1)$  is $k(\deg f_1) + {\rm c}$ for some constant $c$. Therefore, the Hilbert polynomial of $\C[x_0,x_1,x_2]/(f_1,f_2)$ is 
$$
(k \deg f_1 + c ) - ((k-\deg f_2) \deg f_1 + c ) = (\deg f_1)(\deg f_2).
$$
Let us now compute the Hilbert polynomials for $(\C[x_0,x_1,x_2]/p)(d)$ where $p$ is either the irrelevant ideal or the ideal of a point. For the irrelevant ideal, we get $\C[x_0,x_1,x_2]/(x_0,x_1,x_2)=\C$, located in degree zero. Regardless of the shift, the Hilbert polynomial is zero. For an ideal that corresponds to the point, we can take a linear change of variables and then look at $\C[x_0,x_1,x_2]/(x_1,x_2) \cong \C[x_0]$. The Hilbert polynomial is $1$, which is again unchanged under shifts.

\smallskip
Consequently, to prove Bezout's theorem, it suffices to argue that the ideal $p$ of a point in $\C\P^2$ appears exactly as many times in the filtration of Proposition \ref{filter} for $M = \C[x_0,x_1,x_2]/(f_1,f_2)$ as the multiplicity of intersection at that point. This is a straightforward calculation if one is comfortable with the concept of localization in commutative algebra, which we will now review. 

\smallskip{\blue \hrule}
{\bf Detour: Localization of commutative rings and modules.}
The idea of localization is to look at a ring $A$ and then ask ``what would happen if such and such elements of $A$ were invertible"? It is a common theme in algebra to ask for something and then see if there is some universal way of accomplishing it. For example, the quotient ring is the universal way of setting some elements of a ring to zero. 

It turns out that it is indeed possible for any subset $S$ of $A$ to find a ring $A_S$ together with the ring homomorphism\footnote{ In this setting all rings are commutative and associative with $1$ and ring homomorphisms send $1$ to $1$.}  $A\to A_S$ such that every ring homomorphism $A\to B$ that sends elements of $S$ to invertible elements of $B$ uniquely factors through $A\to A_S$.

For the actual construction, if we are asking for elements of a set $S$ to be invertible, we might as well ask it for their products. So we will assume that the set $S$ is closed under multiplication. Elements of $A_S$ are then formal fractions $\frac as$ with $a\in A$ and $s\in S$, up to an equivalence relation
$$
\frac as \sim \frac {s'a}{s's}
$$
for all $a\in A$ and $s,s'\in S$.

\smallskip
{\blue I lied, but only a little bit.} The above $\sim$ is not an equivalence relation, rather we need to take the smallest equivalence relation that it generates. If $A$ is an integral domain, then we can say $\frac {a_1}{s_1} \sim \frac {a_2}{s_2}$ if and only if $s_1a_2 = s_2a_1$, but if $S$ has some zero divisors, this would still not be enough to ensure transitivity of the relation, and the correct definition of equivalence is
\begin{align*}
\left(\frac {a_1}{s_1} \sim \frac {a_2}{s_2}\right) \iff \left( \exists s_3\in S,~{\rm such~that~}s_3(s_1a_2 - s_2a_1)=0   \right).
\end{align*}

\smallskip
What's even better is that we can localize not only rings but modules over rings. If we have a module $M$ over a ring $A$ with a multiplicatively closed subset $S$, then we define the module $M_S$ as the set of fractions $\frac ms$ with $m\in M, s\in S$ up to the equivalence relation 
\begin{align}\label{locm}
\left(\frac {m_1}{s_1} \sim \frac {m_2}{s_2}\right) \iff \left( \exists s_3\in S,~{\rm such~that~}s_3(s_1m_2 - s_2m_1)=0   \right).
\end{align}
Then $M_S$ is an $A_S$-module, and localization of modules is a functor from the category of $A$-modules to that of $A_S$-modules that preserves exact sequences. These are not entirely trivial statements, and the reader may want to look in \cite{AM} or \cite{Eisenbud} for more details.

\smallskip{\blue \hrule}

\smallskip
We now go back to the proof of the Bezout's theorem. 

\smallskip
We have a filtration of $M = \C[x_0,x_1,x_2]/(f_1,f_2)$ from Proposition \ref{filter} and we want to find the number of times the ideal $p=(x_1,x_2)$ appears in $M_i/M_{i-1} \cong (A/p_i)(d_i)$. Let us consider the multiplicatively closed set $S$
$$
S=\{{\rm homogeneous~elements~of~}\C[x_0,x_1,x_2]{\rm~ which ~are~ not~ zero~at~} (1:0:0)\}.
$$
In particular, $S$ contains $x_0$, and if we localize $\C[x_0,x_1,x_2]$ at it first we obtain $\C[\frac {x_1}{x_0},\frac {x_2}{x_0},x_0^{\pm 1}]$ (Laurent polynomials in $x_0$ over the polynomial ring $\C[y_1,y_2]$ for $y_i=\frac {x_i}{x_0}$). When we localize further by the image of $S$ in $\C[\frac {x_1}{x_0},\frac {x_2}{x_0},x_0^{\pm 1}]$ we end up with the $\Z$-graded ring $R[x_0^{\pm 1}]$ where the ring $R$ is defined in \eqref{R}.
If we localize the module $M$, we end up with $(R/(\tilde f_1,\tilde f_2))[x_0^{\pm 1}]$ where $\tilde f_i = \frac {f_i}{x_0^{\deg f_i}}$ is the dehomogenization of $f_i$. We can then take degree $0$ part of this space to get exactly $R/(\tilde f_1,\tilde f_2)$ that was used to define the multiplicity of the intersection at the point $(1:0:0)$.

\smallskip
So we will take the filtration of $M$ from Proposition \ref{filter} and observe that the operations of taking localization in $S$ and degree $0$ parts preserves the filtration property, with the associated graded objects changing accordingly. If we localize 
$M = \C[x_0,x_1,x_2]/(x_0,x_1,x_2)$
in $S$ we get $0$, because $x_0$  both annihilates $M_S$ and is invertible. Similarly, localization of $\C[x_0,x_1,x_2]/p$ for a prime ideal that corresponds to a point $p$ other than $(1:0:0)$ is zero because there is an element in $S$ which is zero at $p$ but not at $(1:0:0)$ and is therefore both zero and invertible on the localization. Finally, $(\C[x_0,x_1,x_2]/(x_1,x_2))_S = \C[x_0^{\pm 1}]$ and the degree zero of any grading shift of it is a dimension one vector space over $\C$. Thus, the number of occurrences of the ideal of $(1:0:0)$ in the filtration is exactly equal to the dimension of $R/(\tilde f_1,\tilde f_2)$ which is the multiplicity of the intersection at this point by \eqref{mult}. The same is true for all other points in $\C\P^2$, which finishes the proof of Bezout's theorem.

{\bf Exercise 1.} Consider the graded ring $\C[t]$ with $t$ homogenous of degree one. Consider $M=\C[t]$ as a module over itself. For any $k\geq 1$, construct a filtration of Proposition \ref{filter} of length $k$. 

{\bf Exercise 2.} Show that under the assumption that $f_1,f_2\in \C[x_0,x_1,x_2]$ have no common irreducible factors, the multiplication by $f_2$ map is injective on $\C[x_0,x_1,x_2]/(f_1)$.

{\bf Exercise 3.} Show that \eqref{locm} defines an equivalence relation.

\section{Line bundles and vector bundles.}\label{sec.vb}
We will now talk about a very important concept that will follow us throughout these notes, that of a vector bundle. We will mostly do this in the setting of complex manifolds rather than algebraic varieties, with an honest assurance that the algebraic case is essentially the same.

\smallskip
Let $X$ be a complex manifold. Informally, a rank $r$ vector bundle $W$ over $X$ is another complex manifold, with a map $\pi:W\to X$ such that:
\begin{itemize}
\item 
For each $p\in X$ the fiber $\pi^{-1}(x)$ has a structure of a dimension $r$ complex vector space.
\item
This vector space structure varies holomorphically as $x$ changes.
\end{itemize}

Now let us look at the formal definition.

\smallskip
\begin{defn}\label{vector.bundle}
A vector bundle $W$ on a complex manifold $X$ is the a complex manifold equipped with a holomorphic map
$\pi:W\to X$ and a structure of a complex vector space on each fiber, that satisfies the following property. For every $x\in X$ there exists an open set $U\ni x$ and a biholomorphism $\pi^{-1}(U)\cong \C^r\times U$, such that the diagram
\begin{align}\label{vb}
\begin{array}{c}
\pi^{-1}(U) \hskip 5pt \cong \hskip 5pt  \C^r\times U\\
\searrow\hskip 20pt  \swarrow\\
U
\end{array}
\end{align}
commutes and is compatible with the complex vector space structures on the fibers.
\end{defn}

\smallskip
\begin{rem}\label{line.bundle}
A vector bundle $W\to X$ of rank $r=1$ is called a line bundle.
If a bunch of mathematicians are talking, you can recognize algebraic geometers by how frequently they say ``line bundle". It is really  important in algebraic geometry, for reasons we will see shortly.
\end{rem}

\smallskip
\begin{rem}
Morphisms of vector bundles over $X$ are defined as holomorphic maps that commute with projections to $X$ and are linear maps on each fiber.
A vector bundle $W\to X$ of rank $r$ is called  {\blue trivial} if it is isomorphic as a vector bundle to $\C^r\times X\to X$ with the projection map.\footnote{ Nothing is quite as disheartening as not understanding what something ``trivial" is.} One can think of arbitrary $W\to X$ as being locally trivial, i.e. there is a cover of $X$ such that the restriction of $W\to X$ to each open set of the cover is trivial.
\end{rem}

\smallskip
\begin{rem}
If $X$ has a structure of an algebraic variety, then a vector bundle $W\to X$ must be locally trivial in Zariski topology. Otherwise, the same definition applies, with holomorphic maps replaced by maps of algebraic varieties.
\end{rem}

\smallskip
Another way of thinking about a vector bundle is in terms of its {\blue transition functions}. Suppose we have two open sets $U$ and $V$ in $X$ with the biholomorphisms as in \eqref{vb}. Restrictions to $U\cap V$ give a commutative diagram
$$
\begin{array}{c}
 \C^r\times (U\cap V)\cong\pi^{-1}(U\cap V)   \cong\C^r\times (U\cap V)\\
\searrow \hskip 25pt \downarrow \hskip 25pt\swarrow \\ 
U\cap V
\end{array}
$$
and therefore an automorphism of $\C^r\times (U\cap V)$ which is compatible with the projection to $U\cap V$ and the vector space structure on the fibers of this projection. Such automorphisms are precisely of the form
$$
(w,z)\mapsto (A(z)w,z)
$$
where $A$ is a holomorphic function from $U\cap V$ to ${\rm GL}(r,\C)$. These functions $A$ must satisfy a certain cocycle condition for triple intersections. Conversely, given such functions with the appropriate compatibility on triple intersections, we can use them to glue together copies of $\C^r\times U$ to build a vector bundle.

\smallskip
\begin{rem}
In the algebraic setting, the transition functions must be given by matrices whose entries are rational functions with denominators that have no zeros on $U\cap V$.
\end{rem}

\smallskip
If $X$ is a point, then a rank $r$ vector bundle over $X$ is simply a dimension $r$ complex vector space. So it is not too surprising that whatever one can do with vector spaces one can also do with  vector bundles. For instance, we have direct sums, tensor products, symmetric and exterior powers, duality, and fiber-wise homomorphisms. Some basic properties of these constructions are listed below.
\begin{itemize}
\item
 $\rk(W_1\oplus W_2) = \rk W_1 + \rk W_2$

\item
$\rk(W_1\otimes W_2) =( \rk W_1 )( \rk W_2)$

\item
$\rk({\rm Sym}^k W) = \left(\begin{array}{c} {{\rk W}+k-1}\\{k}\end{array}\right)$

\item
$\rk(\Lambda^k W) = \left(\begin{array}{c} {{\rk W}}\\{k}\end{array}\right)$

\item 
$\rk(W^\vee) = \rk (W)$

\item
$Hom(W_1,W_2) = W_2 \otimes W_1^\vee$
\end{itemize}

\smallskip
{\blue \hrule}

Did I mention that line bundles were of special interest? 

\smallskip
\begin{defn}
Picard group ${\rm Pic}(X)$ is the (abelian) group of line bundles on $X$, up to isomorphism (as line bundles). The product is $\otimes$, the identity is the trivial line bundle $\C\times X\to X$, and the inverse is $W\mapsto W^\vee$.
\end{defn}

\smallskip
\begin{rem}
The Picard group of a manifold $X$ is an important invariant of $X$. It is a close relative of the class group of a number field. 
\end{rem}

\smallskip
An important construction that we will use later is that of a {\blue pullback} of vector bundles. Suppose we have a vector bundle $\pi:W\to Y$ and a holomorphic map $\mu:X\to Y$. Then the pullback $\mu^*W\to X$ is defined as the fiber product $X\times_Y W$, i.e. the subvariety in $X\times W$ of pairs $(x,w)$ that map to the same point in $Y$ under $\mu$ and $\pi$. In elementary terms, the fiber of $\mu^* W$ over $x\in X$ is naturally identified with the fiber of $W$ over $\mu(x)$. We have a commutative diagram
$$
\begin{array}{ccc}
\mu^*W &\longrightarrow &W\\
\downarrow&&~\downarrow \pi\\
X&\stackrel \mu\longrightarrow&Y
\end{array}
$$
of complex manifolds. Of course, $\rk \mu^*W = \rk W$. The construction also works in the algebraic setting.

\smallskip
We also need to talk about sections of line bundles.\footnote{The same applies to vector bundles, but we will focus on the line bundles.}
For a line bundle $\pi:L\to X$ we define a section $s$ of $L$ as the holomorphic function $s:X\to L$ such that $\pi\circ s= {\rm id}_X$.
In very simple terms, we are picking one point in each fiber of $L\to X$, in a holomorphic fashion.

\smallskip
The simplest example of a section is the {\blue zero section}. This means that in every fiber we pick the well-defined element $0$ of the corresponding vector space structure. For some line bundles this is the only (holomorphic) section.

\smallskip
When we pull back a line bundle, sections pull back with it. Namely, suppose we have $\pi:L\to Y$, $\mu:X\to Y$ and
the pullback commutative diagram below.
$$
\begin{array}{ccc}
\mu^*L &\longrightarrow &L\\
\downarrow&&~\downarrow \pi\\
X&\stackrel \mu\longrightarrow&Y
\end{array}
$$
Then to any section $s:Y\to L$ we can associate a section $\mu^* s:X\to \mu^* L$ by
\begin{align}\label{mus}
\mu^*s:x\mapsto (x, s(\mu(x)) \in  X\times L.
\end{align}
We observe that $(x,s(\mu(x))$ lies in $\mu^*L\subseteq X\times L$ because $s$ is a section, see Exercise 1.

\smallskip
We can also consider sections not for the whole $L\to X$ but only over some open subset $U\subseteq X$, i.e. the sections of the restriction of $L\to X$ to $\pi^{-1} U \to U$. Spaces of such sections  are sometimes denoted by $\Gamma(U,L)$. If $U$ is all of $X$, the space $\Gamma(X,L)$ is often called the space of {\blue global sections} of $L$.

\smallskip{\blue \hrule}
We will also introduce some natural bundles on smooth manifolds which will be used later. Namely, for any smooth manifold $X$ we can construct its holomorphic tangent bundle $TX\to X$. Sections of this bundle on $U$ are the same as vector fields on $U$. Its dual $TX^{\vee}\to X$ is called the cotangent bundle. Sections of the cotangent bundle over $U$ are precisely the holomorphic $1$-forms on $U$.\footnote{ When I was learning calculus, the symbol $dx$ really confused me. Was it a function of two variables? Of one variable? I only found peace when it became clear that $dx$ is a section of the cotangent bundle.} 

\smallskip
\begin{defn}\label{canonical}
The top exterior power of the cotangent bundle $\Lambda^{\dim X} TX^\vee$ is called the {\blue canonical line bundle} of $X$. Sections of it over $U$ are holomorphic $n$-forms on $U$.
\end{defn}

\smallskip
{\bf Exercise 1.} Verify that \eqref{mus} defines a section of $\mu^*L$.

{\bf Exercise 2.} Show that sections of a vector bundle $W\to X$ have a natural structure of a vector space over $\C$.

{\bf Exercise 3.} Verify that sections of the line bundle $\C\times X\to X$ are in natural bijection to holomorphic functions on $X$.

\section{Line bundle ${\mathcal O}(1)$ on $\C\P^n$. Maps to projective spaces.}\label{sec.maps}
We will now learn exactly why line bundles are so ubiquitous in algebraic geometry -- they are used to define maps to projective spaces.

\smallskip
We start by defining a very important line bundle on $\C\P^n$ which I will denote by $\mathcal O(1)$. Recall that $\C\P^n$ can be viewed as a set of lines through the origin in $\C^{n+1}$. To set our notations, a point $p=(x_0:\ldots:x_n)\in \C\P^n$ corresponds to the line 
$l_p=\{(tx_0,\ldots,tx_n),t\in\C\}$. We define {\blue as a set}
$$
\mathcal O(1) =\{(p,\varphi),~{\rm where~}p\in \C\P^n, \varphi \in l_p^\vee\}
$$
with the projection map  $\pi:\mathcal O(1)\to\C\P^n$ given by $(p,\varphi)\mapsto p$. While we immediately see that fibers of  $\pi$ are $l_p^\vee$ and thus have a natural one-dimensional vector space structure, we have not yet given $\mathcal O(1)$ any complex manifold 
structure. However, observe that over an open set $U_0=\{x_0\neq 0\}$ there is a natural bijection
\begin{equation}\label{O1}
\begin{array}{c}\pi^{-1}U_0 \cong \C\times U_0\\[.2em]
((x_0:\ldots:x_n),\varphi) \mapsto (\varphi(1,\frac {x_1}{x_0},\ldots,\frac {x_n}{x_0}),(x_0:\ldots:x_n)).
\end{array}
\end{equation}
In plain words, we can nicely pick a point in $l_p$ for $p\in U_0$ and evaluate $\varphi$ at that point. Of course, we can do the same for every index $i$ and then verify that the transition functions are holomorphic and rational, see Exercise 1.

\smallskip
We now observe that the geometric meaning of the homogeneous coordinates $x_0,\ldots, x_n$ is that they are sections of the line bundle $\mathcal O(1)\to \C\P^n$. Indeed, each $x_i$ is a linear function on $\C^{n+1}$ and thus restricts to a linear function on $l_p$. It is also clear that $x_i$ is  holomorphic (by looking at it on each $U_j$). 
It is in fact true that 
$$\Gamma(\C\P^n, \mathcal O(1)) = \C x_0\oplus\cdots\oplus\C x_n,$$
see Exercise 2.

\smallskip
Using the group operations of line bundles, we can define $\mathcal O(k)\to\C\P^n$ for all $k\in \Z$ as $k$-th power of $\mathcal O(1)$ in the Picard group ${\rm Pic}(\C\P^n)$. Notably, $\mathcal O(0)\cong \C\times \C\P^n$ is the trivial bundle and $\mathcal O(-1)$ has fiber over $p$ naturally identified with $l_p$.

\smallskip
{\blue \hrule}
We are now ready to discuss maps to $\C\P^n$. If we have a holomorphic map $f:X\to \C\P^n$, we can consider the pullback diagram
$$
\begin{array}{ccc}
f^*\mathcal O(1) &\longrightarrow &\mathcal O(1)\\
\downarrow&&\downarrow\\
X&\longrightarrow &\C\P^n
\end{array}
$$
and note that sections $x_i$ of $\mathcal O(1)\to \C\P^n$ pull back to holomorphic sections $s_i = f^*x_i$ of $f^*\mathcal O(1) \to X$. An important observation is that for every point $p\in X$ at least one of the sections $f^*x_i$ does not equal zero in the fiber of $f^*\mathcal O(1) \to X$.\footnote{ We can not make sense of a value of a section, because identification of the fiber with $\C$ is not canonical, but we can ask whether the value is zero, since the fiber has a natural vector space structure.} Indeed, the fiber of $f^*\mathcal O(1)\to X$  at $p$ is the fiber of $\mathcal O(1)\to\C\P^n$ at $f(p)$ and we know that not all $x_i$ are zero at $f(p)$.

In the other direction, if we have $n+1$ holomorphic sections $s_0,\ldots, s_n$ of a line bundle $L\to X$, which are not simultaneously zero at any point of $X$, we can define a map 
$X\to \C\P^n$ by 
$$
p\mapsto (s_0(p):\ldots :s_n(p)) 
$$
Note that this map is well-defined and holomorphic. The easiest way to see both is to appeal to a local biholomorphism with $\C\times U$. While not unique, it is defined up to a holomorphic invertible scaling of the fiber, so all $s_i(p)$ are multiplied by the same factor.

\smallskip
These two constructions also work in the algebraic setting, and are inverses of each other. I.e., we have the following bijection.
\begin{align*}
\{X\to \C\P^n~{\rm with~coords~}x_0,\ldots,x_n\} \longleftrightarrow \hskip 80pt\\
\{{\rm line~bundle~}L\to X,~{\rm with~sections~} s_0,\ldots,s_n{\rm ~with~no~common~zeros}\}
\end{align*}
We do not prove this claim, because the proof is about as enlightening as filing one's taxes, but a motivated reader is welcome to do so.

\smallskip
{\blue \hrule}

\smallskip
We will now look at some examples.
\begin{itemize}
\item
Consider $\C\P^1$ with the homogeneous coordinates $(y_0:y_1)$. Note that $y_0^2,y_0y_1,y_1^2$ can be viewed as sections of the line bundle $\mathcal O(2) = \mathcal O(1)^{\otimes 2}$. Then we can use them to define a map
$\C\P^1 \to \C\P^2$ by
\begin{equation}\label{conic}
(y_0:y_1)\mapsto (y_0^2 :y_0y_1:y_1^2).
\end{equation}
The image of this map is a smooth curve in $\C\P^2$ given by $x_0 x_2 - x_1^2 = 0$. In fact, any smooth conic\footnote{ a curve given by a quadratic equation} in $\C\P^2$ is isomorphic to this one. (Quadratic forms in three complex variables are uniquely determined by their rank; rank two conics are unions of two lines, all rank three conics are as above.) The fact that a conic curve can be parametrized by a line is something that we have already seen in the rationalizing substitution
$$
(x(t),y(t))=\Big(\frac  {2t}{t^2+1}, \frac {t^2-1}{t^2+1} \Big)
$$
which parametrizes the circle $x^2+y^2=1$. It is useful in computing certain indefinite integrals and in classifying Pythagorean triples.

This map is the simplest case of the so-called Veronese embedding.

\item Consider the map $\CP^1\to \CP^1$ given by
$$
(y_0:y_1)\mapsto (y_0^2:y_1^2).
$$
This map is well-defined, because $y_0^2$ and $y_1^2$ are never simultaneously zero. It can be viewed as the extension of the map $\C\to \C$ given by $z\mapsto z^2$ to $\C\P^1$ by sending $\infty$ to $\infty$. It is a holomorphic map of degree two, i.e. most points have two preimages. 

\item
Consider the product $\C\P^1\times \C\P^1$ with two projections $\pi_1,\pi_2$, and the line bundle
$$
\mathcal O(1,1):=\pi_1^*\mathcal O(1) \otimes\pi_2^*\mathcal O(1).
$$
Holomorphic sections of $\mathcal O(1,1)$  are homogeneous polynomials of bidegree $(1,1)$ in two sets of homogeneous coordinates. They define the map from $\C\P^1\times \C\P^1$ to $\C\P^3$  by
\begin{equation}\label{quadric}
((y_0:y_1),(z_0:z_1))\mapsto
(y_0 z_0: y_0z_1:y_1z_0:y_1z_1)
\end{equation}
The image is a degree two surface in $\C\P^3$ given by $x_0x_3-x_1x_2=0$. As in the $\C\P^2$ case, all maximum rank degree two (called {\blue quadric}) surfaces in $\C\P^2$ are isomorphic to it. 

We can see these two rulings on a {\blue real} quadric in the case of the hyperboloid of one sheet. It can be obtained by rotating a line around the axis, and this provides one of the rulings. There are also such rulings on the hyperbolic paraboloid -- these surfaces are projectively the same after compactification in $\R\P^3$.

\item
More generally, one can use sections of $\mathcal O(1,1)$ to embed $\C\P^m\times \C\P^n$ into $\C\P^{(m+1)(n+1)-1}$. It is called the Segre embedding. 

\end{itemize}

We are now ready to define {\blue ample} and {\blue very ample} line bundles.

\smallskip
\begin{defn}\label{ample}
Let $L\to X$ be line bundle over an algebraic variety $X$. We call $L$ very ample, if its global sections can be used to embed $X$ as a subvariety of some projective space. We call $L$ ample if some positive tensor power $L^{\otimes k},\,k\geq 1$ of $L$ is very ample.
\end{defn}

\smallskip
{\blue \hrule}

\smallskip
There is a coordinate-free version of  $\C\P^n$ and the maps to it. Let $V$ be a complex vector space of dimension $n+1$. Then we denote by $\P V = (V\setminus\{{\mathbf 0}\})/\C^*$ the space of lines in $V$. It comes with natural complex and algebraic structures. 
Of course, $\P V$ is isomorphic to $\C\P^n$, but the isomorphism depends on the choice of a basis of $V$. 

\smallskip
If we have a line bundle $L\to X$ and a map 
$$
\mu:W\to \Gamma(X,L)
$$
from a finite-dimensional complex vector space $W$ to the space of holomorphic sections of $L$, then for any $p\in X$ we consider the subspace $W_p\subseteq W$ 
of $w\in W$ such that $\mu(w)$ is zero at $p$. {\red Provided that the subspace $W_p$ is always proper} (this is the coordinate-free analog of $s_i$ not having common zeros) the codimension of $W_p$ is $1$, and we get a map 
$$
X\to \P W^\vee,~p\mapsto {\rm Ann}(W_x)\in \P W^\vee
$$
which sends $p$ to the line in $W^\vee$ which is the annihilator of $W_x$. This map is always holomorphic in the complex manifold setting and is a map of algebraic varieties in the algebraic variety setting.

\smallskip
\begin{rem}
If the map $W\to \Gamma(X,L)$ is not injective, then the image of $X$ in $\P W^{\vee}$ sits in a projective subspace of $\P W^{\vee}$ and vice versa. For example, the map
$$
(y_0:y_1)\mapsto (y_0^2:y_1^2:y_1^2)
$$
gives a degree two map from $\C\P^1$ to a line $\{x_1-x_2=0\}$ in $\C\P^2$.
\end{rem}

\smallskip
Later we will use the following result about automorphisms of $\C\P^n$.
\begin{prop}\label{autoP}
The automorphism group of $\C\P^n$ either as a complex manifold or as an algebraic variety is naturally identified with 
${\rm PGL}(n+1,\C)$. In the coordinate-free language, automorphisms of $\P V$ are identified with the group ${\rm PGL}(V)$
of vector space automorphisms of $V$ up to scaling.
\end{prop}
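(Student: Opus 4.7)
The plan is to show that the natural map $\mathrm{PGL}(n+1,\C) \to \mathrm{Aut}(\C\P^n)$, which sends $[A]$ to the automorphism $(x_0:\ldots:x_n) \mapsto (Ax)$, is both injective and surjective. Injectivity is routine: a matrix that preserves every one-dimensional subspace of $\C^{n+1}$ setwise must stabilize each line, hence act as a scalar on each line. Since any two lines span a $2$-plane, the scalars on different lines must agree, so $A$ is a scalar multiple of the identity and $[A]$ is trivial in $\mathrm{PGL}$.

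For surjectivity, let $\varphi : \C\P^n \to \C\P^n$ be an automorphism and consider the pullback $\varphi^*\mathcal{O}(1) \in \mathrm{Pic}(\C\P^n)$. The crucial input is that $\mathrm{Pic}(\C\P^n)$ is infinite cyclic, generated by $\mathcal{O}(1)$, so $\varphi^*\mathcal{O}(1) \cong \mathcal{O}(k)$ for some integer $k$. Applying the same argument to $\varphi^{-1}$ shows $k$ is a unit in $\Z$, so $k = \pm 1$. The case $k = -1$ is excluded because $\mathcal{O}(-1)$ has no nonzero global section (as the reader can check on the standard affine charts, or by noting that a global section would give a linear projection of $\C^{n+1}\setminus\{0\}$ onto each line), whereas $\varphi^*\mathcal{O}(1)$ inherits the $n+1$ linearly independent pulled-back coordinates $\varphi^*x_0,\ldots,\varphi^*x_n$ as sections. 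Hence $\varphi^*\mathcal{O}(1) \cong \mathcal{O}(1)$.

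Pullback therefore gives a linear automorphism $\varphi^* : \Gamma(\C\P^n,\mathcal{O}(1)) \to \Gamma(\C\P^n,\mathcal{O}(1))$; by Exercise 2 of Section~\ref{sec.maps} the target is the $(n+1)$-dimensional space $V = \C x_0\oplus\cdots\oplus\C x_n$. The choice of isomorphism $\varphi^*\mathcal{O}(1)\cong\mathcal{O}(1)$ is unique up to a nonzero scalar, so $\varphi^*$ is well-defined as an element $[A] \in \mathrm{PGL}(V)$. By the line-bundle-plus-sections description of maps to projective space established in Section~\ref{sec.maps}, the map $\C\P^n\to\C\P^n$ determined by the sections $\varphi^*x_0,\ldots,\varphi^*x_n$ of $\varphi^*\mathcal{O}(1)$ is $\varphi$ itself; on the other hand, unwinding this map in the basis $x_0,\ldots,x_n$ produces exactly the projective linear automorphism $[A]$. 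Therefore $\varphi = [A]$ in $\mathrm{PGL}(n+1,\C)$, completing surjectivity.

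The main obstacle is the Picard group computation $\mathrm{Pic}(\C\P^n) \cong \Z$. Algebraically one typically proves this via a degree map defined by intersection with a hyperplane, showing the kernel is trivial by Weil/Cartier divisor theory; in the holomorphic category one invokes the exponential sheaf sequence together with $H^1(\C\P^n,\mathcal{O}_{\mathrm{hol}}) = 0$ and $H^2(\C\P^n,\Z) \cong \Z$. Both arguments rest on machinery (sheaf cohomology or divisor theory) beyond what has been developed in the notes so far, so in the spirit of the preface we would treat this step as a black box and cite \cite{Hartshorne}.
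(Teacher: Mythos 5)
Your proof follows exactly the same route as the paper's: establish the injective homomorphism $\mathrm{PGL}(n+1,\C)\to\mathrm{Aut}(\C\P^n)$, then for surjectivity reduce to the claim $\varphi^*\mathcal O(1)\cong\mathcal O(1)$, which the paper (like you) obtains from $\mathrm{Pic}(\C\P^n)\cong\Z$ generated by $\mathcal O(1)$ together with the absence of nonzero sections of $\mathcal O(-1)$. The paper defers that Picard computation to Remark~\ref{O1O1} in the divisor section, which matches your correct identification of it as the one nontrivial black box; your write-up merely spells out the final step (pullback on $\Gamma(\C\P^n,\mathcal O(1))$ yielding the projective linear map) more explicitly than the paper does.
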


\begin{proof}
An invertible $(n+1)\times (n+1)$ complex matrix $A$ gives a linear change of coordinates and thus an automorphism of $\C\P^n$. This gives a group homomorphism
$${\rm GL}(n+1,\C)\to{\rm Aut}(\C\P^n)$$
whose kernel  are the constant diagonal matrices $\lambda \,{\rm Id},\,\lambda \neq 0$. Analogously, we get an injective group homomorphism ${\rm PGL}(V)\to{\rm Aut}(\P V)$.
To prove surjectivity, we need to argue that for every automorphism  $\mu:\C\P^n\to \C\P^n$ the pullback of $\mathcal O(1)$ is isomorphic to $\mathcal O(1)$. We delay the verification of this fact until Remark \ref{O1O1}.
\end{proof}

{\bf Exercise 1.} Verify that transition functions for the identification of \eqref{O1} (but for all $U_i$) are holomorphic and rational on $U_i\cap U_j$ and thus give $\mathcal O(1)$ the structure of a line bundle over $\C\P^n$ in both holomorphic or algebraic settings.

{\bf Exercise 2${}^*$.} Prove that every holomorphic section of $\mathcal O(1)\to\C\P^n$ is a linear combination of homogeneous coordinates. Hint: for any such section define the map $f:(\C^{n+1}\setminus\{{\mathbf 0}\})\to \C$ and show $f$ is holomorphic. Use Hartogs' Lemma to extend $f$ to a holomorphic function on $\C^{n+1}$ and use Taylor expansion and $f(\lambda z) = \lambda z$. The same argument shows that holomorphic sections of $\mathcal O(k)\to \C\P^n$ are zero for $k<0$ are are homogeneous polynomials of degree $k$ for $k\geq 0$.

{\bf Exercise 3.} Prove that the maps \eqref{conic} and \eqref{quadric} are embeddings of complex manifolds.

\section{Weil and Cartier divisors and class groups.}
In this section, we will talk about certain abelian groups associated to a complex algebraic variety, known as groups of {\blue Weil and Cartier divisors}. The term ``divisor" goes back to nineteenth century and is ultimately related to divisors of natural numbers, although this will not 
be immediately apparent.

\smallskip
We start with the Weil divisors.
Let $X$ be an algebraic variety. We consider the group of Weil divisors  ${\rm WeilDiv}(X)$, defined as the free group generated by elements $[Y]$  {\blue for each codimension one subvariety} $Y\subset X$. In other words, elements of ${\rm WeilDiv}(X)$ are finite formal linear combinations 
$$
\sum_{i} a_i [Y_i]
$$
where $a_i\in \Z$ and $Y_i$ are codimension one subvarieties of $X$.

\smallskip
If that were all there was to it, we would just have some huge free group, with an uncountable generating set, with nothing interesting to show for it. However, the key to this is that to any rational function $f$ on $X$ one can associate a Weil divisor ${\rm div} f$. 
The concept of a rational function, introduced in Definition \ref{rationalfunctions} and Remark \ref{rationalfunctionsprojective} 
is an algebraic analog of the concept of a meromorphic function\footnote{ Meromorphic functions of several variables are defined as functions which can be locally written as ratios of holomorphic functions.} and the two notions coincide for smooth complex projective varieties, although this is far from obvious. We will construct\footnote{\blue ~sort of} ${\rm div}f$ in a second but will now make two important definitions.

\smallskip
\begin{defn}\label{classgroup}
A Weil divisor on $X$ is called {\blue principal} if it is equal to ${\rm div}f$ for some rational function $f$. It is easy to see that principal divisors form a subgroup of ${\rm WeilDiv}(X)$. The quotient group   ${\rm Cl}(X)$ of the group of all Weil divisors by the subgroup of principal Weil divisors is called the {\blue class group} of $X$.
\end{defn}

\smallskip
For a meromorphic function of one variable, we can look at its zeros and poles, with multiplicities, and this is exactly what ${\rm div}f$ is meant to generalize. We will now get simultaneously vague and technical. 

\smallskip
For simplicity, we will talk about Weil divisors on projective varieties $X\subseteq \C\P^n$.
Let $I$ be a prime homogeneous ideal in $\C[x_0,\ldots,x_n]$, other than the irrelevant ideal. Consider (cf. Remark \ref{rationalfunctionsprojective}) a nonzero element $f$ of the field of degree zero fractions in the quotient field of $A=\C[x_0,\ldots,x_n]/I$, i.e.
$$
f = \frac {g_1}{g_2} 
$$
where $g_i$ are homogeneous elements of $A$ of the same degree. We would like to define 
$${\rm div}f = \sum_k a_k [Y_k]$$
under certain assumptions on $Z(I)$.

\smallskip
By a commutative algebra result known as Krull's Principal Ideal theorem, the minimal prime ideals $I_k$ of $A$ that contain $g_i$ correspond to codimension one subvarieties of $Z(I)$. These will be the $Y_k$ for ${\rm div}f$, but the question remains how to define $a_k$. It turns out that if $Z(I)$ is smooth in codimension one\footnote{ There is a Zariski open subset  $U$ of $Z(I)$ where the equations define a complex submanifold, such that $\dim_\C Z(I) -\dim_\C(Z(I)\setminus U) \geq 2$.} then the ring $A_k$ which is the degree zero part of the localization of $A$ by homogeneous elements not in $I_k$ is what's known as a DVR\footnote{ I am old enough to remember when this abbreviation acquired another meaning, by now already archaic.} ({\blue discrete valuation ring}). This implies that there is a group homomorphism $\nu_k$ from $QF_0(A)^*$ to the additive group of $\Z$, and we define 
$$
{\rm div}f = \sum_k \nu_k(f) [Y_k].
$$
If you feel that you are getting a run-around here, you are not entirely wrong. Still, it is good to know that a prototypical case of a DVR is the ring $\C[[t]]$ of formal power series in one variable, and the corresponding valuation $\nu$ from its field of fractions, the field of Laurent formal power series $\C((t))$, sends $f(t)$ to the degree of its leading term. So in this case it picks up the orders of zeros and poles at $t=0$ of a meromorphic function in $t$.

\smallskip
{\blue \hrule}

Let us look more specifically at the case $I=\{0\}$, i.e. $Z(I)=\C\P^n$. Codimension one subvarieties of $\C\P^n$ are in one-to-one correspondence to irreducible positive degree homogeneous polynomials in $\C[x_0,\ldots,x_n]$ up to scaling. It is not an obvious 
statement, rather it follows from $\C[x_0,\ldots,x_n]$ being a unique factorization domain. Any rational function on $\C\P^n$ can be written, uniquely up to a nonzero multiplicative constant, as 
$$
f=\displaystyle\prod_i F_i(x_0,\ldots,x_n)^{a_i}
$$
where $a_i$ are nonzero integers and $F_i$ are irreducible homogeneous polynomials, such that 
$$
\sum_{i} a_i \deg F_i = 0.
$$
Then the Weil divisor of $f$ works out to be 
\begin{equation}\label{zerosum}
{\rm div} f = \sum_i a_i [\{F_i=0\}].
\end{equation}
It is easy to see that the condition $\sum_i a_i \deg Y_i=0$ is the only condition on $\sum_i a_i [Y_i]$ being a divisor of a rational function on $\C\P^n$. Therefore, we have ${\rm Cl}(\C\P^n)\cong \Z$, with the long exact sequence
$$
1 \to \C^* \to {\rm Rat}(\C\P^n)^*\to {\rm WeilDiv}(\C\P^n)\stackrel{\deg}{\longrightarrow}\Z\to 0
$$
where the degree map sends $[Y]$ to $\deg Y$. Indeed, a divisor of a rational function is zero if and only if the function is a nonzero constant.

\smallskip
\begin{rem}
If we pick a hyperplane $H$, for example $\{x_0=0\}$, we get a natural splitting of the surjective map $\deg$ which sends $k$ to $k[H]$.
\end{rem}

\smallskip
\begin{rem}\label{O1O1}
In the proof of Proposition \ref{autoP} we needed the statement that for an automorphism $\mu:\C\P^n\to \C\P^n$ the pullback of $\mathcal O(1)$ is isomorphic to $\mathcal O(1)$. Since $\mathcal O(1)$ generates ${\rm Pic}(\C\P^n)$, the 
pullback  of $\mathcal O(1)$ must be  $\mathcal O(\pm 1)$. However,  $\mathcal O(-1)$ does not have any nonzero holomorphic sections and is thus excluded.
\end{rem}

\smallskip
{\blue \hrule}

\smallskip
There is another kind of divisors, namely {\blue Cartier divisors} which we will now describe. At a first glance, they have little to do with Weil divisors, but we will soon see that the two notions often coincide.

\smallskip
\begin{defn}
A Cartier divisor on $X$ is a collection of $(U_\alpha,f_\alpha)_{\alpha\in I}$ where $X = \bigcup_{\alpha\in I} U_\alpha$ is a Zariski open cover of $X$ and $f_\alpha$ are rational functions on $U_\alpha$ which are holomorphic on $U_\alpha$, such that for any $\alpha,\beta$ the function $\frac {f_\alpha }{f_\beta}$ is invertible on $U_\alpha\cap U_\beta$. In particular, for every $x\in X$ the functions $f_\alpha$  for $U_\alpha$ that contain $x$ differ by an invertible function. We moreover identify Cartier divisors $(U_\alpha,f_\alpha)_{\alpha\in I}$ and $(U_\alpha,f_\alpha)_{\alpha\in J}$ if they are compatible, i.e. the union $(U_\alpha,f_\alpha)_{\alpha\in I\sqcup J}$
is a Cartier divisor on $X$. Cartier divisors form a group, with the operation given by products of $f_\alpha$ on the common refinement of the covers.
\end{defn}

Here we are assuming that all $U_\alpha$ are some Zariski open subvarieties in $\C^n$, and the concept of rational function from Definition \ref{rationalfunctions} is applicable.

\smallskip
Compared to the definition of Weil divisor, the definition of a Cartier divisor is rather complicated. 
However, there is a natural way to associate to any Cartier divisor a Weil divisor. Namely, to any divisor $(U_\alpha,f_\alpha)_{\alpha\in I}$ we assign the unique Weil divisor on $X$ which restricts to ${\rm div} f_{\alpha}$ on each $U_\alpha$. In other words, we are reading off the zeros and poles of the rational functions $f_\alpha$, with multiplicities. The condition that $\frac {f_\alpha }{f_\beta}$ is invertible on $U_\alpha\cap U_\beta$ assures that these Weil divisors on $U_\alpha$ coincide on the intersections. Therefore, they can be glued together to a unique Weil divisor on $X$.

\smallskip
\begin{rem}
If we are working with {\red smooth} varieties $X$,
we can also go from a Weil divisor to a Cartier divisor. Indeed,\footnote{ It is not at all obvious, but true!} every codimension one subvariety $Y\subset X$ can be Zariski locally written as a divisor of a rational function. We can take these local descriptions and put them together to form a Cartier divisor. Then ratios $\frac {f_\alpha }{f_\beta}$ will have no poles or zeros on $U_\alpha\cap U_\beta$, which will make them invertible on them. I am suppressing the technical details.
\end{rem}

\smallskip
As with the Weil divisors, there is a concept of a principal Cartier divisor. It is simply given by $(X,f)$ for a rational function $f$. Clearly, these are identified with principal Weil divisors. In short, we have the following diagram of abelian groups\footnote{ I have debated whether to write $1$ or $0$ on the left, but chose $1$ to honor the multiplicative nature of $ {\rm InvRat}(X) $.}
\begin{equation}\label{CaWe}
\begin{array}{c}
1 \to {\rm InvRat}(X) \to {\rm Rat}(X)^* \to  {\rm CartierDiv}(X) \to {\rm CaCl}(X)\to  0\\
      \Arrowvert  \hskip 60pt  \Arrowvert     \hskip 70pt  \downarrow   \hskip 60pt   \downarrow  \\
1  \to  {\rm InvRat}(X)  \to  {\rm Rat}(X)^*  \to \hskip 7pt {\rm WeilDiv}(X)\hskip 6pt \to \hskip 7pt{\rm Cl}(X) \hskip 7pt\to 0
\end{array}
\end{equation}
where the maps $ {\rm CartierDiv}(X)\to {\rm WeilDiv}(X)$ and ${\rm CaCl}(X)\to {\rm Cl(X)}$ are isomorphisms for smooth $X$. Here the rows are exact sequences, the notation  ${\rm InvRat}(X)$ stands for invertible functions on $X$ (equal to $\C^*$ for projective $X$), with the operation of multiplication, and $ {\rm Rat}(X)^* $ means nonzero rational functions.

\smallskip
\begin{rem}
We can similarly talk about Weil and Cartier divisors on complex manifold, with Zariski topology replaced by the usual one.
\end{rem}

\smallskip
When $X$ is not smooth, but has some singularities, the concepts of Weil and Cartier divisors are no longer equivalent. 
Both groups are well-defined for varieties $X$ with so-called normal 
singularities,\footnote{ Mathematicians clearly have some unresolved psychological issues, hence the word ``normal" is used in far too many unrelated settings: normal vector, normal distribution, normal subgroup, normal linear operator, etc.}  which means that $X$ is built out of integrally closed integral domains (see \cite{AM}). However, when $X$ is only normal, but not smooth, the maps $ {\rm CartierDiv}(X)\to {\rm WeilDiv}(X)$ and ${\rm CaCl}(X)\to {\rm Cl(X)}$ are only injective. Surjectivity fails if there are subvarieties of codimension one which are {\blue not given locally by a single equation}. 

\smallskip
{\bf Exercise 1.} Find the Weil divisor ${\rm div} f$ for the rational function on $\C^2$
$$f(x_1,x_2)=\frac {x_1^3x_2}{x_1^2+x_2^3-x_2}.$$
\emph{Hint:} It is a very simple problem, do not overthink it.

{\bf Exercise 2.} Find a Cartier divisor on $\C\P^1$ so that the corresponding Weil divisor is $5[0]-[\ii]$. \emph{Hint:} Use the cover of $\C\P^1$ by $U_0$ and $U_1$.

{\bf Exercise 3.} Consider the quotient of the ring of formal power series in three variables
$$R=\C[[x_1,x_2,x_3]]/(x_1x_2-x_3^2).$$
Show that the ideal $I=(x_1,x_3)\subset R$ is not principal.\footnote{ This is an infinitesimal version of the statement that the Weil divisor on the surface $x_1x_2=x_3^2$ given by $[x_1=x_3=0]$ can not be locally given as ${\rm div} f$.}
\emph{Hint:} Look at the dimension of $I/(x_1,x_2,x_3)I$.

\section{Cartier divisors as meromorphic sections of line bundles.}\label{sec.CD}
In this section we will connect the concepts of Cartier and Weil divisors with the previously considered concept of line bundles. 
The main idea is that one can interpret every Cartier divisor as a meromorphic\footnote{ or rational, in the algebraic category} section of an appropriate line bundle, up to isomorphism. For reasons that are not very clear to me, it is difficult to find this viewpoint in the literature without the language of sheaves. I am choosing to avoid the language of sheaves, and to muddle through anyway. 

\smallskip
Let $X$ be a smooth complex manifold. Recall from Section \ref{sec.vb} that a line bundle $\pi:L\to X$ is a holomorphic map from 
a smooth complex manifold $L$, with the fibers $\pi^{-1}(x)$ given a structure of a one-dimensional complex vector space. The simplest example is that of a trivial line bundle $\C\times X\to X$ and all line bundles are glued from $\C\times U\to U$ for some open subsets $U\subseteq X$.

\smallskip
Recall that holomorphic sections of $\pi:L\to X$ are holomorphic maps $s:X\to L$ such that $\pi\circ s = {\rm id}_X$. If $L = \C\times X\to X$ is a trivial line bundle, then holomorphic sections of it are in bijection with holomorphic functions $X\to \C$.
Similarly, we can talk about {\blue meromorphic sections} of line bundles. Indeed, locally every line bundle is (non-canonically) isomorphic to $\C\times U\to U$ and we would be looking at meromorphic functions on $U$ (ratios of two holomorphic functions).
If we change to another identification of $\pi^{-1}U$ with $\C\times U$, we simply multiply by an invertible holomorphic function, so meromorphicity of sections  is well defined. 

\smallskip
The analog of meromorphic sections in the case of smooth complex algebraic varieties is that of rational sections. It is basically the same thing, but now everything is algebraic. The key observation is that to
any nonzero\footnote{ i.e. not identically zero} rational section $s$ of a line bundle $L\to X$ 
one can naturally associate a Cartier divisor as follows.

\smallskip
\begin{defn}\label{mertoCa}
Let $s$ be a nonzero rational section of a line bundle $\pi:L\to X$. Pick an open cover $X=\bigcup_{\alpha\in I}U_\alpha$ such that $L$ is trivial over each $U_{\alpha}$ and choose one isomorphism $\pi^{-1}U_{\alpha}\cong \C\times U_\alpha$ of line bundles over each $U_\alpha$. Under this isomorphism, $s$ becomes a rational function $f_{\alpha}$ and we can see that 
$(U_\alpha,f_\alpha)$ form a Cartier divisor (see Exercise 1.)
\end{defn}

\smallskip
\begin{rem}
Observe that if we multiply a rational section $s$ of a line bundle $L\to X$ by an invertible rational function on $X$ (which we can do because fibers of $L\to X$ have a vector space structure), the corresponding Cartier divisor from Definition \ref{mertoCa} is unchanged. One can think about it as a particular case of the statement that if an isomorphism of vector bundles $L_1$ and $L_2$ sends a rational section $s_1$ of $L_1$ into a rational section $s_2$ of $L_2$, then the corresponding Cartier divisors are the same. Therefore, we see that rational sections of line bundles, up to isomorphism, give Cartier divisors.
\end{rem}

\smallskip
\begin{prop}
If we have two line bundles $L_1\to X$ and $L_2\to X$, with two nonzero  rational sections $s_1$ and $s_2$, then we can define the tensor product $s_1\otimes s_2$ as a  rational section of the tensor product line bundle $L_1\otimes L_2$. This gives an abelian group structure on the set of line bundles with nonzero  rational sections up to isomorphism, and the map of Definition \ref{mertoCa} is a group homomorphism.
\end{prop}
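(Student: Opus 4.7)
The plan is to verify the three claims in order: that $s_1\otimes s_2$ is a well-defined nonzero rational section of $L_1\otimes L_2$, that the resulting set of isomorphism classes forms an abelian group, and that the Cartier divisor map is a homomorphism. The whole proposition is essentially the observation that tensor products of line bundles multiply transition functions, which is exactly how the group law on Cartier divisors is written on local rational functions; so once the bookkeeping is set up correctly everything should fall out.

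First I would construct $s_1\otimes s_2$. Pick a Zariski open cover $X=\bigcup_\alpha U_\alpha$ which simultaneously trivializes both $L_1$ and $L_2$ (taking a common refinement of the two trivializing covers), and choose trivializations $\pi_i^{-1}(U_\alpha)\cong \C\times U_\alpha$ for $i=1,2$. Under these, $s_i|_{U_\alpha}$ corresponds to a rational function $f_{i,\alpha}$. There is a canonical induced trivialization of $L_1\otimes L_2$ on each $U_\alpha$, and I define $(s_1\otimes s_2)|_{U_\alpha}$ to correspond to the product $f_{1,\alpha}f_{2,\alpha}$. To see this glues: if $\varphi_{\alpha\beta}$ and $\psi_{\alpha\beta}$ are the transition functions of $L_1$ and $L_2$, then those of $L_1\otimes L_2$ are $\varphi_{\alpha\beta}\psi_{\alpha\beta}$, which is exactly what is needed for $f_{1,\alpha}f_{2,\alpha}$ and $f_{1,\beta}f_{2,\beta}$ to describe the same section on $U_\alpha\cap U_\beta$. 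Independence from the choice of local trivializations is checked the same way: rescaling the local frame of $L_i$ by an invertible holomorphic (or regular) function rescales $f_{i,\alpha}$ by its inverse, and this is absorbed into the induced rescaling on $L_1\otimes L_2$. Nonzeroness is clear, since neither $f_{1,\alpha}$ nor $f_{2,\alpha}$ is identically zero.

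Second, I would verify the group axioms. The identity is $(\C\times X, 1)$. Associativity and commutativity come from the canonical isomorphisms $(L_1\otimes L_2)\otimes L_3\cong L_1\otimes(L_2\otimes L_3)$ and $L_1\otimes L_2\cong L_2\otimes L_1$; under these, tensors of sections go to tensors of sections (a local check in trivializations). For the inverse of $(L,s)$, I take $(L^\vee,s^\vee)$, where $s^\vee$ is the unique rational section of $L^\vee$ with $s\otimes s^\vee\mapsto 1$ under the evaluation pairing $L\otimes L^\vee\to \C\times X$; locally, if $s\leftrightarrow f_\alpha$ in a trivialization and $e_\alpha^\vee$ is the dual frame, then $s^\vee\leftrightarrow 1/f_\alpha$, which makes sense precisely because $s$ is nonzero. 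Well-definedness on isomorphism classes is functoriality: an isomorphism $(L_1,s_1)\cong (L_1',s_1')$ tensored with the identity on $(L_2,s_2)$ yields an isomorphism of the tensor pairs.

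Third, the homomorphism property is now tautological. By Definition \ref{mertoCa}, the Cartier divisor attached to $(L_i,s_i)$ is $(U_\alpha,f_{i,\alpha})$, and the group operation on Cartier divisors on the common refinement is precisely $(U_\alpha,f_{1,\alpha}f_{2,\alpha})$, which is the Cartier divisor attached to $(L_1\otimes L_2,s_1\otimes s_2)$ by step one. Hence the map is a group homomorphism.

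The main obstacle is not any deep theorem but the bookkeeping: one must pick trivializations consistently, check that the tensor construction is genuinely independent of those choices, and verify that the canonical associativity and commutativity isomorphisms of tensor products honestly send tensors of sections to tensors of sections. All of these are routine local computations, but they must be done carefully because the construction is stated up to isomorphism.
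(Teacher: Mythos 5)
Your argument follows the same route as the paper: identify the identity with the trivial bundle and constant section $1$, work on a common trivializing cover where $s_i\leftrightarrow f_{i,\alpha}$, and observe that tensoring sections multiplies the local functions, which is exactly the group law on Cartier divisors. You simply spell out the bookkeeping (well-definedness, associativity/commutativity via the canonical isomorphisms, inverses via $L^\vee$ and $1/f_\alpha$) that the paper leaves as ``easy to check.''
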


\begin{proof}
The identity of the group structure is given by the trivial line bundle $\C\times X\to X$ with the section $x\mapsto (1,x)$, and it is easy to check group properties. When checking that the map is a homomorphism, we can work on a cover $X=\bigcup_\alpha U_\alpha$ for which both $L_1$ and $L_2$ have trivializations. Then tensor product of sections gives rise to the product of the corresponding functions $f_{\alpha,1}$ and $f_{\alpha,2}$, which corresponds to the product of the Cartier divisors.
\end{proof}

\smallskip
We can in fact reverse the construction of Definition \ref{mertoCa}. To every Cartier divisor we will associate a line bundle, with a  rational section, unique up to an isomorphism. This will provide Cartier divisors with a nice geometric meaning.

\smallskip
\begin{defn}\label{Catomer}
Suppose we are given a Cartier divisor $(U_\alpha,f_\alpha)_{\alpha\in I}$. Take a copy of $\C\times U_\alpha$ for each $\alpha$ and glue them together by the following identification. We consider the equivalence relation of the set $\bigsqcup_\alpha (\C\times  U_\alpha)$ with  
$$
(\C\times U_\alpha)\ni(t,x)\sim\left( \frac {f_\beta(x)}{f_{\alpha}(x)} \,t, x\right)\in (\C\times U_\beta)
$$
for all $x\in U_\alpha\cap U_\beta$ and all $t\in \C$. It is easy to see that this is an equivalence relation. The set of equivalence classes $L$ then has a structure of a complex manifold (or algebraic variety), with a natural projection to $X$, and it has a line bundle structure.
Also observe that since 
$$
(f_\alpha(x),x)\sim(f_\beta (x),x)
$$
we can use $f_\alpha$ to define a  rational section of $L\to X$. 
\end{defn}

Note that if two Cartier divisors are equivalent, then the resulting line bundles are isomorphic, with the isomorphism identifying the natural  rational sections. It is also easy to see that the constructions of Definitions \ref{mertoCa} and \ref{Catomer} are inverses of each other.

\smallskip
\begin{rem}
For a  rational function $f$ on $X$, the principal Cartier divisor ${\rm div }f$ corresponds to the trivial line bundle $\C\times X\to X$ with the  rational section identified with $f$, i.e.
$$
s(x) = (f(x),x).
$$
\end{rem}

\begin{prop}
For a smooth complex manifold  $X$ the class group ${\rm Cl}(X)$ is isomorphic to the group ${\rm Pic}(X)$ of line bundles on $X$.
\end{prop}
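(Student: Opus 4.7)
The plan is to combine the bijective correspondence between Cartier divisors and pairs (line bundle, nonzero rational section) developed in Definitions \ref{mertoCa} and \ref{Catomer} with the isomorphism ${\rm Cl}(X) \cong {\rm CaCl}(X)$ from diagram (\ref{CaWe}), which holds for smooth $X$. It thus suffices to produce an isomorphism ${\rm CaCl}(X) \cong {\rm Pic}(X)$.

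Define a map $\Phi \colon {\rm CartierDiv}(X) \to {\rm Pic}(X)$ by sending a Cartier divisor $D$ to the line bundle $L_D$ produced by Definition \ref{Catomer}, forgetting the attached rational section $s_D$. By the preceding proposition, $D \mapsto (L_D, s_D)$ is a group homomorphism into isomorphism classes of such pairs, and composing with the obviously multiplicative map that discards the section shows $\Phi$ is a group homomorphism.

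Next I identify $\ker \Phi$. If $D = {\rm div}(f)$ is principal, the remark above gives $L_D \cong \C \times X$, so $D \in \ker \Phi$. Conversely, assume $L_D$ is trivial and pick an isomorphism $L_D \cong \C \times X$; the image of $s_D$ under this isomorphism is a nonzero rational section of the trivial bundle, namely a rational function $f$ on $X$. Since the constructions of Definitions \ref{mertoCa} and \ref{Catomer} are mutual inverses on isomorphism classes, the Cartier divisor recovered from $(\C \times X, f)$ equals both ${\rm div}(f)$ and $D$, hence $D$ is principal. Therefore $\Phi$ descends to an injection $\bar{\Phi} \colon {\rm CaCl}(X) \hookrightarrow {\rm Pic}(X)$.

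The main obstacle is surjectivity of $\bar{\Phi}$, which amounts to producing a nonzero rational section of an arbitrary line bundle $L \to X$. Choose a trivializing cover $\pi^{-1}(U_\alpha) \cong \C \times U_\alpha$ with invertible transition functions $g_{\alpha\beta}$ on $U_\alpha \cap U_\beta$, and fix one chart $U_{\alpha_0}$. The constant section with value $1$ in the trivialization over $U_{\alpha_0}$, read in the trivialization over any other $U_\beta$, becomes $g_{\alpha_0 \beta}$, which is holomorphic and nonvanishing on $U_{\alpha_0} \cap U_\beta$ and extends meromorphically (rationally, in the algebraic setting) to $U_\beta$; the cocycle condition guarantees these local expressions patch together into a nonzero rational section $s$ of $L$ on all of $X$. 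Applying Definition \ref{mertoCa} to $(L, s)$ produces a Cartier divisor $D$ with $\Phi(D) = L$, so $\bar{\Phi}$ is surjective. Composing it with the isomorphism ${\rm CaCl}(X) \cong {\rm Cl}(X)$ from (\ref{CaWe}) yields the desired isomorphism ${\rm Cl}(X) \cong {\rm Pic}(X)$.
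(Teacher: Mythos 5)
Your overall strategy matches the paper's: identify ${\rm Pic}(X)$ with ${\rm CaCl}(X)$ by passing through nonzero rational sections, then invoke the isomorphism ${\rm CaCl}(X)\cong{\rm Cl}(X)$ from \eqref{CaWe}. Your kernel computation is correct and is essentially an expanded version of the one-line argument in the text.

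The weak point is the surjectivity step. You assert that the transition function $g_{\alpha_0\beta}$, which is defined and holomorphic only on $U_{\alpha_0}\cap U_\beta$, ``extends meromorphically to $U_\beta$.'' In the holomorphic category there is no reason for this to hold: a holomorphic function on a proper open subset of $U_\beta$ need not extend meromorphically across the boundary (think of $\ee^{1/z}$ near $z=0$). What is really at stake here is the nontrivial claim that every line bundle admits a nonzero meromorphic section, and this is in fact \emph{false} for a general compact complex manifold $X$ --- e.g.\ a generic line bundle on a generic complex $2$-torus has no nonzero meromorphic sections, and for such $L$ the Picard group is strictly larger than the Cartier class group. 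The proposition, as stated for arbitrary smooth complex manifolds, is really an instance of the paper's liberal hand-waving; it becomes correct once one restricts to the algebraic category, where your argument does go through, because the transition functions of an algebraic line bundle are rational by definition and hence (assuming $X$ irreducible) already live in the function field and restrict to rational functions on all of $U_\beta$. So the fix is not to claim a meromorphic extension of $g_{\alpha_0\beta}$, but to say explicitly that you are working in the algebraic setting, where the needed rational section is supplied by the transition functions themselves. The paper avoids this issue by simply not attempting to justify why rational sections exist; your version makes the implicit assumption visible and should therefore also make the restriction to the algebraic case explicit.
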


\begin{proof}
Two different nonzero  rational sections $s_1$ and $s_2$ of the same line bundle $L\to X$ differ by a global  rational function $g$ on $X$, i.e. $s_2(x) = g(x)s_1(x)$. Therefore, forgetting the section $s$ and just remembering the line bundle gives a map
$$
{\rm Pic}(X)\to {\rm CaCl(X)}
$$
from the group of isomorphism classes of line bundles to the group of Cartier divisor classes on $X$, which is easily seen to be an isomorphism. Then we use the identification of Carter and Weil divisor class groups from the previous section.
\end{proof}

\smallskip
{\blue \hrule}
We will now illustrate this construction with an example. Consider the line bundle $\mathcal O(1)\to \C\P^2$ where we use $(x_0:x_1:x_2)$ to denote the homogeneous coordinates on $\C\P^2$ and a holomorphic section $x_0$ of $\mathcal O(1)$. Recall the standard cover $\C\P^2 = U_0\cup U_1\cup U_2$ where  $U_i=\{x_i\neq 0\}$. Each of the $U_i$ is isomorphic to $\C^2$, with coordinates 
$$
(\frac {x_1}{x_0},\frac {x_2}{x_0}),~(\frac {x_0}{x_1},\frac {x_2}{x_1}),~(\frac {x_0}{x_2},\frac {x_1}{x_2})
$$
respectively. Recall that $\mathcal O(1)$ is defined as the set of linear functions on lines $\lambda (x_0:x_1:x_2)$. For the points on $U_0$, we can identify the fiber of  $\mathcal O(1)$ over them  with $\C$ by looking at the values at the standard point $(1:\frac {x_1}{x_0}:\frac {x_2}{x_0})$. This means that the first coordinate (=section $x_0$) gives $f_0=1$. For the points on $U_1$, the standard point is given by $(\frac {x_0}{x_1}:1:\frac {x_2}{x_1})$, so the section gives $f_1=\frac {x_0}{x_1}$. We similarly get $f_2=\frac {x_0}{x_2}$.
To summarize, the Cartier divisor that corresponds to the section $x_0$ is therefore
$$
\{(U_0,1),(U_1,\frac {x_0}{x_1}),(U_2,\frac {x_0}{x_2})\}.
$$
We can easily verify that the above description fits the definition of Cartier divisor. For example, the ratio $\frac {f_1}{f_2} = \frac {x_2}{x_1}$ is an invertible holomorphic function on $U_1\cap U_2$. The corresponding Weil divisor is built from the zeros and poles of $f_i$ and is the line $[\{x_0=0\}]$ on $\C\P^2$, with coefficient $1$.

\smallskip
{\blue \hrule}
We will now compute the {\blue canonical} line bundle $K$ of $\C\P^2$, which we constructed in Definition \ref{canonical}.  Rational sections of $K$ are  rational differential $2$-forms, so we can pick one of them and compute the corresponding Cartier divisor. More precisely, a form that locally looks like $f(z_1,z_2) \,dz_1\wedge dz_2$ will correspond to the function $f(z_1,z_2)$, and these functions together will give us a Cartier divisor.
Let us start with the form 
$$
w = d\left(\frac {x_1}{x_0}\right) \wedge d\left(\frac {x_2}{x_0}\right).
$$
It is precisely $dz_1\wedge dz_2$ for the coordinates on $U_0$, so the corresponding function is $f_0=1$. We now wish to compute it on $U_1$, where the local coordinates are $\frac {x_0}{x_1}$ and $\frac {x_2}{x_1}$. We compute
\begin{align*}
w &= d\left(\frac {x_0}{x_1}\right)^{-1} \wedge  d\left(\frac  {x_2}{x_1} (\frac {x_0}{x_1})^{-1}\right)
\\
&=-\left( \frac {x_0}{x_1}\right)^{-2}  d\left(\frac {x_0}{x_1}\right) 
\wedge \left(\left(\frac {x_0}{x_1}\right)^{-1}d\left(\frac  {x_2}{x_1}\right) - \left(\frac  {x_2}{x_1}\right) \left( \frac {x_0}{x_1}\right)^{-2}  d\left(\frac {x_0}{x_1}\right) \right)
\\
&
=-\left( \frac {x_0}{x_1}\right)^{-3}  d\left(\frac {x_0}{x_1}\right) \wedge d\left(\frac  {x_2}{x_1}\right).
\end{align*}
This gives us $f_1=\left( \frac {x_0}{x_1}\right)^{-3}$ (signs, or more generally invertible functions do not matter) and similarly $f_2=\left(\frac {x_0}{x_2}\right)^{-3}$. Then the corresponding Weil divisor is $-3[\{x_0=0\}]$, and therefore the canonical line bundle over $\C\P^2$ is isomorphic to $\mathcal O(-3)$. 

\smallskip
We finish this section with an extremely useful result, known as the {\blue adjunction formula}.
\begin{prop}\label{adjform}
Let $X$ be a smooth complex algebraic variety and let $D\subset X$ be a smooth subvariety of codimension one. 
Let $K_X$ be the canonical line bundle on $X$ and let $\mathcal O(D)$ be the line bundle on $X$ that corresponds to the 
Weil divisor $[D]$. Then the canonical line bundle $K_D$ is isomorphic to
$$
\mu^*(K_X\otimes \mathcal O(D))
$$
where $\mu^*$ is the pullback of line bundles via the embedding map $\mu:D\to X$.
\end{prop}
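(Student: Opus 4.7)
The plan is to prove the isomorphism by comparing transition functions on an adapted open cover of $X$, mirroring the explicit canonical bundle computation for $\C\P^2$ performed just above. Since isomorphism of line bundles amounts to equality (up to a coboundary) of transition cocycles on a common refinement, the task reduces to a local calculation on a neighborhood of $D$.

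First, using smoothness of both $X$ and $D$, I cover a neighborhood of $D$ by open sets $U_\alpha$ on which there exist local holomorphic coordinates $z_1^\alpha,\ldots,z_n^\alpha$ such that $D\cap U_\alpha=\{z_n^\alpha=0\}$. Indeed, a local Cartier equation $f_\alpha$ for $D$ has nonvanishing differential along $D$ by smoothness of $D$, so by the implicit function theorem it can be taken as one of the coordinates. Away from $D$ the statement is vacuous, so only these charts matter.

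Next, I compute the three relevant transition functions on $U_\alpha\cap U_\beta\cap D$. For $\mathcal O(D)$, whose Cartier data can be taken as $\{(U_\alpha,z_n^\alpha)\}$, the transition function is the unique invertible holomorphic $u_{\alpha\beta}$ with $z_n^\alpha=u_{\alpha\beta}\,z_n^\beta$. For $K_X$, the transition function is (up to the convention of inversion) the Jacobian $J_{\alpha\beta}=\det(\partial z_i^\alpha/\partial z_j^\beta)$. The key identity is that on $D$ this Jacobian matrix is block triangular: differentiating $z_n^\alpha=u_{\alpha\beta}\,z_n^\beta$ gives $\partial z_n^\alpha/\partial z_j^\beta=z_n^\beta\,\partial u_{\alpha\beta}/\partial z_j^\beta$ for $j<n$, which vanishes on $D$, while $\partial z_n^\alpha/\partial z_n^\beta|_D=u_{\alpha\beta}|_D$. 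The upper-left $(n-1)\times(n-1)$ block is precisely the Jacobian $J_{\alpha\beta}^D$ of the induced coordinate change on $D$, so $J_{\alpha\beta}|_D=u_{\alpha\beta}|_D\cdot J_{\alpha\beta}^D$. The factor $u_{\alpha\beta}|_D$ coming from $K_X$ now cancels the transition function of $\mathcal O(D)$ in the tensor product, leaving $J_{\alpha\beta}^D$, which is exactly the transition cocycle of $K_D$.

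The main obstacle is pure bookkeeping: keeping conventions consistent for which of $J$ or $J^{-1}$ is the transition function (an issue the paper itself has been forgiving about), and verifying that these local identifications really glue into a global isomorphism $\mu^*(K_X\otimes\mathcal O(D))\cong K_D$ of line bundles on $D$. A conceptually cleaner reformulation is the \emph{Poincar\'e residue} map: a rational section of $K_X\otimes\mathcal O(D)$ can be viewed as a meromorphic $n$-form on $X$ with at worst a simple pole along $D$, and the assignment sending such a form, locally written as $(g(z)/z_n)\,dz_1\wedge\cdots\wedge dz_n$, to $g(z_1,\ldots,z_{n-1},0)\,dz_1\wedge\cdots\wedge dz_{n-1}$ on $D$ defines a coordinate-free map to $K_D$, which the transition-function computation above shows is an isomorphism.
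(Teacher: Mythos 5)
The paper does not actually prove this proposition: it explicitly says ``we will not present the actual proof'' and gives only a one-sentence sketch, namely the Poincar\'e residue interpretation — holomorphic sections of $K_X\otimes\mathcal O(D)$ are meromorphic top forms on $X$ with at most a simple pole along $D$, and taking the residue produces top forms on $D$. Your final paragraph recovers exactly that idea, so you have not missed the paper's viewpoint; but the substance of your proposal is the transition-cocycle computation the paper elides, and it is correct. Concretely, your block-triangular factorization $\det J_{\alpha\beta}\vert_D = u_{\alpha\beta}\vert_D\cdot\det J^D_{\alpha\beta}$ is right — differentiating $z_n^\alpha = u_{\alpha\beta}z_n^\beta$ shows the off-block entries $\partial z_n^\alpha/\partial z_j^\beta$ for $j<n$ vanish on $D$, and the upper-left block restricts to the Jacobian of the induced change of coordinates on $D$ — so in the tensor product the $u_{\alpha\beta}\vert_D$ contributed by $K_X$ cancels the $\mathcal O(D)$ cocycle $u_{\alpha\beta}$, leaving precisely the $K_D$ cocycle, up to the single inversion convention that you correctly flag as immaterial since it is applied uniformly. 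What your route buys is an actual argument where the paper offers only the statement of what one should check; the residue map is an isomorphism exactly \emph{because} of the local identity you verify. The one housekeeping point worth stating explicitly: to pass from a cover of a neighborhood of $D$ to a cover of $X$ (which is needed to talk about $\mathcal O(D)$ and $K_X$ as line bundles on $X$), adjoin the chart $U_0 = X\setminus D$ with Cartier datum $f_0=1$; since everything is restricted to $D$ in the end, this chart contributes nothing, as your remark ``away from $D$ the statement is vacuous'' already indicates.
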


\begin{proof}
We will not present the actual proof,\footnote{ Shocking, I know.}
but rather the main idea behind it. Holomorphic sections of $K_X\otimes \mathcal O(D)$ correspond to meromorphic top differential forms on $X$ which have pole of order at most $1$ along $D$ and no other poles. Then taking a residue at $D$ gives top differential forms on $D$.
\end{proof}

\smallskip
{\bf Exercise 1.}  Verify that Definition \ref{mertoCa} defines a Cartier divisor. Further verify that different choices of isomorphisms $\pi^{-1}(U_\alpha)\cong \C\times U_\alpha$ define the same Cartier divisor. 

{\bf Exercise 2.} Verify that the relation $\sim$ in Definition \ref{Catomer} is indeed an equivalence relation.

{\bf Exercise 3.}  Check that the canonical line bundle of $\C\P^n$ is isomorphic to $\mathcal O(-n-1)$, for any $n\geq 1$.

\section{Cubic curves in $\C\P^2$. Group law.}\label{cubicP2}
We are switching gears now, as we will do a lot in these notes, to go on a long detour into the wonderful world of elliptic curves.

\smallskip
Recall that an irreducible curve in $\C\P^2$ is the zero locus of an irreducible homogeneous polynomial $f(x_0,x_1,x_2)$ in the homogeneous coordinates $x_0,x_1,x_2$. 
If the degree of $f$ is equal to $1$, then $\{f=0\}=\{a_0x_0+a_1x_1+a_2x_2=0\}$ is a line. After a coordinate change, we may assume that it is $\{x_2=0\}$, so it is isomorphic to $\C\P^1$.
If the degree of $f$ is equal to $2$, then $f$ is a nonzero quadratic form in three complex variables. If the rank is $1$ or $2$, then the corresponding polynomial $x_0^2$ or $x_0^2+x_1^2$ is not irreducible. If the rank is three, then after a change of basis we get $\{x_0x_2-x_1^2=0\}$, which is isomorphic to $\C\P^1$ in its Veronese embedding, which we considered in Section \ref{sec.maps}.

\smallskip
The next case is ${\rm deg}\,f =3$, which is what we will focus on now. A general cubic curve is given by
$$
\sum_{i+j+k=3,\,i,j,k\geq 0}c_{ijk} \,x_0^ix_1^jx_2^k=0
$$
and is thus determined by its ten coefficients $c_{ijk}$.

\smallskip
\begin{prop} Generic choice of coefficients $c_{ijk}$ gives a smooth Riemann surface.
\end{prop}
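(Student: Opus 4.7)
My plan is to interpret ``generic'' as ``in a Zariski-open dense subset of the space $\CP^9$ of cubic forms up to scaling,'' and ``smooth Riemann surface'' as: at every point $p\in\{f=0\}$ the gradient $(\partial_0 f,\partial_1 f,\partial_2 f)(p)$ is nonzero, so that the holomorphic implicit function theorem endows $\{f=0\}$ with the structure of a complex $1$-manifold. A preliminary observation is Euler's identity $\sum_i x_i\,\partial_i f = 3f$, which shows that vanishing of all three partials at $p$ automatically forces $f(p)=0$; hence the singular locus of $\{f=0\}$ equals the common zero locus in $\CP^2$ of $\partial_0 f,\partial_1 f,\partial_2 f$.

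The argument I would run is a standard incidence-variety dimension count. Consider
\[
Z := \{([f],p)\in \CP^9\times \CP^2 : \partial_i f(p)=0,\ i=0,1,2\}.
\]
For each fixed $p$, the three linear functionals $f\mapsto \partial_i f(p)$ on the $10$-dimensional vector space of cubics are linearly independent: at the standard point $p=(1{:}0{:}0)$ they read off, up to a factor of $3$, the coefficients of $x_0^3$, $x_0^2 x_1$, $x_0^2 x_2$, and the transitive $\mathrm{PGL}_3(\C)$-action on $\CP^2$ reduces the case of arbitrary $p$ to this one. Hence every fiber of the projection $Z\to \CP^2$ is a $\P^6$, and $\dim Z = 6+2 = 8$.

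Now project $Z\to \CP^9$. The image is Zariski closed (image of a projective variety under a morphism to a projective variety) and of dimension at most $8<9$, so it is a proper closed subvariety of $\CP^9$; its complement is the locus of smooth cubics. To see this complement is nonempty, I would exhibit the Fermat cubic $x_0^3+x_1^3+x_2^3$, whose partials $3x_i^2$ share no common zero in $\CP^2$. Therefore smooth cubics form a nonempty Zariski open subset of $\CP^9$, which is what ``generic'' means.

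The only substantive thing to check is the linear-independence of the three evaluation functionals uniformly in $p$, which I expect to be the main, though modest, obstacle; the reduction via change of coordinates handles it cleanly. Everything else is either formal (dimension count, image of a projective morphism) or a one-line computation (Fermat).
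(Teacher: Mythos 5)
Your proof is correct and runs on essentially the same dimension count as the paper: fix a point $p$, observe that singularity at $p$ is three linearly independent linear conditions on the $10$-dimensional space of cubics (the paper checks this at $p=(1{:}0{:}0)$ exactly as you do, then invokes homogeneity), and let $p$ range over a $2$-dimensional base to conclude the bad locus has codimension at least one. The only differences are cosmetic or in rigor: you package the count as an explicit incidence variety $Z\subset\CP^9\times\CP^2$ and note that its image is Zariski closed, whereas the paper hand-waves this (``everything is algebraic here, so dimension works like it should''); you record the Euler-identity reason that $f(p)=0$ is redundant given vanishing of the partials; and you supply the Fermat cubic as a witness that the smooth locus is nonempty, which strictly speaking is already forced by the dimension inequality $8<9$ but is a nice sanity check. (Minor nit: at $p=(1{:}0{:}0)$ the functionals $\partial_1 f$ and $\partial_2 f$ read off $c_{210}$ and $c_{201}$ with coefficient $1$, not $3$; only $\partial_0 f$ carries the factor of $3$. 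This does not affect linear independence.)
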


\begin{proof}
Let us try to understand what it means for $\{f=0\}$ to be singular at $(1:0:0)$. This point lies in the open set $U_0$ and we can dehomogenize $f=0$ to 
\begin{equation}\label{cubU0}
\sum_{j+k\leq 3,\,j,k\geq 0}c_{(3-j-k)jk} \,\left(\frac {x_1}{x_0}\right)^j \left(\frac {x_2}{x_0}\right)^k = 0
\end{equation}
in $U_0$. In order for the curve \eqref{cubU0} to pass through $(1:0:0)$, we need to have no constant term, i.e. we need $c_{300}=0$. Moreover, if any of the linear terms is nonzero, we get smoothness at $(1:0:0)$ by the inverse function theorem. This means that we need $c_{210}=c_{201}=c_{300}=0$. Thus the subspace of the coefficient space $\C^{10}$ for which  the curve is singular at $(1:0:0)$ is of complex codimension three. 

As we vary the point $p\in \C\P^2$, we get a family of $7$-dimensional subspaces of  $\C^{10}$, and their union is then at most $9$-dimensional (everything is algebraic here, so dimension works like it should). Picking $c_{ijk}$ generically simply means staying away from this ``bad" locus.
\end{proof}

\smallskip
\begin{rem}
If a complex curve is smooth, then it is necessarily irreducible by Bezout's theorem. Indeed, a union of two curves would have intersection points, and will be singular there.
\end{rem}

\smallskip
\begin{rem}
All smooth cubic curves are diffeomorphic {\blue as real manifolds}. To see that, observe that the ``bad" locus in the coefficient space is of real codimension at least two, so the complement is connected. Thus any two collections of coefficients that give a smooth cubic can be connected by a path so that all of the intermediate Riemann surfaces are smooth. Locally, small changes in coefficients clearly don't affect the topology. 
\end{rem}

\smallskip
\begin{rem}
There is nothing special about the degree $3$ here. In fact, everything we discussed so far works for hypersurfaces of any degree in any $\C\P^n$. We leave the parameter count as Exercise 1. We also remark that it further generalizes to the statement known as the Bertini's theorem -- given a smooth subvariety of $\C\P^n$, a generic hypersurface cut of it is also smooth.
\end{rem}

\smallskip
{\blue \hrule}
So what is the topology of a smooth complex cubic curve? This is a  Riemann surface, which means that it is an orientable real surface. It is also compact, since $\C\P^2$ is compact. Any such surface is homeomorphic to a sphere with $g$ handles, where $g$ is called the {\blue genus} of the surface. So we can rephrase the question as asking what is the genus of a smooth cubic curve in $\C\P^2$. 

\smallskip
We will be using homogeneous coordinates $(x:y:z)$ on $\C\P^2$, to match some traditional notations. 
We pick\footnote{ Every smooth cubic can be written in this form after a coordinate change, but we do not need this fact here.} a cubic curve $E$ of the form
\begin{equation}\label{Esp}
y^2 z = (x- \alpha_1 z) (x-\alpha_2z) (x-\alpha_3z)
\end{equation}
where $\alpha_i$ are distinct complex numbers.
We can try to visualize $E$ by sketching the corresponding real curve in the case of $\alpha_i$ being real numbers, in the open subset $z\neq 0$. 

\begin{tikzpicture}
\draw[dashed, ->] (0,0) -- (10,0) node [anchor=north]{$\frac xz$} ;
\draw[dashed, ->] (5,-3.5) -- (5,3.5) node [anchor=east]{$\frac yz$};
\draw [blue] plot[semithick, smooth, tension=.7] coordinates {(1.,0.2)(1.6,1.8) (3,0) (1.6,-1.8)(1.,-0.2)};
\draw  [blue] plot[semithick, smooth, tension=.7] coordinates {(1.,0.2)(.995,0)(1.,-0.2)};
\draw  [blue] plot[semithick, smooth, tension=.7] coordinates {(6.8,3.5)(6.5,2)(6,0)(6.5,-2)(6.8,-3.5)};
\end{tikzpicture}

\smallskip
Let us check that $E$ is smooth. Note that the restriction of the above equation to the line $z=0$ gives $0=x^3$, so $(0:1:0)$ is an inflection point of $E$. In the local coordinates $(\frac xy, \frac zy)$ at $(0:1:0)$ the curve $E$ is given by
$$
0 = \frac zy - {\rm ~higher~degree~terms}
$$
which shows that $E$ is smooth at $(0:1:0)$. 

\smallskip
For the points with $z\neq 0$, we might as well dehomogenize by setting $z=1$. Then singularity is governed by $f(x,y,1)=0$, $\frac {\partial }{\partial x} f(x,y,1) = 0$ and $\frac {\partial }{\partial y} f(x,y,1) = 0$ which gives
$$
\left\{
\begin{array}{ll}
y^2-(x-\alpha_1)(x-\alpha_2)(x-\alpha_3) = 0,&\\
-(x-\alpha_2)(x-\alpha_3)-(x-\alpha_1)(x-\alpha_2)-(x-\alpha_1)(x-\alpha_2)=0,&\\
2y=0.&
\end{array}
\right.
$$
We then see that $y=0$, so by the first equation $x=\alpha_i$ for some $i$ which then does not fit the second equation since $\alpha_i$ are all different.

\smallskip
We can think of our Riemann surface $E$ given by \eqref{Esp} as 
$$
\frac yz = \sqrt{
\left(\frac xz-\alpha_1\right)\left(\frac xz-\alpha_2\right)\left(\frac xz-\alpha_3\right)
}
$$
which we view as a double cover of $\C\P^1$ with coordinates $(x:z)$. 
This double cover is ramified at $\alpha_1$, $\alpha_2$, $\alpha_3$ and $\infty$, which means that a small loop around these points leads to an opposite branch of the square root. Thus, if we make two cuts, one between $\alpha_1$ and $\alpha_2$ and another between
$\alpha_3$ and $\infty$, we will have two disjoint branches of the square root on the complement. 
We visualize the two branches and the rules of going from one to another as you cross the cut as follows. 

\begin{tikzpicture}
\fill[gray!40!white] (6.5, 5) ellipse (5 and 1.5); 
\draw[dashed](3,5)--(4.5,5) node[anchor=west]{$\alpha_2$};
\draw(3.,5) node[anchor=east]{$\alpha_1$};
\draw[dashed](7.5,5)--(9.,5) node[anchor=west]{$\infty$};
\draw(7.5,5) node[anchor=east]{$\alpha_3$};
\fill[gray!40!white] (6.5, 1) ellipse (5 and 1.5); 
\draw[dashed](3,1)--(4.5,1) node[anchor=west]{$\alpha_2$};
\draw(3.,1) node[anchor=east]{$\alpha_1$};
\draw[dashed](7.5,1)--(9.,1) node[anchor=west]{$\infty$};
\draw(7.5,1) node[anchor=east]{$\alpha_3$};
\draw [ red,->] (4,5.5)--(3.5,5);
\draw [ red,->] (3.5,1)--(3,0.5);
\draw [ blue,->] (8.5,5.5)--(8,5);
\draw [ blue,->] (8,1)--(7.5,0.5);
\end{tikzpicture}

\noindent
We then flip the top part and extend the shores of the cuts.

\begin{tikzpicture}
\fill[gray!40!white] (6.5, 5.5) ellipse (5 and 1.0); 
\fill[gray!40!white] (3.5,5) rectangle(5,4.0);
\filldraw[gray!40!white] (3.5,4.0)..controls(4.25,4.25)..(5,4.0);
\filldraw[gray!40!white](3.5,4.0)..controls(4.25,3.75)..(5,4.0);
\draw[dashed] (3.5,4.0)..controls(4.25,4.25)..(5,4.0);
\draw(3.5,4.0)..controls(4.25,3.75)..(5,4.0);
\draw[thin](3.5,5)--(3.5,4.0);
\draw[thin](5,5)--(5,4.0);
\fill[gray!40!white] (8,5) rectangle(9.5,4.0);
\filldraw[gray!40!white] (8,4.0)..controls(8.75,4.25)..(9.5,4.0);
\filldraw[gray!40!white](8,4.0)..controls(8.75,3.75)..(9.5,4.0);
\draw[dashed] (8,4.0)..controls(8.75,4.25)..(9.5,4.0);
\draw(8,4.0)..controls(8.75,3.75)..(9.5,4.0);
\draw[thin](8,5)--(8,4.0);
\draw[thin](9.5,5)--(9.5,4.0);
\fill[gray!40!white] (6.5, 1) ellipse (5 and 1.0); 
\fill[gray!40!white] (3.5,1.5) rectangle(5,2);
\filldraw[gray!40!white] (3.5,2)..controls(4.25,2.25)..(5,2);
\filldraw[gray!40!white](3.5,2)..controls(4.25,1.75)..(5,2);
\draw (3.5,2)..controls(4.25,2.25)..(5,2.0);
\draw(3.5,2.0)..controls(4.25,1.75)..(5,2.0);
\draw[thin](3.5,1)--(3.5,2.0);
\draw[thin](5,1)--(5,2.0);
\fill[gray!40!white] (8,1) rectangle(9.5,2.0);
\filldraw[gray!40!white] (8,2)..controls(8.75,2.25)..(9.5,2);
\filldraw[gray!40!white](8,2)..controls(8.75,1.75)..(9.5,2);
\draw (8,2)..controls(8.75,2.25)..(9.5,2);
\draw(8,2)..controls(8.75,1.75)..(9.5,2);
\draw[thin](8,1)--(8,2);
\draw[thin](9.5,1)--(9.5,2);
\draw [ red,->] (4.5,4.5)--(4.25,3.82);
\draw [ blue,->] (9,4.5)--(8.75,3.82);
\draw [ red,->] (4.25,1.82)--(4.0,1.19);
\draw [ blue,->] (9,1.82)--(8.75,1.19);
\end{tikzpicture}

\noindent
After we glue the tentacles together,
we get the real torus, 
a.k.a. sphere with one handle. So the genus of the cubic curve is $g=1$.


\smallskip
{\blue \hrule}
What's fascinating about the cubic curves is that they come with a group structure! More precisely, let $O$ be a point on $E$. It is common to pick $O$ to be an inflection point of $E$, but then one would have to prove that such point exists, and I can not be bothered to do so. For points $A,B\in E$ consider the line $AB$ (this would be the unique tangent line to $E$ at $A$ if $A=B$). By Bezout's theorem\footnote{ This is an overkill here, because the restriction of a cubic polynomial to a line will have three roots.} this line intersects $E$ at three points, counted with multiplicities. Two of these intersection points are $A$ and $B$, and we call the third one $X$. Then we similarly consider the line $OX$ and denote the third intersection point with $E$ by $Y$. We declare 
$$
Y=A+B
$$
for the newly defined addition operation $+$. We claim that it gives $E$ a group structure, which is of course abelian by construction, since lines $AB$ and $BA$ are the same.

\smallskip
We will {\red almost} prove associativity, which is rather miraculous. Suppose we want to prove 
$(A+B)+C=A+(B+C)$.
It is easy to see that the operation is continuous, so it suffices to assume that $A$, $B$ and $C$ are general points on the curve and then just take limits. In particular, we never have to worry about tangent lines. We denote  $Y= A+B$ and $W = B+C$. Then the following lines ``compute" $L=(A+B)+C$ and
$R=A+(B+C)$. 
\begin{align*}
ABX,~XOY,~YCZ,~OZL,\\
BCW,~OWV,~AVU,~OUR.
\end{align*}
Clearly, $L=R$ is equivalent to $Z=U$. 

\smallskip
Consider homogeneous cubic polynomials in $x,y,z$ which vanish on the eight points
\begin{equation}\label{8p}
A,B,C,X,Y,W,V,O.
\end{equation}
Since this vanishing is $8$ linear conditions on the space $\C^{10}$ of the polynomials, it is reasonable to expect that these conditions are linearly independent and cut out a $2$-dimensional space of polynomials. {\blue This is why this is only an ``almost" proof -- it takes a fair bit of annoying arguments to establish this linear independence.} Now observe that the three cubic curves 
$$
E, ~ABX\cup OWV\cup YCZ,~BCW\cup OXY\cup AVU
$$
pass through all eight points \eqref{8p}. This means that there is a linear relation among the corresponding polynomials. Since these curves are pairwise distinct, this linear relation involves all three polynomials. Therefore, if two of these curves pass through a point, so does the third one. In other words, the nine intersection points of $E$ with $ABX\cup OWV\cup YCZ$ coincide with the nine intersection points of $E$ with $BCW\cup OXY\cup AVU$, and this implies $Z=U$.

\smallskip
It is easy to check (Exercise 3) that $O$ is the identity of the group law. When $O$ is an inflection point, then the inverse of a point $A\in E$ is given by looking at the third intersection point of the line $A O$ with $E$. More generally, the inverse is found as follows. Consider the tangent line to $E$ at $O$. It intersects $E$ at $O$ twice and at some other point $O'$ (which is $O$ in the inflection case). Then the inverse of $A$ is the third point of intersection of $AO'$ with $E$, see Exercise 3.

{\bf Exercise 1.} Extend the count of the codimension of the bad locus to arbitrary hypersurfaces of degree $d$ in $\C\P^n$ by again figuring out what it takes to be singular at $(1:0:\ldots:0)$.

{\bf Exercise 2.} For the curve $E$ given by \eqref{Esp} and $O=(0:1:0)$ compute explicitly the sum $A+B$ for $A=(x_1:y_1:1)$ and $B=(x_2:y_2:1)$ (you can assume $x_1\neq x_2$). Verify that the operation is associative.
\emph{Hint:} write a parametric equation of the line $AB$, restrict the equation of $E$ to it and then use the Vieta's formula for the sum of three roots of a cubic polynomial.  Don't be shy about using software for algebraic manipulations.

{\bf Exercise 3.} Verify that $O$ is the identity of the group law on $E$. Verify that the inverse $B$ of $A\in E$ is the third intersection point of the line $AO'$ with $E$.

\section{Cubic curves as complex Lie groups. Lattice description. Elliptic functions.}
Recall that a cubic curve $E$ was shown to be a compact Riemann surface, equipped with an abelian group structure. It is not hard to see that the group operation is holomorphic, which makes $E$ into a complex compact Lie group. Universal cover of $E$ is then a one-dimensional simply-connected Lie group and is therefore isomorphic to $(\C,+)$.\footnote{ If you are not familiar with Lie groups, either complex or real, this paragraph probably doesn't make much sense to you. Hopefully, it will motivate you to learn these topics.
I learned this stuff from \cite{VO}, but there are many other viable sources.} Then $E$ must be a quotient of $\C$ by a discrete additive subgroup. Compactness of $E$ implies that it must be given, {\blue as a compact Riemann surface}, by
$$
E\cong \C/(\Z\gamma_1 + \Z\gamma_2)
$$
where $\gamma_1$ and $\gamma_2$ are nonzero complex numbers such that $\frac {\gamma_1}{\gamma_2}\notin \R$. We will denote the subgroup $\Z \gamma_1 + \Z\gamma_2$ by $L$ and call it a {\blue lattice}.

\smallskip
\begin{rem}
It is possible to scale $\C$ and pick the generators of the lattice so that $L=\Z + \Z\tau$ with ${\rm Im}(\tau)>0$. 
\end{rem}

We will now work on the complex manifold $\C/L$. We want to show that for any lattice $L$ we can embed $\C/L$ into $\C\P^2$, as a complex submanifold. Our treatment is very similar to that of \cite{Koblitz}.

\smallskip
We are interested in meromorphic functions on $\C/L$, which we can think of as meromorphic functions $f$ on $\C$ which are periodic with respect to $L$:
$$
f(z+l)= f(z),~{\rm for~all~}l\in L.
$$
These functions are called {\blue elliptic}. Before finding meaningful examples of such functions, we will investigate their properties.

\smallskip
Let $f$ be an elliptic function for the lattice $L$. Since poles of $f$ have no accumulation points, there must be a finite number of them modulo $L$. 

\smallskip
\begin{prop}\label{sumres}
For any elliptic function $f$ there holds
$$
\sum_{w\in \C/L} {\rm Res}_{z=w} f(z) = 0.
$$
\end{prop}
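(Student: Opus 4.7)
The plan is to prove this via the residue theorem applied to a suitable fundamental parallelogram for the lattice $L$, exploiting double periodicity to cancel the boundary integrals.

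First I would choose a fundamental parallelogram. Fix a base point $z_0\in\C$ and consider the closed parallelogram
\[
P = \{z_0 + s\gamma_1 + t\gamma_2 : 0\le s,t\le 1\}.
\]
Since the poles of $f$ are discrete and form a finite set modulo $L$, I can choose $z_0$ generically so that no pole of $f$ lies on the boundary $\partial P$. The interior of $P$ then contains exactly one representative of each pole of $f$ in $\C/L$.

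Next I would apply the classical residue theorem in one complex variable to the meromorphic function $f$ on a neighborhood of $P$, with $\partial P$ oriented counterclockwise:
\[
\sum_{w\in \C/L} \operatorname{Res}_{z=w} f(z) \;=\; \frac{1}{2\pi i}\oint_{\partial P} f(z)\,dz.
\]
Here the left-hand side really is the sum over $\C/L$ because the residues depend only on the class of $w$ modulo $L$ by periodicity, and each class is represented exactly once inside $P$.

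The key step is to show the right-hand side vanishes by pairing opposite sides of $\partial P$. Split the boundary into four oriented segments: $\sigma_1$ from $z_0$ to $z_0+\gamma_1$, $\sigma_2$ from $z_0+\gamma_1$ to $z_0+\gamma_1+\gamma_2$, then the reverses of the translates $\sigma_1+\gamma_2$ and $\sigma_2-\gamma_1$. Using the substitution $z\mapsto z+\gamma_2$ and the periodicity $f(z+\gamma_2)=f(z)$, the integral along $\sigma_1$ equals the integral along $\sigma_1+\gamma_2$ (with the same orientation), so they cancel when one appears with reversed orientation in $\partial P$. The analogous cancellation with $\gamma_1$ handles the other pair. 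Therefore $\oint_{\partial P} f(z)\,dz = 0$, and the proposition follows.

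The only potentially delicate point is the genericity of $z_0$, but this is immediate since the poles modulo $L$ form a finite set, so the set of translates $z_0$ for which $\partial P$ meets a pole is a finite union of real-codimension-one lines in $\C$, which we simply avoid.
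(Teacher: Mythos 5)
Your proof is correct and uses exactly the same strategy as the paper: pick a fundamental parallelogram shifted so its boundary avoids the poles, apply the residue theorem, and cancel the contributions of opposite sides using the $L$-periodicity of $f$. You have simply spelled out the boundary pairing and the genericity of $z_0$ in more detail than the paper does.
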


\begin{proof}
Consider the fundamental domain $D$ of $L$ given by a parallelogram (see picture below). It is shifted by some complex number $z_0$ so that it does not pass through the poles of $f$.

\begin{tikzpicture}
\draw (1,4)--(0,0)--(6,0);
\draw[dashed] (1,4)--(7,4)--(6,0);
\fill [gray!40!white](1,4)--(0,0)--(6,0)--(7,4)--(1,4);
\draw (0,0) node [anchor=east]{$z_0$};
\draw (6,0) node [anchor=west]{$z_0+\gamma_1$};
\draw (1,4) node [anchor=east]{$z_0+\gamma_2$};
\draw (7,4) node [anchor=west]{$z_0+\gamma_1+\gamma_2$};
\draw (3.5,2) node {$D$};
\end{tikzpicture}

\noindent
The sum of residues in $D$ is the counterclockwise integral $\int_{\partial D} f(z)\,dz$. By periodicity of $f$, the contributions of the opposite sides of $D$ cancel.
\end{proof}

\begin{prop}\label{zp}
For a nonzero elliptic function $f$, the number of zeros modulo $L$ equals the number of poles modulo $L$, counted with multiplicities.
\end{prop}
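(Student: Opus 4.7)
The plan is to apply Proposition \ref{sumres} to the logarithmic derivative of $f$, which converts the counting problem into a residue computation.

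First I would verify that if $f$ is a nonzero elliptic function for the lattice $L$, then so is its derivative $f'$, and consequently the meromorphic function $g(z) := f'(z)/f(z)$ is elliptic for $L$. Periodicity of $f'$ follows by differentiating the identity $f(z+l)=f(z)$, and the quotient of two periodic meromorphic functions is periodic wherever it is defined.

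Next I would analyze the poles and residues of $g$. Away from the zeros and poles of $f$, the function $g$ is holomorphic. Near a point $w$ where $f$ has a zero of order $m \geq 1$, we can write $f(z) = (z-w)^m h(z)$ with $h$ holomorphic and nonzero at $w$; then
\[
g(z) = \frac{m}{z-w} + \frac{h'(z)}{h(z)},
\]
so $g$ has a simple pole at $w$ with residue $+m$. Near a pole of $f$ of order $m \geq 1$, the same computation with $f(z)=(z-w)^{-m}h(z)$ yields a simple pole of $g$ with residue $-m$. Thus the residues of $g$ inside a fundamental domain sum to $N_0 - N_\infty$, where $N_0$ and $N_\infty$ denote the total number of zeros and poles of $f$ modulo $L$, counted with multiplicity.

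Finally, by Proposition \ref{sumres} applied to the elliptic function $g$, this sum is zero, giving $N_0 = N_\infty$. The main subtlety to watch is simply ensuring we have accounted for all contributions: $g$ has no other poles besides the zeros and poles of $f$, and there are only finitely many of each modulo $L$ since zeros and poles of a nonzero meromorphic function form a discrete set, which is finite when intersected with a compact fundamental domain. No step here is genuinely hard once Proposition \ref{sumres} is in hand.
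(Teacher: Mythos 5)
Your proof is correct and takes exactly the same route as the paper: form the logarithmic derivative $g = f'/f$, observe that it is elliptic with simple poles at the zeros and poles of $f$ having residues $\pm(\text{order})$, and apply Proposition \ref{sumres}. You have simply spelled out the local computation that the paper leaves to the reader.
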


\begin{proof}
If $f(z)$ is a nonzero elliptic function, then so is $g(z) = \frac {f'(z)}{f(z)}$. It remains to observe that poles of $g$ occur at zeros and poles of $f$, and the residues of $g$ are equal to the order of the zero or pole (the latter with a negative sign). Then we apply Proposition \ref{sumres} to $g$ and get the desired result. 
\end{proof}

\begin{prop}\label{sumzerospoles}
Let $w_i$ be the zeros/poles of a nonzero elliptic function $f$, one for each coset modulo $L$. We denote by $m_i$ the multiplicity of the zero (positive) or pole (negative) of $f$ at $w_i$. Then
\begin{equation}\label{sum}
\sum_{i} m_i w_i \in L.
\end{equation}
\end{prop}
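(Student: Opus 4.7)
The plan is to mimic the proof of Proposition \ref{sumres}, but applied to the meromorphic function $g(z)=z\cdot\frac{f'(z)}{f(z)}$ instead of to $f'/f$ itself. Near a zero or pole $w_i$ of $f$ of order $m_i$ (with $m_i>0$ for zeros and $m_i<0$ for poles), we have $f(z)=(z-w_i)^{m_i}h(z)$ with $h$ holomorphic and nonzero near $w_i$, so $\frac{f'(z)}{f(z)}=\frac{m_i}{z-w_i}+\text{(holomorphic)}$, and hence $\operatorname{Res}_{z=w_i} z\frac{f'(z)}{f(z)}=m_i w_i$. By the residue theorem applied to the shifted fundamental parallelogram $D$ of Proposition \ref{sumres} (shifted so that $\partial D$ avoids all zeros and poles of $f$), I get
\begin{equation*}
\sum_i m_i w_i \;=\; \frac{1}{2\pi \ii}\int_{\partial D} z\,\frac{f'(z)}{f(z)}\,dz.
\end{equation*}

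The goal now is to show this integral lies in $L$. The sides of $D$ pair up into two pairs of parallel edges. First I would take the contribution of the bottom edge (from $z_0$ to $z_0+\gamma_1$) and the top edge (from $z_0+\gamma_1+\gamma_2$ to $z_0+\gamma_2$). Parametrizing the top edge by $w\mapsto w+\gamma_2$ and using the $L$-periodicity of $f'/f$, the combined contribution becomes
\begin{equation*}
\int_{z_0}^{z_0+\gamma_1}\!\! \bigl(z-(z+\gamma_2)\bigr)\frac{f'(z)}{f(z)}\,dz \;=\; -\gamma_2\int_{z_0}^{z_0+\gamma_1}\frac{f'(z)}{f(z)}\,dz.
\end{equation*}
The remaining integral is $[\log f(z)]_{z_0}^{z_0+\gamma_1}$ for some choice of branch of the logarithm along the path; since $f(z_0+\gamma_1)=f(z_0)$ by periodicity, it equals $2\pi\ii\,n_1$ for some $n_1\in\Z$. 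Doing the same for the other pair of edges gives a contribution of $\gamma_1\cdot 2\pi\ii\,n_2$ for some $n_2\in\Z$.

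Putting the two pairs together yields
\begin{equation*}
\sum_i m_i w_i \;=\; n_2\gamma_1 - n_1\gamma_2 \;\in\; L,
\end{equation*}
which is exactly \eqref{sum}. The only subtle point, and the one I would be most careful about, is the justification that $\int \frac{f'}{f}\,dz$ along a path joining $z_0$ to $z_0+\gamma_i$ is an integer multiple of $2\pi\ii$: one needs to know that $f$ is nonzero and finite along that path (arranged by the generic choice of $z_0$) so that a continuous branch of $\log f$ exists along the path, and then the periodicity $f(z_0+\gamma_i)=f(z_0)$ forces the two endpoint values of the logarithm to differ by $2\pi\ii\,\Z$.
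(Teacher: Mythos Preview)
Your proof is correct and follows essentially the same route as the paper: apply the residue theorem to $z\,\frac{f'(z)}{f(z)}$ over a shifted fundamental parallelogram, then use periodicity of $f'/f$ to reduce each pair of opposite edges to $-\gamma_j\int d\log f$, which lies in $2\pi\ii\,\Z$ because $f$ takes the same value at the endpoints. If anything, you spell out the residue computation and the branch-of-logarithm point a bit more explicitly than the paper does.
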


\begin{proof}
Before starting the proof, we remark that changing  $w_i$ to $w_i+l$ for $l\in L$ does not affect the conclusion. So we may again pick a fundamental parallelogram $D$ and write the sum in \eqref{sum} as
$$
\frac 1{2\pi{\rm i}} \oint_{\partial D} z\, \frac {f'(z)}{f(z)}\, dz.
$$
Because of the term $z$ which is not periodic, we will not quite have cancellation for the opposite sides of $D$. Rather, the sum 
of the integral over the top and the bottom segments will be
\begin{align*}
&
\frac 1{2\pi{\rm i}} \int_{z=z_0}^{z=z_0+\gamma_1} z \frac {f'(z)}{f(z)}\, dz -
\frac 1{2\pi{\rm i}} \int_{z=z_0+\gamma_2}^{z=z_0+\gamma_1+\gamma_2} z \frac {f'(z)}{f(z)}\, dz
\\
&=
\frac 1{2\pi{\rm i}} \int_{z=z_0}^{z=z_0+\gamma_1}\left( z \frac {f'(z)}{f(z)}-(z+\gamma_2) \frac {f'(z+\gamma_2)}{f(z+\gamma_2)}\right)\, dz 
=-\frac 1{2\pi{\rm i}} 
\int_{z=z_0}^{z=z_0+\gamma_1} \gamma_2 \frac {f'(z)}{f(z)}\, dz
\\
&=- \frac {\gamma_2}{2\pi{\rm i}} 
\int_{z=z_0}^{z=z_0+\gamma_1}\,d \log f(z) \in \frac { \gamma_2}{2\pi{\rm i}} (2\pi{\rm i} \,\Z) \in \Z \gamma_2 \subset L
\end{align*}
and similarly for the other pair.
\end{proof}

Having discovered these wonderful properties of elliptic functions,  it would be great to actually construct some non-trivial examples. An urban legend claims that once there was a PhD student who has been finding the most fascinating properties of some class of functions, introduced by their advisor. And just before the defense, they finally proved that all of these functions were identically zero. Are we going to meet the same fate? No!

\smallskip
If we have an elliptic function $f$ which is holomorphic, then it must be constant, by virtue of being a holomorphic function on a compact manifold (otherwise, the image of $f$ is compact, but it also has to be an open set).
Of course, constant functions are elliptic, but that's not very interesting.

\smallskip
So we are forced to allow some poles. If we allow only one pole of order $1$, up to lattice shifts, then by Proposition \ref{sumres}, the residue at this pole would be zero, so it would not be a pole at all! So we should either have at least two different poles of order one or one pole of order two. This makes it reasonable to look for an elliptic function $f$ on $\C$  that 
has a pole of order $2$ at $z=0$ and no other poles modulo $L$.

\smallskip
We know that the residue at $z=0$ must be $0$, so our first attempt at finding such $f$ is 
$$
f(z) = \frac 1{z^2}.
$$
This obviously fails, because $\frac 1{z^2}$ is not $L$-periodic.

\smallskip
Undeterred, we are going make it periodic. Consider
$$
f(z) = \sum_{l\in L} \frac 1{(z-l)^2}.
$$
Great, this looks periodic. But, we have a slight problem -- the series does not have absolute convergence. Indeed, the number of lattice points $l$ of size approximately $k$ is on the order of $c\, k$ for some constant $c$, so we have roughly $c\, k$ terms of size $\frac 1{k^2}$, and the harmonic series diverges (although rather slowly).

\smallskip
There is a nice trick that we can use to deal with it. Namely, for a fixed $z$ and  $l$ large enough, $\frac 1{(z-l)^2}$ is approximately $\frac 1{l^2}$. So we may reasonably hope that 
\begin{equation}\label{almostW}
f(z) = \sum_{l\in L} \Big(\frac 1{(z-l)^2}-\frac 1{l^2}\Big)
\end{equation}
will be an absolutely convergent series. And since we are just subtracting constants, we should still have the periodicity.

\smallskip
We are very close. The only problem is that we have a $\frac 1{0^2}$ term in the series, which prompts the following definition.

\smallskip
\begin{defn}The Weierstrass elliptic function\footnote{ We will usually drop the adjective ``elliptic", but it should not be confused with the continuous nowhere differentiable monstrosity also known as the Weierstrass function.} $\mathcal P(z)$ is defined by
\begin{equation}\label{Weier}
\mathcal P(z) = \frac 1{z^2} + \sum_{0\neq l\in L} \Big(\frac 1{(z-l)^2}-\frac 1{l^2}\Big).
\end{equation}
\end{defn}

\smallskip
We will now work to verify that $\mathcal P(z)$ is an elliptic function with pole of order two at $z\in L$ and no other poles. It is actually rather straightforward. We first claim that $\mathcal P(z)$ converges uniformly and absolutely on any compact set $A$ in $\C\setminus L$. Indeed, for $0\neq l\in L$  and $z\in A$ we have
$$
\Big\vert
\frac 1{(z-l)^2}-\frac 1{l^2} 
\Big\vert=
\Big\vert
\frac {z(2l-z)}{(z-l)^2l^2}  \Big\vert \leq  \Big\vert \frac {\rm const} {l^3}\Big\vert
$$ 
and since the number of $l\in L$ of size roughly $k$ is at most ${\rm const}\,k$ (see Exercise 1), we get absolute and uniform convergence. 

\smallskip
It is easy to see that the poles of $\mathcal P(z)$ are of order two and at $z\in L$ only. For example, all of the terms other than $\frac 1{z^2}$ together converge absolutely and uniformly in a small neighborhood of $z=0$, by the same argument as above. Thus, it remains to verify that $\mathcal P$ is $L$-periodic. This is not exactly surprising, given how we gradually constructed $\mathcal P$, but let's do it. Pick a nonzero lattice element $m$. Then for $z\not\in L$ we have
\begin{align*}
&\mathcal P(z+m)
 = \frac 1{(z+m)^2} + \sum_{0\neq l\in L} \Big( \frac 1{(z+m-l)^2 }-\frac 1{l^2}\Big)
 \\&
 =\frac 1{(z+m)^2} + \frac 1{z^2} - \frac 1{m^2}  + \sum_{l\neq 0,m} \Big( \frac 1{(z+m-l)^2 }-\frac 1{l^2}\Big)
 \\&
 =\frac 1{(z+m)^2} + \frac 1{z^2} - \frac 1{m^2}  + \sum_{l'\neq -m,0} \Big( \frac 1{(z-l')^2 }-\frac 1{(l'+m)^2}\Big)
 \\&
 =
 \frac 1{(z+m)^2} + \frac 1{z^2} - \frac 1{m^2}  + \sum_{l'\neq -m,0} \Big( \frac 1{(z-l')^2 }-\frac 1{(l')^2}\Big)
  \\&
  + \sum_{l'\neq -m,0} \Big( \frac 1{(l')^2 }-\frac 1{(l'+m)^2}\Big)
 =\mathcal P(z) 
 + \sum_{l'\neq -m,0} \Big( \frac 1{(l')^2 }-\frac 1{(l'+m)^2}\Big) = \mathcal P(z).
\end{align*}
In the last equality we used that the change of variables $l'\to -m-l'$ sends 
$S= \sum_{l'\neq -m,0} \Big( \frac 1{(l')^2 }-\frac 1{(l'+m)^2}\Big)$ to $(-S)$, which implies $S=0$.

We also observe that $\mathcal P(z)$ is an even function, i.e. 
$$\mathcal P(-z)=\mathcal P(z),$$
which follows from making a change of summation index $l\to (-l)$ in the definition of $\mathcal P$.

\smallskip
\begin{prop}\label{valP}
For every $a\not\in \frac 12 L$ the even elliptic function $\mathcal P(z) -\mathcal P(a)$ has zeros at $a \,{\rm mod} \,L$ and $-a\, {\rm mod} \,L$ with multiplicity one, and no other zeros. For $a\in \frac 12 L \setminus L$ the function $\mathcal P(z) -\mathcal P(a)$ has zeros of order two at  $a\,{\rm mod}\, L$ and no other zeros.
\end{prop}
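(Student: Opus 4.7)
The plan is to apply the zero/pole counting result (Proposition \ref{zp}) to the elliptic function $g(z) := \mathcal{P}(z) - \mathcal{P}(a)$, together with the evenness of $\mathcal{P}$. Subtracting the constant $\mathcal{P}(a)$ does not change the pole structure of $\mathcal{P}$, so $g$ has exactly one pole modulo $L$ (at $z \equiv 0$), of order $2$. By Proposition \ref{zp}, $g$ therefore has exactly $2$ zeros modulo $L$ counted with multiplicity.

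Next, I would exploit the identity $\mathcal{P}(-z) = \mathcal{P}(z)$ noted immediately above the proposition. This gives $g(a) = 0$ and $g(-a) = \mathcal{P}(-a) - \mathcal{P}(a) = 0$, so both $a$ and $-a$ (mod $L$) are zeros. If $a \notin \tfrac{1}{2}L$, then $a \not\equiv -a \pmod{L}$, so these are two distinct cosets. Since the total zero count is $2$, each must be a simple zero, which handles the first case.

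For the second case, suppose $a \in \tfrac{1}{2}L \setminus L$, so $2a \in L$ and $a \equiv -a \pmod{L}$. Then $a$ accounts for only one of the two zeros unless its multiplicity is at least $2$. To establish this, I would differentiate the identity $\mathcal{P}(-z) = \mathcal{P}(z)$ to obtain $\mathcal{P}'(-z) = -\mathcal{P}'(z)$, and combine it with the $L$-periodicity of $\mathcal{P}'$ (which follows by differentiating $\mathcal{P}(z+l) = \mathcal{P}(z)$). Since $-a \equiv a \pmod{L}$, we get
\[
\mathcal{P}'(a) \;=\; \mathcal{P}'(-a) \;=\; -\mathcal{P}'(a),
\]
so $\mathcal{P}'(a) = 0$. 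Hence $g'(a) = 0$ as well, meaning $a$ is a zero of $g$ of multiplicity at least $2$. Combined with the global count of $2$, the multiplicity at $a$ is exactly $2$ and there are no other zeros.

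The argument is essentially a bookkeeping exercise once Proposition \ref{zp} is in hand; I do not see a genuine obstacle. The only mildly delicate point is the second case, where one must remember to use both the oddness of $\mathcal{P}'$ and its $L$-periodicity to force $\mathcal{P}'(a) = 0$ at the half-lattice points. Everything else is a direct application of earlier results.
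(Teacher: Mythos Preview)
Your proof is correct. It differs from the paper's in one respect worth noting: the paper invokes both Proposition~\ref{zp} and Proposition~\ref{sumzerospoles}, using the constraint that the two zeros sum to $0 \bmod L$ to force the second zero to be $-a$ (and, in the half-lattice case, to force a double zero at $a$). You instead use only Proposition~\ref{zp} together with the evenness of $\mathcal P$ to locate $-a$ as a zero directly, and for $a\in\tfrac12 L\setminus L$ you explicitly verify $\mathcal P'(a)=0$ via the oddness and periodicity of $\mathcal P'$. Your route is slightly more hands-on and avoids Proposition~\ref{sumzerospoles} altogether; the paper's route is terser and foreshadows how \ref{sumzerospoles} gets used later. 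Both are perfectly valid, and your handling of the half-lattice case is in fact the content of Exercise~2 in the following section.
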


\begin{proof}
The function $\mathcal P(z) -\mathcal P(a)$ has a pole of order two at $L$ and no other poles. Therefore, by Propositions \ref{zp} and \ref{sumzerospoles} it has two zeros (with multiplicity, up to $L$) which add up to $0\,{\rm mod} \,L$. One of these zeros is $a$, so the other is $(-a)$. The situation with $2a\in L$ is special, because then we have $a=-a\,{\rm mod}\,L$ and a double zero.
\end{proof}

\begin{thm}\label{evenelliptic}
Every even elliptic function $f(z)$ is a ratio of two polynomials in $\mathcal P(z)$ with constant coefficients.
\end{thm}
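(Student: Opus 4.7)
The plan is to use Proposition \ref{valP} to cook up a rational function in $\mathcal P(z)$ that has the same zeros and poles as $f$ at every point outside the lattice $L$, and then deduce via Proposition \ref{zp} that what is left over is a nonzero constant.

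I would begin by cataloging the divisor of $f$ in a fundamental parallelogram. Since $f$ is even and $L$-periodic, its zeros and poles at points $a \notin \tfrac12 L$ come in pairs $\{a,-a\}$ of equal multiplicity. For a half-lattice point $a \in \tfrac12 L \setminus L$ the identity $f(a+w) = f(-a-w) = f(a-w)$ forces the Taylor expansion of $f$ at $a$ to contain only even powers, so the multiplicity of $f$ there is an even integer $2k_a$. The same argument applied at $a=0$ shows that the multiplicity of $f$ at each point of $L$ is also even.

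Next I would assemble
\[
h(z) = \prod_{j} \bigl( \mathcal P(z) - \mathcal P(a_j) \bigr)^{n_j},
\]
where the product runs over one representative $a_j$ from each pair $\{a,-a\}$ with $a \notin \tfrac12 L$ at which $f$ has a zero or pole, and over each half-lattice $a_j \in \tfrac12 L \setminus L$ where $f$ has a zero or pole. The exponent $n_j$ is the multiplicity of $f$ at $a_j$ in the first case, and $k_{a_j}$ in the half-lattice case, taken positive for a zero of $f$ and negative for a pole. By Proposition \ref{valP} the divisor of $h$ on $(\C/L) \setminus \{0\}$ matches that of $f$ exactly, so $g := f/h$ is an even elliptic function whose only possible zeros or poles lie on $L$.

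Finally I would invoke Proposition \ref{zp}: by $L$-periodicity $g$ has a single common order $m$ at every lattice point, so within a fundamental parallelogram it has $\max(m,0)$ zeros and $\max(-m,0)$ poles (with multiplicity) and nothing else; equality of the two counts forces $m=0$. Thus $g$ is a holomorphic elliptic function with no zeros, hence a nonzero constant $c$ by compactness of $\C/L$, and $f = c\, h$ is manifestly a ratio of polynomials in $\mathcal P(z)$. The main obstacle is not this final constancy step but the bookkeeping around the half-lattice points: the factor $\mathcal P(z) - \mathcal P(a)$ already contributes a double zero at $a \in \tfrac12 L \setminus L$ by Proposition \ref{valP}, so the exponent $n_j$ must be taken as half the multiplicity of $f$ at $a_j$, which is exactly why the evenness-of-multiplicity observation above is essential.
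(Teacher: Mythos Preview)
Your proof is correct and follows essentially the same route as the paper: pair up the zeros and poles of $f$ using evenness, use Proposition \ref{valP} to build a product of factors $\mathcal P(z)-\mathcal P(a_j)$ with matching divisor away from $L$, and then invoke Proposition \ref{zp} to force the leftover quotient to be a nonzero constant. Your inline justification that the multiplicity at each half-lattice point is even (via $f(a+w)=f(a-w)$) is exactly the content of the paper's Exercise 2, so you have in fact filled in a detail the paper leaves to the reader.
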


\begin{proof}
For a nonzero $f(z)$, its zeros and poles come in pairs $(a,-a)$ modulo $L$. Moreover, the multiplicity of zero/pole at $a$ with $2a\in L$ is even, see Exercise 2.
Then we consider 
$$
g(z) = f(z)  \prod_{2a\in L,a\not\in L} (\mathcal P(z) -\mathcal P(a))^{-\frac{m_a}2}
\prod_{(a,-a),2a\not\in L} (\mathcal P(z) -\mathcal P(a))^{-m_a}.
$$
By construction, $g(z)$ has no zeros or poles, except for $0\,{\rm mod}\,L$. Because the total number of zeros and poles is zero, this means that $g(z)$ has neither, and is therefore a nonzero constant. This implies that $f(z)$ is a ratio of polynomials in $\mathcal P(z)$. 
\end{proof}

{\bf Exercise 1.} Prove that for every lattice $L$ there exists a constant $c$ such that the number of $l\in L$ with 
$k\leq |l|< k+1$ is at most $c\, k$ for all $k\geq 1$. \emph{Hint:} Argue that the union of fundamental parallelograms of $L$ centered at such $l$ lies in a certain annulus and thus has smaller area.

{\bf Exercise 2.} Prove that for any even elliptic function $f$ and any $a\in \frac 12 L$ the order of pole or zero of $f$ at $a$ is even.
\emph{Hint:} Consider the Laurent power series of $f$ at $a$ and use $f(2a-z) = f(-z)=f(z)$.

{\bf Exercise 3.} Prove that every elliptic function can be uniquely written as a sum of an even and an odd elliptic functions.

\section{Weierstrass embedding of $\C/L$ into $\C\P^2$.}
Theorem \ref{evenelliptic} provides a good handle on even elliptic functions, but can we find an odd elliptic function? No problem at all. 
Since $\mathcal P(z)$ is even, its derivative $\mathcal P'(z)$ is odd. Since as we saw in Exercise 3 of the previous section, every elliptic function is a sum of an even and an odd one, we now have a pretty good understanding of all elliptic functions. Specifically, we see that every elliptic function can be written as 
$$
f(\mathcal P(z)) + \mathcal P'(z) g(\mathcal P(z))
$$
where $f$ and $g$ are rational functions of one variable.

\smallskip
Observe that since $\mathcal P'(z)^2$ is even, it can be written as a rational function in $\mathcal P(z)$. But we can be a lot more precise than that, it is a polynomial of degree $3$ in $\mathcal P(z)$. We will prove it by looking at the Laurent expansions at $z=0$.

\smallskip
We start with the Laurent power series expansion 
$$
\mathcal P(z) = \frac 1{z^2}+ 0 + a z^2 + bz^4 + \ldots
$$
where we can only have even powers because $\mathcal P(z)$ is even and the $z^0$ term is zero by Exercise 1. The constants $a$ and $b$ depend on the lattice $L$, but we will not need explicit formulas for them. We differentiate and square to get the following.
\begin{align*}
&\mathcal P'(z) = -\frac 2{z^3} + 2a z + 4bz^3 + \ldots
\\
&\mathcal P'(z)^2 = \frac 4{z^6} - \frac {8a}{z^2} - 16b + \ldots
\end{align*}
We also compute $\mathcal P(z)^3$ as
$$
\mathcal P(z)^3 = \frac 1{z^6} + \frac {3a}{z^2} + 3b + \ldots
$$
which gives
$$
\mathcal P'(z)^2-4\,\mathcal P(z)^3 = -\frac {20a}{z^2} - 28 b + \ldots
$$
and 
$$
\mathcal P'(z)^2-4\,\mathcal P(z)^3 + 20a\,\mathcal P(z) = -28b + \ldots.
$$
Now observe that $\mathcal P'(z)^2-4\,\mathcal P(z)^3 + 20a\,\mathcal P(z)$ is an elliptic function with no poles, so it must be a constant, which implies 
\begin{equation}\label{CLcubic}
\mathcal P'(z)^2 =4\,\mathcal P(z)^3 - 20a\,\mathcal P(z) - 28b.
\end{equation}
\smallskip
Doesn't it look suspiciously like the cubic curve \eqref{Esp} in $\C\P^2$? 

\smallskip
We will in fact show that the Riemann surface $\C/L$ is biholomorphic to a smooth cubic curve in $\C\P^2$.
\begin{defn}
Consider the map $\mu:\C/L\to \C\P^2$ given by
$$
\mu(z) = \left\{
\begin{array}{ll}
(\mathcal P(z):\mathcal P'(z):1),&{\rm if~} z\neq 0\,{\rm mod}\,L,\\
(0:1:0),&{\rm if~}z=0\,{\rm mod}\,L.
\end{array}
\right.
$$
\end{defn}

\begin{prop}
The map $\mu$ is holomorphic.
\end{prop}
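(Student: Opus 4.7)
The plan is to check holomorphicity separately on $(\C/L)\setminus\{0\}$ and at the single point $z=0\bmod L$, using the homogeneous coordinate scaling of $\C\P^2$ to unify the two branches of the definition.

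Away from the lattice, both $\mathcal P$ and $\mathcal P'$ are holomorphic functions of $z$, so the map $z\mapsto(\mathcal P(z):\mathcal P'(z):1)$ lands in the affine chart $U_2=\{x_2\neq 0\}\cong\C^2$ of $\C\P^2$ with coordinates $(\mathcal P(z),\mathcal P'(z))$, and is manifestly holomorphic there.

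The nontrivial step is to verify holomorphicity at $z=0\bmod L$. Here I would use the freedom to rescale homogeneous coordinates by any nonvanishing holomorphic factor on a punctured neighborhood of $0$. Multiplying through by $z^3$, I rewrite
\[
\mu(z) \;=\; \bigl(z^3\mathcal P(z)\,:\,z^3\mathcal P'(z)\,:\,z^3\bigr),\qquad z\neq 0.
\]
From the Laurent expansions $\mathcal P(z)=z^{-2}+az^2+bz^4+\cdots$ and $\mathcal P'(z)=-2z^{-3}+2az+4bz^3+\cdots$ established just above in the text, each of the three components is holomorphic at $z=0$, with values
\[
z^3\mathcal P(z)\big|_{z=0}=0,\qquad z^3\mathcal P'(z)\big|_{z=0}=-2,\qquad z^3\big|_{z=0}=0.
\]
Since the middle coordinate is nonzero at $z=0$, the triple $(z^3\mathcal P(z),\,z^3\mathcal P'(z),\,z^3)$ defines a holomorphic map of a neighborhood of $0$ into the affine chart $U_1=\{x_1\neq 0\}$ of $\C\P^2$, with affine coordinates
\[
\left(\frac{z^3\mathcal P(z)}{z^3\mathcal P'(z)},\ \frac{z^3}{z^3\mathcal P'(z)}\right),
\]
both of which are ratios of holomorphic functions with nonvanishing denominator at $z=0$, hence holomorphic there. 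The value at $z=0$ is $(0:-2:0)=(0:1:0)$, matching the prescribed value of $\mu$. So $\mu$ extends holomorphically across $z=0$, and for $z\neq 0$ the rescaled formula agrees with the original $(\mathcal P(z):\mathcal P'(z):1)$. Combining the two cases gives holomorphicity on all of $\C/L$.

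The only potential obstacle is bookkeeping: one must keep track of two different affine charts of $\C\P^2$ (using the third coordinate away from $0$, the second coordinate near $0$), and confirm that the $z^3$-rescaling produces functions that are simultaneously holomorphic \emph{and} not simultaneously vanishing at $z=0$. Both facts are immediate from the leading-order terms in the Laurent series, so this is really a matter of writing down the expansions cleanly rather than any substantive difficulty.
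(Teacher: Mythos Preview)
Your proof is correct and follows essentially the same approach as the paper: both check holomorphicity away from $0$ trivially in the chart $x_2\neq 0$, and at $0$ both pass to the chart $x_1\neq 0$ by dividing through by $\mathcal P'(z)$. Your intermediate step of multiplying by $z^3$ is just a transparent way to see that $\mathcal P(z)/\mathcal P'(z)$ and $1/\mathcal P'(z)$ are holomorphic at $0$, which the paper asserts directly from the Laurent expansions.
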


\begin{proof}
Clearly, $\mu$ is well-defined, because $\mathcal P$ and $\mathcal P'$ are $L$-periodic. Holomorphicity outside of $z=0\,{\rm mod}\, L$ is immediate, since $\mathcal P$ has no poles there. To show holomorphicity at $z=0$ observe that near $z=0$ we can rewrite the top line of the definition of $\mu$ as 
\begin{equation}\label{near0}
\mu(z) = \left(\frac{\mathcal P(z)}{\mathcal P'(z)}:1:\frac 1{\mathcal P'(z)}\right)
\end{equation}
and the elliptic functions  $\frac{\mathcal P(z)}{\mathcal P'(z)}$ and $\frac 1{\mathcal P'(z)}$ are holomorphic near $z=0$ and have limit $0$ as $z\to 0$.
\end{proof}

\begin{thm}\label{mainCL}
The map $\mu$ is an embedding of complex manifolds. The image is a smooth cubic curve given by
$$
x_1^2 x_2 =4 x_0^3 - 20a \,x_0 x_2^2 - 28b\, x_2^3.
$$
\end{thm}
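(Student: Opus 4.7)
The plan is to verify that $\mu$ is an injective holomorphic immersion whose image lies in $C := \{x_1^2 x_2 = 4 x_0^3 - 20 a\, x_0 x_2^2 - 28 b\, x_2^3\}$, and then invoke irreducibility and dimension to upgrade the image to all of $C$.

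First, the containment $\mu(\C/L) \subseteq C$: for $z\not\equiv 0\bmod L$ this is exactly the Weierstrass relation \eqref{CLcubic}, and $(0{:}1{:}0)$ manifestly satisfies the equation. Smoothness of $C$ at $(0{:}1{:}0)$ is handled as in Section \ref{cubicP2}, and at affine points it reduces to the polynomial $p(t)=4t^3-20at-28b$ having three distinct roots. Since $p(\mathcal{P}(z))=\mathcal{P}'(z)^2$, those roots are precisely the values $\mathcal{P}(\omega)$ as $\omega$ ranges over the three nontrivial half-periods of $L$ (every half-period is a zero of $\mathcal{P}'$ because $\mathcal{P}'(-z)=-\mathcal{P}'(z)$, and $\mathcal{P}'$ has exactly three zeros mod $L$ counted with multiplicity by Proposition \ref{zp}). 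By Proposition \ref{valP}, each half-period $\omega$ is the unique preimage of its $\mathcal{P}$-value, so the three values are distinct and $C$ is smooth.

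For injectivity, suppose $\mu(z_1)=\mu(z_2)$ with $z_1,z_2\not\equiv 0$. Then $\mathcal{P}(z_1)=\mathcal{P}(z_2)$ forces $z_2\equiv \pm z_1\pmod L$ by Proposition \ref{valP}; if $z_2\equiv -z_1$, then oddness of $\mathcal{P}'$ together with $\mathcal{P}'(z_1)=\mathcal{P}'(z_2)$ forces $\mathcal{P}'(z_1)=0$, whence $z_1$ is a half-period and $z_1\equiv -z_1\equiv z_2$. The point $\mu(0)=(0{:}1{:}0)$ is distinct from every other $\mu(z)$ since the latter has nonzero third coordinate. For the immersion property at $z\not\equiv 0$, work in the chart $x_2\neq 0$, where $\mu(z)=(\mathcal{P}(z),\mathcal{P}'(z))$; the differential vanishes only if $\mathcal{P}'(z)=\mathcal{P}''(z)=0$. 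But the three zeros of $\mathcal{P}'$ mod $L$ (counted with multiplicity) are accounted for by the three distinct half-periods, so each is a \emph{simple} zero, whence $\mathcal{P}''$ is nonzero there. At $z\equiv 0$ use the chart \eqref{near0}: the first coordinate expands as $\mathcal{P}/\mathcal{P}'=-z/2+O(z^5)$, which has nonvanishing derivative.

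Combining these, $\mu$ is an injective holomorphic immersion from the compact Riemann surface $\C/L$ and hence an embedding. Its image is then a smooth compact one-dimensional submanifold contained in the curve $C$, which is itself smooth and therefore irreducible by the Bezout observation from Section \ref{cubicP2}; dimension forces $\mu(\C/L)=C$. The main obstacle is the behavior at the half-periods and at $0$: both the simplicity of the zeros of $\mathcal{P}'$ (needed for the immersion) and the distinctness of the roots of $p(t)$ (needed for smoothness of $C$) come from coupling Proposition \ref{zp} with Proposition \ref{valP}, and once these are in hand everything else is formal.
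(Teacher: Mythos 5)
Your argument follows the same skeleton as the paper's: injectivity via Proposition \ref{valP} and the oddness of $\mathcal{P}'$, the immersion property at $z\not\equiv 0$ via the simplicity of the zeros of $\mathcal{P}'$ at the half-periods, and the behavior at $z\equiv 0$ via the expansion of $\mathcal{P}/\mathcal{P}'$ in the chart \eqref{near0}. Those steps are essentially identical.

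Where you genuinely diverge is at the end. The paper stops after establishing that $\mu$ is an embedding and remarks only that the image is smooth and satisfies \eqref{CLcubic}; it never explicitly argues that the image is \emph{all} of the cubic $C$, which is what the theorem claims. You close that gap by first verifying that $C$ itself is smooth — extracting from $p(\mathcal{P}(z))=\mathcal{P}'(z)^2$ that the roots of $p(t)=4t^3-20at-28b$ are exactly the values $\mathcal{P}(\omega)$ at the three half-periods and are pairwise distinct by Proposition \ref{valP} — and then invoking the irreducibility of smooth plane curves (the Bezout remark from Section \ref{cubicP2}) together with a dimension count to force $\mu(\C/L)=C$. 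This is a real improvement: the smoothness/nondegeneracy of $p$ is the discriminant condition that makes the Weierstrass cubic nondegenerate, and pinning it down via the half-periods of $\mathcal{P}$ is exactly the right ingredient the paper leaves implicit. Both proofs are correct, but yours is the more complete argument for the equality of the image with the cubic.
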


\begin{proof}
The first order of business is to prove that $\mu$ is injective. It is clear that $z=0\,{\rm mod}\,L$ and $z\neq 0\,{\rm mod}\,L$ map to different parts of $\C\P^2$, since the former has nonzero last coordinate in its image.
Suppose now that $\mu(z_1)=\mu(z_2)$ with nonzero $z_1\neq z_2\,{\rm mod}\,L$. This implies
$$
\mathcal P(z_1)=\mathcal P(z_2),~\mathcal P'(z_1)=\mathcal P'(z_2).
$$
Observe that by Proposition \ref{valP} we must have $z_2=-z_1 \,{\rm mod}\,L$ and $z_1$ can not be a $2$-torsion point, i.e. $2z_1\not\in L$. Then since $\mathcal P'$ is odd, we get $\mathcal P'(z_2)=-\mathcal P'(z_1)$ which means $\mathcal P'(z_1)=0$. However, one can easily show that $\mathcal P'(z_1)=0$ occurs only at the three nonzero $2$-torsion points of $\C/L$ (this is Exercise 2), which proves 
$\mu$ is injective.

\smallskip
This injectivity is not enough to prove that we have an embedding.\footnote{ The map $t\mapsto (t^2,t^3)$ from $\C$ to $\C^2$ is injective but is not an embedding of complex manifolds.} We also need to show that tangent vectors do not map to zero. For $z\not\in L$, this means to show that at least one of the derivatives of $\mathcal P$ and $\mathcal P'$ is nonzero at $z$. To see that, observe that $\mathcal P'(z) = 0$ implies $2z\in L$ by Exercise 2. Then $\mathcal P''(z)$ is nonzero, again by Exercise 2, because the 
multiplicity of the zero of $\mathcal P'(z)$ is one. Finally, the map $\mu$ is an embedding near $z=0$, because in \eqref{near0} the function
$$
\frac{\mathcal P(z)}{\mathcal P'(z)} = \frac {\frac 1{z^2}+  a z^2 + bz^4 + \ldots}{-\frac 2{z^3} + 2a z + 4bz^3 + \ldots}
= -\frac 12 z + \ldots
$$
has a nonzero derivative at $z=0$. 

\smallskip
We now know that the image is smooth. It also satisfies the above cubic equation because of \eqref{CLcubic}.
\end{proof}

\smallskip{\blue \hrule}
We will now compare the group law on $\C/L$ with that on the cubic curve of Theorem \ref{mainCL} where we use $O=\mu(0)=(0:1:0)$ to define the group structure geometrically. The idea is simple: if we have a line $a_0 x_0 + a_1 x_1 + a_2 x_2 = 0$ (for simplicity assume that $a_1\neq 0$) then the intersection points of it with $\mu(\C/L)$ are images under $\mu$ of the three roots of the function
$$
a_0 \mathcal P(z) + a_1 \mathcal P'(z) + a_2.
$$
Since the three poles of this function are all at $0\,{\rm mod}\,L$, the sum of these zero is $0\,{\rm mod}\,L$ by Proposition \ref{sumzerospoles}. Similarly, for $\alpha_1=0$, but $a_0\neq 0$, the line intersect $\mu(\C/L)$ at $\mu(0)$ and two other points that add up to zero. If we start with $A=\mu(z_1)$, $B=\mu(z_2)$, then the third intersection point of $AB$ with $\mu(\C/L)$ is $\mu(-z_1-z_2)$, and then the point $A+B$ corresponds to $\mu(z_1 +z_2)$, so the two group laws are matched under $\mu$.\footnote{ We are ignoring various issues of multiple roots, or one of $A$ or $B$ or $A+B$ being equal to $(0:1:0)$. These can be handled by continuity, or directly.}
 
\smallskip
{\blue \hrule}
Let us now interpret the map $\mu$ in terms of sections of line bundles. If we start with a Weil divisor $3[0\,{\rm mod}\,L]$, (a multiple of the codimension one subvariety which is the origin in $\C/L$), then it gives us a line bundle $\mathcal O(3[0\,{\rm mod}\,L])$ and a global holomorphic section $s$ of it (unique up to nonzero scalar), which has locally a zero of order three at $0\,{\rm mod}\,L$ under a trivialization. All other meromorphic sections of $\mathcal O(3[0\,{\rm mod}\,L])$  are of the form $s f$ for an elliptic function $f$; they are holomorphic if and only if $f$ has a pole of order at most three at $0\,{\rm mod}\,L$ and no other poles. Such $f$ are uniquely determined by the coefficients $a_{-3},a_{-2},a_{-1},a_0$ in their Laurent expansion $f(z)=\sum_{k\geq -3} a_kz^{k}$, and we also have $a_{-1}=0$, as this is the only residue of $f$. Thus $f$ lies in the linear span of  
$$
1,~\mathcal P(z), ~\mathcal P'(z)
$$
Then sections $s \mathcal P(z), s \mathcal P'(z), s$ give us the map $\mu$. See also Exercise 3 for what happens for the Weil divisor $2[0\,{\rm mod}\,L]$.

\smallskip
\begin{rem}
One should think of the Weierstrass embedding $\mu$ as providing $\C/L$, which is initially only a complex manifold, with the structure of 
a smooth algebraic variety.
\end{rem}

\smallskip
\begin{rem}
The term ``elliptic" comes from the fact that the ``inverse" of $\mu$ is obtained by integrating the form 
$$
dz = \frac  {d\mathcal P(z)}{\mathcal P'(z)} = \frac {dx}{\sqrt{4x^3-20a\,x-28b}}
$$ 
where $x=\frac {x_0}{x_2}$ and $y=\frac{x_1}{x_2}$. These types of the integrals, with square roots of cubic (or degree four) polynomials historically occurred in the problem of computing the arc length of ellipses.
\end{rem}

\smallskip

{\bf Exercise 1.} Use the definition of $\mathcal P(z)$ to prove that it has zero constant term in its Laurent series expansion at $z=0$.

{\bf Exercise 2.} Prove that $\mathcal P'(z)$ has zeros exactly at the three points $z\,{\rm mod}\,L$ such that $2z\in L$ but $z\not\in L$, each with multiplicity one. \emph{Hint:} Use that $\mathcal P'$ is odd to prove that it has zeros at these points. Use Proposition \ref{zp} to prove that there are no other zeros or higher multiplicities.

{\bf Exercise 3.} Prove that the space of global holomorphic sections of the line bundle $\mathcal O(2[0\,{\rm mod}\,L])$  is two-dimensional and that the resulting map to $\C\P^1$ is a $2:1$ map ramified at points $z$ with $2z= 0\,{\rm mod}\, L$.

\section{Jacobi theta function and divisors on elliptic curves. Elliptic curves over $\Q$.}
We have proved in Proposition \ref{sumzerospoles} that that if $f$ is an elliptic function for a lattice $L\subset \C$, then for zeros/poles $z_i\in \C/L$ with multiplicities $m_i$ there holds 
\begin{equation}\label{zpell}
\sum_{i}m_i = 0,~\sum_i m_i z_i =0\,{\rm mod}\,L.
\end{equation}
We will argue in this section that the converse is true, i.e. for a set of points $z_i\in \C/L$ and integers $m_i$ that satisfy \eqref{zpell} there is a unique up to scaling elliptic function $f$ with poles and zeros at $z_i$ with multiplicity $m_i$.

\smallskip
Uniqueness is clear because the ratio of two functions with the same multiplicities of zeros and poles is holomorphic on $\C/L$ and thus constant. Existence is harder to prove -- we will do it with the help of the  {\blue Jacobi theta function} which is a holomorphic function on $\C\times \mathcal H$, where $\mathcal H= \{\tau,{\rm~Im}\tau>0\}$ is the upper half plane.

\smallskip
\begin{defn}\label{theta}
We define a function of two variables $(z,\tau)\in \C\times \mathcal H$ by
$$
\theta(z,\tau) = \ee^{\pi \ii \tau /2} (2\sin \pi z) \prod_{l=1}^{\infty}(1-\ee^{2\pi\ii l\tau})
(1-\ee^{2\pi\ii (l\tau+z)})(1-\ee^{2\pi\ii (l\tau-z)}).
$$
\end{defn}

\smallskip
\begin{rem}
There is no universally agreed upon set of notations when it comes to the Jacobi theta functions. One can find other formulas in the literature. However, they are all related to $\theta$ that we define by some simple coordinate changes.
\end{rem}

It is easy to see that the infinite product in the definition of $\theta$ converges uniformly on compacts in $\C\times \mathcal H$ and thus defines a holomorphic function of two variables. The key to this is the equality $|\ee^{2\pi\ii(l\tau +z)}| = |\ee^{-2\pi\, {\rm  Im} z }||\ee^{-2\pi\, {\rm Im} \tau }|^l$. 
It is also easy to prove that 
\begin{equation}\label{proptheta}
\theta(-z,\tau) = -\theta(z,\tau),~
\theta(z+1,\tau) = -\theta(z,\tau),~
\theta(z+\tau,\tau) = -\ee^{-2\pi\ii z - \pi \ii \tau} \theta(z,\tau),
\end{equation}
see Exercise 1.

\smallskip
Because of the convergence of the product, zeros of $\theta(z,\tau)$ occur precisely where the factors vanish.
\begin{align*}
&\{\theta(z,\tau)=0\} = \{\sin \pi z = 0\} \cup \bigcup_{l>0} \{1=\ee^{2\pi\ii(l\tau \pm z)}\} 
\\
&= \Z \cup \bigcup_{l>0} \{z \in \mp l\tau+\Z\} =\Z+\Z\tau.
\end{align*}
We also see that the vanishing is of first order, so in particular if we fix $\tau$, the function $\theta(z) = \theta(z,\tau)$ has simple zeros at $L=\Z+\Z\tau$. This can be also proved using the transformation properties \eqref{proptheta}, see Exercise 2. 

\smallskip
\begin{prop}\label{findf}
Suppose that we have a finite set of points $z_k \in \C$  and integers $m_k$ such that 
$$
\sum_k{m_k}=0,~\sum_k{m_k z_k} = 0.
$$
Then
$f(z) = \prod_{k} \theta(z-z_k,\tau)^{m_k}$ is an elliptic function for $L=\Z+\Z\tau$.
\end{prop}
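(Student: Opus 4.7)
The plan is to verify the two periodicity conditions $f(z+1) = f(z)$ and $f(z+\tau) = f(z)$ directly, by plugging into the defining product and applying the quasi-periodicity identities \eqref{proptheta} term by term. Since $\theta(\cdot,\tau)$ is meromorphic (in fact entire) in $z$, the product $f$ is automatically a meromorphic function of $z$ with zeros/poles exactly at $z_k + L$ of order $m_k$, so the only content to check is $L$-periodicity.

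For the shift by $1$, each factor transforms as $\theta(z - z_k + 1,\tau) = -\theta(z - z_k,\tau)$, so the product picks up a global factor $(-1)^{\sum_k m_k}$, which equals $1$ by the first hypothesis. This gives $f(z+1) = f(z)$.

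For the shift by $\tau$, the factor $\theta(z - z_k + \tau, \tau) = -e^{-2\pi i(z-z_k) - \pi i \tau}\, \theta(z-z_k,\tau)$ contributes, after raising to $m_k$ and multiplying over $k$, a scalar
\[
(-1)^{\sum_k m_k}\, \exp\!\Big(-2\pi i \sum_k m_k(z - z_k) - \pi i \tau \sum_k m_k\Big).
\]
The two hypotheses $\sum_k m_k = 0$ and $\sum_k m_k z_k = 0$ kill the sign, the $z$-dependent exponent $-2\pi i z \sum_k m_k$, the constant exponent $2\pi i \sum_k m_k z_k$, and the $\tau$-exponent $-\pi i \tau \sum_k m_k$ simultaneously, so the scalar is $1$ and $f(z+\tau) = f(z)$.

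There is no real obstacle here: once \eqref{proptheta} is in hand the proof is pure bookkeeping of exponents, and the two hypotheses are precisely what is needed to cancel the constant, linear-in-$z$, and linear-in-$\tau$ contributions in the second shift. The only subtlety worth flagging is that the hypothesis demands $\sum_k m_k z_k = 0$ on the nose (not merely in $L$); if one only had $\sum_k m_k z_k \in L$, one would first replace one of the $z_k$ by a lattice translate to arrange exact vanishing, noting that this alters $f$ by an overall nonzero constant and hence yields the same elliptic function up to scale, which suffices for the converse to \eqref{zpell} discussed in the surrounding text.
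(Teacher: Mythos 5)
Your proof is correct and follows the same route as the paper: apply the quasi-periodicity identities \eqref{proptheta} factor by factor to the product defining $f$, and use the two hypotheses to cancel the accumulated sign and exponential factors for the shifts by $1$ and by $\tau$. The only difference is that you spell out the bookkeeping in more detail and add a remark about relaxing $\sum_k m_k z_k = 0$ to a lattice condition, which the paper handles separately in the proof of Theorem \ref{mainzpell}.
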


\begin{proof}
By \eqref{proptheta} and assumptions on $z_k$ and $m_k$,
\begin{align*}
&f(z+1) = f(z) \prod_k (-1)^{ m_k}  = f(z),\\
&f(z+\tau) = f(z) \prod_k \ee^{(\pi\ii - 2\pi\ii z + 2\pi\ii z_k -\pi\ii\tau)m_k}
=f(z).
\end{align*}
\end{proof}

\begin{thm}\label{mainzpell}
For any set of points  $z_k\in \C/L$ with integers $m_k$ that satisfy 
$$
\sum_k m_k = 0,~\sum_k m_k z_k = 0\,{\rm mod}\,L
$$
there exists a unique up to scaling meromorphic function on $\C/L$ whose Weil divisor ${\rm div} f$ is given by $\sum_k m_k z_k$.
\end{thm}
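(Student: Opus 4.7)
The plan is to prove uniqueness by a short compactness argument, and to prove existence by applying Proposition \ref{findf}, with a small preliminary step to arrange the hypotheses of that proposition exactly (not just modulo $L$).

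\textbf{Uniqueness.} Suppose $f_1$ and $f_2$ are two meromorphic functions on $\C/L$ with the same Weil divisor $\sum_k m_k[z_k]$. Then $f_1/f_2$ is a meromorphic function on the compact Riemann surface $\C/L$ with no zeros and no poles, hence a holomorphic function into $\C^*$. Since holomorphic functions on a compact connected complex manifold are constant (the image is compact and open in its codomain, but $\C^*$ is connected and not compact), $f_1/f_2$ is constant.

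\textbf{Existence.} First, by rescaling $\C$ by a nonzero complex constant (which does not affect meromorphicity or divisors), we may assume $L = \Z + \Z\tau$ for some $\tau\in\mathcal H$. Next, choose lifts $\tilde z_k\in\C$ of the classes $z_k\in \C/L$. By hypothesis, $l := \sum_k m_k \tilde z_k \in L$. If $l=0$, Proposition \ref{findf} applies directly to produce $f(z) = \prod_k \theta(z-\tilde z_k,\tau)^{m_k}$, and since $\theta(\cdot,\tau)$ has only simple zeros at lattice points, the Weil divisor of $f$ on $\C/L$ is exactly $\sum_k m_k[z_k]$.

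If $l\neq 0$, introduce an auxiliary pair that is trivial as a divisor on $\C/L$ but corrects the sum of lifts. Pick any $w\in\C$ and enlarge the collection by the two extra data $(w,+1)$ and $(w+l,-1)$. Since $(w+l)\equiv w\pmod L$, these two contributions cancel as a Weil divisor on $\C/L$, so the augmented divisor is still $\sum_k m_k[z_k]$. The new sum of integers is still $0$, and the new sum of lifts is
\[
\Big(\sum_k m_k \tilde z_k\Big) + w - (w+l) = l - l = 0.
\]
Now Proposition \ref{findf} applies to the augmented data and produces
\[
f(z) = \theta(z-w,\tau)\,\theta(z-w-l,\tau)^{-1}\prod_k \theta(z-\tilde z_k,\tau)^{m_k},
\]
an elliptic function for $L$. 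Because $\theta(\cdot,\tau)$ vanishes to order one precisely on $L$, the divisor of $f$ on $\C/L$ is $[w]-[w+l]+\sum_k m_k[z_k]=\sum_k m_k[z_k]$, as desired.

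The only substantive step is the existence half, and within it the only subtle point is the lift-correction trick: the hypothesis gives $\sum m_k z_k=0$ only modulo $L$, whereas Proposition \ref{findf} requires exact vanishing in $\C$. Adding the divisor-neutral pair $[w]-[w+l]$ is what bridges that gap; once the lift sum equals zero on the nose, the theta-function product from Proposition \ref{findf} does all the work, and simplicity of the zeros of $\theta$ guarantees that the resulting divisor matches $\sum_k m_k[z_k]$ exactly.
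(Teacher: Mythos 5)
Your proof is correct and takes essentially the same route as the paper: uniqueness via holomorphic functions on a compact Riemann surface being constant, and existence by rescaling $L$ to $\Z+\Z\tau$, lifting to $\C$, correcting the lift sum, and invoking Proposition \ref{findf}. The one cosmetic difference is in the lift-correction: the paper splits some $m_k$ as $(m_k-1)+1$ and re-lifts the $+1$ part to absorb $l\in L$, while you adjoin a fresh divisor-neutral pair $[w]-[w+l]$; both exploit exactly the observation that Proposition \ref{findf} does not require the lifted points to be distinct modulo $L$.
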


\begin{proof}
By scaling $\C$ and $L$ by a complex number, we may assume that $L=\Z+\Z\tau$.

We lift $z_k$ to $\C$. We would like to be able to do it so that $\sum_k m_k z_k = 0$, but a priori we can only get 
$\sum_k m_k z_k \in L$, if none of the $m_k$ is equal to $\pm 1$. However, we can also split some $m_k$ into $m_k-1$ and $1$ and do a separate lift for $1$ to get the sum to be exactly zero. Now the elliptic function $f$ from Proposition \ref{findf} defines a meromorphic function on $\C/L$ with prescribed zeros and poles. (Note that we did not assume in Proposition \ref{findf} that $z_k$ are pairwise distinct modulo $L$.)
\end{proof}

\begin{rem}
The Jacobi theta function $\theta(z,\tau)$ has a number of miraculous properties. For example, for any $\left(\begin{array}{cc}a&b\\
c&d\end{array}\right) \in {\rm SL}(2,\Z)$ there holds
$$
\theta \left(\frac z{c\tau+d},\frac {a\tau+b}{c\tau+d}\right) = \xi (c\tau+d)^{\frac 12} \ee^{\frac {\pi \ii c z^2}{c\tau+d}}\theta (z,\tau)
$$
where $\xi$ is some $8$-th root of $1$ which depends only on $a,b,c,d$.
Another great result is the Jacobi triple product identity which leads to
$$
\theta(z,\tau) = -\ii \sum_{n\in\Z} (-1)^{n} \ee^{\pi\ii((n+\frac 12)^2 \tau + (2n+1)z)}.
$$
Proofs of these results, while not terribly difficult, lie outside of the scope of these notes, see \cite{Chandrasekharan}.

It is also worth mentioning that for a fixed $\tau$ the second logarithmic derivative 
$$
\frac {d^2}{dz^2} \log \theta(z)
$$
is elliptic. By looking at the Laurent expansion at $z=0$, we see that it equals a constant minus the Weierstrass function.
\end{rem}

\smallskip
{\blue \hrule}
Now that we know that conditions \eqref{zpell} precisely describe the location and multiplicity of zeros and poles of meromorphic functions on $\C/L$, we can use it to find ${\rm Pic}(\C/L)$, which we know to equal the Weil class group ${\rm Cl}(\C/L)$.

\begin{thm}
The class group of $\C/L$ is isomorphic to $\C/L\oplus \Z$.
\end{thm}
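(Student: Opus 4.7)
The plan is to exhibit an explicit isomorphism $\mathrm{Cl}(\C/L) \xrightarrow{\sim} \Z \oplus \C/L$ by packaging the two invariants of a divisor that Theorem \ref{mainzpell} has just identified as the obstructions to being principal. First I would define two group homomorphisms on the free abelian group $\mathrm{WeilDiv}(\C/L)$: the degree $\deg(\sum m_k [z_k]) = \sum m_k \in \Z$, and the summation map $\sigma(\sum m_k [z_k]) = \sum m_k z_k \in \C/L$. Both are obviously well-defined homomorphisms (addition of formal sums maps to integer addition and to addition in $\C/L$), so combining them gives
$$
\Phi = (\deg, \sigma)\colon \mathrm{WeilDiv}(\C/L) \longrightarrow \Z \oplus \C/L.
$$

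Next I would identify the kernel of $\Phi$ with the subgroup of principal divisors. Proposition \ref{sumzerospoles} shows that any principal divisor satisfies $\deg = 0$ and $\sigma = 0$, so principal divisors lie in $\ker\Phi$. The converse is the content of Theorem \ref{mainzpell}: any divisor $\sum m_k[z_k]$ with $\sum m_k = 0$ and $\sum m_k z_k \equiv 0 \pmod L$ is the divisor of an elliptic function built from the Jacobi theta function. Hence $\ker\Phi$ equals the subgroup of principal divisors, and $\Phi$ descends to an injective homomorphism $\bar\Phi\colon \mathrm{Cl}(\C/L) \hookrightarrow \Z \oplus \C/L$.

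Finally I would verify surjectivity, which is easy: given any pair $(n, \bar z) \in \Z \oplus \C/L$, lift $\bar z$ to some $z \in \C$ and consider the divisor $D = [z] + (n-1)[0]$ (where $[0]$ denotes the class of the origin in $\C/L$). Then $\deg D = n$ and $\sigma(D) = z \equiv \bar z \pmod L$, so $\Phi(D) = (n,\bar z)$. Therefore $\bar\Phi$ is an isomorphism.

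There is essentially no obstacle at this stage, since all the real work has already been done in the construction of $\theta(z,\tau)$ and in Theorem \ref{mainzpell}; the only mild bookkeeping point is to note that the definition of $\sigma$ is independent of the choice of representatives $z_k \in \C$ of points in $\C/L$ precisely because $\sigma$ is taken modulo $L$, and that the whole argument tacitly uses $\mathrm{Cl}(\C/L) = \mathrm{Pic}(\C/L)$ via the smoothness of $\C/L$ as established in Section \ref{sec.CD}.
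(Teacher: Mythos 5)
Your proof is the same as the paper's: both define the map $\sum_k m_k[z_k]\mapsto\bigl(\sum_k m_k z_k,\sum_k m_k\bigr)$ on Weil divisors and invoke Proposition \ref{sumzerospoles} together with Theorem \ref{mainzpell} to identify the kernel with the principal divisors. You spell out the (easy) surjectivity check that the paper leaves implicit, but the argument is the same.
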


\begin{proof}
The isomorphism is induced by the map ${\rm WeilDiv}(\C/L)\to \C/L\oplus \Z$ given by
$$
\sum_{k}m_k[z_k]\mapsto \left(\sum_k m_kz_k,\sum_k m_k\right).
$$
By Theorem \ref{mainzpell} the kernel is exactly the subgroup of principal divisors on $\C/L$.
\end{proof}

\smallskip
{\blue \hrule}
When we have a cubic curve $E$ whose coefficients are rational numbers, it is meaningful to ask about divisors on it that are defined over $\Q$ (it is equivalent to looking at the group of points on $E$ with rational coordinates). By a theorem of Mordell (1922) this group has finite rank, i.e. it is 
$$\Z^{n}\oplus{\rm Torsion}.$$
The torsion is well-understood, but the rank is harder. For example, it is unknown whether the rank $n$ is bounded from above. The current world record of a curve whose rank is at least $29$ is due to Elkies and Klagsbrun, achieved in 2024. The previous world record of $n\geq 28$ was achieved by Elkies in 2006, see \cite{Elkies}.

\smallskip
Another interesting topic is generalizations of $\C/L$ to higher dimensions. For example, we may want to consider the complex manifolds $\C^2/L$ where $L\cong \Z^4$ is a discrete subgroup. Remarkably, for a generic choice of $L$ the quotient $\C^2/L$ is {\red not an algebraic variety!} For some, well-understood, conditions on $L$ the quotients are algebraic, and are referred to as {\blue abelian surfaces}.

\smallskip
We would be amiss not to mention the Frey elliptic curves which play a crucial role in the proof of Fermat's Last Theorem. For a putative integer solution to 
$$
a^n+b^n=c^n,
$$
Hellegouarch \cite{Hellegouarch} and then Frey \cite{Frey1,Frey2} considered the elliptic curve 
$$
y^2 z = (x - a^n z) x (x +b^n z)
$$
and proved that it would have some weird properties. This put  FLT squarely into a well-studied area of elliptic curves over $\Q$ and eventually lead to Wiles' proof.\footnote{ I was still in high school when Frey's paper discussing the connection between FLT and modularity conjecture appeared, but a mathematician I knew correctly predicted that because of it FLT will now be proved sooner rather than later.}

{\bf Exercise 1.} Prove \eqref{proptheta}.

{\bf Exercise 2.} Prove that $\theta(z)$ has zeros of first order at $L=\Z+\Z\tau$ by integrating $\frac {\theta'(z)}{\theta(z)}$ along the boundary of a fundamental parallelogram of $L$.

{\bf Exercise 3.} Prove that the function
$$
 -\ii \sum_{n\in\Z} (-1)^{n} \ee^{\pi\ii((n+\frac 12)^2 \tau + (2n+1)z)}
$$
is well-defined and holomorphic, and satisfies the transformation properties \eqref{proptheta}.

\section{Genus of complex algebraic curves and Riemann-Hurwitz formula.}\label{sec.RH}

Let $X$ be a compact connected Riemann surface. 
Since it is an oriented real surface, it is homeomorphic to a sphere with $g$ handles.

\smallskip
For $g=0$, the only\footnote{ Uniqueness is not obvious.} such surface is $\C\P^1$. For $g=1$ these are elliptic curves. They form a one-parameter family (basically $\tau$ in $L=\Z+\Z\tau$). A general picture for $g\geq 2$ is something like:

\begin{tikzpicture}
\draw (6,0) ellipse (4 and 1.5);
\draw (2.5,0)..controls(3.3,-.3)..(4.1,0);
\draw (2.7,-0.07)..controls(3.3,.2)..(3.9,-0.07);
\draw (5.2,0)..controls(6,-.3)..(6.8,0);
\draw (5.4,-0.07)..controls(6,.2)..(6.6,-0.07);
\draw (7.9,0)..controls(8.7,-.3)..(9.5,0);
\draw (8.1,-0.07)..controls(8.7,.2)..(9.3,-0.07);
\draw (10,0) node[anchor=west]{$g=3$}; 
\end{tikzpicture}

\noindent
A nontrivial theorem implies that $X$ has a unique structure of a complex algebraic curve. The difficult part is to show that there is a meromorphic function $X\to \C\P^1$.

\smallskip
{\blue \hrule}
Let us discuss {\blue nonconstant} holomorphic maps between Riemann surfaces. Suppose we have one such map $f:X\to Y$. For a point $p\in X$ consider $q=f(p)$. If we consider local charts at $p$ and $q$ with coordinates $z$ and $t$ respectively (so that $z=0$ corresponds to $p$ and $t=0$ corresponds to $q$) then the map is given by
$$
t=f(z) = c_1 z + c_2 z^2 + \ldots
$$
and we will call the smallest $k$ such that $c_k\neq 0$ the {\blue multiplicity} of $f$ at $p$. One can think of the multiplicity of $f$ at $p$ as the number of solutions to $f(z) = \varepsilon$ in the neighborhood of $z=0$ for small $\varepsilon$, see Exercise 1.
We also note that there are only finitely many points in $X$ where the multiplicity of $f$ is larger than one. Indeed, by compactness of $X$, we would otherwise have an accumulation point which would lead to an accumulation point of the zeros of the derivative, thus making $f$ constant by analytic continuation.

\smallskip
\begin{prop}\label{deg}
For $q\in Y$ the number of preimage points $p\in X$, counted with multiplicities
\begin{equation}\label{totalmult}
\sum_{p,f(p)=q} m_p 
\end{equation}
is finite and is independent of $q$.
We call it the  {\blue degree} of the map $f$.
\end{prop}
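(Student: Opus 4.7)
The plan is to show that the locally defined function $N(q) := \sum_{p \in f^{-1}(q)} m_p$ is (i) finite for every $q$, and (ii) locally constant on $Y$; since $Y$ is connected, $N$ must then be a constant.

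First I would establish finiteness. Since $f$ is nonconstant and holomorphic, by the identity principle $f^{-1}(q)$ has no accumulation point in $X$, so it is a discrete closed subset of the compact space $X$, hence finite. Thus \eqref{totalmult} is a finite sum.

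Next, for local constancy, fix $q_0 \in Y$ and enumerate $f^{-1}(q_0) = \{p_1, \ldots, p_r\}$ with multiplicities $m_1, \ldots, m_r$. By the definition of multiplicity (and a standard local normal form for nonconstant holomorphic maps between Riemann surfaces), around each $p_i$ I can choose a small coordinate disc $U_i$ and a coordinate chart $V_i$ around $q_0$ such that $f|_{U_i}$ becomes $z \mapsto z^{m_i}$. Shrinking the $U_i$ I may assume they are pairwise disjoint. For any $t \neq 0$ in the target disc, the equation $z^{m_i} = t$ has exactly $m_i$ distinct simple solutions in $U_i$, each with multiplicity $1$. Hence, for $q$ in a small punctured neighborhood of $q_0$ inside $\bigcap V_i$,
\begin{equation*}
\sum_{p \in U_i,\, f(p)=q} m_p = m_i,
\end{equation*}
so summing over $i$ contributes exactly $\sum_i m_i = N(q_0)$.

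The main obstacle is to rule out extra preimages of nearby $q$ lying outside $\bigcup_i U_i$. Here compactness saves the day: the set $K := X \setminus \bigcup_i U_i$ is compact, and $q_0 \notin f(K)$ by construction, so $f(K)$ is a compact subset of $Y$ not containing $q_0$. Therefore there is an open neighborhood $W$ of $q_0$ disjoint from $f(K)$; for $q \in W$, every preimage of $q$ lies in some $U_i$. Intersecting $W$ with the target neighborhood above, we get an open set on which $N(q) = \sum_i m_i = N(q_0)$ for all $q$, including $q = q_0$ itself (since at $q_0$ the only solution to $z^{m_i} = 0$ in $U_i$ is $z = 0$, contributing $m_i$). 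Thus $N$ is locally constant, and connectedness of $Y$ finishes the proof.
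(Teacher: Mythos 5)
Your proof is correct, and the overall strategy coincides with the paper's: establish finiteness via compactness and the identity principle, then prove local constancy of the count and invoke connectedness of $Y$. Both arguments also rely on the same local picture at a point of multiplicity $m$, namely that near such a point $f$ behaves like $z\mapsto z^m$, so that a nearby value has exactly $m$ simple preimages; the paper outsources this to its Exercise~1 (a contour-integral count) while you invoke the local normal form directly.

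The one genuine difference is in how the absence of stray preimages near $q_0$ is handled. The paper looks at the connected components of $f^{-1}(U\setminus\{q_0\})$ and argues, via compactness of $X$ and an accumulation-point argument, that each such component must have one of the $p_i$ in its closure, then shrinks $U$ so that these components are separated. Your argument instead sets $K=X\setminus\bigcup_i U_i$, observes $K$ is compact with $q_0\notin f(K)$, and chooses a neighborhood $W$ of $q_0$ disjoint from the compact set $f(K)$. This is cleaner and more standard: it delivers the same conclusion in one step, without any need to reason about connected components or accumulation points of sequences approaching $q_0$. Either route is fine, but yours is easier to make fully rigorous from the sketch.
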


\begin{proof}
We will sketch the proof and the reader is welcome to fill in the details.

Finiteness is obvious due to compactness of $X$, since an accumulation point would imply that $f$ is locally (and hence globally) constant. 
Now suppose that $p_1,\ldots,p_k$ map to $q\in Y$ with multiplicities $m_1,\ldots,m_k$. Let $U$ be a small neighborhood of $q$ and consider connected components of the open set $f^{-1}(U\setminus \{q\})$. By compactness of $X$, every such connected component must have one of $x_i$ in its closure, since we can find an accumulation point of a sequence of points in $X$ whose values approach $q$. We can pick a small enough neighborhood $U$ of $q$ so that preimages of $U$ near $x_i$ are close enough to $x_i$ (in some metric) to be disjoint. This means that that for $q$ in this $U$ all of the preimages are close to $x_i$ and by Exercise 1, the total number in \eqref{totalmult} is locally constant and hence constant.
\end{proof}

\begin{rem}\label{degalg}
The invariant $\deg f$ has an algebraic meaning. The fields of meromorphic functions on $X$ and $Y$ are transcendence degree one field extensions of $\C$ and the pullback under $f$ induces a field extension
$$
{\rm Mer}(Y)\subseteq {\rm Mer}(X).
$$
Then this extension is finite and $\deg f$ is its degree, i.e. $\dim_{{\rm Mer}(Y)}{\rm Mer(X)}$. We do not prove this claim, but see Exercise 2.
\end{rem}

Proposition \ref{deg} has an important corollary.
\smallskip
\begin{cor}\label{cor.zp}
For any nonzero meromorphic function $f$ on $X$ with zeros/poles at points $x_i\in X$ of multiplicities $m_i$, there holds
$\sum_i m_i=0$.
\end{cor}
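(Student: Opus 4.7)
The plan is to view the meromorphic function $f$ as a holomorphic map $f:X\to \C\P^1$, and then apply Proposition \ref{deg} to the two points $0,\infty\in \C\P^1$. First I would check that since $f$ is nonzero and $X$ is compact, $f$ extends to an honest holomorphic map to $\C\P^1$: at a pole of $f$ of order $k$, the function $1/f$ has a zero of order $k$, and one sends the point to $\infty$. This is a standard (and essentially tautological) check using local charts on $\C\P^1$, and after it we have a genuine holomorphic map between compact Riemann surfaces, so Proposition \ref{deg} applies.

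Next I would identify the preimages of $0$ and of $\infty$ in terms of the $x_i$. By construction, $f^{-1}(0)$ consists exactly of the zeros of $f$, and the local multiplicity $m_p$ of the map $f$ at such a point $p$ agrees with the order of vanishing of $f$ there, i.e. with $m_p$ in the statement of the corollary. Similarly $f^{-1}(\infty)$ consists exactly of the poles of $f$, and the local multiplicity of $f$ at such a point equals the order of the pole, i.e. $-m_p$ in the sign convention of the statement (poles carry negative $m_i$).

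Now by Proposition \ref{deg}, the quantity $\sum_{p\in f^{-1}(q)} m_p$ is independent of $q\in \C\P^1$ and equals $\deg f$. Applying this to $q=0$ gives
\[
\sum_{m_i>0} m_i \;=\; \deg f,
\]
and applying it to $q=\infty$ gives
\[
\sum_{m_i<0}(-m_i) \;=\; \deg f.
\]
Subtracting these two equalities yields $\sum_i m_i = 0$, as desired.

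The only potentially fiddly step is verifying that $f$ extends holomorphically across its poles to give a well-defined map to $\C\P^1$, and that the analytic notion of multiplicity used in Proposition \ref{deg} coincides with the order of a zero/pole of a meromorphic function; both are routine chart computations rather than real obstacles. Once those identifications are made, the corollary follows immediately from the invariance of the degree.
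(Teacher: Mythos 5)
Your proof takes essentially the same approach as the paper: interpret $f$ as a holomorphic map to $\C\P^1$ and compute its degree via the preimages of $0$ and of $\infty$. The only thing you omit is the trivial case of a nonzero constant $f$, which Proposition \ref{deg} does not cover (it requires a nonconstant map); but in that case the set of zeros and poles is empty and the claim is vacuous, as the paper notes explicitly.
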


\begin{proof}
If $f$ is a nonzero constant, then the sum is empty. Otherwise, we can think of $f$ as a nonconstant holomorphic map $X\to \C\P^1$. The sum of positive $m_i$ is precisely the degree of $f$ computed via preimages of $0\in \C\P^1$ and the opposite of the sum of negative $m_i$ is the degree of $f$ computed via preimages of $\infty\in \C\P^1$.
\end{proof}

Another proof of Corollary \ref{cor.zp} can be obtained by integrating 
$$
\frac 1{2\pi\ii} \int_\gamma d \log f = \frac 1{2\pi\ii} \int_\gamma  \frac {df}f 
$$
over a contour $\gamma$ which is the boundary of a region $U$ that contains all zeros and poles of $f$. Now observe that $\gamma$ is also the boundary (with the wrong sign) of the complement of $U$ which now has no zero or poles of $f$ and thus the integral is zero.

\smallskip
Corollary \ref{cor.zp} implies that the map ${\rm WeilDiv}(X)\to \Z$ which sends 
$$
\sum_k m_k [p_k] \mapsto \sum_k m_k
$$
induces a map ${\rm Pic}(X)\to \Z$, which is understandably called the {\blue degree map.} Isn't it nice how there are terms ``the degree of a map" and ``the degree map" which are quite different but sufficiently related to cause confusion?

\smallskip
\begin{rem}
The kernel of the degree map is called ${\rm Pic}^0(X)$. For $g=0$ we have ${\rm Pic}^0(\C\P^1) \cong \{0\}$. For $g=1$ we saw that 
$$
 {\rm Pic}^0(\C/L) \cong \C/L.
$$
More generally, for any compact Riemann surface $X$ of genus $g$ the group ${\rm Pic}^0(X)$ has a natural structure of an algebraic variety of dimension $g$. In fact it is isomorphic to $\C^g/L$ for some lattice $L\cong \Z^{2g}$. We do not prove these statements.
\end{rem}

\smallskip
{\blue \hrule} 
We will now talk about a wonderful result known as the {\blue Riemann-Hurwitz formula}. 
\begin{thm}\label{RH}
Let $f:X\to Y$ be a nonconstant holomorphic map of compact connected Riemann surfaces. Then
$$
2g_X-2 = (\deg f)(2g_Y-2) + \sum_{p\in X}(m_p-1)
$$
where $m_p$ is the multiplicity of $f$ at $p$.
\end{thm}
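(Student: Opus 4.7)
The plan is to deduce the formula from Euler characteristic bookkeeping, via the identity $\chi = 2 - 2g$ for a compact connected oriented real surface. The idea is to build compatible triangulations on $X$ and $Y$ and to simply count cells on each side.

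First I would choose a triangulation of $Y$ whose vertex set contains every branch value of $f$, that is, every $q \in Y$ such that $m_p > 1$ for some $p \in f^{-1}(q)$. As noted just before Proposition \ref{deg}, there are only finitely many points of $X$ at which $m_p > 1$, hence only finitely many branch values, so such a triangulation exists; one can even refine an arbitrary triangulation of $Y$ to include these finitely many additional vertices.

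Next I would lift the triangulation to $X$. Over the complement of the branch values, $f$ is a local biholomorphism with fibers of constant size $\deg f$ by Proposition \ref{deg}, hence restricts to a topological covering map of degree $\deg f$ there. Each open face and each open edge of the chosen triangulation of $Y$ is simply connected and disjoint from the branch locus, so it lifts to exactly $\deg f$ disjoint homeomorphic copies in $X$. At a vertex $q$ of $Y$ the fiber has cardinality $|f^{-1}(q)| = \deg f - \sum_{p \in f^{-1}(q)}(m_p - 1)$, again by Proposition \ref{deg}. The local picture near a ramified point (discussed in the paragraph introducing $m_p$) is that one can pick local coordinates so that $f$ is $t = z^{m_p}$, which makes it clear that the stars of the lifted vertices glue correctly to the lifted edges and faces and assemble into a bona fide triangulation of $X$.

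Third I would count cells. If $V, E, F$ denote the vertex, edge, and face counts of the triangulation of $Y$, then
$$V_X = (\deg f)V - \sum_{p \in X}(m_p - 1), \quad E_X = (\deg f)E, \quad F_X = (\deg f)F,$$
so
$$\chi(X) = (\deg f)\chi(Y) - \sum_{p \in X}(m_p - 1).$$
Substituting $\chi(X) = 2 - 2g_X$ and $\chi(Y) = 2 - 2g_Y$ and rearranging gives the Riemann-Hurwitz formula.

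The main obstacle is the technical step glossed over above, namely verifying that the lifted cells form an honest CW structure on $X$ and not merely a disjoint collection of cells. This reduces to the local normal form for holomorphic maps at a ramified point, which is a standard consequence of the inverse function theorem after extracting an $m_p$-th root of a local uniformizer at $q$. In the spirit of these notes one could equally well cite this local model and proceed. Everything else, namely the cell counts and the substitution $\chi = 2 - 2g$, is routine.
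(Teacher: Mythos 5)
Your proposal is correct and is essentially the paper's own proof: both choose a triangulation of $Y$ with all branch values among the vertices, lift it to a triangulation of $X$ using the unramified covering over the open cells, count vertices/edges/faces, and substitute $\chi = 2 - 2g$. You flesh out the lifting step a bit more carefully (via the local model $t = z^{m_p}$), but the structure of the argument is the same.
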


\begin{proof}
First of all, $\sum_p$ is really just a finite sum since we only need to worry about the points where $m_p>1$. We call
points in $X$ with $m_p>1$ the {\blue ramification points} of $f$.
We pick a triangulation $T_Y$ of $Y$ in such a way that all images of ramification points of $X$ are among the vertices of $T_Y$.
Then we compare the triangulation $T_Y$ with its preimage $T_X$ on $X$. Note that each triangle $\Delta$ in $T_Y$ gives rise to $\deg f$ triangles in $T_X$ because the map $f^{-1}(\Delta)\to\Delta$ is unramified, and the same is true for the edges. For vertices, a vertex $q\in Y$ of $T_Y$ gives rise to 
$$
\sum_{p,f(p)=q}1=\sum_{p,f(p)=q}m_p - \sum_{p,f(p)=q} (m_p-1)
=
\deg f - \sum_{p,f(p)=q} (m_p-1)
$$
vertices in $T_X$. We know that the Euler characteristics of $X$ (resp. $Y$) is the number of triangles minus the number of edges plus the number of vertices of $T_X$ (resp. $T_Y$). It remains to combine the observation about the numbers of triangles, edges and faces of $T_X$ and $T_Y$ with well-known formulas for the Euler characteristics
$
\chi(X) = 2-2g_X,~\chi(Y)= 2- 2g_Y.
$
\end{proof}

We will illustrate the Riemann-Hurwitz formula with a couple of examples.

\begin{itemize}
\item
Let $X$ be an elliptic curve and $X\to \C\P^1$ be the degree two map ramified at $4$ points (see Section \ref{cubicP2}).
The Riemann-Hurwitz formula reads
$$
2\cdot 1 -2 = 2(2\cdot 0 - 2) + 4 \times (2-1).
$$
\item
Let $f:\C\P^1\to \C\P^1$ be the map $(x_0:x_1)\mapsto (x_0^n:x_1^n)$ which is a compactification of the map $z\mapsto z^n$. Then the degree of $f$ is $n$ and there are two ramification points $(0:1)$ and $(1:0)$ with $m_p=n$ each. The Riemann-Hurwitz formula reads
$$
2\cdot 0 -2 = n(2\cdot 0 -2) + (n-1)+(n-1).
$$
\end{itemize}

\smallskip{\blue \hrule}
For a dimension one complex manifold $X$, the holomorphic cotangent bundle is the canonical bundle of $X$. If $X$ is compact, it makes sense to ask about the degree of the corresponding Weil divisor class $K_X$, and Riemann-Hurwitz formula allows us to compute it.

\smallskip
First, we note that in the case of $X=\C\P^1$ the degree of $K_{X_{\C\P^1}}$ is $(-2)$. Indeed, if we look at the differential form $dz$ which has no zeros/poles in $\C$, it has a second order pole at infinity, because $dz = -\left(\frac 1z\right)^{-2}\, d\left(\frac 1z\right)$ near $\infty$.

\smallskip
\begin{prop}\label{degcan} For a smooth compact Riemann surface $X$ we have
$\deg K_X = 2g_X-2$.
\end{prop}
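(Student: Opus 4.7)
The plan is to compute $\deg K_X$ by exhibiting an explicit meromorphic section of $K_X$ and summing the orders of its zeros and poles, then matching the answer to $2g_X-2$ via Riemann-Hurwitz.

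First I would invoke the fact (alluded to in the introductory paragraph of this section) that every compact connected Riemann surface admits a nonconstant meromorphic function $f:X\to\C\P^1$. Such an $f$ is a nonconstant holomorphic map, and it has a well-defined degree $d=\deg f$ and a finite set of ramification points, with multiplicities $m_p\geq 1$. The key observation is that the standard coordinate $z$ on $U_0\subset\C\P^1$ gives the meromorphic $1$-form $dz$, whose divisor on $\C\P^1$ is $-2[\infty]$; hence the pullback $\omega:=f^*(dz)=df$ is a nonzero meromorphic section of the canonical line bundle $K_X$, and by the correspondence between meromorphic sections and Cartier divisors from Section \ref{sec.CD}, we have $\deg K_X=\deg(\mathrm{div}\,\omega)$.

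Next I would compute $\mathrm{div}\,\omega$ locally at each $p\in X$. Pick a local coordinate $w$ at $p$ with $w(p)=0$. If $f(p)\neq\infty$, write $f(w)=f(p)+cw^{m_p}+\cdots$ with $c\neq 0$, so that $\omega=df=m_p\,c\,w^{m_p-1}dw+\cdots$, which contributes $(m_p-1)[p]$. If $f(p)=\infty$, use the coordinate $t=1/z$ near $\infty$ and write $t\circ f=c w^{m_p}+\cdots$ with $c\neq 0$; then $f=c^{-1}w^{-m_p}+\cdots$ and $df=-m_p c^{-1}w^{-m_p-1}dw+\cdots$, contributing $(-m_p-1)[p]$. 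Thus
\begin{equation*}
\deg K_X\;=\;\sum_{p:\,f(p)\neq\infty}(m_p-1)\;+\;\sum_{p:\,f(p)=\infty}(-m_p-1)\;=\;\sum_{p\in X}(m_p-1)\;-\;2\!\!\sum_{p:\,f(p)=\infty}m_p.
\end{equation*}

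Finally I would apply Proposition \ref{deg} and Theorem \ref{RH}. The last sum on the right equals $d$, because it computes the degree of $f$ via preimages of $\infty$. Riemann-Hurwitz (with $g_Y=0$) gives $\sum_{p\in X}(m_p-1)=2g_X-2+2d$. Substituting,
\begin{equation*}
\deg K_X \;=\; (2g_X-2+2d)\;-\;2d\;=\;2g_X-2,
\end{equation*}
as claimed. The only non-routine input is the existence of a nonconstant meromorphic function $f:X\to\C\P^1$, which is precisely the deep fact flagged earlier in this section; everything else is a straightforward local computation combined with Riemann-Hurwitz and the degree-of-a-divisor identity. One could instead bypass this existence statement by working on any single coordinate chart and using partitions of unity or sheaf-theoretic arguments, but the $f^*(dz)$ approach is the most transparent given the tools already developed.
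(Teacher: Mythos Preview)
Your proof is correct and follows essentially the same approach as the paper: pull back a meromorphic $1$-form along a nonconstant map $f:X\to\C\P^1$, compute the divisor of the pullback locally, and combine with Riemann--Hurwitz. The paper states the slightly more general identity $\deg K_X = (\deg f)\deg K_Y + \sum_p(m_p-1)$ before specializing to $Y=\C\P^1$, whereas you work directly with $dz$ on $\C\P^1$ and do the local computation at points over $\infty$ explicitly; the content is the same.
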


\begin{proof}
For any nonconstant map $f:X\to Y$ we have 
\begin{equation}\label{KRH}
\deg K_X = (\deg f)  \deg K_Y + \sum_{p\in X} (m_p-1).
\end{equation}
Indeed, if we take a meromorphic form on $Y$ and compare its zeros and poles with zeros and poles of its preimage, we will get additional
$(m_p-1)$ for every ramification point, because $dz^k = kz^{k-1}\,dz$.

Because every $X$ has a holomorphic map  $f:X\to \C\P^1$, we combine Riemann-Hurwitz formula for $f$ with \eqref{KRH} to get the desired result.
\end{proof}

\smallskip{\blue \hrule}
Let us now talk about automorphisms of compact Riemann surfaces.

\smallskip
For $g(X)=0$, we have ${\rm Aut}(\C\P^1)={\rm PGL}(2,\C)$, given by the M\"obius transformations $z\mapsto \frac {az+b}{cz+d}$.

\smallskip
For $g(X)=1$, the automorphism group of $X=\C/L$ is a semidirect product of $\C/L$ with a finite group, which could be $\Z/2\Z$, $\Z/4\Z$ or $\Z/6\Z$. The normal subgroup $\C/L$ is obtained by parallel transports $\varphi_w:z\mapsto z+w$, and the finite group comes from scalar multiplications that preserve the lattice $L$. It is usually just $\{\pm 1\}$, but it is $\{1,\ii,-1,\ii\}$ if $L=\Z+\Z \ii$ and 
it is the group of sixth roots of $1$ for the hexagonal lattice $L=\Z + \Z(\frac 12 +\frac {\sqrt 3}2\ii)$. We do not prove this, but it is not too hard to derive from what we know about elliptic curves.

\smallskip
For $g(X)\geq 2$, the automorphism group of $X$ is always finite. One way to prove it (we omit details) is to argue that $X$ can be embedded into a projective space in a way that automorphisms of $X$ extend to the ambient space. Then the automorphism group of $X$ will be an algebraic subgroup of some ${\rm PGL}(n,\C)$, so it will have a finite number of connected components. Finally, the identity component must be just a point, otherwise $X$ would admit a nonzero holomorphic vector field, which would contradict $\deg K_X>0$.

\smallskip
There is an even stronger statement with a surprisingly simple proof.
\begin{thm}
Let $X$ be a compact connected Riemann surface of genus $g\geq 2$. Then
$$
\vert {\rm Aut}(X)\vert \leq 84(g-1).
$$
\end{thm}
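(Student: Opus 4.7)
The plan is to study the quotient map $\pi\colon X\to Y := X/G$, where $G=\Aut(X)$, and apply the Riemann--Hurwitz formula (Theorem \ref{RH}). By the discussion preceding the theorem we know $G$ is finite. The first step is to endow $Y$ with the structure of a compact Riemann surface so that $\pi$ becomes a holomorphic map of degree $|G|$. For each $p\in X$ the stabilizer $G_p$ is finite and acts faithfully on a coordinate disk at $p$; differentiating at $p$ embeds $G_p$ into $\C^{*}$, so $G_p$ is cyclic of some order $e_p$. This $e_p$ is constant along the $G$-orbit of $p$ and equals the ramification index of $\pi$ at $p$, while the fiber $\pi^{-1}(\pi(p))$ consists of $|G|/e_p$ points.

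With this setup, let $g':=g_Y$ and let the ramification indices above the branch points $q_1,\dots,q_r\in Y$ be $e_1,\dots,e_r\ge 2$. Theorem \ref{RH} becomes
\begin{equation*}
2g-2 \;=\; |G|(2g'-2) + \sum_{i=1}^{r} \frac{|G|}{e_i}(e_i-1) \;=\; |G|\cdot R,
\end{equation*}
where $R := 2g'-2 + \sum_{i=1}^{r}\bigl(1-\tfrac{1}{e_i}\bigr)$. Since $g\ge 2$ and $|G|>0$ we have $R>0$. The desired bound $|G|\le 84(g-1)$ is therefore equivalent to the purely arithmetic statement that $R\ge \tfrac{1}{42}$ for every admissible tuple $(g';e_1,\dots,e_r)$ with $R>0$.

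The main obstacle is this combinatorial minimization, which I would carry out by cases on $g'$. For $g'\ge 2$ one has $R\ge 2$; for $g'=1$ positivity forces $r\ge 1$ and then $R\ge 1/2$. For $g'=0$ positivity forces $r\ge 3$; for $r\ge 5$ one bounds $R\ge r/2-2\ge 1/2$; and for $r=4$ a direct check of $\sum 1/e_i<2$ with $e_i\ge 2$ shows that the minimum positive value of $R=2-\sum 1/e_i$ is $1/6$, attained at $(2,2,2,3)$. The delicate case is $r=3$: ordering $e_1\le e_2\le e_3$, one studies positive values of $R=1-\sum 1/e_i$. Inspecting $e_1\in\{2,3,\dots\}$ in turn and, within $e_1=2$, inspecting $e_2\in\{3,4,\dots\}$, the minimum positive value is $R=1-\tfrac{1}{2}-\tfrac{1}{3}-\tfrac{1}{7}=\tfrac{1}{42}$, uniquely attained at $(2,3,7)$. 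Combining the cases yields $R\ge 1/42$ and hence $|G|=(2g-2)/R\le 84(g-1)$.
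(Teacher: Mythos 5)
Your argument is correct and follows essentially the same route as the paper: write Riemann--Hurwitz for the quotient map $X\to X/G$ and then carry out the combinatorial minimization of $2g'-2 + \sum_i (1-1/e_i)$ by cases on $g'$ and on the number of branch points. The only difference is cosmetic: you spell out the verification that $Y=X/G$ is a Riemann surface (cyclic stabilizers, ramification indices) and you do the full case analysis in line, whereas the paper defers the former to a separate corollary and the latter to an exercise.
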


\begin{proof}
Let $G$ be the automorphism group of $X$. One can show\footnote{ We will indeed show this later in Corollary \ref{dim1fixed}, although a motivated reader can try to do it now.} that the set of $G$-orbits of $X$ can be given the structure of a Riemann surface so that $X\to Y= X/G$ is a degree $\vert G\vert$ holomorphic map.
An orbit of $G$ with a stabilizer of size $n$ contributes $\vert G\vert/n$ points of multiplicity $n$ to the set of ramification points of $X\to Y$. Therefore, the Riemann-Hurwitz formula gives
$$
2g - 2 = |G| (2g_Y-2) + |G|\frac {n_1-1}{n_1} +  |G|\frac {n_2-1}{n_2} + \ldots +  |G|\frac {n_k-1}{n_k} 
$$
for some integers $n_1,\ldots,n_k$ that encode stabilizer sizes of the action of $G$.

\smallskip
If $g_Y\geq 2$, then we have $2g-2 \geq |G| \cdot 2$, and the claim follows. 
If $g_Y=1$, then we have 
$$
2g-2 = |G| \sum_{i=1}^k (1-\frac 1{n_i}).
$$
Since the left-hand side is positive, the sum on the right is nonempty. Each of the terms is at least $|G|\frac 12$, so 
$|G|\leq 4(g-1)$. 

\smallskip
If $g_Y=0$, we see that 
$$
\frac {2g-2}{|G|} = -2 + \sum_{i=1}^k  (1-\frac 1{n_i}).
$$
So it remains to observe that the smallest possible positive value of the right hand side is $\frac 1{42}$, achieved by 
$$-2 + \frac 12 + \frac 23 + \frac 67,$$
see Exercise 3.
\end{proof}

{\bf Exercise 1.} Prove that for a nonconstant holomorphic function $f$ with $f(0)=0$ the number of solutions to $f(0)=\varepsilon$ is equal, for all small nonzero $\varepsilon$, to the smallest $k$ such that the $k$-th derivative of $f$ at $z=0$ is nonzero. \emph{Hint:} Since $f'(z)$ is holomorphic, there is a neighborhood where $f'(z)$ has no other zeros than perhaps $z=0$. Thus any zeros of $f(z)-\varepsilon$ will be simple. Then you can count the their number in a circle $|z|<\alpha$ by 
$$
\frac 1{2\pi\ii} \oint_{|z|=\alpha} \frac {f'(z)}{f(z)-\varepsilon}\,dz.
$$
It is continuous in $\varepsilon$ for $\varepsilon$ close to $0$ and is equal to $
\frac 1{2\pi\ii} \oint_{|z|=\alpha} \frac {f'(z)}{f(z)}\,dz.
$

{\bf Exercise 2.} Check the claim of Remark \ref{degalg} for the map $f:\C\P^1\to\C\P^1$ that sends $(x_0:x_1)$ to $(x_0^n:x_1^n)$ for some positive integer $n$.

{\bf Exercise 3.} Prove the last claim of the section. \emph{Hint:} First argue that for $k\geq 5$ the right hand side is at least $\frac 12$. Then for $k=4$ the right hand is positive, so at least one of $n_i$ is $3$ or more, which gives at least $-2+\frac 12 +\frac 12 +\frac 12 + \frac 23 = \frac 16$. If $k\leq 2$, the right hand side is negative, so we must have $k=3$, etc.

\section{Blowup of a point in $\C^2$. Birational equivalence of algebraic varieties.}\label{sec.blowup}
We are now switching gears to talk about phenomena that appear in complex dimension two and higher.
We start with a wonderful and initially puzzling construction of the {\blue blowup} of a point in $\C^2$.

\smallskip
Consider the subset $S\subset \C^2 \times \C\P^1$ defined by
$$
S = \{(x,y)\times(u:v) \in  \C^2 \times \C\P^1, {\rm~such~that~}xv-yu = 0\}.
$$
\begin{prop}
The set $S$ is a smooth complex algebraic surface.
\end{prop}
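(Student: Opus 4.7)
The plan is to verify smoothness and the surface property locally on the standard cover of $\C^2\times\C\P^1$, by observing that in each chart the defining equation $xv-yu=0$ becomes the graph of an affine-linear function.

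First I would cover $\C\P^1$ by the two standard affine opens $U_0=\{u\neq 0\}$ and $U_1=\{v\neq 0\}$, with coordinates $t=v/u$ on $U_0$ and $s=u/v$ on $U_1$. Then $\C^2\times\C\P^1$ is covered by the two smooth $3$-dimensional charts $\C^2\times U_0$ and $\C^2\times U_1$, each biholomorphic (and isomorphic as algebraic varieties) to $\C^3$.

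Next I would intersect $S$ with each chart. On $\C^2\times U_0$, using coordinates $(x,y,t)$, the defining equation $xv-yu=0$ dehomogenizes (dividing by $u$) to $xt-y=0$, so
\[
S\cap(\C^2\times U_0)=\{(x,y,t)\in\C^3:\,y=xt\}.
\]
This is the graph of the polynomial map $(x,t)\mapsto xt$, so the projection $(x,y,t)\mapsto(x,t)$ realizes an algebraic isomorphism $S\cap(\C^2\times U_0)\cong\C^2$ with inverse $(x,t)\mapsto(x,xt,t)$. Symmetrically, on $\C^2\times U_1$ with coordinates $(x,y,s)$ the equation becomes $x-ys=0$, so $S\cap(\C^2\times U_1)=\{x=ys\}$ is isomorphic to $\C^2$ via $(y,s)\mapsto(ys,y,s)$. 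In each case the defining equation has a nowhere-vanishing partial derivative (with respect to $y$ in the first chart, with respect to $x$ in the second), so the implicit function theorem shows the zero locus is a smooth complex submanifold of the chart.

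Finally I would note that the two local identifications with $\C^2$ glue. On the overlap $\C^2\times(U_0\cap U_1)$ the transition $s=1/t$ is a rational (holomorphic, invertible) function on $U_0\cap U_1$, so the change of coordinates $(x,t)\leftrightarrow(y,s)=(xt,1/t)$ is given by rational functions with nonvanishing denominators, matching the gluing of $\C^2\times U_0$ and $\C^2\times U_1$ inside $\C^2\times\C\P^1$. Hence $S$ is covered by two Zariski-open charts each isomorphic to $\C^2$ with algebraic transition functions, so $S$ is a smooth complex algebraic surface. The only mildly delicate point is checking that the local smoothness arguments are compatible with the ambient gluing, but since $S$ is cut out globally inside the smooth $3$-fold $\C^2\times\C\P^1$ by a section of the line bundle $\pi_2^*\mathcal O(1)$ whose dehomogenization has nonvanishing derivative in each chart, this is automatic.
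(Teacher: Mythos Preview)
Your proof is correct and follows essentially the same approach as the paper: cover $\C^2\times\C\P^1$ by the two standard affine charts on the $\C\P^1$ factor, dehomogenize the equation $xv-yu=0$ in each, and observe that in each chart one can solve for one of the coordinates, exhibiting the intersection as a copy of $\C^2$. Your write-up is in fact more thorough than the paper's (you discuss the gluing and the line-bundle interpretation), but the underlying argument is identical.
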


\begin{proof}
Recall that $\C\P^1$ is covered by two copies of $\C^1$, with points $(\frac uv:1)$ and $(1:\frac vu)$. This gives a covering of 
$\C^2\times \C\P^1$ by two copies of $\C^3$. In the first open set, when using $z=\frac uv$ as the third coordinate, we get
$$
S\cap \C^3=\{(x,y,z)\in \C^3,{\rm ~such~that~}x - y z =0\}
$$
which is  isomorphic to $\C^2$ (we can solve for $x$). Similarly, in the second open set we get
$$
x \left(\frac vu\right) - y
$$
and we can solve for $y$.
It is also clearly algebraic, for example because it is given by a polynomial equation in $\C^2\times \C\P^2$.
\end{proof}

Whenever we have a subset in a product, it is natural to consider the two projections of it. We will first look at the projection 
$$
S\to \C\P^1
$$
that forgets $(x,y)$. The fiber of $S$ over $(u:v)$ is the set of all $(x,y)$ such that $xv=yu$. Since $(u,v)\neq 0$, this is 
equivalent to $(x,y) = \lambda (u,v)$ for some $\lambda \in \C$. We can see from this that $S$ is the line bundle $\mathcal O(-1)$ over $\C\P^1$.

\smallskip
The other projection
$$
\pi:S\to\C^2
$$
is even more interesting. We get
$$
\pi^{-1}(a,b) = \left\{\begin{array}{ll}\{(a,b)\times (a:b)\},&{\rm if~}(a,b)\neq (0,0);\\
\{(0,0)\}\times \C\P^1,&{\rm if~}(a,b)=(0,0).\end{array}\right.
$$
So we have drastically different fibers -- a single point everywhere except $(0,0)$ and a whole $\C\P^1$ over $(0,0)$.

\smallskip
How should we visualize the blowup surface $S$? It is important to realize that the naive picture on the left in
the figure below

\begin{tikzpicture}[scale = .9]
\fill[gray!40!white] (0,0.0)--(4,1)--(6,0)--(2,-1)--(0,0);
\draw (3,1.5)--(3,0);
\draw [dashed](3,0)--(3,-.75);
\draw (3,-.76)--(3,-1.2);
\draw (1,0) node [anchor=west]{$S$};
\draw[thick,->] (3,-1.5)--(3,-2);
\fill[gray!40!white] (0,-3)--(4,-2)--(6,-3)--(2,-4)--(0,-3);
\draw (3,-3) node [anchor=west]{$\hskip -6.5pt \cdot (0,0) $};
\draw (1,-3) node [anchor=west]{$\C^2 $};
\fill[gray!40!white] (7,0.0)--(11,1)--(13,0)--(9,-1)--(7,0);
\draw (10,.7)--(10,-.7);
\draw (8,0) node [anchor=west]{$S$};
\draw[thick,->] (10,-1.5)--(10,-2);
\fill[gray!40!white] (7,-3)--(11,-2)--(13,-3)--(9,-4)--(7,-3);
\draw (10,-3) node [anchor=west]{$\hskip -6pt \cdot (0,0) $};
\draw (8,-3) node [anchor=west]{$\C^2 $};
\draw[red] (9.5,-2.76)--(10.5,-3.26);
\draw[blue] (9,-3.24)--(11,-2.74);
\draw[red] (9.5,0.5)--(10.5,0);
\draw[blue] (9,-0.5)--(11,0);
\end{tikzpicture}

\noindent
is wrong, because $S$ would not be smooth. The correct way of thinking about it is the picture on the right,
where I also indicate that approaching the origin along different directions gives different limiting points on $S$.

\smallskip
{\blue Terminology:} The map $S\to \C^2$ is called the {\blue blowdown map} or the {\blue blowdown morphism}, and $S$ is called the blowup of $\C^2$ at $(0,0)$ because that point is ``blown up" to a $\C\P^1$.\footnote{ Algebraic geometers waiting for their flight in the airport should refrain from discussing blowing up the plane at any number of points.}

\smallskip
The construction generalizes, of course. First of all, we can blow up a point in $\C^n$. Consider 
$S\subset \C^n \times \C\P^{n-1}$ given by
$$
S=\{(x_1,\ldots,x_n) \times (y_1:\ldots:y_n),{\rm ~such~that~rk}\hskip -2pt
\left(
\begin{array}{ccc}
x_1& \cdots &x_n \\
y_1& \cdots &y_n 
\end{array}
\right)
=1\}.
$$
Since not all $y_i$ are zero, the rank of the matrix is at least one, so $S$ is cut out by all $2\times 2$ minors of the matrix, i.e.
$$
x_i y_j -x_j y_i =0,~ 1\leq i<j\leq n.
$$
We claim that $S$ is smooth. Indeed, suppose we are working in the chart of $\C\P^{n-1}$ where $y_n\neq 0$. Then
the rank condition is equivalent to 
$$
(x_1,\ldots,x_n) = \lambda(\frac {y_1}{y_n},\ldots,\frac {y_{n-1}}{y_n},1).
$$
This means that $\lambda =x_n$ and then in this chart $S$ is isomorphic to $\C^n$ with coordinates 
$$x_n,\frac  {y_1}{y_n},\ldots, \frac{y_{n-1}}{y_n}.$$

\smallskip
Looking at the fibers of $\pi :S \to \C^n$, we see that 
$$
\pi^{-1}(x_1,\ldots,x_n) = \left\{\begin{array}{ll}\{(x_1,\ldots,x_n)\times (x_1:\ldots:x_n)\},&{\rm if~}(x_1,\ldots,x_n)\neq (0,\ldots,0),\\
\{(0,\ldots,0)\}\times\C\P^{n-1},&{\rm if~}(x_1,\ldots,x_n)= (0,\ldots,0),\end{array}\right.
$$
so the point $(0,\ldots,0)\in\C^n$ is blown up to $\C\P^{n-1}$.

\smallskip
\begin{rem}
In general, it is possible to blow up a smooth subvariety $Z$ in a smooth variety $X$ to get a blowdown morphism
$$
\pi:{\rm Bl}(Z\subset X)\to X
$$
with the following properties.
\begin{itemize}
\item
${\rm Bl}(Z\subset X)$ is a smooth algebraic variety.

\item
Fibers of $\pi$ over points in $X\setminus Z$ are points, i.e. $\pi$ induces an isomorphism 
$\pi^{-1}(X\setminus Z)\to X\setminus Z$.

\item
The morphism $\pi^{-1}Z\to Z$ can be naturally identified with the map $\P N(Z\subset X) \to Z$.
Here $N(Z\subset X)$ is the normal bundle\footnote{ A fiber of this bundle at a point $z\in Z$ is the quotient of the tangent space to $X$ at $z$ by the tangent space to $Z$ at $z$.}
 to $Z$ in $X$ and $\P W$ means the space of lines in the fibers of 
a vector bundle $W$.
\end{itemize}
In other words, a subvariety $Z$ gets blown up to the projectivization of its normal bundle.
\end{rem}

\smallskip
Even more generally, one can blow up arbitrary (i.e. singular) subvarieties of arbitrary varieties, but then there is no guarantee that the result is smooth. In fact, it usually isn't.

\smallskip
{\blue \hrule}

The blowup construction inspires the concept of {\blue birational equivalence} of algebraic varieties.

\smallskip
\begin{defn}
We call complex algebraic varieties $X$ and $Y$ birationally equivalent, if their fields of rational functions are isomorphic as field extensions of $\C$.
\end{defn}

\smallskip
\begin{rem}
An equivalent, more geometric, definition is that $X$ and $Y$ are birational if and only if there exist Zariski open subsets
$U\subseteq X$ and $V\subseteq Y$ with $U$ isomorphic to $V$ as a complex algebraic variety. We also call a morphism of algebraic varieties $X\to Y$ a birational morphism if it induces an isomorphism of Zariski open subsets.
\end{rem}

The second definition makes it clear that blowups (and blowdowns) do not change the birational equivalence class.
Study of birational equivalence is known as {\blue birational geometry} and is a big active area of Algebraic Geometry, with multiple Fields medals awarded for it (Hironaka, Mori, Birkar). A major part of birational geometry is the so-called Minimal Model Program
which aims to find a ``nice" representative in each birational class. Some singular varieties naturally occur in the Minimal Model Program.

\smallskip
I will state some accessible results of birational geometry, without proof.
\begin{itemize}

\item
For every, possibly singular, complex algebraic variety $X$ there exists a smooth complex algebraic variety $Y$ with a birational morphism $Y\to X$ (Hironaka). This also holds over other fields of characteristic zero, but is a very famous open problem in positive characteristics!

\item 
 $\dim = 1$. Two smooth complex projective  algebraic varieties $X$ and $Y$ of dimension one are birational to each other if and only if they are isomorphic.
 
 \item 
 $\dim = 2$. Two smooth complex projective algebraic varieties $X$ and $Y$ of dimension two are birational to each other if and only if they are connected by a sequence of blowups of points followed by a sequence of blowdowns, as indicated below.
\begin{equation}\label{strongfac}
X\leftarrow\cdots\leftarrow Z \rightarrow \cdots\rightarrow Y
\end{equation}

\item 
 $\dim = {\rm any}$. ({\blue Weak Factorization Theorem}, by Abramovich, Karu, Matsuki, W\l odarczyk, see \cite{AKMW})
Two smooth complex projective algebraic varieties $X$ and $Y$ of any dimension are birational to each other if and only if they are connected by a sequence of blowups and blowdowns of smooth subvarieties as indicated below
\begin{align*}
X = X_0 \dashleftarrow \hskip -5pt\dashrightarrow X_1 \dashleftarrow \hskip -5pt\dashrightarrow \cdots \dashleftarrow \hskip -5pt\dashrightarrow X_{n-1}
\dashleftarrow \hskip -5pt\dashrightarrow X_n=Y
\end{align*}
where each $\dashleftarrow\hskip -5pt \dashrightarrow$ is either a blowup or a blowdown morphism.\footnote{ The precise statement is stronger but a bit more technical.} 
It is an open problem, even in dimension $3$, whether the weak factorization statement can always be upgraded to strong factorization, as in \eqref{strongfac}.
\end{itemize}

\smallskip
{\blue \hrule}

When one has a birational morphism $\pi:X\to Y$, one can talk about the {\blue exceptional locus} ${\rm Exc}$ of $\pi$, which are the points $x\in X$ where $\pi$ is not an isomorphism in any neighborhood of $x$. One is also interested in the image $\pi({\rm Exc})$ in $Y$. If $Z$ is any closed subset of $Y$ which is not contained in $\pi({\rm Exc})$ then the {\blue proper preimage} of $Z$ is defined as the closure
\begin{equation}\label{properpreimage}
\overline {\pi^{-1}\Big(C\setminus (C\cap \pi({\rm Exc}))\Big)}.
\end{equation}
This construction could be used to desingularize algebraic varieties by looking at their proper preimages in carefully chosen blowups of some ambient varieties. 

{\bf Exercise 1.} Check that the blowup $S$ of $\C^n$ at the origin can be identified with the line bundle $\mathcal O(-1)\to \C\P^{n-1}$.

{\bf Exercise 2.} Let $W$ be the subset of $\C\P^2\times \C\P^2$ given by the conditions that the points $(x_0:x_1:x_2)\times (y_0:y_1:y_2)$ 
satisfy
$$
x_0 y_0 = x_1 y_1 = x_2 y_2.
$$
Prove that $W$ is a smooth surface. Verify that the projection maps $W\to \C\P^2$ are birational and find the image of the exceptional locus (i.e. points in $\C\P^2$ with more than one preimage point). \emph{Remark:} $W$ is the closure of the graph of the Cremona transformation $(x_0:x_1:x_2)\mapsto (\frac 1{x_0}:\frac 1{x_1}:\frac 1{x_2})$.

{\bf Exercise 3.} Let $x^2-y^3=0$ be a singular curve on $\C^2$. Compute its proper preimage \eqref{properpreimage} on the blowup $S$ of $\C^2$ at $(0,0)$ and verify that it is a smooth curve. \emph{Hint:} Work in the coordinate charts on $S$.

\section{Finite group actions on affine algebraic varieties and complex manifolds. Subgroups of $SL_2(\C)$.}
Let $X\subseteq \C^n$ be an affine complex algebraic variety. Suppose that a finite group $G$ acts on $X$, 
i.e. 
$$
G\subseteq {\rm Aut}_{\C-{\rm alg}}(\C[x_1,\ldots,x_n]/I)
$$
where $I$ is the defining ideal of $X$.
What can we say about the set of $G$-orbits $X/G$? The following beautiful result goes back to at least Hilbert.

\smallskip
\begin{thm}\label{XG}
The set $X/G$ has a natural structure of an affine algebraic variety. More precisely, the subring of $G$-invariant elements
$$
(\C[x_1,\ldots,x_n]/I)^G
$$
is a finitely generated $\C$-algebra, and its maximal ideals are in a natural bijection with $G$-orbits on $X$.
\end{thm}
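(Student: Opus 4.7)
The plan is to work with the $\C$-algebra $A = \C[x_1,\ldots,x_n]/I$, on which $G$ acts by $\C$-algebra automorphisms, and establish the theorem by proving two things: (i) the invariant subring $A^G$ is a finitely generated $\C$-algebra, and (ii) the inclusion $A^G \hookrightarrow A$ induces a bijection between $G$-orbits on $X = Z(I)$ and maximal ideals of $A^G$. Once these are in place, the ``natural structure'' of $X/G$ as an affine variety is simply $Z(J)$ where $A^G \cong \C[y_1,\ldots,y_m]/J$ is any presentation, with the bijection supplied by Theorem \ref{whn} applied to $A^G$.

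For step (i), the key observation is that every $a\in A$ is a root of the monic polynomial
\[
P_a(t) = \prod_{g\in G}(t - g\cdot a) \;\in\; A^G[t],
\]
whose coefficients are elementary symmetric functions of the $G$-orbit of $a$ and hence lie in $A^G$. Thus $A$ is integral over $A^G$. Now pick $\C$-algebra generators $a_1,\ldots,a_n$ of $A$ (the images of the $x_i$), let $B \subseteq A^G$ be the $\C$-subalgebra generated by the finitely many coefficients of $P_{a_1},\ldots,P_{a_n}$. Then $B$ is a finitely generated $\C$-algebra, hence Noetherian by Hilbert's basis theorem, and $A$ is generated as a $B$-algebra by the $a_i$ each satisfying a monic relation of degree $|G|$ over $B$, so $A$ is a finitely generated $B$-module. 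Since $B$ is Noetherian, the submodule $A^G \subseteq A$ is also finitely generated over $B$, and therefore $A^G$ is a finitely generated $\C$-algebra.

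For step (ii), integrality of $A$ over $A^G$ immediately yields that contraction along $A^G \hookrightarrow A$ sends maximal ideals of $A$ to maximal ideals of $A^G$, and by the lying-over theorem every maximal ideal of $A^G$ is the contraction of some maximal ideal of $A$; combined with Remark \ref{maxpts} this gives a surjection from $X$ to $\mathrm{maxSpec}(A^G)$. It is clear that points in the same $G$-orbit contract to the same maximal ideal (since $A^G$ is pointwise fixed). For the converse, given distinct orbits $O_1,O_2 \subseteq X$, use the finiteness of $O_1 \cup O_2$ and the Chinese remainder theorem (or direct Lagrange interpolation in $A$) to find $h \in A$ with $h|_{O_1} = 0$ and $h|_{O_2} = 1$; then
\[
f := \prod_{g \in G} g\cdot h \;\in\; A^G
\]
vanishes on $O_1$ but takes the value $1$ on $O_2$, separating the two contractions.

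The main obstacle is really step (i): the clever trick of integrality plus the Artin--Tate style bootstrap (using Noetherianity of $B$ to pass finite generation as $B$-module to $A^G$) is the nontrivial input, and it is what makes the result fail in many infinite-group settings. Step (ii) is then essentially formal: surjectivity from lying-over, and injectivity from the orbit-averaging argument applied to a separating function built from the Nullstellensatz.
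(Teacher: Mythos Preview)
Your proof is correct, but your route to finite generation is genuinely different from the paper's. The paper first linearizes the action (embedding $X$ so that $G$ permutes coordinates), reduces to the graded polynomial case $\C[y_1,\ldots,y_l]^G$, and then runs Hilbert's classical argument: take homogeneous invariant generators $f_1,\ldots,f_k$ of the ideal $AA_+^G$, and prove by induction on degree---using the Reynolds operator $\frac{1}{|G|}\sum_g g$---that they generate $A^G$ as an algebra. You instead use Noether's integrality approach: the characteristic polynomial $P_a(t)=\prod_g(t-g\cdot a)$ makes $A$ integral over $A^G$, and the Artin--Tate bootstrap (finitely generated $B\subseteq A^G$ with $A$ module-finite over $B$, hence $A^G$ module-finite over Noetherian $B$) finishes it. Your argument is shorter, avoids the reduction to a graded situation, and works verbatim in positive characteristic where the averaging operator is unavailable; the paper's approach, on the other hand, is the one that generalizes to linearly reductive groups (as the paper notes in the remark following the proof). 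For the bijection with orbits, the two arguments are essentially the same in content: the paper proves by hand that $m\cap A^G$ is maximal and that the map is surjective, while you invoke integrality and lying-over; and both separate distinct orbits by the same product trick $\prod_g g\cdot h$.
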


\begin{proof}
Consider the elements $y_{ik} = g_i(x_k)$ for all $g_i\in G$ and all $1\leq k\leq n$. We can use this set of generators to write $X$ as an algebraic subvariety of $\C^{n|G|}$, so that the action of $G$ comes from a permutation action on the coordinates of $\C^n$.
Then we have 
$$
(\C[x_1,\ldots,x_n]/I)^G \cong (\C[y_1,\ldots,y_l]/J)^G\cong \C[y_1,\ldots,y_l]^G/J^G.
$$
In the second isomorphism we used the fact that every element $r$ of the invariant ring  $(\C[y_1,\ldots,y_l]/J)^G$
can be lifted to an element $\hat r$ of  $\C[y_1,\ldots,y_l]$ and then 
$$\frac 1{|G|}\sum_{g\in G} g(\hat r)=r\,{\rm mod}\,J.$$
Thus, to prove that $(\C[x_1,\ldots,x_n]/I)^G$ is a finitely generated $\C$-algebra, it suffices to prove it for
$ \C[y_1,\ldots,y_l]^G$, with the permutation action of $G$.
We actually don't care too much that the action is a permutation action, but we will use a weaker statement that the action of 
$G$ on $A= \C[y_1,\ldots,y_l]$ preserves the natural grading on $A$ by the total degree in $y_i$.  

\smallskip
Consider the ideal $A A_+^G\subset A$ which is generated by all homogeneous elements of $A^G$ of positive degree. Since $A$ is Noetherian, we can find a finite set of generators $f_1,\ldots, f_k$ of $A A_+^G$, and we can even assume that all $f_i$ are $G$-invariant homogeneous elements of $A$. We will now prove that
$f_i$ generate the ring $A^G$ as a $\C$-algebra, by induction on degree.

\smallskip
In degree zero, we have $A_0 = A^G_{0}=\C$, and there is nothing to prove. Suppose we have proved that every homogeneous element of $A^G$ of degree at most $d$ can be written as a polynomial in $f_1,\ldots, f_k$ with constant coefficients. Then for a homogeneous element $f\in A^G$ of degree $d+1$ we use $f\in AA_+^G$ to write 
$$
f = \sum_{i=1}^k a_i f_i
$$
where $a_i$ are some elements of $A$ of degree $d+1-\deg f_i$. We will now use the averaging trick
\begin{align*}
f = \frac 1{|G|} \sum_{g\in G}g(f) =\frac 1{|G|} \sum_{i=1}^k  \sum_{g\in G}g(a_i f_i) 
=\frac 1{|G|} \sum_{i=1}^k  \sum_{g\in G}g(a_i) f_i
\\
=\sum_{i=1}^k  f_i \left(\frac 1{|G|} \sum_{g\in G}g(a_i) \right) = \sum_{i=1}^k f_i h_i
\end{align*}
where $h_i$ are $G$-invariant elements of degree $d+1-\deg f_i$. By induction assumption, $h_i$ can be written as polynomials in $f_1,\ldots,f_k$, therefore so can be $f$.

\smallskip
Now that we have established that $(\C[x_1,\ldots,x_n]/I)^G$ is finitely generated, let us prove the bijection between its maximal ideals and the set of orbits of $G$-action on the set of maximal ideals of $\C[x_1,\ldots,x_n]/I$.\footnote{ Recall that points in $X$ correspond to maximal ideals in $R=\C[x_1,\ldots,x_n]/I$ by Hilbert's Nullestellensatz.} In one direction, to any maximal ideal $m\subset R$ we associate the ideal $m^G = m\cap R^G\subset R^G$. The ideal $m^G$ is maximal because for any 
$x\in R^G\setminus m^G$ there exists $y\in R$ such that $xy=1\,{\rm mod}\,m$. Then
the element 
$$
\prod_{g\in G} (1 - g(x y))  = \prod_{g\in G} (1 - x g(y))  = 1 - x\hat y
$$
(with $G$-invariant $\hat y$) lies in $m^G$, which shows that $x \,{\rm mod}\,m^G$ is invertible. It is clear that different $m$ in the same $G$-orbit give the same $m^G$.

\smallskip
In the other direction, for a maximum ideal $I\subset R^G$ consider the ideal $RI$ that it generates in $R$. If $RI$ contains $1$
then there holds
$$
1 = \sum_i r_i x_k
$$
with $x_k\in I$. Averaging gives $1 =\sum_i \left (\frac 1{|G|}r_i\right)x_k\in I,$ contradiction. Therefore, $RI$ is contained in some maximal ideal $m$. It is clear that $m^G$ contains $I$ and is therefore equal to it, since $I$ is maximal.

\smallskip
It now suffices to show that if $m_1$ and $m_2$ satisfy $m_1^G=m_2^G$, then $m_1$ and $m_2$ are in the same $G$-orbit.
Consider $I_1 = \bigcap_{g\in G} g(m_1)$ and $I_2= \bigcap_{g\in G} g(m_2)$. If $I_1+ I_2$ were contained in a maximum ideal $m$,
we would have 
$$
m\supseteq \bigcap_{g\in G} g(m_1),~m\supseteq \bigcap_{g\in G} g(m_2).
$$
If $m$ is not equal to any $g(m_1)$, then there are elements $x_g\in g(m_1),x_g\not\in m$, and their product is in $ \bigcap_{g\in G} g(m_1)$ but not in $m$. Similarly, $m$ must be equal to one of $g(m_2)$, and this is impossible, since $m_1$ and $m_2$ are in different $g$-orbits. Therefore, $I_1+I_2=R$ and\footnote{ You have been warned.}
$$
1 = a_1+a_2,~a_i\in I_i.
$$
Moreover, since $I_i$ are $G$-invariant, when we average over $G$, we may assume that $a_i$ are $G$-invariant. However, this means that $a_i\in m_i^G$, which contradicts $1\not\in m_1^G=m_2^G$.
\end{proof}

\begin{rem}
The invariant rings are also finitely generated when the group $G$ is a {\blue linear reductive group}. However, they don't have to be finitely generated in the non-reductive case, see a famous counterexample to the Hilbert's fourteenth problem given by Nagata \cite{Nagata}. 
\end{rem}

\smallskip
{\blue \hrule}
We will now discuss finite group actions on smooth complex manifolds. 
Suppose a finite group $G$ fixes a point $p\in X$. Then locally we have a surjective map of $G$-representations 
$$
{\rm Hol}^0(p) \stackrel {d}{\to} \left(TX \right)^\vee_p\to 0
$$
where ${\rm Hol}^0(p)$ denotes the infinite-dimensional space of holomorphic functions near $p$ which are zero at $p$.
This surjection has a $G$-invariant splitting. To see this, a person nervous around infinite-dimensional vector spaces, can lift a basis of $\left(TX \right)^\vee_p$, look at the span of the $G$-translates of it and thus reduce the problem to that of surjective finite-dimensional representations of $G$. The $G$-invariant splitting then endows $X$ near $p$ with analytic coordinates on which $G$ acts {\blue linearly}.

\smallskip
\begin{cor}\label{dim1fixed}
In dimension one, all actions that fix a point $p$ locally analytically look like $z\mapsto \alpha z,~\alpha^n=1$, and the quotient is given by 
$z\mapsto z^n=w$.
\end{cor}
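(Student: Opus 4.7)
The plan is to invoke, verbatim, the linearization statement proved in the paragraph just before the corollary: near a fixed point $p$ one can choose local analytic coordinates on $X$ in which every element of $G$ acts by a linear transformation. In complex dimension one this says that each $g\in G$ acts on the local coordinate $z$ by $z\mapsto \chi(g)\,z$ for some scalar $\chi(g)\in\C^*$, and composing linear maps shows that $\chi:G\to \C^*$ is a group homomorphism.

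Next I would observe that the image of $\chi$ is a finite subgroup of $\C^*$, hence consists of roots of unity, and is cyclic. (Indeed, every element has finite order so lies in the group of roots of unity, and any finite subgroup of roots of unity of fixed order $n$ is already cyclic.) Let $n=|\mathrm{im}(\chi)|$ and let $\alpha$ be a primitive $n$-th root of unity generating $\mathrm{im}(\chi)$. Then each $g\in G$ acts as $z\mapsto \alpha^{k(g)}z$ for some integer $k(g)$, and in particular $\alpha^n=1$, as required. Elements in $\ker\chi$ act trivially on the chart, so the action near $p$ factors through the cyclic group of order $n$ generated by $z\mapsto \alpha z$.

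For the quotient, I would apply (the local analytic analog of) Theorem \ref{XG}: the quotient is parametrized by the ring of invariants. A convergent power series $\sum_{k\geq 0} a_k z^k$ is invariant under $z\mapsto \alpha z$ if and only if $a_k=0$ whenever $\alpha^k\neq 1$, that is, whenever $n\nmid k$. Hence the invariant ring is $\C\{z^n\}$, generated by the single function $w=z^n$, and the quotient morphism is precisely $z\mapsto z^n=w$.

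The main (really only) obstacle is making sure the linearization from the preceding paragraph is used in the analytic, not merely formal, setting; but this is already what that paragraph establishes, via a $G$-equivariant splitting of $\mathrm{Hol}^0(p)\to (TX)^\vee_p$. Everything else is the classification of finite subgroups of $\C^*$ together with a one-line invariant-theory calculation.
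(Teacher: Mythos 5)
Your proposal is correct and takes essentially the same route as the paper: linearize the action near $p$ using the $G$-equivariant splitting established in the preceding paragraph, then observe that a finite subgroup of $\mathrm{GL}(1,\C)=\C^*$ consists of roots of unity and is cyclic. The paper's proof is a single sentence and does not spell out the quotient computation; your invariant-ring argument (a power series $\sum a_k z^k$ is fixed by $z\mapsto \alpha z$ iff $n\mid k$ for all nonzero $a_k$, so the invariants are $\C\{z^n\}$) is a useful addition that actually justifies the second half of the corollary, and your remark that a non-faithful action factors through $\mathrm{im}(\chi)$ handles a small imprecision in the statement. No gaps.
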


\begin{proof}
Locally near $p$, we have an action of ${\rm GL}(1,\C) =\C^*$, and all finite subgroups of $ \C^*$ are given by groups of $n$-th roots of $1$.
\end{proof}

\begin{cor}
For any finite group $G$ of automorphisms of a complex manifold $X$, the fixed locus $X^G$ is a disjoint union of  complex submanifolds of $X$.
\end{cor}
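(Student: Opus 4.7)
The plan is to argue locally at each fixed point, using the linearization observation made just above the corollary. Let $p \in X^G$; since every element of $G$ fixes $p$, the whole group $G$ acts on a neighborhood of $p$. By the discussion preceding the corollary, we can choose local holomorphic coordinates $z_1,\ldots,z_n$ centered at $p$ in which the $G$-action is linear, i.e.\ given by a homomorphism $\rho\colon G\to \mathrm{GL}(n,\C)$ acting on $\C^n$. So near $p$, the problem reduces to describing the fixed locus of a finite linear group action on $\C^n$.

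Next, I would observe that the fixed locus of $\rho$ is simply the linear subspace $V^G = \{v\in\C^n : \rho(g)v = v\text{ for all }g\in G\}$, which is the image of the $\C$-linear averaging projector $\pi = \frac{1}{|G|}\sum_{g\in G}\rho(g)$. In particular $V^G$ is a complex-linear subspace of $\C^n$, hence a complex submanifold. Restricting back to $X$, this shows that a neighborhood of $p$ in $X^G$ is biholomorphic to an open neighborhood of the origin in $V^G$, so $p$ has a neighborhood in $X^G$ that is a complex submanifold of $X$.

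From this local statement the global one follows formally. The fixed set $X^G = \bigcap_{g\in G}\{x\in X : g(x)=x\}$ is closed in $X$ (being the intersection of the preimages of the diagonal under the graphs of the $g$'s), and the previous paragraph shows that every point of $X^G$ has an open neighborhood (in $X$) whose intersection with $X^G$ is a complex submanifold. Taking connected components of $X^G$, each connected component is a complex submanifold of $X$ (the local dimensions glue because they are locally constant on the connected locally closed set $X^G$), and $X^G$ is the disjoint union of these components. Different components may carry different dimensions, reflecting the different isotypic decompositions of tangent spaces at different fixed points.

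The only non-routine step is the linearization input, but that has already been supplied in the paragraph immediately preceding the corollary: the existence of a $G$-invariant splitting of $\mathrm{Hol}^0(p)\twoheadrightarrow (TX)^\vee_p$ (obtained by averaging a lift over $G$, which is possible because $|G|$ is invertible in $\C$) is what makes the whole argument go through. Without the finiteness of $G$ (more generally, without reductivity), this splitting step would be the genuine obstacle.
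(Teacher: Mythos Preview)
Your proof is correct and follows essentially the same approach as the paper: reduce via the preceding linearization to a linear action of $G\subseteq\mathrm{GL}(n,\C)$ on $\C^n$, and observe that the fixed locus there is the linear subspace $(\C^n)^G$. You have simply filled in more detail (the averaging projector, the passage from local to global via closedness and connected components) than the paper's one-line argument.
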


\begin{proof}
Since the action is locally linear, it suffices to look at $G\subseteq {\rm GL}(n,\C)$ acting on $X=\C^n$. Then $X^G$ is the linear subspace $(\C^n)^G$.
\end{proof}

\smallskip
{\blue \hrule}

We will now focus on dimension two, and more precisely on the action of finite subgroups
$G\subset {\rm SL}(2,\C)$ on $\C^2$. (We could have considered ${\rm GL}(2,\C)$ but it is more complicated.)

\smallskip
We first remark that any such $G$ can be conjugated to lie inside the unitary group ${\rm SU}(2)$. Indeed,
$G\subset SU(2)$ iff $G$ fixes the standard Hermitean on $\C^2$. We can take a positive definite Hermitean form on $\C^2$ and average it over $G$ to prove that $G$ preserves {\blue some} positive definite Hermitean form. Since all such forms can be written as the standard form in some basis, this change of basis makes a conjugate of $G$ unitary.\footnote{ This is a particular case of the Weyl's unitarian trick.} We put a matrix version of this  statement in Exercise 2.

\smallskip
Recall that 
$$
{\rm SU}(2) = \left\{\left(\begin{array}{rr} a&-\bar b\\
b&\bar a\end{array}\right),~|a|^2 + |b^2|=1\right\}
$$
is a real $3$-sphere. We can also think of it as the length one quaternions $\gamma\in \H$ acting on $\H\cong \C^2$ by 
left multiplication $\gamma :w\mapsto \gamma w$. 

\smallskip
Now consider the conjugation action
$$
w\mapsto \gamma w \gamma^{-1}
$$
of ${\rm SU}(2)$ on $\H$.
It preserves the trace zero part of $\H$ (see Exercise 3) and the norm
$$
|w|^2 = w \bar w
$$
on it. This gives a group homomorphism ${\rm SU(2)} \to {\rm SO}(3,\R)$, whose kernel is $\{\pm 1\}$. Finite subgroups of ${\rm SO}(3,\R)$ are classically known. They include two infinite series (rotation symmetries of a regular pyramid and a regular bipyramid) as well as rotational symmetries of the regular tetrahedron, cube/octahedron, and icosahedron/dodecahedron. 
This allows one to classify all finite subgroups $G\subset {\rm SL}(2,\C)$. 

\smallskip
In the future sections we will study in detail the singular quotients $\C^2/G$ for these groups $G$ and their resolutions of singularities.

\smallskip
{\bf Exercise 1 (Molien series.)} Let $V$ be a complex finite-dimensional representation of a finite group $G$. It induces a grading-preserving action of $G$ on ${\rm Sym}^*(V)\cong \C[x_1,\ldots,x_n]$. Prove that the Hilbert series of the ring of invariants ${\rm Sym}^*(V)^G$
can be computed by
$$
\sum_{d=0}^{\infty} \dim_\C\left({\rm Sym}^d(V)\right)^G\, t^d =
\frac 1{|G|}
\sum_{g\in G} \prod_{i=1}^n \frac {1}{1-\lambda_{g,i}t}
=\frac 1{|G|}
\sum_{g\in G} \det (1-tg)^{-1}
$$
where $\lambda_{g,1},\ldots,\lambda_{g,n}$ are eigenvalues of $g$ on $V$.  \emph{Hint:} For a finite-dimensional $G$-representation $W$ we have $\dim_\C W^G = \frac 1{|G|}\sum_{g\in G} {\rm Tr}(g,W)$. Then use the result that all finite order elements in ${\rm GL}(V)$ are diagonilizable.

{\bf Exercise 2.}
Let $M$ be a Hermitean form on $\C^n$ given by $\langle v,w\rangle = \bar w^T M v$. Prove that an endomorphism
$$
g: v\to Av
$$
satisfies 
$
\langle Av, Aw \rangle = \langle v, w \rangle 
$
for all $v$ and $w$ if and only if $\bar A^TMA = M$. Prove that if for some invertible matrix $S$ we have $M = \bar S^TS$, then
$S A S^{-1}$ is a unitary matrix.

{\bf Exercise 3.} 
Prove that the conjugation action of quaternions preserves the subspace $\R\ii + \R{\rm j} + \R{\rm k}$ and the norm on it.

\section{Resolutions of $A_n$ and $D_4$ singularities.}\label{sec.ADE1}
Let us now look at some specific examples of finite subgroups $G$ of ${\rm SL}(2,\C)$ and the corresponding quotients $\C^2/G$.

\smallskip
The first series of examples comes from the action of the cyclic group of order $n\geq 2$ which necessarily\footnote{ Exercise 1.}
looks like
\begin{equation}\label{cyclic}
G=<g>,~g(x,y) =(\xi x, \xi^{-1} y)
\end{equation}
where $\xi = \ee^{2\pi \ii/n}$.
As a consequence of the definition, the action\footnote{ There is an evergreen issue of the action on points versus the action on functions. We consider the action on the ring $\C[x,y]$.} of the generator $g$ on the monomial $x^ay^b$ is 
$$
g(x^a y^b) = \xi^{a-b} x^ay^b
$$
which implies that the invariant ring $\C[x,y]^G$ is the linear span of monomials $x^ay^b$ with the property $a=b\,{\rm mod}\,n$.
Some prominent examples of such monomials are $u=x^n$, $v=y^n$ and $w=xy$. We claim that every $G$-invariant monomial $x^ay^b$ can be written as a product of these three. Indeed, we can use $u$ and $v$ to reduce to the case $a,b<n$, and then we must have $a=b$. It is also clear that $u,v,w$ satisfy
\begin{equation}\label{An}
uv-w^n = 0,
\end{equation}
so the ring of invariants $\C[x,y]^G$ is isomorphic to $\C[u,v,w]/(uv-w^n)$, and the quotient $\C^2/G$ is a surface in $\C^3$
given by the equation \eqref{An}.
By looking at partial derivatives, we see that the only singular point of this surface occurs at the origin $(0,0,0)$. It is known as the $A_{n-1}$ singularity, and we will soon see why.

\smallskip{\blue \hrule}
We will first look in detail at the $A_1$ singularity given by
\begin{equation}\label{A1}
uv-w^2=0
\end{equation}
This is the simplest hypersurface singularity in complex dimension two, in the following sense. If we have a function
$f(u,v,w)$ with $f(0,0,0)=0$, we can consider its Taylor series
$$
f(u,v,w) = 0 + {\rm ~linear~} + {\rm~ quadratic~} + {\rm higher~order.}
$$ 
If the linear term is nonzero, then $f(u,v,w)=0$ is smooth at $(0,0,0)$. If the the linear term is zero, then we can ask about the quadratic term. If the rank of the corresponding form is the maximum possible $3$, then we get an $A_1$ singularity.

\smallskip
We will now resolve the surface \eqref{A1} by blowing up the ambient $\C^3$ at $(0,0,0)$ and looking at the proper preimage of the surface $uv=w^2$. Here is the intuitive picture of what will happen, where we indicate in blue the preimage of the singular point.

\begin{tikzpicture}
\fill[gray!40!white](4,0) rectangle  (7,3);
\draw[red] (4.5,2) -- (6.5,1);
\draw[red] (4.5,1) -- (6.5,2);
\draw[blue] node at (5.5,1.5){$\bullet$};
\draw (4.5,1.5) ellipse (.15 and .5);
\draw (6.5,1.5) ellipse (.15 and .5);
\draw(5.5,0) node[anchor=north]{$\{uv = w^2\}\subset \C^3$};
\fill[gray!40!white](9,0) rectangle  (12,3);
\draw[thick,<-] (7.5,1.5)--(8.5, 1.5);
\draw[red] (9.5,2) -- (11.5,2);
\draw[red] (9.5,1) -- (11.5,1);
\draw (9.5,1.5) ellipse (.15 and .5);
\draw (11.5,1.5) ellipse (.15 and .5);
\draw [blue](10.5,1.5) ellipse (.15 and .5);
\draw(10.5,0) node[anchor=north]{$\tilde S\subset \tilde \C^3$};
\end{tikzpicture}

Let us see this with all the gory details, in the coordinate charts. Recall that the blowup $\tilde \C^3$ is a submanifold of $\C^3 \times \C\P^2$, given by $\{(u,v,w),(u_1:v_1:w_1)\}$ such that there exists $\lambda$ with $(u,v,w) = \lambda (u_1,v_1,w_1)$.
As a result, $\tilde \C^3$ is covered by three open charts, each one isomorphic to $\C^3$.

\smallskip
In the first open chart $w_1\neq 0$, we have $\C^3$ with coordinates $\frac {u}{w}, \frac {v}{w},w$ and the 
map to the original $\C^3$ is given by
\begin{equation}\label{blowcoord}
(u,v,w) = (w \left(\frac {u}{w}\right), w \left(\frac {v}{w}\right), w).
\end{equation}

\smallskip
\begin{rem}
You may be concerned with such a bizarre notation for the coordinates. Shouldn't they be variables? Yes, of course. Let's say that we have $\C^3$ with coordinates $t_1,t_2,t_3$ which maps to the original $\C^3$ by $(t_1,t_2,t_3)\mapsto (u,v,w)=(t_1 t_3,t_2 t_3, t_3)$.
But then we think about what the coordinate functions $t_i$ look like as rational functions in $u,v,w$ and we recover \eqref{blowcoord}. At which point we forget about $t_i$-s and accept a somewhat weird notation. It does have advantages of being easily able to write the transition functions, although one has to be careful figuring out which subsets in the three open charts are glued together. Fortunately, we will not worry too much about it.
\end{rem}

\smallskip
What is the preimage of $S=\{uv-w^2=0\}$ in the chart with coordinates $\frac {u}{w}, \frac {v}{w},w$? We rewrite the equation of $S$ as
$$
0=\left(\frac uw\right) w \left(\frac vw\right) w  - w^2 = w^2 \left(\left(\frac uw\right) \left(\frac vw\right) -1\right)
$$
and see that the preimage has two components: $w=0$ and $  \left(\frac uw\right)\left(\frac vw\right) -1 = 0$. The first component is the preimage of $(u,v,w)=0$, a.k.a. the exceptional divisor, and the second component is the proper preimage of $S$. Or, more precisely, these are the intersections of the exceptional divisor and the proper preimage of $S$ with this coordinate chart.
Observe now that the proper preimage of $S$ is smooth in this chart.

\smallskip
Let us now look at the coordinate chart with coordinates $u, \frac vu, \frac wu$. We similarly get
$$
0=u^2\left(\frac vu \right)- u^2 \left(\frac wu\right)^2 = u^2 \left(\left(\frac vu\right) - \left(\frac wu\right)^2\right)
$$
and again we see that the proper preimage $0= \left(\frac vu\right) - \left(\frac wu\right)^2$ is smooth.
Obviously the $v_1\neq 0$ chart is handled similarly.

\smallskip
We thus see that the proper preimage $\tilde S$ of $S$ is smooth. It is also instructive to look at the fibers of $\tilde S\to S$. Outside of $(0,0,0)$, the fibers are of course points. The preimage of $(0,0,0)$ is a subvariety of the exceptional divisor, given 
by $(u_1:v_1:w_1)\in \C\P^2$ with $u_1 v_1 - w_1^2 = 0$, so it is a $\C\P^1$. For example, in the coordinate chart $w_1\neq 0$,
we have $\frac uw = \frac {u_1}{w_1},~\frac vw = \frac {v_1}{w_1}$ so $0=\left(\frac uw\right) \left(\frac vw\right) -1$ matches $u_1 v_1 - w_1^2 = 0$ in the appropriate chart on $\C\P^2$.

\smallskip
{\blue \hrule}
Now let us look at the $A_{n-1}$ case, i.e. 
$$
S=\{uv-w^n=0\}.
$$
As before, we blow up $\C^3$ and see what happens on the three coordinate charts, two of which are similar due to $u\leftrightarrow v$ symmetry. On the chart with coordinates $\frac uw, \frac vw, w$ we get
$$
uv-w^n = w^2 \left(\left(\frac uw\right) \left(\frac vw\right) -w^{n-2}\right)
$$
so we may still have a singular point at the origin, but the value of $n$ has decreased by $2$. Also note that the intersection of the proper preimage of $S$ with the exceptional divisor is given in this chart by
\begin{equation}\label{Anlines}
w=0,~ \left(\frac uw\right) \left(\frac vw\right) -w^{n-2}= 0,
\end{equation}
so we get two lines $w=\frac uw=0$ and  $w=\frac vw=0$. In the other two charts the proper preimage is smooth (Exercise 2).
They also compactify the lines \eqref{Anlines} from $\C$ to $\C\P^1$. In short, we get a map $\tilde S\to S$ with the preimage of $(0,0,0)$ being a union of two $\C\P^1$-s, which intersect as coordinate lines at an $A_{n-3}$ singularity, where $A_0$ is by definition smooth. 

\begin{tikzpicture}
\filldraw[gray!40!white] (3,0) ellipse (2.5 and 1);
\filldraw[gray!40!white] (9,0) ellipse (2.5 and 1);
\node at (3,0){$\bullet$};
\node at (3,0)[anchor=north]{$~~A_{n-1}$};
\node at (2,.5){$S$};
\node at (9,0){$\bullet$};
\node at (9,0)[anchor=north]{$~~A_{n-3}$};
\node at (8,.5){$\tilde S$};
\draw[thick, <-]( 5.7,0)--(6.3,0);
\draw[blue] (9.7,.7)--(8.5,-.5);
\draw[blue] (10,0)--(8,0);
\end{tikzpicture}

\smallskip
We repeat the process. Each time, the power $n$ decreases by $2$ and two new $\C\P^1$-s spring out. A paranoid reader can verify that under the blowup the previous $\C\P^1$-s separate and two new lines intersect them transversely. The last step is either unnecessary (if $n$ is odd, and we reduce to $uv=w$) or we only have one $\C\P^1$ (if $n$ is even and we reduce to $A_1$-singularity). In both cases we end up with a resolution of singularities
$\tilde S\to S$ such that the preimage of the singular point is a chain of $(n-1)$ transversal $\C\P^1$-s.

\begin{tikzpicture}
\draw (0.3,.7)--(1.2,-.2);
\draw (.8,-.2)--(1.7,.7);
\draw (1.3,.7)--(2.2,-.2);
\draw (3.3,.7)--(4.2,-.2);
\draw (3.8,-.2)--(4.7,.7);
\draw (4.3,.7)--(5.2,-.2);
\node at (2.75,.25){$\cdots$};
\node at (2.75,-.3)[anchor=north]{$\C\P^1$-s as lines};
\draw (7.0,0.25) ellipse (0.5 and 0.5);
\draw (6.5,0.25) ..controls(6.75,0.1) and (7.25,0.1)  ..(7.5,0.25);
\draw [dashed] (6.5,0.25) ..controls(6.75,0.4) and (7.25,0.4)  ..(7.5,0.25);
\draw (8.0,0.25) ellipse (0.5 and 0.5);
\draw (7.5,0.25) ..controls(7.75,0.1) and (8.25,0.1)  ..(8.5,0.25);
\draw [dashed] (7.5,0.25) ..controls(7.75,0.4) and (8.25,0.4)  ..(8.5,0.25);
\node at (9,0.25) {$\cdots$};
\draw (10.0,0.25) ellipse (0.5 and 0.5);
\draw (9.5,0.25) ..controls(9.75,0.1) and (10.25,0.1)  ..(10.5,0.25);
\draw [dashed] (9.5,0.25) ..controls(9.75,0.4) and (10.25,0.4)  ..(10.5,0.25);
\draw (11.0,0.25) ellipse (0.5 and 0.5);
\draw (10.5,0.25) ..controls(10.75,0.1) and (11.25,0.1)  ..(11.5,0.25);
\draw [dashed] (10.5,0.25) ..controls(10.75,0.4) and (11.25,0.4)  ..(11.5,0.25);
\node at (9,-.3)[anchor=north]{$\C\P^1$-s as spheres};
\end{tikzpicture}

\noindent
Neither of the above pictures is entirely correct, since the Riemann spheres intersect transversely. At any rate, we will consider
the dual graph that encodes the components of the exceptional locus of $\tilde S\to S$ as vertices and their intersections as edges. It is
$$
\circ - \hskip -5pt - \circ - \hskip -5pt - \cdots - \hskip -5pt - \circ - \hskip -5pt - \circ 
$$
with $n-1$ vertices, which is the $A_{n-1}$ Dynkin diagram.\footnote{ There is, of course, more to this notation. Appropriately understood intersection pairing matrix of the components is up to the sign the Cartan matrix of $A_{n-1}$. We will not try to define it now, but will comment on it in Section \ref{sec.int}.}

\smallskip
{\blue \hrule}
Now let us look at the next case. Here we have a subgroup $G$ of ${\rm SL}(2,\C)$ generated by
\begin{equation}\label{Dn}
(x,y)\mapsto (\xi x, \xi^{-1} y),~(x,y)\mapsto (-y,x).
\end{equation}
If we apply the second operation twice, we get $(x,y)\mapsto (-x,-y)$, so we might as well assume that $\xi$ is the primitive $(2n)$-th root of $1$. 

\smallskip
Let us first compute the ring of invariants. The cyclic subgroup is normal, so we will first take the quotient by that to get
$u=x^{2n},v= y^{2n}, w= xy$ with the ring
$$
\C[u,v,w]/(uv-w^{2n}).
$$
The symmetry $(x,y)\mapsto (-y,x)$ now becomes the involution $(u,v,w) \mapsto (v,u,-w)$. We can take the invariants first and then the quotient. The action of this involution on $\C^3$ has positive eigenspace $u+v$ and the $A_1$-type action on $u-v$ and $w$, so the ring of invariants is generated by
$$
t_1 = u+v,~t_2= (u-v)^2,~t_3 =w^2,~t_4=(u-v)w
$$
with relations $t_2 t_3 = t_4^2$. We then have 
\begin{align*}
&\C[x,y]^G \cong \C[t_1,\ldots,t_4]/(t_2 t_3 - t_4^2, \frac 14( t_1^2 - t_2)- t_3^n)\\
&\cong  \C[t_1,t_3,t_4]/((t_1^2 -4t_3^n) t_3 - t_4^2).
\end{align*}
We rescale and rename the variables to get this into the standard form of 
$$
\{u^2 + wv^2 + w^{n+1} = 0\} \subset \C^3
$$
called the $D_{n+2}$ singularity. Note that the extra $w$ in the second term makes it different from the $A_n$ singularities.

\smallskip
Looking at the group generators or the equation of the quotient, we see that $n=1$ case simply gives $A_3$ singularity, see Exercise 3.
We will now consider the simplest new case $n=2$, i.e. the $D_4$ singularity
$$
u^2 + wv^2 + w^3=0.
$$

\smallskip
It is easy to see that the only singular point occurs at $(0,0,0)$. This is also apparent from the quotient description, since nonidentity diagonalizable elements of ${\rm SL}(2,\C)$ can not have eigenvalue $1$ and thus act freely away from the origin.
Now we blow up $\C^3$ at $(0,0,0)$ and see what happens. 

\smallskip
In the open chart with coordinates $(z_1,z_2,z_3) =(\frac uv , v, \frac wv)$ we get\footnote{ So having raised a stink about using letters for the variables, we are now doing it. Enjoy!}
$$
0 = u^2 + w v^2 + w^3 = z_1^2 z_2^2  + z_2^3  z_3 + z_2^3 z_3^3 = z_2^2(z_1^2 + z_2 z_3 + z_2 z_3^3)
$$
and the proper preimage is given by $0=z_1^2 + z_2 z_3 + z_2 z_3^3$. Vanishing of partial derivatives gives
$$
z_1 = 0, z_3 +z_3^3 =0, z_2 (1 + 3z_3^2) = 0,
$$
which gives three singular points $(0,0,0),(0,\ii,0),(0,-\ii,0)$. Local analysis of these points show that all three are of the type $A_1$.
We skip the calculations for the two other charts, but we claim that we do not see any new singular points.  
So the $D_4$ singularity blows up to a $\C\P^1$ with three type $A_1$ singular  points on it.

\begin{tikzpicture}
\filldraw[gray!40!white] (3,0) ellipse (2.5 and 1);
\filldraw[gray!40!white] (9,0) ellipse (2.5 and 1);
\node at (3,0){$\bullet$};
\node at (3,0)[anchor=north]{$~~D_{4}$};
\node at (2,.5){$S$};
\node at (10,0){$\bullet$};
\node at (10,0)[anchor=north]{$~~A_{1}$};
\node at (9,0){$\bullet$};
\node at (9,0)[anchor=north]{$~~A_{1}$};
\node at (8,0){$\bullet$};
\node at (8,0)[anchor=north]{$~~A_{1}$};
\node at (8,.5){$\tilde S$};
\draw[thick, <-]( 5.7,0)--(6.3,0);
\draw[blue] (10.5,0)--(7.5,0);
\end{tikzpicture}

\noindent
When we further blow up these three points, we obtain a resolution of singularities $\tilde S\to S$ with four $\CP^1$-s in it which intersect transversally in the $D_4$ Dynkin diagram (one central Riemann sphere intersects three others).
\begin{align*}
\begin{array}{c}
\circ - \hskip -5pt - \circ - \hskip -5pt - \circ \\
\vert \\
\circ 
\end{array}
\end{align*}

{\bf Exercise 1.} Prove that every cyclic subgroup of ${\rm SL}(2,\C)$ can be conjugated to the one in \eqref{cyclic}. 

{\bf Exercise 2.} Verify that the proper preimage of $uv-w^n$ in the chart with coordinates $u, \frac vu, \frac wu$ is smooth.

{\bf Exercise 3.} Check that the $D_3$ case is the same as the $A_3$ case in two ways, by looking at the group action, and by
making a change of variables in $u^2 + w v^2 + w^2=0$.

\section{More resolutions of $ADE$ singularities.}
Let us now discuss the general $D_{n+2}$ case 
$$
S=\{u^2+ wv^2 + w^{n+1} = 0\}\subset \C^3
$$
for $n>2$.

\smallskip
As always, we blow up the origin in $\C^3$. On the open chart with coordinates $\frac uv ,v ,\frac wv$ we get
$$
0=v^2 \left(
\left( \frac uv \right)^2 + v \left(\frac wv\right) + \left(\frac wv\right)^{n+1} v^{n-1}
\right)
$$
with the second factor giving the proper preimage of $S$.
Let us find singular points of $x^2 + yz + z^{n+1}y^{n-1}=0$. We have to solve the equations\footnote{ Yes, algebraic geometry is about solving polynomial equations.}
$$
2x = 0,~ z + (n-1) z^{n+1} y^{n-2} = 0,~y + (n+1) z^{n}y^{n-1}=0,~x^2 + yz + z^{n+1}y^{n-1}=0.
$$
We can multiply the second equation by $(n+1)y$ and the third one by $(n-1)z$, subtract them and get $yz=0$. Since  $n>2$, we use the second and third equations to deduce that both $y$ and $z$ are $0$. So the only singular point in this chart is $(0,0,0)$,
which corresponds to $(0:1:0)$ on the exceptional $\C\P^2$ of $\tilde \C^3\to \C^3$ (we are in the chart $v_1\neq 0$).
For $n>2$, we see that the singularity at this point is of type $A_1$, because we can ignore higher degree terms.
The intersection with the exceptional locus $v=0$ is given by $\frac u v=0$, i.e. we have the line $(0:*:*)$ on the exceptional $\C\P^2$ of $\tilde \C^3\to \C^3$.

\smallskip
Let us look at the $u,\frac vu, \frac wu$ chart. There, the proper preimage is given by
$$
0=1 + \left(\frac wu\right) \left(\frac vu\right)u +  \left(\frac wu\right)^{n+1} u^{n-1}
$$
and the exceptional locus is $u=0$. Therefore, this chart of $S$ does not intersect the exceptional locus and is automatically smooth (one can also check it directly).

\smallskip
The last chart has coordinates $\frac uw,\frac vw,w$. There we get
$$
0=\left(\frac uw\right)^2 w^2 + \left(\frac vw\right)^2w^3 + w^{n+1} = z^2\left(\left(\frac uw\right)^2 + w\left(\frac vw\right)^2 + w^{n-1}\right).
$$
Clearly, we have a $D_{n-2}$ singularity at the origin, which corresponds to the point $(0:0:1)$.

\smallskip
Combining the information from the three charts, we see that a single blowup gives us a map from the proper preimage $\tilde S$ to $S$ so that the exceptional locus is $\C\P^1$ with two singular points on it. One of these points of type $A_1$ and the other is of type $D_{n-2}$. By resolving the $A_1$ singularity, we replace $D_n$ by two transversal $\C\P^1$-s with a $D_{n-2}$ point on one of them. 

\hskip -15pt
\begin{tikzpicture}[scale = 0.9]
\filldraw[gray!40!white] (3,0) ellipse (2. and .7);
\filldraw[gray!40!white] (8,0) ellipse (2 and .7);
\node at (3,0){$\bullet$};
\node at (3,0)[anchor=north]{$~~D_{n}$};
\node at (9,0){$\bullet$};
\node at (9,0)[anchor=north]{$~~A_1$};
\node at (7,0){$\bullet$};
\node at (7,0)[anchor=north]{$~~D_{n-2}$};
\draw[thick, ->]( 5.8,0)--(5.2,0);
\draw[blue] (9.5,0)--(6.5,0);
\filldraw[gray!40!white] (13,0) ellipse (2 and .7);
\node at (12,0){$\bullet$};
\node at (12,0)[anchor=north]{$~~D_{n-2}$};
\draw[blue] (13.6,0.4)--(14.4,-0.4);
\draw[thick, ->]( 10.8,0)--(10.2,0);
\draw[blue] (14.5,0)--(11.5,0);
\end{tikzpicture}

\noindent
When we repeat the process, skipping details, we eventually get to $D_4$ or $D_3=A_3$ and the resulting dual graph is given by the $D_{n+2}$ Dynkin diagram.
\begin{align*}
\begin{array}{c}
\circ - \hskip -5pt - \circ - \hskip -5pt - \circ  - \hskip -5pt - \cdots  - \hskip -5pt -\circ\\
\vert \hskip 50pt ~\\
\circ \hskip 50pt~\\
\end{array}
\end{align*}

As you may deduce from the title of the section, there are three more cases to consider, which correspond to the remaining three Dynkin diagrams $E_6$, $E_7$ and $E_8$, so that cumulatively these singularities are referred to as $ADE$ surface singularities.\footnote{ They are also called du Val singularities, canonical singularities, simple surface singularities, Kleinian singularities, and rational double points. This should tell you how ubiquitous these singularities are.} The corresponding groups are $\Z/2\Z$ central extensions of the groups of rotational symmetries of the regular tetrahedron, cube and dodecahedron, respectively.

\smallskip
We will not write down the group generators, but will list the equations below.
$$
\begin{array}{ll}
E_6:& u^2 + v^3 + w^4 = 0\\
E_7:& u^2 + v^3 + vw^3 = 0\\
E_8:& u^2 + v^3 + w^5 = 0
\end{array}
$$

\smallskip
{\blue \hrule}
There is another way of looking at the resolutions of $D_{n+2}$ singularities. The group $G$ given by \eqref{Dn} has a cyclic subgroup $G_1 = \Z/2n\Z$. We can first take the quotient of $\C^2$ by $G_1$, then resolve the resulting $A_{2n-1}$ singularity
to get a smooth surface $S_1$, then take the quotient by $G/G_1 \cong \Z/2\Z$ to get a singular surface $S_2$ and then further resolve to get a resolution $S_3$ of $\C^2/G$, see the commutative diagram below.
$$
\begin{array}{c}
\C^2\hskip 50pt S_1 \hskip 50pt S_3\hskip 5pt\\
\searrow\hskip 20pt\swarrow\hskip 20pt\searrow\hskip 20pt\swarrow\\
\C^2/G_1\hskip 40pt S_2\hskip 10pt\\
\searrow\hskip 30pt\swarrow\\
\C^2/G\end{array}
$$
Specifically, we have the $A_{2n-1}$ singularity given by 
$$
uv - w^{2n} = 0
$$
and the action of $G/G_1 \cong \Z/2\Z$ which switches $u$ and $v$. The resolution $S_1$ has a chain of $(2n-1)$ Riemann spheres
$$
\circ - \hskip -5pt - \circ - \hskip -5pt - \cdots - \hskip -5pt - \circ - \hskip -5pt - \circ 
$$
which intersect transversely, and the involution of $G/G_1$ switches the opposite spheres. Because we have an odd number of spheres, this involution will preserve the middle $\C\P^1$. It acts by switching two intersection points with two adjacent $\C\P^1$-s. We can call these points $0$ and $\infty$ and observe that every such involution can be written in the form $z\mapsto \frac 1z$, see Exercise 1. Then we have two fixed points $z=\pm 1$. Since these are isolated points, the eigenvalues of the action on the tangent space must be $(-1,-1)$, so the quotient will have an $A_1$ singularity there. Therefore, the image of the chain 
of $(2n-1)$ lines on $S_2$ will be $n$ lines, with the first line having two additional $A_1$ singularities on it. Blowing up these $A_1$ singularities, we get $S_3$ with the $(n+2)$ lines in $D_{n+2}$ configuration.

\smallskip
We can similarly get the $E_6$ singularity as the quotient of the $D_4$ singularity by a $\Z/3\Z$ action. Then if you resolve
the $D_4$ singularity to get a surface with a $D_4$ configurations of projective lines, the group can be seen rotating the 
three ``outside" lines while fixing the middle one. By Exercise 2, any order three automorphism of $\C\P^1$ can be 
written in the form $z\mapsto \ee^{2\pi\ii/3}z$, so it will have two fixed points $0$ and $\infty$. It is then possible to compute that the  quotient will be the $A_2$ singularity, so we get $S_1\to S$ with $E_6$ singularity replaced by two lines with two $A_2$ singular points on one of them. 

\begin{tikzpicture}
\filldraw[gray!40!white] (3,0) ellipse (2. and 1);
\draw[red] (3,0) ellipse (.4 and .4);
\draw[blue](3.2,0)--(3.8,0);
\draw[blue](2.9, {0.1*sqrt(3)})--(2.6,{0.40*sqrt(3)});
\draw[blue](2.9, {-0.1*sqrt(3)})--(2.6,{-0.40*sqrt(3)});
\node at (6,0)[anchor=south]{$/(\Z/3\Z)$};
\draw[thick, <-]( 6.7,0)--(5.3,0);
\filldraw[gray!40!white] (9,0) ellipse (2 and 1);
\draw[red] (9,.8) -- (9,-.8);
\draw[blue](8.8,0)--(9.8,0);
\node at (9,.5) {$\bullet$};
\node at (9,.5) [anchor=east] {$A_2$};
\node at (9,-.5) {$\bullet$};
\node at (9,-.5) [anchor=east] {$A_2$};
\end{tikzpicture}

\noindent
When we resolve them, we get the $E_6$ diagram below, with the colors matching the colors of the above picture.
$$
\begin{array}{c}
\circ - \hskip -5pt - \circ - \hskip -5pt -{\red \circ}  - \hskip -5pt - \circ  - \hskip -5pt - \circ\\
\vert\\
{\blue \circ} 
\end{array}
$$

Similarly, the $E_7$ singularity is a $\Z/2\Z$ quotient of the $E_6$ singularity.\footnote{ The set of  eight vertices of the cube naturally splits into two sets of four vertices of a regular tetrahedron, and the group of rotational symmetries of the tetrahedron is an index two subgroup of that of the cube.}  After resolving the singularity, the action on the $E_7$ configuration of lines is by switching the opposing ends. So we get four lines, $C_1$, $C_2$, $C_3$, $C_4$, connected in a chain, and there are three $A_1$ singularities: one one $C_1$, one on the intersection of $C_1$ and $C_2$ and one elsewhere on $C_2$.

\begin{tikzpicture}
\filldraw[gray!40!white] (3,0) ellipse (2. and 1);
\draw[red] (3,0) ellipse (.4 and .4);
\draw[cyan](3.2,0)--(3.8,0);
\draw[blue](2.9, {0.1*sqrt(3)})--(2.6,{0.40*sqrt(3)});
\draw[blue](2.9, {-0.1*sqrt(3)})--(2.6,{-0.40*sqrt(3)});
\draw[blue] (2.2,.6) -- (2.8,.6);
\draw[blue] (2.2,-.6) -- (2.8,-.6);
\node at (6,0)[anchor=south]{$/(\Z/2\Z)$};
\draw[thick, <-]( 6.7,0)--(5.3,0);
\filldraw[gray!40!white] (9,0) ellipse (2 and 1);
\draw[red] (9,.8) -- (9,-.8);
\draw[cyan](8.8,0)--(9.8,0);
\node at (9,.0) {$\bullet$};
\node at (9,.2) [anchor=east] {$A_1$};
\node at (9.5,.0) {$\bullet$};
\node at (9.5,.2) [anchor=west] {$A_1$};
\node at (9,-.5) {$\bullet$};
\node at (9,-.5) [anchor=east] {$A_1$};

\draw[blue](6.15+2.9,{.2+ 0.1*sqrt(3)})--(6.15+2.6,{.2+0.40*sqrt(3)});
\draw[blue] (6.15+2.2,.2+.6) -- (6.15+2.8,.2+.6);
\end{tikzpicture}

\noindent
When we resolve these $A_1$ singularities, we get the $E_7$ diagram
$$
\begin{array}{c}
{\blue \circ} - \hskip -5pt - \,{\blue \circ} - \hskip -5pt -\,{\red \circ}  - \hskip -5pt - \circ  - \hskip -5pt -{\cyan \circ} - \hskip -5pt - \hskip 2pt \circ \\
\vert\hskip 21pt~\\
\circ \hskip 21pt~
\end{array}
$$
with the matching colors.

\smallskip
Unfortunately, the group of symmetries of the regular dodecahedron is not solvable, so one can not obtain the $E_8$ resolution by this process. We can however blow up the origin in $\C^3$ and look at the proper preimage of the surface $u^2+ v^3 + w^5=0$ in it. We get in the $\frac uw, \frac vw, w$ coordinate chart
$$
0= w^2 \left(
\left (\frac uw \right)^2 + \left (\frac vw \right)^3 w + w^3
\right)
$$
which is the $E_7$ singularity. We see in Exercise 3 that the proper preimage has no singularities in other coordinate charts.
The intersection with the special fiber is a line $\frac uw = u = 0$, and one can see, with some effort, that after resolving that $E_7$ singularity we get the $E_8$ diagram.
$$
\begin{array}{c}
\circ - \hskip -5pt - \circ - \hskip -5pt - \circ  - \hskip -5pt - \circ  - \hskip -5pt -\circ  - \hskip -5pt -\circ - \hskip -5pt - \hskip 2pt \circ \\
\vert\hskip 48pt~\\
\circ \hskip 48pt~
\end{array}
$$

{\bf Exercise 1.}
 Prove that every involution of $\C\P^1$ that sends $0$ to $\infty$ can be written in the form $z\mapsto \frac cz$ for some nonzero constant $c$. Then argue that by scaling $z$ we can rewrite it as $z\mapsto \frac 1z$.

{\bf Exercise 2.} Prove that every order three automorphism of $\C\P^1$ can be written as $z\mapsto \ee^{2\pi\ii/3}z$ in some coordinates. \emph{Hint:} Consider the corresponding element in ${\rm PGL}(2,\C)$ and its eigenvalues (up to common scaling).

{\bf Exercise 3.} Check that the proper preimage of $u^2+v^3 + w^5=0$ has no singular points in the coordinate charts with coordinates $u,\frac vu,\frac wu$ and $\frac uv, v, \frac wv$.

\section{Overview of intersection theory. Surfaces.}\label{sec.int}
In this section we will talk about intersection theory. Even for these notes, this section is unusually hand-wavy, with many of the claims partially or totally unsubstantiated. Sorry, not sorry!

\smallskip
Let $X$ be a smooth complex algebraic variety or a complex manifold. We can view it as an orientable real manifold and associate to it two 
collections of abelian groups, namely its integral simplicial homology groups $H_i(X,\Z)$ and cohomology groups
$
H^{i}(X,\Z)
$
for $0\leq i\leq \dim_\R X = 2\dim_\C X$. 
All of these are finitely generated for algebraic varieties $X$. Moreover, $H^*(X,\Z)$ is an associative graded, super-commutative\footnote{ Super-commutativity means that for $a\in H^i(X,\Z)$ and $b\in H^j(X,\Z)$ we have 
$\alpha\cup \beta = (-1)^{ij}\beta\cup\alpha\in H^{i+j}(X,\Z)$.} ring with identity with the multiplication operation cup-product $\cup$. The homology $H_*(X,\Z)$ is a graded module over $H^*(X,\Z)$.
The groups $H^*$ and $H_*$ are also related by the Universal Coefficient Theorem, but we will not use it in these notes.

\smallskip
The nicest case occurs when $X$ is a smooth and compact (for example, projective) algebraic variety.
In this case we have the Poincar\'e duality
$$
H^i(X,\Z) \stackrel{\rm natural}\cong H_{2\dim_\C X - i}(X,\Z).
$$
Under Poincar\'e duality, $1\in H^0(X,\Z)$ corresponds to the class of $[X]$ in $H_{2\dim_\C X }(X,\Z)$, but the more interesting statement is that the class of the point in $H_0(X,\Z)$ corresponds to a special element ${\rm P.D.(point)}$ in  $H^{\rm top}(X,\Z)=H^{2\dim_\C X}(X,\Z)$. In fact, we have 
$$
H^{\rm top}(X,\Z) = \Z\, {\rm P.D.(point)},
$$
and this gives us a concept of the {\blue intersection number}. 

\smallskip
\begin{defn}
Let $Y$ and $Z$ be algebraic subvarieties of $X$, of complementary dimension, i.e. 
$$
\dim_\C Y+ \dim_\C Z = \dim_\C X.
$$
Consider the product of Poincar\'e duals 
$$
{\rm P.D}([Y]) \cup {\rm P.D}([Z ]) =\alpha\, {\rm P.D.(point)} \in H^{\rm top}(X,\Z).
$$
The integer $\alpha$ is called the intersection number of $Y$ and $Z$ and will be denoted by $Y\cdot Z$.
\end{defn}

\smallskip
\begin{rem}
If $\,Y$ and $Z$ intersect transversely at $k$ points, then $Y\cdot Z = k$ (in particular, $Y\cdot Z=0$ if $Y\cap Z=\emptyset$). More generally, if $Y\cap Z$ is a finite set, then $Y\cdot Z$ is a sum of local contributions, each of which is positive. However, the definition makes sense even if $Y\cap Z$ is positive-dimensional, or even $Y\subseteq Z$! In these cases, we just know that $Y\cdot Z$ is integer but can make no claims about the sign of it. Philosophically, we can deform $Y$ as a real cycle to some $Y'$ with the same homology classes $[Y']=[Y]$ so that $Y'$  intersects $Z$ transversely, and we add up $(\pm 1)$ at each intersection point, depending on the orientations.
\end{rem}

\smallskip{\blue \hrule}
The simplest nontrivial example is provided by $X=\C\P^2$. If $Y$ and $Z$ are complex curves in $X$, we have $\dim _CY+ \dim_\C Z = \dim_\C X$, so their intersection product makes sense. The cohomology groups of $X$ are given by
$$
H^{1}(X,\Z) = H^{3}(X,\Z) = 0,~ H^{0}(X,\Z)\cong H^2(X,\Z)\cong H^4(X,\Z)  \cong \Z
$$
with the generators of the latter three groups given by $1$, the Poincar\'e dual of the class $[l]$ of a line, and the Poincar\'e dual of the class of a point, and we have ${\rm P.D.}[l]\cup {\rm P.D.}[l] = {\rm P.D.(point)}$\footnote{ This is a very fancy way of saying that two lines in $\C\P^2$ intersect transversely at a point.}
The class of the curve $Y$ is $(\deg Y)[l]$, and therefore $Y\cdot Z = (\deg Y) (\deg Z)$. This is essentially the 
Bezout's Theorem \ref{Bezout} that we discussed rigorously in Sections \ref{sec.B1} and \ref{sec.B2},  although to make the connection precise we would need to talk about the intersection multiplicities in the non-transversal case.

\smallskip
Now let us consider the blowup of $\C\P^2$ at one point. We get $X\to \C\P^2$ with $E$ the exceptional divisor on $X$.

\begin{tikzpicture}
\fill[gray!40!white] (7,0.0)--(11,1)--(13,0)--(9,-1)--(7,0);
\draw (10,.7)--(10,-.7);
\draw (8,0) node [anchor=west]{$X$};
\draw[thick,->] (10,-1.5)--(10,-2);
\fill[gray!40!white] (7,-3)--(11,-2)--(13,-3)--(9,-4)--(7,-3);
\draw (10,-3) node [anchor=west]{$\hskip -6pt \cdot $};
\draw (8,-3) node [anchor=west]{$\C\P^2 $};
\draw[red] (10.5,-2.5)--(11.5,-3.0);
\draw[blue] (9,-3.24)--(11.6,-2.59);
\draw[red] (10.5,0.5)--(11.5,0);
\draw[blue] (9,-0.24)--(11.6,0.41);
\node at (10.1,.5) [anchor=east]{$E$};
\node[blue] at (9.2,.-0.2) [anchor=north]{$L'$};
\node[red] at (11.2,0.2) [anchor=north]{$L$};
\end{tikzpicture}

We have the following curves of interest on $X$: $E$, $L$, $L'$. The curve $E$ was already discussed, the curve $L$ is the preimage of a line in $\C\P^2$ that does not pass through the point we blew up and the curve $L'$ is the proper preimage of line that does pass through a point of blowup.

\smallskip
The key observation is that $L\sim L'+E$, in the sense that $[L] = [L']+[E]$ in $H_2(X,\Z)$. Indeed, more generally there is a map of Weil divisors to $H_2(X,\Z)$ which passes through the class group. Another way of saying the same thing is that a divisor of a rational function is always homologically trivial. And there is a rational function (the ratio of the equations of the equations of the two lines)
which has simple zeros at $E$ and $L'$ and poles at $L$.\footnote{ In this particular case can think of the family of smooth preimages of  lines in $\C\P^2$ degenerating into a singular curve $E\cup L'$.}
We also have 
\begin{equation}\label{Xproduct}
L^2 = 1,~L\cdot E = 0, L'\cdot E = 1, L\cdot L' = 1
\end{equation}
based on the transversal intersections of the corresponding curves. This implies that 
$$
0 = L\cdot E = (L'+E)\cdot E = L'\cdot E + E\cdot E = 1 + E^2,
$$
so we get 
$$
E^2 = -1.
$$
What? This certainly looked weird to me the first time I saw it. How can a curve intersect itself negatively?

\smallskip
Actually, there is no problem. This curve $E$ can not be deformed holomorphically, but you can deform it as a real 
$2$-cycle on $X$ to some $\tilde E$ which intersects $E$ at one point transversely with the ``wrong" orientation. In general, for a smooth curve $C$ on a smooth projective surface $X$ the self-intersection $C^2$ is equal to the degree of the normal bundle to $C$ in $X$, where by the degree of a line bundle I mean the degree of the corresponding Weil divisor class.

\smallskip{\blue \hrule}
In a footnote to Section \ref{sec.ADE1} we indicated that the trees of lines in the resolutions of ADE singularities have an intersection matrix which is negative of the Cartan matrix of the corresponding Dynkin diagram. Since these diagrams are simply-laced, this simply means checking that for each of these lines $E$ there holds $E^2=-2$. We will not do this in full generality but will address the case of the resolution of $A_1$ singularity. In this case, we have a surface $S$ in $\C^3$ given by
$uv - w^2 = 0$, and we blow up $\C^3$ at the origin. The proper preimage $X$ of $S$ is then smooth and the map
$$
\pi:X\to S
$$
contracts a single line $E$ to the origin.

\smallskip
\begin{rem}
If you object to doing intersection theory on a non-compact surface $X$, you are not alone. To avoid this issue, we will compactify $\C^3\subset \C\P^3$ and consider the closure of $S$ in it, given by
$$
uv-w^2 =0
$$
in homogeneous coordinates $(t:u:v:w)$ of $\C^3$. We end up adding a smooth conic in the plane $t=0$, and it 
is easy to see that there is still only one singular point and the blowup of $\C\P^3$ in it still induces a resolution of singularities. We will still use $S$ and $X$ as our notations. Alternatively, one can argue that all we are trying to do is to understand the degree of the normal bundle of $E$ in $X$, and this does not require any compactification.
\end{rem}

\smallskip
Consider two lines $C_1$ and $C_2$ on $S$ given by $u=w=0$ and $v=w=0$ respectively. Their proper preimages on 
$X$ intersect $E$ transversely at one point each. 

\begin{tikzpicture}
\filldraw[gray!40!white] (3,0) ellipse (2.5 and 1);
\filldraw[gray!40!white] (9,0) ellipse (2.5 and 1);
\node at (3,0){$\bullet$};
\node at (3,0)[anchor=north]{$~~A_1$};
\node at (2,.5){$S$};
\node at (8,.5){$X$};
\draw[thick, <-]( 5.7,0)--(6.3,0);
\draw[blue] (3.7,.7)--(2.5,-.5);
\node[blue] at (3.7,.7) [anchor = west] {$C_1$};
\draw[blue] (5,0)--(1,0);
\node[blue] at (1,0) [anchor = north] {$~~C_2$};
\draw[red] (3,1).. controls(3.8,.5) .. (4.5,-.8);
\node[red] at (4.5,-.8) [anchor=south]{$~~D$};
\draw (9,.7)--(9,-.7);
\node at (9,-.7)[anchor= west]{$E$};
\draw[blue] (11,0)--(8,-.5);
\node[blue] at (8,-.5)[anchor = east]{$C_2'$};
\draw[red] (9,1).. controls(9.8,.5) .. (10.5,-.8);
\node[red] at (10.5,-.8) [anchor=south]{$~~D$};
\draw[blue] (9.7,.7)--(8.5,0.2);
\node[blue] at (9.7,.7) [anchor = west] {$C_1'$};
\end{tikzpicture}

\noindent
We also see that the rational function $\frac wt$ has divisor 
$$
[C_1' ]+ [E] + [C_2'] - [D]
$$
where $D$ is the preimage of the conic at infinity $\{uv-w^2=t=0\}$. Therefore, we 
have $[E]=[D] - [C_1']-[C_2']$ in $H_2(X,\Z)$
and
$$
E^2 = D\cdot E - C_1 \cdot E - C_2 \cdot E = 0 - 1 - 1 = -2
$$
since $D\cap E = \emptyset$ and $C_i$ intersect $E$ transversely at a single point.

\smallskip
\begin{rem}
It is possible to do intersection theory purely algebraically, over any field $k$, even if $k$ is not algebraically closed or has positive characteristics. One needs to replace the (even) homology groups $H_*$ with the so-called Chow groups $A_*(X)$. As a matter of fact, $A_*(X)$ provide more refined invariants, compared to $H_*$. A very good, but also quite advanced reference for it is the Fulton's book \cite{Fulton}.
\end{rem}

{\bf Exercise 1.} Use \eqref{Xproduct} to check that on the blowup $X$ of a point in $\C\P^2$ there holds $(L')^2=0$. What is the geometric meaning of it?

{\bf Exercise 2.} Let $C$ be a smooth curve on a smooth projective surface $S$ and let $X\to S$ be the blowup of $S$ at a point $p\in S$. Denote by $C'$ the proper preimage of $C$ in $X$. Prove that $(C')^2=C^2-1$. This generalizes the example of Exercise 1.

{\bf Exercise 3.} Let $X$ be a blowup of $\C\P^2$ in $5$ points $p_1,\ldots, p_5$ in general position. Prove that 
the proper preimage of any line through two of these points has self-intersection $(-1)$. Prove that the proper preimage of the unique conic that passes through all five points also has self-intersection $(-1)$.

\section{Rational surfaces. Del Pezzo surfaces.}\label{sec.dP}
We will now discuss arguably the simplest class of complex algebraic surfaces, namely the {\blue rational} surfaces.

\smallskip
\begin{defn}
A complex algebraic surface $X$ is called {\blue rational} if and only if it is birational to $\C\P^2$ or, equivalently, the field of rational functions on $X$ is isomorphic to the field of rational functions in two variables.
\end{defn}

\smallskip
This definition makes sense in arbitrary dimension, to give us a notion of a rational algebraic variety. In some sense, rational varieties are what we would wish all varieties to be: their points, at least most of them, are precisely encoded by free variables. In higher dimension, it is a highly nontrivial problem to determine if a variety is rational, but in dimension two it is usually quite manageable due to the following theorem of Castelnuovo, which we will not attempt to prove.

\smallskip
\begin{thm}
Let $X$ be a smooth projective complex algebraic surface. Then $X$ is rational if and only if the square of the canonical line bundle $K_X^{\otimes 2}\to X$ has no nonzero holomorphic sections.
\end{thm}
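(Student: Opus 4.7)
The forward direction $(\Rightarrow)$ reduces to the statement that the plurigenus $P_2(X) := \dim_\C H^0(X, 2K_X)$ is a birational invariant of smooth projective surfaces. Granting this, rationality of $X$ gives $P_2(X) = P_2(\C\P^2)$, and Exercise 3 of Section \ref{sec.CD} gives $2K_{\C\P^2} \cong \mathcal O(-6)$, which has no nonzero holomorphic sections. To prove birational invariance, I would use the Weak Factorization Theorem (stated in Section \ref{sec.blowup}) to reduce to comparing $X$ with the blowup $\pi: \tilde X \to X$ at a single point, with exceptional divisor $E \cong \C\P^1$ satisfying $E^2 = -1$. The relation $K_{\tilde X} = \pi^* K_X + E$ (derivable from the adjunction formula of Proposition \ref{adjform} combined with $\deg K_E = -2$ and Proposition \ref{degcan}) yields $2K_{\tilde X} = \pi^*(2K_X) + 2E$. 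One then checks, by Hartogs extension across the blown-up point, that $\pi_* \mathcal O_{\tilde X}(mE) = \mathcal O_X$ for all $m \geq 0$, and the projection formula delivers $H^0(\tilde X, 2K_{\tilde X}) \cong H^0(X, 2K_X)$.

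For the converse $(\Leftarrow)$ the plan follows the Enriques--Castelnuovo strategy. First, apply Castelnuovo's contraction theorem iteratively: any $(-1)$-curve $C$ (a smooth $\C\P^1$ with $C^2 = -1$, such as the $E$ above) can be blown down to a smooth point of another projective surface, strictly decreasing the Picard number, and the process terminates at a \emph{minimal model} $X_{\min}$ birational to $X$, still satisfying $P_2 = 0$ by the invariance just established. Second, classify minimal smooth projective surfaces with $P_2 = 0$: using Riemann--Roch on surfaces, Noether's formula $\chi(\mathcal O_X) = (K_X^2 + c_2(X))/12$, the Hodge index theorem, and Castelnuovo's ruledness criterion, one shows that such an $X_{\min}$ is either $\C\P^2$ itself or a Hirzebruch surface $\mathbb F_n = \P(\mathcal O_{\C\P^1} \oplus \mathcal O_{\C\P^1}(n))$, each of which is manifestly rational since it carries a dense open $\C^2$-chart.

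The main obstacle is the classification step in the converse. Proving that $P_2 = 0$ forces ruledness is genuinely deep: one must analyze the canonical linear system, deploy appropriate vanishing theorems on surfaces, and produce an honest pencil of rational curves by studying $|mK_X + H|$ for an ample $H$. Moreover, as stated the theorem is missing a hypothesis: a minimal ruled surface over a curve of genus $g \geq 1$ has $P_n(X) = 0$ for every $n \geq 1$ yet is visibly not rational, so one must additionally assume that the irregularity $q(X) := \dim_\C H^0(X, \Omega^1_X)$ vanishes (equivalently, that the Albanese variety of $X$ is trivial). Granted $q = 0$, Castelnuovo's ruledness criterion combines with the classification of minimal ruled surfaces to yield rationality; the considerable machinery involved is no doubt why the author quotes the theorem without proof.
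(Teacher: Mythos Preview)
The paper gives no proof of this theorem at all: immediately before the statement it says ``due to the following theorem of Castelnuovo, which we will not attempt to prove.'' So there is nothing to compare your argument against on the paper's side.

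Your outline is the standard one and is essentially correct. The forward direction via birational invariance of $P_2$ is fine; note that you do not actually need the Weak Factorization Theorem here, since in dimension two the paper already records (Section~\ref{sec.blowup}) the stronger classical fact that any birational map of smooth projective surfaces factors as a zig-zag of point blowups, and in fact the pullback-then-Hartogs argument for sections of $K_X^{\otimes m}$ works directly for any birational map without factoring it at all (this is exactly the argument the paper sketches at the start of the Kodaira dimension section). For the converse, your description of the Enriques--Castelnuovo route through minimal models is accurate, and you are right that the heavy lifting is in Castelnuovo's ruledness criterion.

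Most importantly, you are correct that the theorem \emph{as stated in the paper is false}. A geometrically ruled surface $X \to C$ over a curve $C$ of genus $g \geq 1$ has $K_X\vert_F \cong \mathcal O_{\C\P^1}(-2)$ on every fiber $F$, hence $H^0(X, K_X^{\otimes m}) = 0$ for all $m \geq 1$, yet $X$ is not rational because $q(X) = g > 0$ is a birational invariant. The genuine Castelnuovo criterion requires both $P_2(X) = 0$ and $q(X) = 0$; the paper has simply omitted the irregularity hypothesis.
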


As was mentioned before, every two projective algebraic surfaces which are birational to each other are obtained by successive blowups of points and blowdowns. Thus, it is natural to define {\blue minimal rational surfaces}  as smooth projective rational surfaces which are not isomorphic to a blowup of any other surface. Since blowdown decreases the rank of the second cohomology group, one can not do them indefinitely, so every rational surface is obtained by a repeated blowup of a minimal one.

\smallskip
Prominent examples of minimal rational surfaces are {\blue Hirzebruch surfaces.}

\smallskip
\begin{defn}\label{Hirz}
Let $n$ be a nonnegative integer and let $W=\mathcal O\oplus \mathcal O(-n)$ be a rank two vector bundle over $\C\P^1$.\footnote{ A theorem due to Grothendieck states that any vector bundle over $\C\P^1$ is a direct sum of line bundles. This is a very peculiar property of $\C\P^1$.} 
We define the Hirzebruch surface 
$$H_n:= 
\P W
$$
as the set of lines through the origin in the fibers of $W$.
\end{defn}

\smallskip
Each of the two summands $\mathcal O$ and $\mathcal O(-n)$ of $W$ gives a line in the fiber of $W$ and thus a point in the fiber of $\P W$. As fibers vary, we get two smooth curves in $H_n$ which are sections of $H_n\to \C\P^1$ in the usual sense of the word. We call these curves $S_0$ and $S_{-n}$. In addition, we will be interested in the fiber $F_0$ and $F_\infty$ over the two points of $\C\P^1$.

\hskip 40pt
\begin{tikzpicture}
\filldraw[gray!40!white][rounded corners] (0,0) rectangle (8,4);
\draw [blue] (1,0)--(1,4);
\node[] at (1,2)[anchor=east]{$F_0$};
\draw [blue] (7,0)--(7,4);
\node[] at (7,2)[anchor=west]{$F_\infty$};
\draw [blue](0,.1)--(8,1);
\node[] at (4,.5)[anchor=south]{$S_0$};
\node[] at (4,3.2)[anchor=south]{$S_{-n}$};
\draw [blue](0,3.5)--(8,3);
\draw [thick,->] (4,-0.2)--(4,-0.8);
\draw (0,-1)--(8,-1);
\node [blue] at (1,-1) {$\bullet$};
\node [] at (1,-1) [anchor=north]{$0$};
\node [blue] at (7,-1) {$\bullet$};
\node [] at (7,-1) [anchor=north]{$\infty$};
\end{tikzpicture}

\noindent
It is easy to see (Exercise 2) that 
\begin{equation}\label{Hnminus}
H_n \setminus \left(
S_0\cup F_0
\right)\cong \C^2.
\end{equation}
One can show that every Weil divisor on $\C^2$ can be given as a divisor of a rational function. Therefore, the class group of $H_n$ is generated by $[S_0]$ and $[F_0]$. Let us find their intersection numbers.

\smallskip
\begin{prop}
The intersection pairings of $S_0$ and $F_0$ on  $H_n$ are given by 
$$
F_0^2 = 0 ,~ F_0 S_0 = 1,~S_0^2 = -n.
$$
\end{prop}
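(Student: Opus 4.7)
The plan is to dispatch the first two equalities by elementary observations about fibers and sections, and then to establish $S_0^2 = -n$ by producing an explicit linear equivalence on $H_n$ relating $S_0$, $S_{-n}$, and a fiber.

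First I would handle $F_0^2$ and $F_0 \cdot S_0$. Both fibers $F_0$ and $F_\infty$ are pulled back from points on $\C\P^1$, which are linearly equivalent, so $[F_0] = [F_\infty]$ in the class group; since $F_0$ and $F_\infty$ are literally disjoint, this yields $F_0^2 = F_0 \cdot F_\infty = 0$. For the second, $S_0$ is by construction a section of the projection $H_n \to \C\P^1$, so it meets each fiber in exactly one point, transversely, giving $F_0 \cdot S_0 = 1$. Both of these arguments apply equally well to $S_{-n}$, so $F_0 \cdot S_{-n} = 1$ and $F_\infty \cdot S_{-n} = 1$ as well.

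The main step is $S_0^2 = -n$. Cover $\C\P^1$ by $U_0$ and $U_\infty$ with coordinates $u$ and $v = 1/u$, and trivialize $W = \mathcal{O}\oplus\mathcal{O}(-n)$ over each by fiber coordinates $(a,b)$ on $U_0$ and $(a',b')$ on $U_\infty$, glued via $a' = a$ and $b' = u^n b$. In $\P W = H_n$, the sections arising from the two summands are $S_0 = \{b=0\}$ (from $\mathcal{O}$) and $S_{-n} = \{a=0\}$ (from $\mathcal{O}(-n)$), and these are visibly disjoint, so $S_0 \cdot S_{-n} = 0$. Now consider the rational function $g := b/a$ on $H_n$. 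Its restriction to $U_0$ has divisor $S_0 - S_{-n}$ (intersected with $U_0$). On $U_\infty$ the same function reads $g = u^{-n}(b'/a') = v^n (b'/a')$, contributing an extra $n[F_\infty]$ to the divisor. Since $F_0$ does not appear (it lies entirely in $U_0$, where $g$ has no zero or pole along $F_0$), the global divisor is
\[
\mathrm{div}(g) \;=\; [S_0] - [S_{-n}] + n[F_\infty],
\]
yielding the linear equivalence $[S_0] \sim [S_{-n}] - n[F_\infty]$.

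From here the proposition follows by two intersection computations. First, $0 = S_0 \cdot S_{-n} = (S_{-n} - nF_\infty)\cdot S_{-n} = S_{-n}^2 - n$, so $S_{-n}^2 = n$. Second,
\[
S_0^2 \;=\; (S_{-n} - nF_\infty)^2 \;=\; S_{-n}^2 - 2n\,(S_{-n}\cdot F_\infty) + n^2 F_\infty^2 \;=\; n - 2n + 0 \;=\; -n.
\]
The only real obstacle is bookkeeping: one must be careful with the direction of the transition $b' = u^n b$ and the resulting sign $v^n$ on $U_\infty$, and identify correctly which of the two sections corresponds to which summand of $W$. Everything else is formal from the definition of $H_n$ and the linear-equivalence/intersection-pairing machinery.
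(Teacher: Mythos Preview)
Your proof is correct and follows essentially the same route as the paper: both establish $F_0^2=0$ and $F_0\cdot S_0=1$ by the obvious disjointness and transversality arguments, and both compute $S_0^2$ by writing down the rational function $b/a$ (the paper's $t_0/s_0$) and reading off the linear equivalence $[S_0]\sim[S_{-n}]-n[\text{fiber}]$. The only cosmetic differences are that your conventions produce $F_\infty$ rather than $F_0$ in the divisor, and that your final arithmetic goes via $S_{-n}^2$ by expanding $(S_{-n}-nF_\infty)^2$, whereas the paper takes the one-line shortcut $S_0^2 = S_0\cdot(S_{-n}-nF_0) = 0 - n$.
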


\begin{proof}
The rational function $\frac {x_0}{x_1}$ on $\C\P^1$ gives a rational function on $H_n$ with divisor
$[F_0] -[F_\infty]$. We thus get 
$$
F_0^2 = F_0 F_\infty = 0,
$$ 
since $F_0\cap F_\infty =\emptyset$.
The curves $F_0$ and $S_0$ intersect transversally at a single point, thus $F_0 S_0=1$.

\smallskip
The computation for $S_0^2$ is a bit more subtle. We claim that there is a rational function on $H_n$ which has divisor
\begin{equation}
[S_0]-[S_{-n}]+n[F_0].
\end{equation}
This would imply that $[S_0] = [S_{-n}] - n[F_0]$ in ${\rm Cl}(H_n)$, so 
$$S_0^2 = S_0 S_{-n}-n S_0 F_0 = 0-n = -n.$$
To construct such rational function, we write $\mathcal O(-n)\to \C\P^1$ in coordinate charts as being glued from two copies
of $\C\times \C\to \C$ over the two charts $\C$ on $\C\P^1$ via
$$
(t_0, (1:\frac {x_1}{x_0})) \sim ( (\frac {x_0}{x_1})^n t_1, (\frac {x_0}{x_1}:1)).\footnote{ If you are like me and struggle with signs, one way to see that you should have $n$ and not $(-n)$ in this formula
is by noticing that section $t_0=1$ in the first chart becomes $t_1=  (\frac {x_0}{x_1})^{-n}$ in the second chart and thus does not extend to a section on all of $\C\P^1$. This is consistent with $\mathcal O(-n)$ not having nonzero sections.
If a different sign were chosen, we would get $\mathcal O(n)$ which does have such sections.}
$$
Then $W\to \C$ is glued from two copies of $\C^2\times \C\to \C$ via
$$
(s_0,t_0, (1:\frac {x_1}{x_0})) \sim (s_1, (\frac {x_0}{x_1})^nt_1, (\frac {x_0}{x_1}:1)).
$$
Then $H_n=\P W$ is glued from two copies of $\C\P^1\times \C\to \C$ via
$$
((s_0:t_0) ,(1:\frac {x_1}{x_0})) \sim ((s_1: (\frac {x_0}{x_1})^nt_1), (\frac {x_0}{x_1}:1)).
$$
The curve $S_0$ is defined by $t_0=0$ in the first chart and $t_1=0$ in the second chart. Similarly, $S_{-n}$ is $s_0=0$ in the first chart and $s_1=0$ in the second chart. The curve $F_0$ is the complement of the first chart and is defined by 
$\frac {x_0}{x_1}=0$ in the second chart.

\smallskip
The rational function
$$
\frac {t_0}{s_0} = \frac {t_1}{s_1} (\frac{ x_0}{x_1})^{n}
$$
has divisor $[S_{0}]-[S_{-n}]$ in the first chart and $[S_{0}]-[S_{-n}] +n [F_0]$ in the second.
\end{proof}

\smallskip
The following result, which we will not prove, describes all minimal rational surfaces.
\begin{thm}
Minimal rational surfaces are either $\C\P^2$ or Hirzebruch surfaces $H_n$ for $n=0$ or $n\geq 2$.
\end{thm}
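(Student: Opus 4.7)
The plan is to use \emph{Castelnuovo's contractibility criterion}, which states that a smooth projective surface $X$ admits a blowdown morphism to a smooth surface if and only if it contains a \emph{$(-1)$-curve}, meaning a smooth rational curve $C$ with $C^2=-1$. This immediately identifies ``minimal'' with ``no $(-1)$-curve'' and reduces the theorem to two tasks: (a) check that each of $\C\P^2$ and $H_n$ for $n\neq 1$ has no $(-1)$-curve while $H_1$ does, and (b) show every minimal rational surface appears on this list.

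For (a) I would argue as follows. The Picard group of $\C\P^2$ is generated by the class of a line, with positive self-intersection, so there are no negative-self-intersection classes at all. On $H_n$ with $n\geq 1$, using the basis $\{[S_0],[F_0]\}$ of $\mathrm{Cl}(H_n)$ with $S_0^2=-n$, $F_0^2=0$, $S_0\cdot F_0=1$, one checks that if $C=aS_0+bF_0$ is an irreducible curve distinct from both $S_0$ and $F_0$, then $C\cdot S_0\geq 0$ forces $b\geq na$, whence $C^2=-na^2+2ab\geq na^2\geq 0$. Hence the only irreducible negative-self-intersection curve on $H_n$ is $S_0$ itself, with $S_0^2=-n$. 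This proves $H_n$ is minimal for $n\geq 2$ and contains a $(-1)$-curve precisely when $n=1$; a short check with the linear system $|S_0+F_0|$ exhibits the blowdown of $S_0\subset H_1$ as an isomorphism onto $\C\P^2$. For $H_0=\C\P^1\times\C\P^1$ with the two ruling classes $F_1,F_2$ satisfying $F_i^2=0$ and $F_1\cdot F_2=1$, any effective irreducible class $aF_1+bF_2$ with $a,b\geq 0$ has self-intersection $2ab\geq 0$.

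For (b), let $X$ be a minimal rational surface not isomorphic to $\C\P^2$. The main step is to produce a $\C\P^1$-bundle structure $f\colon X\to\C\P^1$. I would start from a birational map $X\dashrightarrow\C\P^2$, pull back a pencil of lines, resolve the indeterminacy by a sequence of blowups to obtain a morphism $X'\to\C\P^1$ from some blowup $X'\to X$, and then contract $(-1)$-components that appear in reducible fibers until the fibration descends to a morphism $f\colon X\to\C\P^1$ whose general fiber $F$ is a smooth rational curve with $F^2=0$ and $K_X\cdot F=-2$ (the latter by the adjunction formula of Proposition \ref{adjform}). Minimality of $X$ then forces every fiber of $f$ to be irreducible and reduced: if a fiber $F=\sum m_iC_i$ were reducible, then because the intersection form on the components of $F$ is negative semidefinite with kernel spanned by $F$ and because $K_X\cdot F=-2$, adjunction applied componentwise produces some $C_i\cong\C\P^1$ with $C_i^2=-1$, contradicting minimality. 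Hence $f$ is a $\C\P^1$-bundle. I expect this step, and in particular the verification that such a ruling exists for \emph{every} minimal rational $X\neq\C\P^2$, to be the main obstacle, since it rests on the full theory of surface fibrations and the Enriques--Castelnuovo rationality criterion.

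Once $f\colon X\to\C\P^1$ is a $\C\P^1$-bundle, Grothendieck's splitting theorem writes the associated rank-two vector bundle as $\mathcal O(a)\oplus\mathcal O(b)$; tensoring with a line bundle pulled back from $\C\P^1$ (which does not change the projectivization) normalizes the splitting to $\mathcal O\oplus\mathcal O(-n)$ with a unique $n\geq 0$. This identifies $X\cong H_n$, and step (a) eliminates $n=1$, yielding the desired list $\{\C\P^2\}\cup\{H_n:n=0\text{ or }n\geq 2\}$.
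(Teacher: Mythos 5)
The paper does not actually prove this theorem --- it is stated with the explicit disclaimer ``which we will not prove,'' so there is no proof of record to compare your attempt against. I will therefore assess the proposal on its own merits.

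Your part~(a) is correct and is a complete, self-contained computation: on $\C\P^2$ there are no negative classes; on $H_n$ with $n\geq 1$ the inequalities $C\cdot F_0 = a\geq 0$ and $C\cdot S_0 = -na+b\geq 0$ for an irreducible $C=aS_0+bF_0$ distinct from $S_0,F_0$ force $C^2\geq na^2\geq 0$, so $S_0$ (with $S_0^2=-n$) is the unique irreducible negative curve, and it is a $(-1)$-curve precisely when $n=1$; and on $H_0$ the form is nonnegative on effective classes. Good.

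Part~(b) contains a genuine gap in the mechanism for producing the ruling on $X$ itself. You propose: blow up to resolve the indeterminacy of a pencil of lines, obtaining $\pi\colon X'\to X$ and $f'\colon X'\to\C\P^1$, then ``contract $(-1)$-components that appear in reducible fibers until the fibration descends to a morphism $f\colon X\to\C\P^1$.'' But these are two unrelated contractions. The exceptional curves of $\pi$ need not lie in fibers of $f'$ at all --- a blown-up base point of the pencil yields an exceptional $E$ with $E\cdot F>0$ for a fiber $F$, i.e.\ a (multi)section, and contracting a multisection does not preserve the fibration. Conversely, contracting the $(-1)$-curves \emph{in fibers} of $f'$ produces the relatively minimal model $X''$ of $f'$ over $\C\P^1$, which is some Hirzebruch surface, but there is no reason for $X''$ to equal $X$: in Kodaira dimension $-\infty$, minimal models within a birational class are not unique (indeed all the $H_n$ with $n\neq 1$ together with $\C\P^2$ are pairwise birational but pairwise nonisomorphic), so ``both minimal and birational'' does not imply ``isomorphic.'' The substantive content of the theorem is precisely that the minimal $X$ itself carries a pencil of self-intersection zero, and this must be proved directly on $X$ (classically via a careful choice of rational pencil of minimal self-intersection and an adjunction/Riemann--Roch argument; in modern language via the cone theorem, which produces on a minimal rational $X$ an extremal ray that is either a conic bundle fiber or forces $\rho(X)=1$ and then $X\cong\C\P^2$). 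Your sketch correctly flags this as the hard step, but the mechanism you propose for it does not work as stated and needs to be replaced. The final step --- Grothendieck splitting and normalizing to $\mathcal O\oplus\mathcal O(-n)$, then excluding $n=1$ by part~(a) --- is fine.
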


\smallskip
\begin{rem}
The Hirzebruch surface $H_0$ is simply $\CP^1 \times \C\P^1$. The reason that the Hirzebruch surface $H_{1}$ is not included in the list is that it is actually isomorphic to 
the blowup of $\C\P^2$ at one point. The Hirzebruch surface $H_2$ is the resolution of the $A_1$ singularity of the singular quadric
$x_0x_1 = x_2^2$ in $\C\P^3$.
\end{rem}

\smallskip
{\blue \hrule}
Some of the most fascinating examples of rational surfaces are the so-called {\blue del Pezzo} surfaces. Recall (see Definition \ref{ample}) that  a line bundle $L\to X$ is called ample if global sections of some positive tensor power of it define an embedding of $X$ into a projective space.

\smallskip
\begin{defn}\label{defdp}
A smooth projective surface $X$ is called a del Pezzo surface, if the anticanonical line bundle $K_X^\vee \cong \wedge^2 TX \to X$ is ample.
\end{defn}

The simplest example of a del Pezzo surface is $\C\P^2$. The anticanonical line bundle on $\C\P^2$ is $\mathcal O(3)$, its sections are cubic polynomials in the homogeneous coordinates, and they define an embedding of $\C\P^2$ into $\C\P^9$, a particular case of the Veronese embedding. So the anticanonical line bundle is very ample. Similarly, the anticanonical line bundle on $\C\P^1\times \C\P^1$ is very ample.

\smallskip
It can be shown that all del Pezzo surfaces are either $\C\P^1 \times \C\P^1$ or blowups of at most $8$ distinct points $p_1,\ldots,p_k$
on $\C\P^2$, provided that these points are in a sufficiently general position (for example, three points can not be collinear).
An interesting feature of a del Pezzo surface is its set of $(-1)$-curves, which are the smooth curves of genus zero that can be blown down to get another del Pezzo surface. We will comment on these below.

\begin{itemize}
\item $k=1.$ Blowing up one point on $\C\P^2$, we get the Hirzebruch surface $H_1$. It has a unique $(-1)$-curve, namely the exceptional curve of the blowdown morphism.

\item $k=2.$ When we blow up two points $p_1$ and $p_2$ on $\C\P^2$, we get three $(-1)$-curves: two exceptional curves $E_1$ and $E_2$ that blow down to $p_1$ and $p_2$ respectively, and the proper preimage $l$ of the line $\overline{p_1p_2}\subset \C\P^2$. We can blow down $l$, and the resulting surface turns out to be isomorphic to $\C\P^1\times \C\P^1$. The reader can try to prove it now, or wait until we talk about toric varieties in Section \ref{sec.toric}. 

\item $k=3.$ Since points can not be collinear, we might as well choose to blow up $(1:0:0)$, $(0:1:0)$ and $(0:0:1)$. 
There are now six $(-1)$-curves in $X$, connected to each other like vertices in a hexagon. They are  three exceptional curves and three proper preimages of the coordinate lines. In fact, we have seen this surface in Exercise 2 of Section \ref{sec.blowup}.

\item $k=4.$ There is only one such surface, since every four points on $\C\P^2$, no three of which are collinear, can be
moved to $(1:0:0)$, $(0:1:0)$, $(0:0:1)$ and $(1:1:1)$ by the action of ${\rm PGL}(3,\C)$. This surfaces has 
ten $(-1)$-curves: $4$ exceptional curves and $6$ proper preimages of the lines through two out of $4$ points. 

\item $k=5.$ There is now a two-dimensional family of surfaces, which we looked at in Exercise 3 of Section \ref{sec.int}.
There are $16$  $(-1)$-curves: $5$ exceptional curves of the blowup, $10$ proper preimages of lines and one proper preimage of the conic through $p_1,\ldots, p_5$. Sections of the anticanonical line bundle on $X$ can be identified with 
cubic polynomials in the homogeneous coordinates of $\C\P^2$ which are zero on $p_1,\ldots, p_5$. This is a five-dimensional space, and it gives an embedding of $X$ into $\C\P^4$, so that the ideal of $X$ is generated by two degree two equations.

\item $k=6.$ This case is a bit more famous than the others. There are $27$ $(-1)$-curves of $X$, which come from
$6$ exceptional curves, $15$ proper preimages of lines and $6$ proper preimages of conics through $5$ out of six points we blow up. All such surfaces are embedded into $\C\P^3$ by the sections of $K_X^\vee$ and the image is given by one cubic equation. The $(-1)$-curves become lines in $\C\P^3$ in this embedding. Conversely, every smooth cubic surface in $\C\P^3$ is del Pezzo, has $27$ lines on it, and any collection of pairwise disjoint lines on it can be contracted to get
$\C\P^2$. We do not prove these classical results, but we will see how the expected number of lines can be deduced using calculations in the cohomology ring of the Grassmannian of lines in Exercise 3 of Section \ref{sec.ms}.

\item $k=7.$ In this case, the surface $X$ always has an involution, so that the quotient is $\C\P^2$ and the map
$X\to \C\P^2$ is ramified over a smooth curve of degree $4$. There are $56$ $(-1)$-curves on $X$. 
In addition to the $7$ exceptional curves, proper preimages of $21$ lines through two of the blowup points and 
$21$ conics through five of the points, there are also proper preimages of $7$ cubic curves in $\C\P^2$ which pass through $6$ out of seven points of the blowup and are singular at the $7$-th point.
The anticanonical line bundle is not very ample, i.e. sections of $K_X^\vee$ do not define an embedding.

\item $k=8.$ There are $240$ $(-1)$-curves. Specifically, there are $8$ exceptional curves, $28$ and $56$ proper preimages of lines and conics respectively and $64$ proper preimages of cubic curves which are singular at one of the points and pass through six points 
of the blowup. Furthermore, there are proper preimages of $56$ degree four curves which are singular at three points and pass through the remaining five points, and $28$ degree five curves which are singular at six points and pass through the remaining two. The anticanonical line bundle is not very ample.

\end{itemize}

\smallskip
\begin{rem}
Another interesting family of surfaces is obtained by blowing up $9$ points on $\C\P^2$ which are the intersection
points of two smooth cubic curves $f_1=0$ and $f_2=0$. The resulting surface is not del Pezzo, but its anticanonical line bundle defines  a map to $\C\P^1$. The generic fiber of this map is an elliptic curve, and these surfaces are called {\blue rational elliptic surfaces}.
\end{rem}

{\bf Exercise 1.} Prove that for any vector bundle $W\to X$ and a line bundle $L\to X$ we have 
$$
\P(W\otimes L) \cong \P W.
$$
Conclude that the projectivization of $\mathcal O(a)\oplus\mathcal O(b)\to \C\P^1$
is isomorphic to the Hirzebruch surface $H_{|a-b|}$.

{\bf Exercise 2.} Show that the complement $H\setminus F_0 $ is isomorphic to $\C\P^1 \times \C$. Use it to prove that \eqref{Hnminus}.

{\bf Exercise 3.} A $(-1)$-curve in a smooth surface $X$ is, by definition, isomorphic to $\C\P^1$ and satisfies $E^2=-1$.
Use adjunction formula to prove that it satisfies $K\cdot E= -1$ where $K$ is the Weil divisor that corresponds to the canonical line bundle $K_X\to X$.

\section{Kodaira dimension and classification of algebraic surfaces.}
We will now discuss a very important invariant of algebraic varieties known as the {\blue Kodaira dimension}.

\smallskip
Let $X$ and $Y$ be compact smooth $n$-dimensional algebraic varieties which are birational to each other, i.e. they contain isomorphic Zariski closed subsets $X\supseteq U_X\cong U_Y\subseteq Y$. Because $Y$ is compact, there exists an open subset $V\subseteq X$ that contains $U_X$ such that
\begin{itemize}
\item  ${\rm codim}_\C(X\setminus V) \geq 2$.

\item There is a birational holomorphic map $\mu:V\to Y$. 
\end{itemize}
This is often phrased as ``birational equivalences are isomorphisms  away from a set of codimension one and are morphisms away from a set of codimension two". We will not prove this statement,\footnote{ It is related to the valuative criterion of properness, \cite{Hartshorne}.} but will illustrate it by an example.

\smallskip
The algebraic surface $\C\P^2$ and its blowup at a point $p$ are birational to each other, because they contain isomorphic Zariski open subsets, namely the complement of the point and its preimage.
If we take $Y$ to be the blowup of $\C\P^2$ and $X$ to be $\C\P^2$, then $V= \C\P^2\setminus\{p\}$ maps to $Y$. On the other hand, if we take $X$ to be the blowup of $\C\P^2$ and $Y$ to be $\C\P^2$, then we can take $V$ to be all of $X$,
although the holomorphic map $V\to Y$ is no longer an inclusion. 

\smallskip
Now let $w\in \Gamma(Y,\Lambda^n TY^{\vee})$ be a global holomorphic $n$-form on $Y$. When we pull it back to $V$ we get
$\mu^* w \in  \Gamma(V,\Lambda^n TX^{\vee})$, a holomorphic $n$-form on $V$. We now invoke a statement knowns as the Hartogs' Lemma: a holomorphic section of a line bundle defined outside of the locus of complex codimension $2$ or higher can be uniquely extended to a holomorphic section.\footnote{ It is often stated for functions rather than sections of line bundles, but there is no difference locally.} Thus, our assumption of $V$ implies that $\mu^*w$ can be extended to an element of $\Gamma(X,\Lambda^n TX^{\vee})$. This gives us a pullback-then-extension linear map 
$$
\Gamma(Y,\Lambda^n TY^{\vee})\to \Gamma(X,\Lambda^n TX^{\vee}).
$$
Naturally, we also have the pullback-then-extension map in the other direction
$$
\Gamma(X,\Lambda^n TX^{\vee})\to \Gamma(Y,\Lambda^n TY^{\vee})
$$
and these two maps are inverses of each other, because the compositions (in the appropriate order) preserve the forms on $U_X$ and $U_Y$.

\smallskip
Recall that the line bundle $\Lambda^n TX^{\vee}$ is called the canonical line bundle of $X$ and is denoted by $K_X$. 
A similar argument works for any positive tensor power\footnote{ Just because we can't integrate expressions like 
$g(z) \,(dz_1\wedge \cdots \wedge dz_n)^{\otimes m}$, it doesn't mean that they don't make sense. They can also be pulled back for $m\geq 0$.} of $K_X$ so we see that 
$$
\Gamma(X,K_X^{\otimes m})
$$
is an invariant of birational equivalences between smooth compact algebraic manifolds.
Moreover, the graded ring
$$
R = \bigoplus_{m\geq 0}\Gamma(X,K_X^{\otimes m})
$$
is a birational invariant. One of the great achievements of birational geometry is the proof in 2010,
by Birkar, Cascini, Hacon and McKernan \cite{BCHM1,BCHM2} that 
$R$ is finitely generated (over its degree zero part which is always $\C$). I believe this is still an open problem over the fields of positive characteristics, and I am not willing to venture any guesses if it is true there or not.

\smallskip
\begin{war}
For $m<0$, the vector space 
$$
\Gamma(X,K_X^{\otimes m}) 
$$
is {\red not} a birational invariant. The proof that works for $m>0$ is not applicable, because vector fields do not pull back. For instance, for $m=-1$, 
$$\dim_\C \Gamma(\C\P^2, K_{\C\P^2}^\vee) = \dim_\C \Gamma(\C\P^2,\mathcal O (3))=10$$
but for the blowup $X$ of $\C\P^2$ at a  point $p$ we get 
$\dim_\C \Gamma(X, K_X^\vee) = 9$ as these correspond to homogeneous cubic polynomials in three variables that are zero at $p$.
\end{war}

\smallskip
Kodaira dimension concerns the ``size" of $R$. More precisely, $R$ is always an integral domain, and its quotient field is a field extension of $\C$ of finite transcendence degree. 

\smallskip
\begin{defn}
We define Kodaira dimension of $X$ by
$$
\kappa(X) = \left\{
\begin{array}{ll}
{\rm tr.deg.}(Q.F.(R))-1,&{\rm if~}R_{>0}\neq 0.\\
-\infty.&{\rm if~}R_{>0}= 0.
\end{array}
\right.
$$
\end{defn}

\begin{rem}
Kodaira dimension of $X$ is a birational invariant which takes values in 
$$
\{-\infty, 0, 1, \ldots, \dim_\C X\}.
$$
After the dimension, it is arguably the most important invariant of an algebraic variety.
Occasionally, $\kappa(X)=-\infty$ is denoted by $\kappa(X)=-1$, but the $-\infty$ notation is preferable, because it satisfies
$$
\kappa(X\times Y) = \kappa(X) + \kappa(Y)
$$
in the appropriate sense. 
\end{rem}

\smallskip
In $\dim_\C X = 1$ case, birational equivalence of smooth compact Riemann surfaces implies isomorphism, and Kodaira dimension separates all Riemann surfaces into three classes that we have already seen in Section \ref{sec.RH}.
\begin{itemize}
\item $\kappa(X)=-\infty.$ This is the case of genus $g=0$, and $X\cong \C\P^1$ is the only example. The universal cover of $X$ is $\C\P^1$. The automorphism group is ${\rm PGL}(2,\C)$.

\item $\kappa(X)=0.$ We have $g=1$, $X$ is an elliptic curve, the universal cover is $\C$. The automorphism group is one-dimensional.

\item $\kappa(X)=1.$ Here $g\geq 2$, the universal cover is the unit disc (equivalently, upper half plane). The automorphism group is finite.

\end{itemize}

We will now talk about the next case, $\dim_\C X = 2$. The situation is a lot more complicated. We will make no attempts at proving anything.

\smallskip
First let's talk about 
$\kappa(X) = -\infty.$ Examples include all of the rational surfaces, such as $\C\P^2$, $\C\P^1\times \C\P^1$, Hirzebruch and del Pezzo surfaces. In addition, this class contains products $\C\P^1 \times Y$ for a curve $Y$, and more generally ruled surfaces ($X\to Y$ with generic fiber isomorphic to $\C\P^1$).

\smallskip
The next case of 
$\kappa(X) = 0$ is arguably the most fascinating. In this case either the canonical line bundle $K_X$ or some positive tensor power of it is trivial. Examples include abelian surfaces,
so-called hyperelliptic surfaces, K3 surfaces and Enriques surfaces.

\smallskip 
Abelian surfaces are topologically equivalent to $(S^1)^4$ and are isomorphic to $\C^2/L$ for certain lattices $L\cong \Z^4$. They form countably many three-dimensional families inside of a four-dimensional family of complex manifolds (see Exercise 1), most of which are not algebraic. 

\begin{tikzpicture}
\filldraw[gray!40!white] plot[smooth, tension=.7] coordinates {(1.,0.01)(1.6,1.8) (3,2)(4,0) (1.6,-1.8)(1.,-0.01)};
\draw plot[smooth, tension=.7] coordinates {(1.5,-1.5)(3,0)(4,0.5)};
\draw plot[smooth, tension=.7] coordinates {(1.5,1.5)(2,0)(3.8,-0.2)};
\draw plot[smooth, tension=.7] coordinates {(2,-1.4)(2.5,0)(3,1.6)};
\node at (4,1.8){$\dim 4$};
\node at (3.3,0.8){$\cdots$};
\node at (2,1.2){$\dim 3$};
\node at (9,0) {$\begin{array}{c}
{\rm You~try~drawing~infinitely~many} \\
{\rm three-dimensional~loci~inside }\\
{\rm a~four-dimensional~space!}
\end{array}$};
\end{tikzpicture}

\smallskip 
Hyperelliptic (a.k.a. bielliptic) surfaces are quotients $(E\times F)/G$ of certain products of elliptic curves by certain finite group actions. There are seven families of such surfaces.

\smallskip 
K3 surfaces have been studied by many authors, but they were named as such in 1958 by Weil, to honor 
Kummer, K\"ahler and Kodaira, as well as the K2 mountain in the Himalayas. Similarly to the abelian surfaces, they form countably many $19$-dimensional families inside a $20$-dimensional family of  complex surfaces. In what follows, I will describe some of these families and give a (non-rigorous) count of the parameters. 

\smallskip 
\begin{itemize}
\item Consider the double cover $X\to \C\P^2$ ramified over a smooth degree $6$ curve $f(x_0,x_1,x_2)=0$ which roughly means looking at $t^2 = f(x_0,x_1,x_2)$ in some appropriate space, see Exercise 2.\footnote{ We do not explain why $K_X\to X$ is  trivial in this case.} The parameter count goes as follows. We have 
a $28$-dimensional space of homogeneous polynomials of degree $6$, which gives $27$ parameters up to scaling. Moreover, we should subtract the dimension of the automorphism group of $\C\P^2$, which is $\dim {\rm PGL}(3,\C)=8$.
Thus, we get $27-8=19$.

\item
The next case is that of degree $4$ hypersurfaces $X$ in $\C\P^3$ that one can view as a direct generalization of cubic curves in $\C\P^2$. Note that by the adjunction formula of Proposition \ref{adjform} we have
$$
K_X \cong \mu^*(K_{\C\P^3} \otimes \mathcal O(X)) \cong \mu^*(\mathcal O(-4)\otimes \mathcal O(4)) = \mu^*\mathcal O
$$
is trivial.
The space of polynomials of degree $4$ is of dimension $35$ and we get as before
$$
35 - 1 - \dim  {\rm PGL}(4,\C)= 35 -1 - 15 = 19.
$$

\item
Now let us consider the surfaces $X$ in $\C\P^4$ whose homogeneous ideal is generated by one quadratic and one cubic equation. We can think of $X$ as an intersection of a quadric $Q$ and a cubic $C$ in $\C\P^4$. The canonical class  of $X$ is trivial by a repeated application of the adjunction formula, first going from $\C\P^4$ to $C$ and then going from $C$ to $X$. The space of quadratic polynomials in $5$ variables is of dimension $15$, so we get a dimension $14$ parameter space of $Q$. The space of equations of $C$ is of dimension $35$, but we need to not only account for the scaling (subtract $1$) but also for adding any products of the equation of $Q$ and a degree one polynomial. And let's not forget about the automorphisms of $\C\P^4$. All in all, we get
$$
14 + (35 - 1 - 5) - \dim  {\rm PGL}(5,\C) = 14 + 29 - 24 = 19.
$$
\end{itemize}

\smallskip
\begin{rem}
We do list one more family in Exercise 3 and there are, as I mentioned earlier, infinitely many families, but the description gets more complicated further on. One really needs Hodge theory for the proper treatment of K3 surfaces.
What is remarkable, and not at all obvious, is that all of the above are {\blue diffeomorphic to each other as real manifolds!}
\end{rem}

\smallskip 
To finish the  $\kappa(X) = 0$ case,
Enriques surfaces are quotients of certain special K3 surfaces by $\Z/2\Z$ that acts without fixed points. They form a single $10$-parameter family.

\smallskip
In the case of $\kappa(X) = 1$, the surfaces in question have a structure of {\blue elliptic fibration}, i.e. there is a map
$X\to Y$, where $Y$ is a curve, and most fibers are smooth curves of genus $1$. There is a detailed classification of what kind of special fibers could occur, due to Kodaira.

\smallskip
The case of $\kappa(X) = 2$ is that of {\blue surfaces of general type}. It includes in particular products $C_1\times C_2$ 
where both $C_i$ are curves of genus at least two. Other simple examples are smooth surfaces in $\C\P^3$ of degree at least five. Overall, we are nowhere near classifying surfaces of general type, although there are multiple very interesting constructions of some of them with small Betti numbers. My hobby over the last eight years or so has been trying to construct explicit equations of a very special type of such surfaces, known as {\blue fake projective planes}.\footnote{ They have been classified as $100$ group quotients of a two-dimensional ball by \cite{CS}, following \cite{PY}, but this does not provide explicit equations.}
For a review of related surfaces, I highly recommend \cite{BCP}.

{\bf Exercise 1.} Count the parameters of rank $4$ lattices in $\C^2$, up to linear transformations of $\C^2$ to show that they form a four-dimensional family.

{\bf Exercise 2*.} The {\blue weighted projective space} $W\P(1,1,1,3)$ is defined as the quotient  of $\C^4\setminus \{(0,0,0,0\}$ with coordinates $(x_0,x_1,x_2,t)$ by the action
of $\C^*$ given by $(x_0,x_1,x_2,t)\mapsto (\lambda x_0,\lambda x_1,\lambda x_2,\lambda^3 t)$. As a set,  $W\P(1,1,1,3)$ can be covered by the charts $U_i=\{x_i\neq 0\}$ and $U_t=\{t\neq 0\}$ which gives it a structure of a (singular) algebraic variety. Prove that $U_{0,1,2}$ are in natural bijection to $\C^3$ and $U_t$ is in bijection to the quotient of $\C^3$ by the group $\Z/3\Z$ that acts by scaling coordinates by the third roots of $1$. Prove that the surface
$$
t^2 = x_0^6+ x_1^6+ x_2^6
$$
is smooth and maps $2:1$ to $\C\P^2$, with ramification divisor given by 
$$0 =x_0^6+ x_1^6+ x_2^6.$$

{\bf Exercise 3.} One more accessible example of K3 surfaces is that of surfaces $X$ in $\C\P^5$ cut out by three quadratic equations. Do the rough parameter count, taking into account that only the linear span of the three equations matters. \emph{Hint:} Count the ordered bases of the space of generators, which overcounts the parameter space by
$\dim {\rm GL}(3,\C)=9$. If you know the formula for the dimensions of Grassmannians, that could be used as well.

\section{Chern classes of vector bundles. Chern character. Euler sequence.}
Let $X$ be a smooth complex manifold and $W\to X$ be a rank $r$ holomorphic vector bundle.
Then it is possible, although not easy, to construct the {\blue total Chern class} of $W$
$$
c(W) = 1 + c_1(W) + c_2(W) + \ldots + c_r(W) \in H^{\rm even}(X,\Z) = \bigoplus_{i}H^{2i}(X,\Z)
$$ 
where $1\in H^0(X,\Z)$ and $c_i(W)$ is an element of $H^i(X,\Z)$. The individual terms
$c_i(W)$ are called the $i$-th Chern classes of $W$.

\smallskip
The case of line bundles $L\to X$ is of particular interest. Here the only nontrivial Chern class is $c_1(L)$.
If $L$ corresponds to the Weil divisor $D=\sum_i a_iD_i$, the class
$c_1(L)$ turns out to be the Poincar\'e dual of the homology element $[D]$ in $H_{2\dim_\C X -2}(X,\Z)$.
So in this sense we can think of Chern classes as some kind of higher rank invariants that generalize the
correspondence between line bundles and Weil divisor classes.

\smallskip
Chern classes carry a number of properties, which we list below, obviously without proof.
\begin{itemize}
\item $c_k(W) = 0$ for $k>\rk W$.

\item (Functoriality) For a morphism $f:X\to Y$ and vector bundle $W\to Y$ there holds
$$
c_k (f^* W) = f^*(c_k(W))
$$
for all $k$. On the left hand side we have the Chern class of the pullback of the vector bundle, and on the right we have the cohomology pullback $H^{2k}(Y,\Z) \to H^{2k}(X,\Z)$ of the Chern class.

\item For a short exact sequence of vector bundles\footnote{ This means we have maps of vector bundles that induce short exact sequences in each fiber.}
$$
0\to W_1 \to W_2 \to W_3\to 0
$$
on $X$ there holds
$$
c(W_2) = c(W_1)c(W_3).
$$
\end{itemize}

 As the corollary of the last statement, if a vector bundles $W$ is a direct sum of line bundles
$$
 W \cong L_1 \oplus L_2\oplus \ldots \oplus L_r,
$$
then 
$$
c(W) = (1+ c_1(L_1))(1+c_1(L_2))\cdot\ldots\cdot (1+c_1(L_r)).
$$

\smallskip{\blue \hrule}
Now I want to explain  how one can work with Chern classes. A typical problem might be the following. Suppose we have a rank $2$ vector bundle $W$ with the total Chern class $c(W) = 1+ c_1 +c_2$, and we want to compute the Chern class of the third symmetric power ${\rm Sym}^3(W)$.

\smallskip
Because ${\rm Sym}^3(\C^2)\cong \C^4$, the vector bundle ${\rm Sym}^3(W)$ has rank $4$.
Suppose for a moment that $W$ is a direct sum of line bundles 
$$W \cong L_1\oplus L_2,$$
in which case 
$${\rm Sym}^3(W)\cong L_1^{\otimes 3} \oplus (L_1^{\otimes 2} \otimes L_2 )\oplus  (L_1 \otimes L_2^{\otimes 2})\oplus L_2^{\otimes 3}.
$$ 
If we denote $c_1(L_1) = x_1,~c_2(L_2)=x_2$, then we have $c_1=x_1+x_2\in H^2(X,\Z)$ and $c_2 = x_1 x_2 \in H^4(X,\Z)$. Since tensor product of line bundles corresponds to the addition of the corresponding Weil divisor classes, we get
$$
c({\rm Sym}^3(W)) = (1+3x_1)(1+2x_1+x_2)(1+x_1+2x_2)(1+3x_2).
$$
On the right, we have a symmetric function in $x_1,x_2$, which we can write in terms of its elementary symmetric polynomials $c_1$ and $c_2$ as
\begin{equation}\label{sym3}
c({\rm Sym}^3(W)) = 1 + 6c_1 + (11c_1^2 + 10c_2) + (6c_1^3 + 30 c_1 c_2) + (18 c_1^2 c_2 + 9c_2)
\end{equation}

The black magic known as the {\blue splitting principle} then says that $c({\rm Sym}^3(W))$ is given by the formula \eqref{sym3} even if $W$ is not isomorphic to a direct sum of line bundles! This allows us to compute Chern classes of vector bundles that are constructed from given vector bundles.

\smallskip
An important terminology convention is to write 
$$c(W) = \prod_{i=1}^{\rk W} (1+ x_i(W))$$
and call $x_i(W)$ the {\blue Chern roots} of $W$. If $W$ is a direct sum of line bundles, then its Chern roots are just their first Chern classes. More generally, the Chern roots themselves are not assigned a precise meaning, but their elementary symmetric functions are the Chern classes of $W$. As a consequence, a degree $k$ symmetric polynomial in the Chern roots of $W$ is a polynomial in the Chern classes of $W$, and is an element of $H^{2k}(X,\Z)$.
A particular case is that of the tangent bundle $TX\to X$, where we write
$$
c(TX) = \prod_{i=1}^{\dim_\C X} (1+x_i)
$$
and call $x_i$ the Chern roots of $X$.\footnote{ I have also seen these more properly referred to as the Chern roots of the tangent bundle to $X$. Perhaps this will become the common convention in the future.}

\smallskip
{\blue \hrule}
Let us now do an important example. We would like to compute the total Chern class of the tangent bundle to 
$\C\P^n$.

\smallskip
\begin{prop}\label{Euler}
There is a natural sequence of vector bundles on $\C\P^n$
$$
0\to \mathcal O \to \mathcal O(1)^{\oplus(n+1)}\to T\C\P^n \to 0
$$
usually called the {\blue Euler sequence}.
\end{prop}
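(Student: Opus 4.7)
The plan is to construct the sequence fiberwise using the description of $\C\P^n$ as the Grassmannian of lines in $\C^{n+1}$, and then to promote the pointwise construction to an honest short exact sequence of holomorphic vector bundles.

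First I would recall that for $p\in\C\P^n$ corresponding to the line $l_p\subset\C^{n+1}$, there is a canonical identification
$$
T_p\C\P^n \cong \mathrm{Hom}(l_p,\,\C^{n+1}/l_p) \cong l_p^\vee\otimes(\C^{n+1}/l_p),
$$
coming from viewing a tangent vector at $[l_p]$ as a first-order deformation of the line $l_p$ inside $\C^{n+1}$. In the chart $U_0$ with affine coordinates $y_j=x_j/x_0$, one verifies directly that the basis vector $\partial/\partial y_j$ corresponds under this identification to the linear map sending the canonical representative $(1,y_1,\ldots,y_n)\in l_p$ to $e_j\,{\rm mod}\,l_p$; this makes the identification concrete and manifestly holomorphic.

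Next I would tensor the tautological short exact sequence of vector spaces $0\to l_p\to\C^{n+1}\to\C^{n+1}/l_p\to 0$ by $l_p^\vee$ to obtain
$$
0\to l_p^\vee\otimes l_p\to l_p^\vee\otimes\C^{n+1}\to l_p^\vee\otimes(\C^{n+1}/l_p)\to 0.
$$
Using the canonical isomorphism $l_p^\vee\otimes l_p\cong\C$ (evaluation pairing), the definitional identification of $l_p^\vee$ with the fiber of $\mathcal O(1)$ at $p$, and the identification from step one, this is exactly the Euler sequence at the fiber over $p$.

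It then remains to exhibit both maps as globally defined holomorphic maps of vector bundles. For the first map I would take $1\mapsto(x_0,\ldots,x_n)$ using the global sections $x_i\in\Gamma(\C\P^n,\mathcal O(1))$; fiberwise this is $\sum_i x_i|_{l_p}\otimes e_i\in l_p^\vee\otimes\C^{n+1}$, which is precisely the image of $1\in l_p^\vee\otimes l_p$ under the inclusion $l_p\hookrightarrow\C^{n+1}$. For the second map, I would work chart by chart: on $U_0$, trivializing $\mathcal O(1)$ via the non-vanishing section $x_0$ and writing a section of $\mathcal O(1)^{\oplus(n+1)}$ as $(f_0x_0,\ldots,f_nx_0)$, the induced map is $(f_0,\ldots,f_n)\mapsto\sum_{j=1}^n(f_j-y_jf_0)\,\partial/\partial y_j$, whose kernel is generated by $(1,y_1,\ldots,y_n)$, matching the image of the first map. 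The main obstacle will be the routine but slightly fiddly verification that these chart-wise formulas glue into a well-defined global map of bundles, which reduces to checking compatibility with the transition functions of $\mathcal O(1)$ and the Jacobian of the coordinate change $y_j\leftrightarrow x_j/x_k$ on $U_0\cap U_k$.
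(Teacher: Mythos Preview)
Your argument is correct and complete in outline; the fiberwise identification $T_p\C\P^n\cong l_p^\vee\otimes(\C^{n+1}/l_p)$ together with tensoring the tautological sequence $0\to l_p\to\C^{n+1}\to\C^{n+1}/l_p\to 0$ by $l_p^\vee$ is the standard ``Grassmannian'' proof of the Euler sequence, and your chart formulas are right.

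The paper takes a different, more impressionistic route. Instead of the tautological sequence, it uses the quotient presentation $\C\P^n=(\C^{n+1}\setminus\{\mathbf 0\})/\C^*$ and identifies vector fields on $\C\P^n$ with $\C^*$-equivariant weight-one vector fields upstairs, modulo the Euler field $\sum x_i\,\partial/\partial x_i$ tangent to the orbits. In that picture a weight-one vector field is $\sum f_i\,\partial/\partial x_i$ with each $f_i$ linear, which is visibly a section of $\mathcal O(1)^{\oplus(n+1)}$, and the kernel of the surjection onto $T\C\P^n$ is the line spanned by the Euler field, giving the $\mathcal O$ subbundle. Your approach has the advantage of being fully rigorous as written and of generalizing verbatim to arbitrary Grassmannians (replacing $l_p$ by the tautological subbundle $S$ yields $T\Gr(k,n)\cong S^\vee\otimes Q$); the paper's approach is quicker to state and makes the role of the Euler vector field transparent, but as presented it is only a sketch and one still has to justify why ``$\C^*$-equivariant of weight one'' translates into sections of $\mathcal O(1)$ on the quotient.
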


\begin{proof}
I will only sketch the proof. The key to this is the presentation
$$
\C\P^n = (\C^{n+1}\setminus \{{\mathbf 0}\})/\C^*.
$$
We can think of vector fields on $\C\P^n$ as $\C^*$-equivariant vector fields on $\C^{n+1}\setminus \{{\mathbf 0}\}$ of ``weight $1$",
up to the vector field $\sum_{i=0}^{n} x_i \frac \partial{\partial x_i}$ along the fibers of the quotient map.
\end{proof}

As a consequence of the Euler sequence, we get 
$$
c(T\C\P^n) = \frac {c(\mathcal O(1))^{n+1}}{c(\mathcal O)} = \frac {(1+H)^{n+1}}{1+0} = (1+H)^{n+1}
$$
where $H=c_1(\mathcal O(1))$ is the standard generator of $H^2(\C\P^n,\Z)$.
Note, however, that it would be wrong to claim that $H,\ldots,H$ ($n+1$ times) are Chern roots of $T\C\P^n$, because that would give the wrong rank.

\smallskip
{\blue \hrule}
We will now discuss a very important way of packaging the Chern class information, known as the {\blue Chern character}.

\smallskip
\begin{defn}\label{ch}
Let $W$ be a rank $r$ vector bundle on $X$ with 
$$c(W)=\prod_{i=1}^r(1+y_i)$$
where $y_i$ are the Chern roots. Then we define the Chern character of $W$ by
$$
ch(W) := \ee^{y_1} + \ee^{y_2} + \cdots + \ee^{y_r}
= r + (y_1 + \ldots y_r) + (\frac 12 y_1^2 + \ldots + \frac 12 y_r^2) + \cdots
$$
in  $H^{\rm even}(X,\Q)$. We have to use the coefficient ring $\Q$ because of the denominators in the exponential. Note that the formal power series of $\ee^y$ can be truncated to a polynomial, since $H^{>2\dim_\C X}(X,\Q)=0$.
\end{defn}

\smallskip
\begin{rem}
People these days plug {\red everything} into the exponential function! My personal favorite is $\ee^X$, which is the disjoint union of properly understood quotients of the products of $n$ copies of a manifold $X$ by the symmetric group of permutations.
\end{rem}

\smallskip
\begin{rem}
The standard notations $ch(W)$ and $c(W)$ are close enough to be somewhat confusing at first reading, even though total Chern class and Chern character have very different properties.
\end{rem}

The following properties of the Chern character can be deduced from those of the total Chern class and the splitting principle.
\begin{itemize}
\item The $H^0(X,\Q)$ part of $ch(W)$ is always the rank of $W$.

\item For a short exact sequence 
$$
0\to W_1 \to W_2 \to W_3 \to 0
$$
there holds $ch(W_2) = ch(W_1)+ch(W_3)$.

\item For $f:X\to Y$ and vector bundles $W$ on $Y$ there holds $ch(f^*(W))=f^*(ch(W))$.

\item $ch(W_1\otimes W_2)= ch(W_1)ch(W_2)$.

\end{itemize}

The last property is perhaps the most useful, it is much simpler than the corresponding statement for the total Chern classes.

\smallskip
As a corollary, we can use the Euler sequence on $\C\P^n$ to get
$$
ch(T\C\P^n) = (n+1)\ee^H -1.
$$

\smallskip
There is also some strange but important class called the {\blue Todd class} of $X$. It is defined as follows.

\smallskip
\begin{defn}
Let $x_i$ be the Chern roots of $TX$, i.e. 
$$c(TX)=1+ c_1 +c_2 + \ldots =\prod_i(1+x_i).$$
Then 
$$
{\rm Td}(X) = \prod_{i=1}^{\dim_\C X} \frac {x_i}{1-\ee^{-x_i}}
$$
where we expand $\frac {x}{1-\ee^{-x}}$ as a formal power series in $x$. It can be explicitly written as
\begin{equation}\label{td}
{\rm Td}(X)= 1 + \frac {c_1}2 + \frac {c_1^2 + c_2} {12}+\frac {c_1c_2}{24}+\cdots,
\end{equation}
see Exercise 3.
\end{defn}

{\bf Exercise 1.} Verify \eqref{sym3}

{\bf Exercise 2.} Prove that the first Chern class $c_1(W)$ of a vector bundle $W$ is the first Chern class of its top exterior power $\Lambda^{\rk W} W$.

{\bf Exercise 3.} Prove that the coefficients of the Todd class of $X$ can be written as polynomials in the Chern classes of $TX$, without using the dimension of $X$, and verify \eqref{td}. \emph{Hint:} By looking at 
$$\ln c(X) = \sum_i \ln(1+x_i),$$
we can write the sums $\sum_{i=1}^{\dim_\C X} x_i^k$ as polynomials in $c_j$. Then consider $\ln {\rm Td}(X)$.

\section{Cohomology of vector bundles and Hirzebruch-Riemann-Roch formula. 
Example: $\chi(\C\P^n,\mathcal O(k))$.}
Recall that for a  vector bundle $\pi:W\to X$ on a smooth complex algebraic variety, sections $s$ of it are  maps $s:X\to W$ such that $\pi \circ s = {\rm id}_X$. Sections of $W$ have a natural structure of a complex vector space, which we (for now) denote by $\Gamma(X,W)$.  

\smallskip
Suppose that we have a short exact sequence 
$$
0\to W_1 \to W_2 \to W_3 \to 0
$$
of vectors bundles on $X$, which means that we have this short exact sequence of vector spaces in each fiber.
Any section of $W_1$ induces a section of $W_2$ and a section of $W_2$ induces one of $W_3$
 and we have the following result.
\smallskip
\begin{prop}\label{lex}
There is a left exact sequence
$$
0\to \Gamma(X,W_1)
\to \Gamma(X,W_2)
\to \Gamma(X,W_3)
$$
\end{prop}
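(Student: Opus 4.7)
The plan is to verify the three conditions of left-exactness in order: injectivity of the first map, containment of the image in the kernel of the second, and the reverse containment. The first two are essentially formal consequences of fiberwise exactness, while the third requires a bit of local work to check that the section we construct is actually holomorphic (or algebraic, in the relevant setting).

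For injectivity of $\Gamma(X,W_1)\to\Gamma(X,W_2)$: given $s\in\Gamma(X,W_1)$ whose image is the zero section, at each point $p\in X$ we have $s(p)\in (W_1)_p$ mapping to $0\in (W_2)_p$, so by fiberwise injectivity $s(p)=0$. Thus $s$ is the zero section. For the inclusion $\mathrm{im}(\Gamma(X,W_1))\subseteq\ker(\Gamma(X,W_2)\to\Gamma(X,W_3))$: the composition $W_1\to W_2\to W_3$ is zero on every fiber (by exactness of the original sequence), so it is the zero morphism of bundles, and hence induces the zero map on sections.

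The nontrivial content is the reverse inclusion. Suppose $t\in\Gamma(X,W_2)$ maps to the zero section of $W_3$. Pointwise, fiberwise exactness guarantees a unique $s(p)\in (W_1)_p$ with $s(p)\mapsto t(p)$, so as a set-theoretic function $s:X\to W_1$ is well-defined and satisfies $\pi_1\circ s=\mathrm{id}_X$. The one thing to check is that $s$ is holomorphic (resp.\ a morphism of algebraic varieties). For this I would argue locally: around any $p\in X$ one can find an open neighborhood $U$ on which all three bundles are simultaneously trivial and the short exact sequence splits, i.e.\ there is an isomorphism $W_2|_U\cong W_1|_U\oplus W_3|_U$ of vector bundles compatible with the inclusion and projection. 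The existence of such a splitting follows because a splitting amounts to a section of a certain $\mathrm{Hom}$-bundle over $U$, and such a section always exists after possibly shrinking $U$ (for example, by choosing a complementary subspace at $p$ and extending holomorphically via the inverse function theorem, or simply by writing down the direct sum decomposition in local trivializations). Under this splitting, $t|_U$ decomposes as $(t_1,t_3)$ with $t_1$ a section of $W_1|_U$ and $t_3$ a section of $W_3|_U$; the hypothesis forces $t_3=0$, and then $s|_U=t_1$ is manifestly holomorphic. Since holomorphicity is a local condition, $s$ is a global holomorphic section of $W_1$, as required.

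The main (minor) obstacle is the local splitting step; once one accepts that every short exact sequence of vector bundles splits locally, everything else is automatic. In the algebraic setting the same argument works with ``holomorphic'' replaced by ``regular'' and the open cover taken in the Zariski topology on which the bundles trivialize.
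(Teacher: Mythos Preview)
Your proof is correct and follows the same fiberwise strategy as the paper. The paper's argument is terser: it simply observes that if $s(x)$ lands in the fiber of $W_1$ for every $x$, then $s$ is a section of $W_1$, implicitly using that $W_1$ sits inside $W_2$ as a complex submanifold (so a holomorphic map to $W_2$ whose image lies in $W_1$ is automatically holomorphic as a map to $W_1$). Your local-splitting argument reaches the same conclusion but is more explicit about why the lift is holomorphic; this extra care is not wrong, just more than the paper bothers with.
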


\begin{proof}
The fact that $\Gamma(X,W_1)
\to \Gamma(X,W_2)$ is injective is obvious, since a zero in a fiber of $W_2$ is also a zero in a fiber of $W_1$.
For the exactness in the middle, suppose that $s:X\to W$ is a section of $W_2\to X$ which maps to the zero section
of $W_3\to X$. This means that for every $x\in X$ the image of $s(x)$ in the fiber of $W_3\to X$ is zero. But this means that $s(x)$ actually lies in the fiber of $W_1$ inside that of $W_2$, and $s$ is an image of a section $s_1$ of $W_1\to X$. 
\end{proof}

\smallskip
As you may have guessed, there is generally {\blue no right exactness}. In other words, a surjective map of vector bundles
$$W_2\to W_3\to 0$$
may or may not induce a surjective map of the corresponding spaces of sections. The issue here is that given a section $s:X\to W_3$, we may always pick a lift to $W_2$ at each fiber, but we may not be able to do it in a consistent (say, holomorphic) fashion over all of $X$.
We provide a specific example of non-surjectivity below.

\smallskip
Let $X=\C\P^n$. Consider the Euler sequence
$$
0\to \mathcal O \to \mathcal O(1)^{\oplus n+1} \to TX \to 0.
$$
We can dualize it to get 
$$
0\to (TX)^\vee \to \mathcal O(-1)^{\oplus n+1} \to \mathcal O \to 0.
$$
The line bundle $\mathcal O$ is the trivial bundle $\C\times X\to X$, whose global sections are the holomorphic functions on $X$, which are constants. So $\Gamma(X,\mathcal O) \cong \C$. We claim that the line bundle 
$\mathcal O(-1)\to \C\P^n$ has only zero holomorphic sections. One way of seeing it is that the coordinates $x_0,\ldots, x_n$ on $\C\P^n$ are sections of $\mathcal O(1)$, so for any section $s\in \Gamma(\C\P^n,\mathcal O(-1))$ the tensor product $s \otimes x_0$ is a section of $\mathcal O$ and is therefore a constant. Since it is zero at $x_0=0$, we see that it must be a zero constant. This is implies that $s$ is zero for $x_0\neq 0$ and is thus zero everywhere by continuity.

\smallskip
{\blue \hrule}
It turns out that there is a way of working with the lack of exactness on the right, other than just giving up.
Specifically, for a short exact sequence 
$$
0\to W_1 \to W_2\to W_3 \to 0
$$
of vector bundles on $X$ there exists a long exact sequence
\begin{equation}\label{coho}
\begin{array}{l}
0\to \Gamma(X,W_1) \to \Gamma(X,W_2) \to \Gamma(X,W_3) \to\\[.5em]
\to H^1(X,W_1)\to H^1(X,W_2)\to H^1(X,W_3)\to\\[.5em]
\to H^2(X,W_1)\to \cdots 
\end{array}
\end{equation}
where $H^i(X,W)$ are certain {\blue cohomology} vector spaces which depend functorially on $W$. 
\begin{rem}
From now on, we will use $H^0(X,W)=\Gamma(X,W)$ to make the long exact sequence \eqref{coho} look more uniform.
\end{rem}

While we don't have the tools needed to define $H^i(X,W)$ (the hardest part are the so-called connecting homomorphisms $H^i(W_3)\to H^{i+1}(W_1)$) we will list some of their properties.
\begin{itemize}
\item $H^i(X,W)=0$, unless $0\leq i \leq \dim_\C X$.

\item For $X$ compact, $H^i(X,W)$ are finite-dimensional.

\item (Serre Duality) For a smooth projective variety $X$, for all vector bundles $W$ and all $i$ we have a natural duality 
$$
H^i(X,W) \cong H^{\dim_\C X -i} (X, W^\vee \otimes K_X)^\vee
$$
where $K_X$ is the canonical line bundle on $X$.

\item (Kodaira vanishing theorem) Let $L\to X$ be an ample line bundle on a smooth projective complex variety $X$, see Definition \ref{ample}. Then $H^i(X, L \otimes K_X)=0$ for all $i>0$.

\end{itemize}

\smallskip
\begin{defn}
For any vector bundle $W$ on a smooth projective variety $X$ we define the {\blue Euler characteristics} of $W$ by
$$
\chi(X,W):= \sum_{i=0}^{\dim X} \dim_\C H^i(X,W) = \dim_\C H^0(X,W)- \dim_\C H^1(X,W)+\cdots. 
$$
\end{defn}

The main property of the Euler characteristics of a vector bundle is that it is additive on short exact sequences. More precisely, for any 
$$
0\to W_1 \to W_2\to W_3 \to 0
$$
there holds $\chi(W_2) = \chi(W_1)+\chi(W_3)$,
see Exercise 1.
This is why it is often much easier to compute $\chi(X,W)$ than the more geometrically meaningful $\dim_\C H^0(X,W)$. Fortunately, in some cases of interest Kodaira vanishing or other similar theorems allow one to conclude that 
$\chi(X,W) = \dim_\C H^0(X,W)$. 

\smallskip
Additivity of $\chi$ on short exact sequences should remind you of the additivity of the Chern character. Not coincidentally, the famous {\blue Hirzebruch-Riemann-Roch formula} allows one to compute $\chi(X,W)$ if one has a good understanding of $ch(W)$, ${\rm Td}(X)$ and the product in $H^{\rm even}(X,\Q)$.

\smallskip
\begin{thm}\label{hrr}
Let $X$ be a smooth projective complex algebraic variety.
We define $\int_X:H^{\rm even}(X,\Q)\to \Q$ as reading off the coefficient at the Poincar\'e dual of the point in the top cohomology $H^{2\dim_\C X}(X,\Q)$. Then for any vector bundle $W$ on $X$ we have
$$
\chi(X,W) = \int_X ch(W) \,{\rm Td}(X).
$$
\end{thm}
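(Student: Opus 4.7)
The plan is to reduce the statement to the very concrete case of line bundles on projective space, verify the formula there by direct computation, and then transfer the result to arbitrary $X$ via a closed embedding into some $\C\P^N$.

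First I would observe that both sides of the asserted equality are additive on short exact sequences of vector bundles. For $\chi(X,W)$, this is precisely what the long exact sequence \eqref{coho} together with Exercise 1 of this section delivers. For $\int_X ch(W)\,\mathrm{Td}(X)$, additivity of the Chern character is one of the properties listed right after Definition \ref{ch}, and $\mathrm{Td}(X)$ does not depend on $W$. Combining this with the splitting principle from the previous section, it suffices to prove HRR when $W = L$ is a single line bundle.

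Next I would verify HRR in the baseline case $X = \C\P^n$, $L = \mathcal O(k)$, which is the example flagged in the section title. By Exercise 2 of Section \ref{sec.maps}, $H^0(\C\P^n,\mathcal O(k))$ has dimension $\binom{n+k}{n}$ for $k\geq 0$ and vanishes for $k<0$; Serre duality with $K_{\C\P^n} = \mathcal O(-n-1)$ handles the top cohomology symmetrically, and a Kodaira-type vanishing takes care of the middle. On the Chern side, Proposition \ref{Euler} together with the multiplicativity of $\mathrm{Td}$ on short exact sequences yields $\mathrm{Td}(\C\P^n) = \bigl(H/(1-e^{-H})\bigr)^{n+1}$ after cancelling the contribution from the trivial summand, and $ch(\mathcal O(k)) = e^{kH}$. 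The right hand side of HRR is then the coefficient of $H^n$ in $e^{kH}\bigl(H/(1-e^{-H})\bigr)^{n+1}$, which a substitution $t = 1 - e^{-H}$ converts into the coefficient of $t^n$ in $(1-t)^{-k-1}$, namely $\binom{n+k}{n}$, matching the left side in every range of $k$.

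To pass to a general smooth projective $X$, I would embed $i: X \hookrightarrow \C\P^N$ via some very ample line bundle, and for a given line bundle $L$ on $X$ build a finite resolution of the pushforward $i_*L$ by vector bundles on $\C\P^N$ (so-called locally free resolution). Additivity of $\chi$ converts $\chi(X,L)$ into an alternating sum of Euler characteristics on $\C\P^N$, which the previous step handles; the analogous manipulation on the Chern side, using the fact that $ch$ and $\mathrm{Td}$ transform predictably under the embedding $i$, must give the same answer. This compatibility is exactly the content of Grothendieck--Riemann--Roch with target a point.

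The hard part, and the reason this is a \emph{proposal} rather than a proof, is precisely the last step: justifying that the pushforward in cohomology commutes with $ch(\cdot)\,\mathrm{Td}(X)$ up to the correction factor $\mathrm{Td}(\C\P^N)$. Making this rigorous requires either Grothendieck's deformation-to-the-normal-cone argument, factoring $i$ into a zero-section inclusion followed by a projection and computing each piece, or Hirzebruch's original cobordism-based argument identifying both sides as multiplicative genera and checking them on a spanning set. Either route involves machinery that is far beyond the hand-waving tolerance of these notes, which is why in practice HRR is stated and then one is referred to \cite{Hartshorne} or \cite{Fulton}.
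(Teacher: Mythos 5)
The paper explicitly declines to prove Theorem \ref{hrr}: right after the statement it says ``We have no intention of proving Theorem \ref{hrr} but we will look at some of its corollaries,'' and then works out exactly the $\chi(\C\P^n,\mathcal O(k))$ computation that you do in your second step. So there is no proof in the text to compare against, and you have correctly identified both the baseline calculation the paper does carry out and the fact that the remaining steps (Grothendieck--Riemann--Roch for a closed embedding) are machinery the notes deliberately avoid.

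That said, your first reduction is glibber than it looks, and it hides a gap of essentially the same nature as the one you honestly flag at the end. Additivity on short exact sequences only lets you pass from $W$ to its graded pieces if $W$ actually admits a filtration by subbundles with line bundle quotients on $X$ itself, and most bundles do not. The splitting principle supplies such a filtration only after pulling back along a flag bundle $\pi:Y\to X$. To descend back from $Y$ to $X$ you would need to know that $\chi(X,W)=\chi(Y,\pi^*W)$ and that $\int_Y ch(\pi^*W)\,\mathrm{Td}(Y)=\int_X ch(W)\,\mathrm{Td}(X)$; the first requires computing $R\pi_*\mathcal O_Y$ for a flag bundle, and the second, after using $\mathrm{Td}(Y)=\pi^*\mathrm{Td}(X)\cdot\mathrm{Td}(T_{Y/X})$ and the projection formula, reduces to $\pi_*\mathrm{Td}(T_{Y/X})=1$. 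Both of these are instances of GRR for the proper submersion $\pi$, i.e.\ the very theorem you are trying to prove. The splitting principle is safe for verifying \emph{universal polynomial identities in Chern classes} because Chern classes pull back injectively, but $\chi(X,W)$ is not known to be such a polynomial until HRR is already in hand, so the argument is circular as stated. The usual fix is to prove GRR for projective bundle projections first (by a direct computation with the tautological sequence and the relative Euler sequence), then for regular embeddings (deformation to the normal cone), and only then quote the splitting principle for bookkeeping. Your final paragraph names this correctly for the embedding step; just be aware that the ``reduce to line bundles'' step is not a free move but another instance of the same difficulty.
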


We have no intention of proving Theorem \ref{hrr} but we will look at some of its corollaries.

\begin{itemize}
\item $\rk  =1$, $\dim_\C X = 1$. We will denote the line bundle by $L$.
Since $H^{>2}(X,\Q)=0$, we get
$$ch(W) = \ee^{c_1(L)} = 1+ c_1(L) = 1+ (\deg L ){\rm P.D.(point)}.$$
We also have
${\rm Td}(X)=1+\frac12 c_1(TX) = 1 + \frac 12 (2-2g) {\rm P.D.(point)}$ where $g$ is the genus of $X$. Therefore,
we have 
$$
\chi(X,L) = \int_X (1+ (\deg L ){\rm P.D.(point)}  +(1-g) {\rm P.D.(point)})= \deg L + 1-g.
$$
Here we used Proposition \ref{degcan} that the degree of the canonical line bundle is $2g-2$, and the fact that $TX$ is the dual of the canonical line bundle.

\item $\rk =1$, $\dim_\C X = 2$. We have 
$$
ch(L) = 1+ c_1(L) + \frac 12 c_1(L)^2,~ {\rm Td}(X) = 1+\frac 12 c_1 + \frac {c_1^2 + c_2}{12}
$$
where $c_i$ denotes $c_i(TX)$. This gives
$$
\chi(X,L) = \frac 12 c_1(L)^2 + \frac 12 c_1(L) c_1 + \frac 12 c_1 + \frac {c_1^2 + c_2}{12}.
$$
In the particular case $L=\mathcal O$ we get the {\blue Noether's formula}\footnote{ It is worth mentioning that $c_1^2$ is the self-intersection of the canonical divisor and $c_2$ is the topological Euler characteristics of $X$, i.e. the alternating sum of its Betti numbers.}
$$
\chi(\mathcal O) =  \frac {c_1^2 + c_2}{12}.
$$
Then for general $L$ that corresponds to a Weil divisor $D$ we get
$$
\chi(X,L) = \frac 12  D(D-K)+\chi(\mathcal O).
$$

\item
$X=\C\P^n$ and $L = \mathcal O(k)$. The cohomology ring of $\C\P^n$ is generated in degree two by the Poincar\'e dual $H$ of a hyperplane, with $H^n={\rm P.D.(point)}$ being the generator of the top cohomology.
$$
H^{*}(\C\P^n,\Z) = H^{\rm even}(\C\P^n,\Z) = \C\oplus \C H\oplus \cdots \oplus\C H^n.
$$
We have $ch(L) = \ee^{kH}$, but the computation for the Todd class is a bit more complicated. We have
$$
c(TX) = (1+H)^{n+1},
$$
but these are not the Chern roots. Nevertheless\footnote{ One can define Todd classes of vector bundles in a way that's multiplicative for short exact sequences and then use the Euler sequence.},
$$
{\rm Td}(X) = \left(\frac {H}{1- \ee^{-H}} \right)^{n+1}
$$
so Hirzebruch-Riemann-Roch formula says
\begin{align*}
\chi(X,\mathcal O(k)) = \int_X \frac {\ee^{kH} H^{n+1}}{(1- \ee^{-H})^{n+1}}
={\rm coeff.~at~}H^n\left( \frac {\ee^{kH} H^{n+1}}{(1- \ee^{-H})^{n+1}}\right)\\
={\rm Res}_{H=0}\frac {\ee^{kH} }{(1- \ee^{-H})^{n+1}} 
=\frac 1{2\pi\ii} \oint_{|H|=\varepsilon}\frac {\ee^{kH} }{(1- \ee^{-H})^{n+1}} \,dH 
\\
\stackrel {t=1-\ee^{-H}}{=}
\frac 1{2\pi\ii} \oint_{|t|
=\varepsilon}\frac {(1-t)^{-k-1} }{t^{n+1}}  \, dt = 
\frac {(k+1)(k+2)\cdot\ldots\cdot(k+n)}{n!}
\end{align*}
where we treat $H$ as a complex variable in the second line. In the last identity we used that it is the coefficient at $t^n$ of the Maclaurin series of $(1-t)^{-k-1}$.
We will see the more elementary reason behind this formula in Exercise 3.
\end{itemize}

\smallskip

{\bf Exercise 1.} Prove that Euler characteristics is additive on short exact sequences. \emph{Hint:} Use \eqref{coho} and the general statement about alternating sum of dimensions of vector spaces in a long exact sequence.

{\bf Exercise 2.} Use Kodaira vanishing theorem to show that  for all $k>-n-1$ we have $H^{>0}(\C\P^n,\mathcal O(k))=0$. \emph{Hint:} Use
Exercise 3 of Section \ref{sec.CD}.

{\bf Exercise 3.} One can show (see Exercise 2 of Section \ref{sec.maps}) that for $k\geq 0$ the global sections of $\mathcal O(k)$ on $\C\P^n$ are homogeneous polynomials of degree $k$ in the homogeneous coordinates on $\C\P^n$. Together with Exercise 2, show that this is consistent with the above calculation of $\chi(\C\P^n, \mathcal O(k))$.

\section{Introduction to Grassmannians.}
In the next two sections we will talk about the {\blue Grassmannian} varieties. 

\smallskip
\begin{defn}
Let $V$ be a complex vector space of dimension $n$. For each $k\in \{0,\ldots,n\}$,
the Grassmannian $\Gr(k,V)$ is defined as the set of all dimension $k$ vector subspaces of $V$.
We also use $\Gr(k,n)$ notation for it if we don't particularly care about the nature of $V$.
\end{defn}

Clearly, for $k=0$ or $k=n$ the set $\Gr(k,V)$ consists of one element.
For $k=1$ we have $\Gr(k,V) = \P V\cong \C\P^{n-1}$.
For $k=n-1$ we have $\Gr(k,V) = \P V^\vee\cong \C\P^{n-1}$. This makes $\Gr(2,4)$ the first interesting case.

\smallskip
It is reasonable to expect that $\Gr(k,n)$ has a natural structure of an algebraic variety and a complex manifold. Indeed, this is the case. As with projective spaces, we can define it by using coordinate charts.
We will think of $V$ as the space of length $n$ {\blue row} vectors. Then a dimension $k$ subspace $W$ of $V$, {\blue together with a choice of a basis} can be regarded as a $k\times n$ matrix $M$ of rank $k$. Different choices of said basis of $W$ are related by invertible row transformations of $M$.
Thus we get, as sets,
$$
\Gr(k,n) = \{{\rm max~rank~} k\times n {\rm~complex~matrices}\}/{\rm GL}(k,\C)
$$
for the left multiplication action of ${\rm GL}(k,\C)$. Note, that in the $k=1$ case, this perfectly reproduces the definition of $\C\P^{n-1}$. 

\smallskip
Recall that $M$ has rank $k$ if and only if it has a nonzero minor. This inspires us to consider, for each set of indices 
$$
1\leq i_1 < i_2 <\cdots < i_k\leq n,
$$
the subset $U_{i_1,i_2,\ldots,i_k}$ of $\Gr(k,n)$ such that
$$
\det M_{i_1,\ldots,i_k} \neq 0
$$
where the determinant is that of the square
submatrix of $M$ made from columns 
$i_1,i_2,\ldots,i_k$. Note that this property is preserved under invertible linear row transformations.

\smallskip
Let us look in detail at the particular case of $U_{1,2,\ldots,k}$. This means that the first minor of $M$ is invertible, so Gaussian elimination gives
$$
M\sim \left(
\begin{array}{ccccccc}
1 & 0 &\ldots & 0 & * &\ldots &*\\
0 & 1 &\ldots & 0 & * &\ldots &*\\
\ldots & \ldots &\ldots & \ldots & \ldots &\ldots &\ldots\\
0 & 0 &\ldots & 1 & * &\ldots &*
\end{array}
\right)
$$
with the unique reduced row echelon form. This is just a fancy way of saying that we multiply $M$ by $M_{1,2,\ldots,k}^{-1}$ to get the first submatrix to be identity. The entries of the remaining $k\times (n-k)$ matrix can be arbitrary complex numbers, so we can identify $U_{1,\ldots, k}$ with $\C^{k(n-k)}$. For arbitrary $i_1<\cdots<i_k$, the chart $U_{i_1,\ldots, i_k}$ can be identified with $\C^{k(n-k)}$ by looking at the entries of $M_{i_1,i_2,\ldots,i_k}^{-1}M$ outside of the $i_j$-th columns.

\smallskip
On the intersection of $U_{i_1,\ldots,i_k}$ and $U_{j_1,\ldots,j_k}$, the transition functions are clearly\footnote{ To make it even more clear, we ask the reader to find this change of variables explicitly in Exercise 1.} rational and holomorphic, which gives 
$\Gr(k,n)$, and more generally $\Gr(k,V)$ the structure of a smooth complex algebraic variety. 

\smallskip
\begin{rem}
Compactness of $\Gr(k,n)$ is also fairly easy and is proved similarly to that of $\C\P^n$. Every $k$-dimensional subspace $W$ of $\C^n$ has an orthonormal basis with respect to the standard Hermitean form on $\C^n$. The space of such bases is a closed subset of $(S^{2n-1})^{k}$ (given by the orthogonality conditions) and is therefore compact. It maps continuously and surjectively onto $\Gr(k,n)$, which proves compactness.
\end{rem}

\smallskip{\blue \hrule}

We will now  embed $\Gr(k,V)$ into some projective space.
The idea is rather elegant: given a dimension $k$ subspace $W\subseteq V$ we get
$$
\Lambda^k W \subseteq \Lambda^k V.
$$
Since $\dim_\C \Lambda^k W=1$, we naturally associate to each $W$ a point in the projective space 
$\P\Lambda^k V$ (of dimension $\frac {n!}{k!(n-k)!}-1$). So we get a map 
$$
\mu:\Gr(k,V)\to \P\Lambda^k V.
$$

\begin{rem}
It is not a particularly mysterious map. 
If we think of $\Gr(k,n)$ as the quotient as before, then
the map is induced by 
$$
M\mapsto \left(\det M_{i_1,\ldots,i_k}\right)_{1\leq i_1<\cdots <i_k\leq n}.
$$
Indeed, left multiplication by a matrix $A$ scales all of the above determinants by $\det A$, so the map is well-defined.
It is also clear from this description that $\mu$ is holomorphic (and algebraic).
\end{rem}

\smallskip
\begin{rem}
It can be seen from the same construction that the map is an embedding. If two points go to the same point in the projective space, then they must have the same nonzero minor, which we can for simplicity assume to be $M_{1,\ldots, k}$. We can assume that $M$ is in the reduced form and observe that determinants of minors that use $(k-1)$ of the first $k$ columns pick up precisely the coordinates of the reduced form of $M$. It is also easy to see that tangent vectors do not collapse under $\mu$. 

Another, more abstract, way of thinking about injectivity is that 
$$
v \wedge (\Lambda^k W) = 0\in \Lambda^{k+1}V \iff v \in W,
$$
so $\Lambda^k W$ determines $W$ uniquely.
\end{rem}

\smallskip
{\blue \hrule}
This embedding $\mu$ is called the {\blue Pl\"ucker embedding} of the Grassmannian $\Gr(k,V)$ and 
the homogeneous coordinates on $\P \Lambda^k V$ are called the {\blue Pl\"ucker coordinates}. I now want to discuss the 
equations that cut out $\Gr(k,V)$ in terms of Pl\"ucker coordinates. Hmm, what should we call them?

\smallskip
But, first, let us talk about the exterior algebra $\Lambda^*V$. If $e_1,\ldots, e_n$ form a basis of $V$, then $\Lambda^* V$ is spanned by $e_{i_1}\wedge \cdots \wedge e_{i_m}$ with the associativity rules, and $e_i \wedge e_j = -e_j \wedge e_i$.
In fact, up to one measly sign, the rules are the same as those for the polynomials in $e_i$, and the exterior algebra can be viewed as a ring of polynomials in odd (in physics terminology ``fermionic") variables. Just as one has partial differentiation operators acting on the polynomial ring, there are odd analogs of them here, known as {\blue contractions}.
Specifically, for $\alpha\in V^\vee$ we can define linear maps ${\rm contr}_\alpha:\Lambda^m V \to \Lambda^{m-1} V$ with the property that
for any $v_1,\ldots, v_m\in V$ there holds
\begin{equation}\label{contr}
\begin{array}{l}
{\rm contr}_\alpha(v_1\wedge\cdots \wedge v_m) = 
\alpha(v_1) v_2 \wedge \cdots \wedge v_m \\
\hskip 20pt - \alpha(v_2) v_1 \wedge  v_3\wedge \cdots \wedge v_m +\ldots+
(-1)^{m-1} \alpha(v_m) v_1\wedge \cdots \wedge v_{m-1}.
\end{array}
\end{equation}

\smallskip
Suppose we have a point in $\Gr(k,V)\subseteq \P \Lambda^k V$ or, equivalently, a decomposable element 
$w=v_1\wedge \cdots \wedge v_k\subseteq  \Lambda^k V$. Then by \eqref{contr}, for arbitrary $\alpha_1,\ldots,\alpha_{k-1} \in V^{\vee}$ we see that
$$
{\rm contr}_{\alpha_{k-1}}\circ \cdots \circ {\rm contr}_{\alpha_1}(w)
$$
is a linear combination of $v_1,\ldots,v_k$, so we have 
$$
\left({\rm contr}_{\alpha_{k-1}}\circ \cdots \circ {\rm contr}_{\alpha_1}(w)\right)\wedge w = 0 \in \Lambda^{k+1} V.
$$
We can further restate it by saying that if $w$ is decomposable, then for all $\alpha_1,\ldots, \alpha_{2k}\in V^\vee$ there holds the following {\blue Pl\"ucker relation}.
\begin{equation}\label{plucker}
{\rm contr}_{\alpha_{2k}}\circ \cdots \circ {\rm contr}_{\alpha_k} \left(\left({\rm contr}_{\alpha_{k-1}}\circ \cdots \circ {\rm contr}_{\alpha_1}(w)\right)\wedge w \right)= 0
\end{equation}

\smallskip
We comment that it suffices to pick $\alpha_i$ in \eqref{plucker} among the basis elements of $V^{\vee}$, so we have a finite and  easily computable set of quadratic equations in coefficients of $w$. It is true, though definitely not obvious, that Pl\"ucker relations \eqref{plucker} cut out exactly $\Gr(k,V)$. As a matter of fact, they generate the homogeneous ideal of $\Gr(k,V)$ in $\P\Lambda^k V$.

\smallskip
Let us compute the smallest interesting example, that of $\Gr(2,4)$. We have a vector space $V$ with the basis $e_1,\ldots,e_4$ and we want to characterize the decomposable elements $w\in \Lambda^2V$. 
Every $w\in  \Lambda^2V$ can be written as
$$
w = x_{12}  \,e_1 \wedge e_2 +  x_{13}  \,e_1 \wedge e_3 +  x_{14}  \,e_1 \wedge e_4 + x_{23}  \,e_2\wedge e_3 
+ x_{24} \,e_2\wedge e_4 + x_{34} \,e_3\wedge e_4,
$$
and we think of $x_{12},\ldots, x_{34}$ as the homogeneous coordinates of $\C\P^5$. Since the dimension of $\Gr(2,4)$ is $4$, its image under the Pl\"ucker embedding is a hypersurface in $\C\P^5$, so we expect a single quadratic equation.
More precisely, if we take $\alpha_1$ to be the dual basis vector  $e_1^\vee$, then we have 
$$
{\rm contr}_{\alpha_1} w = x_{12} \, e_2 + x_{13}  \,e_3 + x_{14} \, e_4.
$$
We wedge it with $w$ to get 
\begin{align*}
&({\rm contr}_{\alpha_1} w)\wedge w =(x_{12} e_2 + x_{13} e_3 + x_{14} e_4)\wedge (
 x_{12} e_1 \wedge e_2 + \cdots + x_{34}e_3\wedge e_4)\\
 &=
x_{12}x_{13} \,e_2\wedge e_1 \wedge e_3  + x_{12}x_{14} \, e_2 \wedge e_1 \wedge e_4 
+ x_{12}x_{34}  \,e_2\wedge e_3 \wedge e_4 
\\& + x_{13}x_{12} \, e_3 \wedge e_1 \wedge e_2 + x_{13}x_{14} \, e_3 \wedge e_1 \wedge e_4 
+ x_{13}x_{24} \, e_3 \wedge e_2 \wedge e_4 
\\
&+ x_{14}x_{12}  \,e_4 \wedge e_1 \wedge e_2+x_{14}x_{13}  \,e_4 \wedge e_1 \wedge e_3+x_{14}x_{23} \, e_4 \wedge e_2 \wedge e_3
\\
&= (x_{12} x_{34} - x_{13} x_{24} + x_{14} x_{23} ) e_2 \wedge e_3 \wedge e_4.
\end{align*}
Therefore, the equation of $\Gr(2,4)\subseteq \C\P^5$ is
$$
0=x_{12} x_{34} - x_{13} x_{24} + x_{14} x_{23},
$$
and $\Gr(2,4)$ is a smooth (rank $6$) quadric hypersurface in $\C\P^5$.

{\bf Exercise 1.} Find explicitly the change of variables from $U_{1,3}$ to $U_{3,4}$ in $\Gr(2,4)$. 

{\bf Exercise 2.} Verify that in the special case of $\Gr(2,4)$ one can write the Pl\"ucker relation as $w\wedge w=0$.

{\bf Exercise 3.} Give an example of an element $w$ in $\Lambda^3 \C^6$ with $w\wedge w = 0$ which can not be written
as $w=v_1\wedge v_2 \wedge v_3$.

\section{Grassmannians continued. Number of lines that intersect four given lines in $\C\P^3$.}

In this section we  continue our discussion of the complex Grassmannians $\Gr(k,n)$. 

\smallskip
The singular cohomology ring $H^*(\Gr(k,n),\Z)$ of $\Gr(k,n)$ is well-studied but is intricate. 
Cohomology only occurs in even degrees and has a free generator set given by the Poincar\'e duals of the so-called Schubert cycles which are described as follows.

\smallskip
Let us think about points of  $\Gr(k,n)$  as full rank $k\times n$ matrices up to row transformations and recall that every such matrix has a unique reduced row echelon form. Such reduced row echelon form is characterized by the location of the pivot columns, and the Schubert cycle is defined as the closure of the locus of $M$ whose reduced row echelon form has given pivot columns. Instead of setting up general notation, we will illustrate this in the particular case of $\Gr(2,4)$. We can think of the points in $\Gr(2,4)$ as parameterizing lines $l\cong\C\P^1$ in the three-dimensional projective space $\C\P^3$, instead of $\C^2\subset \C^4$.\footnote{ This leads to a competing convention where what we call $\Gr(2,4)$ is called $\Gr(1,3)$, but I believe that ours is more widely used.}
There are six possible choices of pivot columns, and we list the Schubert cycles below, together with their geometric descriptions. 
\begin{align}\label{sch}
&
\overline{\left(
\begin{array}{cccc}
1&0&*&*\\
0&1&*&*
\end{array}
\right)},
\hskip 50pt
\overline{\left(
\begin{array}{cccc}
1&*&0&*\\
0&0&1&*
\end{array}
\right)},
\hskip 50pt
\overline{\left(
\begin{array}{cccc}
1&*&*&0\\
0&0&0&1
\end{array}
\right)},
\notag\\
&
{\rm ~all~of~}\Gr(2,4)
\hskip 50pt
l\cap \{(0:0:*:*)\}\neq \emptyset
\hskip 45pt
(0:0:0:1)\in l
\notag
\\[.5em]
&
\overline{\left(
\begin{array}{cccc}
0&1&0&*\\
0&0&1&*
\end{array}
\right)},
\hskip 50pt
\overline{\left(
\begin{array}{cccc}
0&1&*&0\\
0&0&0&1
\end{array}
\right)},
\hskip 50pt
\overline{\left(
\begin{array}{cccc}
0&0&1&0\\
0&0&0&1
\end{array}
\right)}.
\notag\\
&
l\subset\{(0:*:*:*)\}
\hskip 10pt
(0:0:0:1)\in l\subset \{(0:*:*:*)\}
\hskip 10pt
l=(0:0:*:*)
\end{align}
\begin{rem}
There is an equivalent description of Schubert cycles in terms of dimensions of the intersection of the subspace $W$ with elements of a complete flag on $V$, see \cite{GH}.
\end{rem}

\smallskip
{\blue \hrule}
There is a very nice alternative description of the cohomology ring of $\Gr(k,n)$ in terms of the Chern classes of certain vector bundles. Specifically, we have a natural exact sequence of vector bundles on $\Gr(k,n)$
\begin{equation}\label{naturalGr}
0\to S\to \mathcal O^{\oplus n} \to Q\to 0.
\end{equation}
All fibers of $\mathcal O^{\oplus n}$ are naturally identified with $\C^n$, 
the fiber of $S\to \Gr(k,n)$ over the point that corresponds to $W\subset \C^n$ is $W$, and the fiber of $Q$ over this point is 
$\C^n/W$. We call $S$ and $Q$ the tautological subbundle and the tautological quotient bundle respectively.

\smallskip
From the properties of Chern classes, we have
$$
1=c(S) c(Q) = (1+s_1 + s_2 + \cdots + s_k)(1+q_1 + q_2 + \cdots + q_{n-k}).
$$
This gives relations on $s_i$ and $q_i$ and there is a beautiful result that states that $H^*(\Gr(k,n),\Z)$ is generated by $s_i$ and $q_j$ subject to these relations. 

\smallskip
We will illustrate this in the case of $\Gr(2,4)$.
We have $(1+s_1+s_2)(1+q_1+q_2)=1$. Therefore,
\begin{equation}\label{sq}
s_1+ q_1 = 0,~ s_1 q_1 + s_2 + q_2 = 0,~ s_1 q_2 + s_2 q_1 =0, s_2 q_2 = 0.
\end{equation}
The quotient of $\Z[s_1,s_2,q_1,q_2]$ by the relations \eqref{sq} is isomorphic to
$$
\Z[s_1,s_2]/(s_1^3 - 2 s_1 s_2, s_1^2 s_2 - s_2^2),
$$
see Exercise 1. 
This implies $s_1s_2^2=s_1^3 s_2 =2 s_1 s_2^2$, so $s_1 s_2^2 = s_1^3s_2=0$. From here we get that $s_1 s_2^2=s_2^3=0$ and $s_1^5=0$.  As the result, we get
\begin{align*}
H^{0}(\Gr(2,4),\Z) = \Z ,~H^{2}(\Gr(2,4),\Z) = \Z s_1,~H^4(\Gr(2,4),\Z) = \Z s_1^2 \oplus \Z s_2,\\
H^{6}(\Gr(2,4),\Z) = \Z s_1s_2,~H^{8}(\Gr(2,4),\Z) = \Z s_2^2.\hskip 80pt
\end{align*}
The geometric meaning of the $s_i$ classes is the following. 
\begin{itemize}
\item
The $q_1=-s_1$ class is the Poincar\'e dual of the Schubert cycle of lines in $\C\P^3$ that intersect a given line (the second entry in \eqref{sch}). To see that, we dualize the short exact sequence of vector bundles \eqref{naturalGr} to get
$$
0\to Q^\vee \to( \mathcal  O^{\oplus 4})^\vee \to S^\vee \to 0.
$$
Linear functions on $\C^4$ give sections of the bundle $S^\vee$ by looking at their restrictions on
the fibers of $S$. Then the Pl\"ucker coordinates give sections of $\Lambda^2 S^\vee$, and one can see that this Schubert cycle is given by $x_{12}=0$.
The Poincar\'e dual of the divisor that corresponds to a section of $\Lambda^2 Q$ is therefore $c_1(S^\vee)=-c_1(S)=-s_1$.

\item The $s_2$ class is the Poincar\'e dual of the Schubert class of lines inside a given plane in $\C\P^3$ (the fourth entry).
To explain this, we invoke the general claim the top Chern class of a vector bundle is equal, under some transversality conditions, to the Poincare dual of the zero locus of a section. In our case, with a section given by a coordinate on $\C^4$, 
and the vanishing locus is that of lines $l$ that lie in the corresponding plane.

\item The $(-s_1)s_2$ class is the Poincar\'e dual of the Schubert class of lines that contain a given point and lie inside a given plane in $\C\P^3$ (the fifth entry). Indeed, we can think of this condition as lying in a given plane and intersecting a 
line.

\item The class $s_2^2$ is the Poincar\'e dual of a point. Indeed, we can think of it as intersection of the two Schubert cycles that correspond to $s_2$, but for different planes and use that two distinct planes in $\C\P^2$ intersect transversely in a line.

\item We also note that the Poincar\'e dual of the Schubert class of lines that contain a given point (the third entry), is given by $s_1^2 -s_2$. To explain this\footnote{ That's not exactly a proof.} we look at the Poincar\'e dual of $(-s_1)^2$ as the locus of lines that
intersect two given lines $l_1$ and $l_2$ in $\C\P^3$. If $l_1$ and $l_2$ intersect at a point $p$, then $l$ that intersects both $l_1$ and $l_2$ come in two flavors: lines in the span of $l_1$ and $l_2$ (dual to $s_2$) and lines through $p$.
\end{itemize}

\smallskip
\begin{rem}
The fact that $(-s_1)^4 = 2(-s_1)^2 s_2 = 2 s_2^2= 2\,{\rm P.D.(point)}$ is the manifestation of $\Gr(2,4)$ being a degree two hypersurface in $\C\P^5$. Indeed, $(-s_1)$ is the pullback of the hyperplane class in $\C\P^5$ and for a submanifold of complex dimension $d$ in $\C\P^n$, the $d$-th power of this class measures the degree of the submanifold.
\end{rem}

\smallskip
{\blue \hrule}
We end our discussion of Grassmannians with the following interesting calculation.
Let $l_1,l_2,l_3,l_4$ be four lines in $\C\P^3$ which we assume to be in general position. 
{\blue How many lines $l$ in $\C\P^3$ intersect all $l_i$?}
Clearly, this locus is the Poincar\'e dual to  the intersection $(-s_1)^4$, so we should get two points, so we have two lines $l$ that intersect all of $l_i$.
One must always be careful in algebraic geometry with various transversality issues, but it works out fine here.

\smallskip
\begin{rem}
There is another argument for the same statement that does not involve Grassmannians. The space of quadratic polynomials in homogeneous coordinates of $\C\P^3$ has dimension $10$, and the conditions of vanishing on a given line $l_i$ is that of vanishing of three coefficients of the restriction. Thus, for general $l_1,l_2,l_3$ there is a unique quadric surface $X$ in $\C\P^3$ that contains all three. If the lines are generic, the quadric will have a full rank and $X$ will be isomorphic to $\C\P^1\times \C\P^1$ in its Segre embedding. The lines on $X$ are precisely the fibers of 
two projections to $\C\P^1$ (see Exercise 3) and since $l_1,l_2,l_3$ are disjoint, they must be coming from the same projection. The line $l_4$ intersects $X$ in two points $p_1,p_2$. Any line $l$ that intersects $l_1,l_2,l_3$ has at least three intersection points with $X$. Therefore, $l$ lies in $X$, since a nonzero quadratic polynomial can not have three different roots. Thus $l$ must the the fiber of the second projection that passes through either $p_1$ or $p_2$.
\end{rem}

{\bf Exercise 1.} Prove that the quotient of $\Z[s_1,s_2,q_1,q_2]$ by the relations \eqref{sq} is isomorphic to
$
\Z[s_1,s_2]/(s_1^3 - 2 s_1 s_2, s_1^2 s_2 - s_2^2).
$

{\bf Exercise 2.} Give a geometric meaning to the relations
$(s_1^2-s_2)s_2=0$ and $(s_1^2 - s_2)^2={\rm P.D.(point)}$ in terms of intersections of the corresponding Schubert classes.

{\bf Exercise 3.} Let $X\subset\C\P^3$ given by $x_0 x_3 - x_1 x_2 = 0$. Prove that all lines in $X$
are either
$$
l_{a,b}=\{
(x_0:x_1:x_2:x_3) = \{(a y_0: a y_1: by_0:b y_1),~(y_0:y_1)\in \C\P^1\}
$$
or 
$$
l'_{a,b}=\{
(x_0:x_1:x_2:x_3) = \{(a y_0: b y_0: ay_1:b y_1),~(y_0:y_1)\in \C\P^1\}
$$
for some $(a,b)\neq(0,0)$.

\section{Mirror Symmetry.}\label{sec.ms}
In this section, I will give a brief introduction into an active area of algebraic geometry known as Mirror Symmetry. It is a rather recent development, starting in the early 90-s. As opposed to other sections, I have earned the right to present a highly subjective overview of the field, as this has been my primary research area over the last three decades.

\smallskip
We start with the classical example of the smooth quintic threefold\footnote{ The terms ``threefold" is an amalgamation of ``three" and ``manifold". It simply means a three-dimensional algebraic variety. There are also fourfolds, fivefolds, etc.} in $\C\P^4$. Given a homogeneous polynomial $F(x_0,\ldots, x_4)$ of degree $5$ in the homogeneous coordinates of $\C\P^4$, we consider 
$$
Q:=\{F(x_0,\ldots, x_4)=0\} \subset \C\P^4.
$$
It is a simple three-dimensional example a so-called {\blue Calabi-Yau variety}, in particular $K_Q = \mathcal O$ by the adjunction formula of Proposition \ref{adjform}.

\smallskip
We will ask the following question: How many lines in $\C\P^4$ lie in $Q$? This is a reasonable question, in the sense that the dimension of the space of lines in $\C\P^4$ is $\dim \Gr(2,5) = 6$ and for each line
$$
(x_0:\ldots:x_4) = (a_0 u + b_0 v: a_1 u + b_1 v:\ldots:a_4 u + b_4 v),~(u:v)\in \C\P^1
$$
the condition of being in $Q$ is the condition that the coefficients of the restriction of $F$ to $l$ are all zero. There are six coefficients in a homogeneous polynomial of degree $5$ in $u$ and $v$, so it is plausible that, at least for generically chosen $F$, there are finitely many lines in $Q$. 

\smallskip
\begin{rem}
Why would anybody care about the number of lines in $Q$? It is a very long tradition to look for some nice subvarieties in a given variety and it doesn't get nicer than lines. Finding them may be difficult, so one can settle for finding their number.
\end{rem}

Let us try to compute it. We have the usual short exact sequence of vector bundles on $\Gr(2,5)$
$$
0 \to S \to \C^5\times \Gr(2,5) \to Q \to 0
$$
which we dualize to get 
$$
0 \to Q^\vee \to (\C^5)^\vee \times \Gr(2,5) \to S^\vee \to 0.
$$
Each $x_i$ for $i=0,\ldots, 4$ gives a section of $S^\vee$ and $F(x_0,\ldots, x_5)$ gives a global section $s_F$
of the rank $6$ vector bundle ${\rm Sym}^5 (S^\vee)$. In simple terms, the fibers of  ${\rm Sym}^5 (S^\vee)$ encode the degree five homogeneous functions on lines $l$. The section $s_F$ encodes the restrictions of $F$ to the lines. For for a point $p\in \Gr(2,5)$ that corresponds to the line $l\subset \C\P^4$ we have
$$
(l\subset Q) \iff s_F(p) =0.
$$

\smallskip
We can use Chern classes to figure out the number of zeros of a section. For a rank $r$ vector bundle $W\to X$ on an $r$-dimensional variety $X$ we have $c_r(W)= n\,{\rm P.D.(point)}$ where $n$ is the expected number of zeros of a global section of $W$. We do not prove it, but in the special case where $W = L_1\oplus \cdots\oplus L_r$ and $L_i$ have sections $t_i$, the zeros of the section $(t_1,\ldots, t_r)$ of $W$ occur at the intersection of the zeros of $t_i$, which matches $c_r(W) = c_1(L_1)\cdots c_1(L_r)$.

\smallskip
The short exact sequence $0\to S\to \mathcal O^{\oplus 5} \to Q\to 0$ gives
$$
(1+s_1+s_2)(1+q_1+q_2+q_3)=1
$$
for the Chern classes of $S$ and $Q$. We can compute
$$
(1+s_1+s_2)^{-1} = 1- (s_1+s_2) + (s_1+s_2)^2 - (s_1+s_2)^3 + (s_1+s_2)^4  - \ldots
$$
which allows us to compute $q_i$ in terms of $s_1$ and $s_2$. It also gives us relations on $s_i$ by looking at the coefficients of degree $4$, $5$ and $6$.
$$
s_2^2 - 3s_1^2 s_2 + s_1^4 =-3s_1s_2^2 + 4s_1^3 s_2 - s_1^5= -s_2^3 + 6s_1^2 s_2^2 - 5s_1^4 s_2 +s_1^6=0
$$
After some boring linear algebra, we use these equations to get 
$$s_1^6 = 5s_2^3, ~
s_1^4 s_2 =2s_2^3,~s_1^2s_2^2 = s_2^3.
$$
We also note that $s_2^3$ is the Poincare dual of a point. Indeed, a section of $S_2^\vee$ gives the lines inside a $\C\P^3\subset \C\P^4$, and intersection of three generic hyperplanes in $\C\P^4$ is a line.

\smallskip
We have $c(S^\vee) = 1- s_1 +s_2 = (1+\alpha)(1+\beta)$ for the Chern roots $\alpha,\beta$, and 
$$
c({\rm Sym}^5 (S^\vee)) = (1+5\alpha)(1+4\alpha + \beta)\cdots(1+5\beta)
$$
so 
\begin{align*}
c_6({\rm Sym}^5 (S^\vee)) &= 25\alpha\beta(4\alpha + \beta)(\alpha + 4\beta)(3\alpha + 2\beta)(2\alpha + 3\beta)
\\
&=25 s_2(4(s_1^2-2s_2) + 17s_2)(6(s_1^2-2s_2) + 13s_2)
\\
&=
25 s_2(4s_1^2 + 9s_2)(6s_1^2 + s_2)
\\& = 600 s_1^4 + 1450 s_1^2 s_2 + 225 s_2^3 
\\&
= (1200 + 1450 + 225)s_2^3 = 2875\,{\rm P.D.(point)}
\end{align*}
from which we conclude that we expect $2875$ lines in $Q$. 

\smallskip
\begin{rem}
The reason we only ``expect" this number of lines has to do with the transversality. To know that this is the precise number, we need to ensure that image of $s_F$ intersects the zero section of ${\rm Sym}^5(S^\vee)$ transversely, which we will not do. Note also that there exist smooth quintics $Q$ which have {\blue infinitely} many lines in them, see Exercise 1.
\end{rem}

\smallskip
One can similarly ask about the number of {\blue degree two} Riemann spheres in $Q$. What we mean by this is looking at smooth conic curves in some $\C\P^2\subset \C\P^4$. The space of $\C\P^2$-s inside of $\C\P^4$ is a Grassmannian $\Gr(3,5)$ of dimension $6$, there is a dimension $5$ family of conics in each $\C\P^2$, and the restriction of $F$ to the conic is a polynomial of degree $10$, with $11$ coefficients. So the expected dimension of the solution space is zero, and we expect a finite number. Indeed, this number has been found to be $609250$.
There is a similar number $n_d$ for every integer degree $d>0$, at least conjecturally.

\smallskip
In the early 1990s physicists\footnote{ We call string-theorists physicists, but not all physicists agree.} came up with, at the time, a completely weird way of computing $n_d$, \cite{CdOGP}. It involved looking at some hypergeometric functions which are solutions of certain linear ODEs, and doing some strange change of variables in formal power series. In particular, they predicted $n_3 = 317206375$, which was confirmed by mathematicians by a much more complicated calculation. This has lead to an explosion in mathematical research inspired by string theory. It goes under the moniker {\blue mirror symmetry} for the following reason. There are certain invariants of algebraic varieties known as Hodge numbers, and for the smooth quintic $Q$ they are
$$
\begin{array}{c}
1\\
0\hskip 24pt 0\\
0\hskip 24pt 1\hskip 24pt 0\\
1 \hskip 17pt 101 \hskip 13pt 101 \hskip 17pt 1\\
0\hskip 24pt 1\hskip 24pt 0\\
0\hskip 24pt 0\\
1
\end{array}
$$
while the ODEs come from looking at the {\blue mirror manifolds} with Hodge numbers:
$$
\begin{array}{c}
1\\
0\hskip 24pt 0\\
0\hskip 20pt 101\hskip 20pt 0\\
1 \hskip 24pt 1 \hskip 24pt 1 \hskip 24pt 1\\
0\hskip 20pt 101\hskip 20pt 0\\
0\hskip 24pt 0\\
1
\end{array}
$$
So the Hodge numbers are flipped across the diagonal line, hence the term ``mirror".  I want to stress that this is a very strange phenomenon -- some invariants of the tangent bundle on one manifold are equal to the same invariants of the cotangent bundle on another manifold.

\smallskip
I will now briefly mention different avenues of research inspired by this calculation.
\begin{itemize}
\item {\bf Gromov-Witten invariants.} This is a way to rigorously define counts of curves in algebraic varieties which have certain properties, for example passing through a given number of points or lying in some subvarieties. The main technical tool is Kontsevich's moduli spaces of stable maps, see \cite{Kontsevich,FP}. In particular, Givental in 1996 proved the original physicists' statement about counts of curves in a quintic.

\item {\bf More examples of mirrors.} In 1992 Batyrev \cite{Batyrev} realized that the original quintic example has a combinatorial underpinning in terms of so-called {\blue reflexive polytopes}. The geometric way of thinking about it is passing from hypersurfaces in $\C\P^4$ to hypersurfaces in related, but more complicated, spaces known as {\blue Gorenstein toric Fano varieties}, encoded by the so-called reflexive polytopes. In dimension two, there are $16$ reflexive polytopes, up to equivalence, but the numbers grow fast. There are $4319$ such polytopes in $\dim =3$, there are $473800776$ of them for $\dim =4$ (computed by Kreuzer and Skarke \cite{KS}), and $\dim =5$ case seems hopeless. However, the fact that there are so many in dimension four (these give Calabi-Yau threefolds) was a surprise, and not a pleasant one, as it was originally hoped that there would be relatively few Calabi-Yau threefolds, and their special properties would indicate that string theory constraints lead to some highly constrained geometry and thus some simple description of the physical world. 

\smallskip
My contribution to this subject was to extend the construction from hypersurfaces to {\blue complete intersections} in toric varieties. Jointly with Batyrev, we proved the duality of appropriately defined\footnote{ These  Hodge numbers were defined by Batyrev and Kontsevich.} Hodge numbers for these complete intersections.

\item{\bf Classifying Calabi-Yau threefolds.} It is currently unknown if Calabi-Yau threefolds fall into a finite number of topological types, or even have a bound on their Euler characteristics, although some important special cases have been settled positively. Also, after allowing certain {\blue conifold transitions}, one may hope that all of these threefolds are connected to each other, this is the so-called Miles Reid's dream.\footnote{ I have also seen it attributed to Hirzebruch.} As far as I know, this is wide open, and it is not clear if we have the tools to make progress, although all our current constructions only lead to finitely many families.

\item{\bf Open strings.} There is a version of mirror symmetry that deals, from the physical point of view, with open strings propagating on Calabi-Yau manifolds. It was proposed by Kontsevich that the boundary conditions on these strings should be the so-called derived category of coherent sheaves on one side and the derived Fukaya category on the other side. The statement that these triangulated categories for mirror manifolds are equivalent, and the area of research that aims at clarifying it, are known as the {\blue Homological Mirror Symmetry}. It is the most active current area in mirror symmetry, combining ideas from algebraic and symplectic geometry.

\item{\bf Generalized varieties.} Constructions of mirror symmetry naturally lead people to consider some generalizations of varieties, such as Deligne-Mumford stacks, Landau-Ginzburg models, and various flavors of noncommutative geometry.
\end{itemize}

{\bf Exercise 1.} Prove that the Fermat quintic threefold $\{x_0^5+ \ldots + x_4^5\}=0$ is a smooth subvariety of $\C\P^4$. Verify
that for any point $(a:b:c)$ on the curve $\{a^5+b^5+c^5 =0\}\subset \C\P^2$ the line
$$
(x_0:\ldots:x_4) = (u: \ee^{\frac {\pi \ii}5} u: a v: b v: cv),~(u:v)\in \C\P^1
$$
lies in the Fermat quintic.

{\bf Exercise 2.} Follow our method to deduce that one expects to find $27$ lines on a cubic surface $F=0$ in $\C\P^3$. In fact, in this case every smooth complex cubic surface has exactly $27$ lines on it. We have seen this from another perspective in Section \ref{sec.dP}.

{\bf Exercise 3.} A (generic) degree $d$ rational curve in $\C\P^4$ is given by 
$$
(x_0:\ldots:x_4) = (f_0(u,v):\ldots :f_4(u,v)),~(u:v)\in \C\P^1
$$
where $f_i$ are generically chosen homogeneous polynomials on degree $d$ in variables $u$ and $v$. Give a rough count of the dimension of the space of such curves and show that it is equal to the number of coefficients of the restriction of the quintic equation to the curve, thus giving us an expected finiteness of the number of curves.
\emph{Hint:} Take into account scaling of the variables and the automorphisms of $\C\P^1$.

\section{Schemes.}
Since we are getting close to the end of these notes, we might as well talk about the language of schemes. It is the standard rigorous 
framework of algebraic geometry, introduced by Grothendieck and his collaborators. We will do the best we can within the constraints of one section.

\smallskip
We start with the definitions of presheaves and sheaves.
\begin{defn}\label{presheaf}
Let $X$ be a topological space. A {\blue presheaf} $\mathcal F$ of abelian groups is a contravariant functor from the category of open sets in $X$ together with inclusions to the category of abelian groups. In more concrete terms, the data of  $\mathcal F$ are:
\begin{itemize}
\item abelian groups $\mathcal F(U)$, one for each open set $U\subseteq X$,
\item group homomorphisms $\rho_{V\subseteq U}:\mathcal F(U) \to\mathcal F(U) $
for all inclusions $V\subseteq U$,
with the property that for $W\subseteq V\subseteq U$ there holds 
$$\rho_{W\subseteq U}= \rho_{W\subseteq V}\circ\rho_{V\subseteq U}.$$
We also require $\rho_{U\subseteq U}={\rm id}_{\mathcal F(U)}$.
\end{itemize}
\end{defn}

\smallskip
\begin{rem}
The abelian groups ${\mathcal F(U)}$ are called the groups of sections of $\mathcal F$ on $U$.
The maps $\rho$ are called the {\blue restriction maps}. Because the notation is somewhat heavy, we will just say which open set we want to restrict to and mean by this the image of the corresponding map of the groups of sections.
\end{rem}

\smallskip
\begin{defn}\label{sheaf}
A presheaf of abelian groups $\mathcal F$ is called a {\blue sheaf} if it satisfies the following gluing property. For any open set $U\subseteq X$ and any open cover
$$
U = \bigcup_\alpha U_\alpha,
$$
and any collection of $s_\alpha\in \mathcal F(U_\alpha)$ such that for all $\alpha,\beta$ the restrictions of $s_\alpha$ and $s_\beta$ to $U_\alpha \cap U_\beta$ are the same, there exists a unique $s\in \mathcal F(U)$ that restricts to all $s_\alpha\in \mathcal F(U_\alpha)$.
\end{defn}

\smallskip
\begin{rem}
It is common to put a condition $\mathcal F(\emptyset) = 0$ for both presheaves and sheaves. Technically, since we can do an empty cover of an empty set, the uniqueness of gluing implies this statement for sheaves.\footnote{ This reminds me of the statement that the determinant of a $0\times 0$ matrix is $1$, because it is the sum of $0!=1$ terms, the term is a product of $0$ entries (so it is equal to $1$), and the sign of the term is positive because there are zero inversions.}
\end{rem}

\smallskip{\blue \hrule}
Here are some examples of sheaves.
\begin{itemize}
\item Let $X$ be a real manifold. For each open set $U\subseteq X$ we define $\mathcal F(U)$ to be the space of continuous real-valued functions on $U$. The restriction maps are, obviously, the restriction maps. It is a sheaf, because 
any collection of continuous functions on open subsets  $U_\alpha$ which is compatible on the intersections naturally glues to a continuous function on $\bigcup_\alpha U_\alpha$.

\item Let $\pi:W\to X$ be a holomorphic vector bundle over $X$. We denote by $\mathcal F(U)$ the space of holomorphic sections of 
$\pi^{-1}U\to U$, and the restriction maps are again the restriction maps.
\end{itemize}

\smallskip
\begin{rem}
An example of a presheaf that is not a sheaf on a real manifold $X$ can be given by {\blue constant} functions. The problem is that if we have a disjoint union $U=U_1\sqcup U_2$ then constant functions on $U_1$ and $U_2$ do not glue to a constant function on $U$. On the other hand, {\blue locally constant} functions do form a sheaf.
\end{rem}

\smallskip{\blue \hrule}
{\bf Grothendieck's idea.} For any commutative (associative, with $1$) ring $A$ one can construct a topological space ${\rm Spec}\, A$ with a sheaf of rings $\mathcal O$ on it.
As a set, 
$${\rm Spec} \,A= \{{\rm prime~ideals~of~}A\}.
$$
The topology is given by the Zariski topology, i.e. for any subset (equivalently, ideal) $I$ of $A$ the
set 
$$V(I):=\{p\in {\rm Spec} \,A, p\supseteq I\}$$
is declared a closed set.

\smallskip
We will now construct the sheaf $\mathcal O$ on ${\rm Spec}\,A$. For any prime ideal $p$ we can consider the localization $A_p$ of $A$ which is the set of formal fractions $\frac as$ with $a\in A$ and $s\not\in p$, up to the equivalence relation
$$
\left(
\frac {a_1}{s_1} \sim \frac {a_2}{s_2}\right) \iff \left({\rm there~exists~}s_3\not\in p,{\rm~such~that~}s_3(s_1a_2-s_2a_1)=0\right).
$$
There is a ring homomorphism $A\to A_p$, called the localization map,  which sends $a$ to $\frac a1$. 
Informally, $\mathcal O$ is the ``sheaf of collections of elements in $A_p$, which are locally given by a fraction". The precise definition is the following. For a Zariski open subset $U\subseteq {\rm Spec}\,A$ we define
$$
\mathcal O(U):=\left\{
\begin{array}{c}
{\rm collections~of~}\alpha_p\in A_p {\rm ~for~all~}p\in U,\\
{\rm such~that ~for~each~}p\in U~{\rm there~exists~}p\in U_1\subseteq U, a_1,s_1\in A,\\
{\rm so~that~for~all~}q\in U_1~{\rm there~holds~}\alpha_q= \frac {a_1}{s_1}\in A_q.
\end{array}
\right\}
$$
In particular, in the above definition $\frac{a_1}{s_1}$ makes sense in $A_q$, i.e. $s_1\not\in q$ for all $q\in U_1$.

\smallskip
It is clear what the restriction maps are -- we just use the same $\alpha_p$ but for a potentially smaller Zariski open subset.
It is also clear that each $\mathcal O(U)$ comes with a ring structure,  induced from the ring structure of the localizations,
see Exercise 1.
What is not at all clear is how one can compute any groups $\mathcal O(U)$, for example $\mathcal O({\rm Spec}\,A)$.
The following key proposition answers this in a very satisfactory way.

\smallskip
\begin{prop}
For every $a\in A$, the collection of elements $\frac a1$ in $A_p$ for all $p$ gives a section of $\mathcal O$ on ${\rm Spec}\,A$. 
The resulting map $A\to \mathcal O({\rm Spec}\,A)$ is a ring isomorphism. 
In particular, one can recover the ring $A$ back from the data $({\rm Spec}\,A,\mathcal O)$. 
\end{prop}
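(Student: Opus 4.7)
The plan is to verify the four ingredients separately: (i) the collection $(a/1)_p$ really is a section of $\mathcal O$; (ii) the assignment is a ring homomorphism; (iii) it is injective; (iv) it is surjective. For (i), taking $U_1=\mathrm{Spec}\,A$, $a_1=a$, $s_1=1$ in the defining condition shows the section property holds globally. For (ii), both the ring structure on $\mathcal O(\mathrm{Spec}\,A)$ and the localization maps $A\to A_p$ are ring homomorphisms, so compatibility with $+$ and $\cdot$ is automatic. The real work is (iii) and (iv).

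For injectivity, if $(a/1)_p=0$ for every prime $p$, then by the definition of equality in $A_p$ there exists $s_p\notin p$ with $s_p a=0$. Thus the annihilator $\mathrm{Ann}(a)=\{s\in A\mid sa=0\}$ meets the complement of every prime ideal, so it is contained in no prime (equivalently, no maximal) ideal. By the standard Zorn's-lemma consequence, the only such ideal is $A$ itself, so $1\in \mathrm{Ann}(a)$ and $a=0$.

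Surjectivity is the main obstacle. Given a section $(\alpha_p)$, the local condition in the definition of $\mathcal O$ lets us cover $\mathrm{Spec}\,A$ by opens on which $\alpha$ is represented by a single fraction; shrinking to basic opens $D(f):=\{p:f\notin p\}$, which form a basis of the Zariski topology, I would choose a cover $\mathrm{Spec}\,A=\bigcup_i D(s_i)$ with $\alpha_p=a_i/s_i$ in $A_p$ for each $p\in D(s_i)$. Now I invoke quasi-compactness of $\mathrm{Spec}\,A$: if $\{D(s_\lambda)\}$ covers $\mathrm{Spec}\,A$ then $V((s_\lambda))=\emptyset$, i.e.\ $(s_\lambda)=A$, so a finite subfamily $s_1,\dots,s_n$ already generates $A$; in particular the cover can be taken finite. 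Next, on overlaps $D(s_i)\cap D(s_j)=D(s_is_j)$ the fractions $a_i/s_i$ and $a_j/s_j$ agree in every stalk $A_p$ with $s_is_j\notin p$; unwinding the equivalence in localizations and using another finite-cover argument upgrades this to a genuine identity $(s_is_j)^N(s_ja_i-s_ia_j)=0$ in $A$. Replacing $s_i$ by $s_i^{N+1}$ and $a_i$ by $a_is_i^N$ (which leaves the fraction $a_i/s_i$ unchanged and, by finiteness, may be done uniformly) I may assume outright that $s_ja_i=s_ia_j$ in $A$ for all $i,j$.

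Finally, since $(s_1,\dots,s_n)=A$, write $1=\sum_i b_is_i$ with $b_i\in A$, and set $a:=\sum_i b_ia_i$. Then for each $j$,
$$s_j a=\sum_i b_i s_j a_i=\sum_i b_i s_i a_j=\Bigl(\sum_i b_is_i\Bigr)a_j=a_j,$$
so $a/1=a_j/s_j$ in $A_{s_j}$ and hence $a/1=\alpha_p$ in every $A_p$ with $p\in D(s_j)$. Since the $D(s_j)$ cover $\mathrm{Spec}\,A$, the global element $a$ hits the given section, proving surjectivity. The hard part is the overlap-reconciliation step: making precise the passage from equality in every stalk to a genuine algebraic identity in $A$, for which one really needs the quasi-compactness of $\mathrm{Spec}\,A$ together with the standard description $\mathcal O(D(f))\cong A_f$; once that is in place, the partition-of-unity identity $1=\sum b_is_i$ does the rest.
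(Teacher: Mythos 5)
The paper itself does not prove this proposition; it punts with the joke ``the proof is left to the reader of Hartshorne,'' pointing to Proposition II.2.2 of \cite{Hartshorne}. Your argument is exactly the standard Hartshorne proof, and it is correct: localize to basic opens, use quasi-compactness of $\mathrm{Spec}\,A$ to get a finite cover $\bigcup_i D(s_i)$, reconcile the fractions on overlaps, and close with the algebraic partition of unity $1=\sum b_is_i$. Two small glosses worth flagging. First, when you pass from ``on some open $U_1\ni p$ the section is $a_1/s_1$'' to ``cover $\mathrm{Spec}\,A$ by $D(s_i)$ with $\alpha=a_i/s_i$ on $D(s_i)$,'' there is a genuine rewriting step: the $U_1$ from the definition satisfies $U_1\subseteq D(s_1)$ but need not equal it, so one shrinks to some $D(f)\subseteq U_1$ and then uses $D(f)\subseteq D(s_1)\Rightarrow f^m=cs_1$ (radical-of-principal-ideal argument) to re-express the fraction as $ca_1/f^m$ with denominator equal to the defining element of the basic open. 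Second, the overlap step is not really a ``finite-cover argument'': the correct mechanism is that if $x:=s_ja_i-s_ia_j$ maps to $0$ in $A_p$ for every $p\in D(s_is_j)$, then $\mathrm{Ann}(x)$ is contained in no such $p$, so $V(\mathrm{Ann}(x))\subseteq V(s_is_j)$, hence $s_is_j\in\sqrt{\mathrm{Ann}(x)}$ and $(s_is_j)^Nx=0$ for some $N$. Your subsequent renormalization $s_i\mapsto s_i^{N+1}$, $a_i\mapsto a_is_i^N$ (uniform $N$ by finiteness of the cover) and the closing identity $s_ja=a_j$ are then exactly right.
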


\begin{proof}
The proof is left to the reader of Hartshorne\footnote{ \cite[Proposition II.2.2]{Hartshorne}}.
\end{proof}

\smallskip
{\blue \hrule}
It is important to define morphisms of schemes. If we have a ring homomorphism $f:A\to B$, 
we get the map of sets
$$
{\rm Spec}\,B\to {\rm Spec}\,A
$$
which sends the prime ideal $p\subset B$ to the prime ideal $f^{-1}(p)\subset A$.\footnote{ Preimages of prime ideals are prime, while the same can not be said about the maximal ideals. This is a good reason why we want to include all prime ideals into ${\rm Spec}$.}
This is a continuous map of topological spaces, see Exercise 2. There is also some additional data that mimics 
the pullback of holomorphic functions for holomorphic maps of complex manifolds.

\smallskip
We will not get into the technical details, but one defines schemes as topological spaces $X$ with sheaves of rings $\mathcal O_X$ on them which locally look like $({\rm Spec}\,A,\mathcal O)$, and morphisms are defined in such a way that scheme morphisms from ${\rm Spec}\,B$
to ${\rm Spec}\,A$ are in natural bijection with ring homomorphisms $A\to B$.

\smallskip
Here are some notable examples of schemes.
\begin{itemize}
\item ${\rm Spec}\,\C$. This is my favorite. It consists of just one point, namely the prime ideal $\{0\}\subset \C$. The 
sections of $\mathcal O$ on it are $\C$.

\item ${\rm Spec}\,k$ for any field $k$. This is also just a single point, but the sections of $\mathcal O$ are now $k$.

\item ${\rm Spec}\,\C[x_1,\ldots, x_n]$. Algebraic geometers often denote this scheme by $\C^n$, but in addition to the usual points on $\C^n$ which correspond to maximal ideals of $\C[x_1,\ldots, x_n]$ we have all sorts of other points that encode prime ideals.
Note that the ring homomorphism $\C\to \C[x_1,\ldots, x_n]$ gives a scheme map  ${\rm Spec}\,\C[x_1,\ldots, x_n]\to
{\rm Spec}\,\C$. This is a particular case of a ``scheme over $\C$", which is equivalent to saying that all of the rings are in fact 
$\C$-algebras. One can recover the usual points of $\C^n$ as maps ${\rm Spec}\,\C\to {\rm Spec}\,\C[x_1,\ldots, x_n]$ 
of schemes over $\C$.

\item ${\rm Spec}\,\C[t]/(t^2)$. Again, as a topological space, it is just a single point, but it is what's called a ``fat point". Maps 
from it to a $\C$-scheme $X$ are in bijection to tangent vectors to $X$. This example is a good motivation for allowing arbitrary rings and not just integral domains. 
\end{itemize}

\smallskip
Where there are rings, there are modules. To every module $M$ over a commutative ring $A$ one can associate a sheaf of $\mathcal O$-modules over ${\rm Spec}\,A$, denoted by $\tilde M$. It is built from localizations of $M$ in the same way as $\mathcal O$ but
with numerators of fractions now in $M$ instead of $A$.
Then one defines a quasi-coherent sheaf on a scheme $X$ as a sheaf of $\mathcal O_X$-modules that locally look like $\tilde M$.

\smallskip
We can in particular look at locally free quasi-coherent sheaves of finite rank $r$ as analogs of sheaves of sections of vector bundles over a smooth complex manifold.\footnote{ This is why we have been using calligraphic $\mathcal O$ in our $\mathcal O(k)$ notation for line bundles on $\C\P^n$.} There is also a concept of cohomology groups of arbitrary sheaves, which we will not go into. 

\smallskip
The richness and flexibility of working with arbitrary commutative\footnote{ Noncommutative rings are not entirely hopeless, but localization does not seem to work very well unless denominators are central, or at least normal. This really limits attempts of extending the theory.} rings give scheme theory an advantage over the more naive point-based formulations. The disadvantage is a layer of bureaucracy that takes some getting used to.
There is obviously a lot more one can say about this topic, but hopefully this section gives the reader a flavor of the language of schemes, and a bit of a preview, if they are to go on to study algebraic geometry in rigorous detail.

\smallskip
{\bf Exercise 1.} Prove that for two sections $(\alpha_p)_{p\in U}$ and $(\beta_p)_{p\in U}$ of $\mathcal O(U)$ their sum and product 
$(\alpha_p+\beta_p)_{p\in U}$, $(\alpha_p\beta_p)_{p\in U}$ are also sections of $\mathcal O(U)$.

{\bf Exercise 2.} For a ring homomorphism $f:A\to B$, prove that the map 
$
{\rm Spec}\,B\to {\rm Spec}\,A
$
which sends  $p\subset B$ to $f^{-1}(p)\subset A$ is continuous in Zariski topology.

{\bf Exercise 3.} Give an example of a commutative ring homomorphism so that the preimage of a maximal ideal is prime, but not maximal.

\section{Modular curves and modular forms.}
As we know by now, elliptic curves are given by $\C/L$ where $L$ is a discrete rank two additive subgroup of $\C$. 
For any choice of free generators $(l_1,l_2)$ of $L$, exactly one of the fractions $\frac {l_1}{l_2}$ and $\frac{l_2}{l_1}$ has a positive imaginary part. We call the ordered pair $(l_1,l_2)$ an oriented basis if ${\rm Im}(\frac{l_2}{l_1})>0$, in other words, $l_2$ is located counterclockwise from $l_1$. 

\smallskip
For any oriented basis $(l_1,l_2)$ of $L$, we define $\tau = \frac {l_2}{l_1}$ in the upper half plane $\mathcal H =\{{\rm Im}\,\tau>0\}$ and observe that
$$
\C/L \cong \C/(\Z  + \Z \tau),
$$
with the isomorphism induced by the multiplication by $l_1^{-1}$. Conversely, if we scale $L$ to make it in the form $\Z+ \Z\tau$, the preimages of $1$ and $\tau$ form an oriented basis.

\smallskip
What happens to $\tau$ if we pick another basis? Different oriented bases are related by the group ${\rm SL}(2,\Z)$ of orientation-preserving automorphism of $\Z^2$. If 
$$
\left(
\begin{array}{c}
l_2'\\
l_1'\\
\end{array}
\right)
=\left( \begin{array}{cc}
a&b\\
c&d\\
\end{array}
\right)
\left(
\begin{array}{c}
l_2\\
l_1\\
\end{array}
\right)
$$
then
$$\tau'=\frac {l_2'}{l_1'} = \frac {a l_2 + b l_1 }{c l_2 + d l_1} 
=
\frac {a\frac {l_2}{l_1} + b }{c\frac {l_2}{l_1} + d } = \frac {a\tau + b}{c\tau + d}.
$$
Therefore, the action of ${\rm SL}(2,\Z)$ on the upper half plane that sends $\tau$ to  $\frac {a\tau + b}{c\tau + d}$ preserves the isomorphism class of the elliptic curve $\C/(\Z+ \Z\tau)$. 
The converse is also true, see Exercise 1.
Thus we see that the set of all elliptic curves up to isomorphism is the quotient of the upper half-plane $\mathcal H$ by
the above action of the group ${\rm SL}(2,\Z)$. 

\smallskip
It is possible to explicitly describe a fundamental domain of this action, see the picture below. 

\begin{tikzpicture}
\filldraw [gray!40!white] (-1,5)--
(-1.000, 1.732)--(-0.8135, 1.827)--(-0.6180, 1.902)--(-0.4158, 
  1.956)--(-0.2091, 1.989)--(0, 2.000)--(0.2091, 1.989)--(0.4158, 
  1.956)--(0.6180, 1.902)--(0.8135, 1.827)--(1.000, 1.732)--(1,5);
\draw[dashed]
(-2.000, 0)--(-1.956, 0.4158)--(-1.827, 0.8135)--(-1.618, 
  1.176)--(-1.338, 1.486)--(-1.000, 1.732)--(-0.6180, 
  1.902)--(-0.2091, 1.989)--(0.2091, 1.989)--(0.6180, 1.902)--(1.000, 
  1.732)--(1.338, 1.486)--(1.618, 1.176)--(1.827, 0.8135)--(1.956, 
  0.4158)--(2.000, 0);
\draw (-1,5)--
(-1.000, 1.732)--(-0.8135, 1.827)--(-0.6180, 1.902)--(-0.4158, 
  1.956)--(-0.2091, 1.989)--(0, 2.000);
\draw[dashed](0, 2.000)--(0.2091, 1.989)--(0.4158, 
  1.956)--(0.6180, 1.902)--(0.8135, 1.827)--(1.000, 1.732)--(1,5);
\draw [dashed,->](-3,0)--(3,0);
\draw [dashed,->](0,0)--(0,5);
\node at (2.7,0) [anchor = south] {$\R $};
\node at (1.000+.5, 1.732-.3)[anchor = west] {$|\tau|=1 $};
\node[blue] at (0,2)  {\hskip 3pt\circle* 3};
\node[blue] at (.2,2)  [anchor = south] {$\ii$};
\node[blue] at (-1.000, 1.732)  {\hskip 3pt\circle* 3};
\node[blue] at (-1.000, 1.732)   [anchor =east] {$\ee^{2\pi\ii/3}$};
\node at (-2.7,2) [anchor = south] {$\mathcal H$};
\node at (-.2,0)[anchor = south] {$0$};
\end{tikzpicture}

\noindent
We do not prove that this is the fundamental domain but instead refer to Chapter 7 of \cite{Serre}. 
Most points of $\mathcal H$
have the stabilizer $\{\pm {\rm Id}\}$ which corresponds to the Kummer involution $z\mapsto (-z)$ on $\C/L$. For $\tau=\ii$, the stabilizer is isomorphic to $\Z/4\Z$, generated by $\left(\begin{array}{cc}0&1\\-1&0\end{array}\right)$. This corresponds to the square lattice of Gaussian integers with the extra automorphism that is the multiplication by $\ii$. Of course, we have conjugate stabilizers in the ${\rm SL}(2,\Z)$ orbit of $\tau=\ii$. Similarly, for $\tau = \ee^{\frac {2\pi\ii}3}$, the stabilizer is $\Z/6\Z$, generated by $\left(\begin{array}{rr}1&1\\-1&0\end{array}\right)$. The lattice is the hexagonal lattice of Eisenstein integers, again with more automorphisms.

\smallskip
We will now define modular forms.

\smallskip
\begin{defn}
A holomorphic function $f:\mathcal H \to \C$ is called a modular form of weight $k$ for the group ${\rm SL}(2,\Z)$ if 
it satisfies the following properties.
For every $\left( \begin{array}{cc}a&b\\c&d\end{array}\right)\in {\rm SL}(2,\Z)$ there holds 
$$
f\left(\frac {a\tau+b}{c\tau+d}\right) = (c\tau+d)^k f(\tau).
$$
This means in particular that $f(\tau+1)=f(\tau)$, so $f$ can be written as a Laurent power series in
$q=\ee^{2\pi\ii\tau}$ at $q=0$. We furthermore assume that $f$ extends to a holomorphic function in
a neighborhood of $q=0$, i.e. $f(\tau) = \sum_{n\geq 0}a_n \ee^{2\pi\ii n\tau}$.
\end{defn}

We remark that the weight of a nonzero modular form for ${\rm SL}(2,\Z)$ has to be even, because $-{\rm id}\in {\rm SL}(2,\Z)$ implies that 
$f(\tau) = (-1)^k f(\tau)$.

\smallskip
Examples of modular forms are provided by the coefficients of the Weierstrass function $\mathcal P$ (for the
lattice $L=\Z+\Z\tau$). Recall that
$$
\mathcal P(z) = \frac 1{z^2} + \sum_{0\neq l \in L} \left(\frac 1{(z-l)^2}-\frac 1{l^2}\right).
$$
Coefficients at $z^k$ in the Laurent are found by integrating $z^{-k-1}\mathcal P(z)$ over a small circle around the origin,
so absolute uniform convergence of the series implies that one can simply add up the corresponding coefficients of the expansion. We have
$$
\frac 1{(z-l)^2}-\frac 1{l^2}= \sum_{k\geq 1} (k+1) \frac {z^k}{l^{k+2}}.
$$
Thus, the coefficients of $\mathcal P$ at $z^k$ for $k\geq 1$ are, up to constant factor, given by
$$
\sum_{l\neq 0} \frac 1{l^{k+2}} = \sum_{(m,n)\neq (0,0)} \frac 1{(m\tau + n)^{k+2}}.
$$
This prompts the following definition.

\smallskip
\begin{defn}\label{defG}
We define the Eisenstein series
$$G_k(\tau) =  \sum_{(m,n)\neq (0,0)} \frac 1{(m\tau + n)^{k}}$$
for $k\geq 3$. 
\end{defn}

\smallskip
\begin{prop}
The Eisenstein series $G_k$ is zero for odd $k$. For even $k\geq 4$, $G_k$ is a modular form of weight $k$ 
for  ${\rm SL}(2,\Z)$.
\end{prop}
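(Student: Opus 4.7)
\emph{Plan of the proof.} The vanishing for odd $k$ is immediate: the change of summation index $(m,n)\mapsto(-m,-n)$ is a bijection of $\Z^2\setminus\{(0,0)\}$, and it multiplies each term by $(-1)^k$, giving $G_k=(-1)^kG_k$, which for odd $k$ forces $G_k=0$. So from now on I focus on even $k\ge 4$.

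The first substantive step is absolute, locally uniform convergence of the series in Definition \ref{defG} on $\mathcal H$. I will prove the elementary lemma that for any compact $K\subset\mathcal H$ there is a constant $c_K>0$ with
$$|m\tau+n|\ge c_K\sqrt{m^2+n^2}\qquad\text{for all }\tau\in K,~(m,n)\neq(0,0).$$
This reduces (via covering $\mathcal H$ by ${\rm SL}(2,\Z)$-translates of a compact region, or by a direct computation using that $\mathrm{Im}\,\tau$ is bounded below on $K$) to the familiar estimate on the standard fundamental domain using $|m\tau+n|^2=m^2|\tau|^2+2mn\,\mathrm{Re}\,\tau+n^2$. Then comparing with the integral $\int_1^\infty r\cdot r^{-k}\,dr$ in polar coordinates shows $\sum_{(m,n)\neq 0}(m^2+n^2)^{-k/2}<\infty$ for $k>2$, so the series for $G_k$ converges absolutely and uniformly on compact subsets of $\mathcal H$. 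This makes $G_k$ holomorphic on $\mathcal H$ and, crucially, permits rearrangement of the sum.

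Next I verify the modular transformation. For $\gamma=\left(\begin{smallmatrix}a&b\\c&d\end{smallmatrix}\right)\in{\rm SL}(2,\Z)$, a direct computation gives
$$G_k\!\left(\tfrac{a\tau+b}{c\tau+d}\right)=\sum_{(m,n)\neq 0}\frac{(c\tau+d)^k}{\bigl((ma+nc)\tau+(mb+nd)\bigr)^k}=(c\tau+d)^k\sum_{(m',n')\neq 0}\frac{1}{(m'\tau+n')^k},$$
where $(m',n')=(m,n)\gamma$. Since $\gamma$ is invertible over $\Z$, the substitution $(m,n)\mapsto(m,n)\gamma$ is a bijection of $\Z^2\setminus\{(0,0)\}$ to itself, and absolute convergence (established above) justifies this reindexing. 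The right-hand side is therefore $(c\tau+d)^k G_k(\tau)$, which is the weight-$k$ transformation law.

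Finally I must check the holomorphy condition at the cusp, i.e.\ that $G_k(\tau)$ has a removable singularity (in fact a limit) as $q=\ee^{2\pi\ii\tau}\to 0$, equivalently as $\mathrm{Im}\,\tau\to\infty$. Restrict to $\mathrm{Im}\,\tau\ge 1$, where the convergence is uniform by the lemma above. Split
$$G_k(\tau)=\sum_{n\neq 0}\frac{1}{n^k}+\sum_{m\neq 0}\sum_{n\in\Z}\frac{1}{(m\tau+n)^k}=2\zeta(k)+2\sum_{m\ge 1}\sum_{n\in\Z}\frac{1}{(m\tau+n)^k}.$$
For fixed $m\ge 1$ the inner sum tends to $0$ as $\mathrm{Im}\,\tau\to\infty$ (each term does, and one dominates by $\sum_n(m^2(\mathrm{Im}\,\tau)^2+n^2)^{-k/2}$), and uniform convergence lets me interchange limit and sum, so $G_k(\tau)\to 2\zeta(k)$. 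Since $G_k$ is holomorphic and $1$-periodic and bounded as $\mathrm{Im}\,\tau\to\infty$, its Fourier series $\sum_{n\in\Z}a_n q^n$ must have $a_n=0$ for $n<0$, giving the required expansion $G_k(\tau)=\sum_{n\ge 0}a_n\ee^{2\pi\ii n\tau}$. The only subtle point in the whole argument is the convergence lemma, which is the one place where conditional versus absolute convergence could cause trouble; everything else then proceeds formally.
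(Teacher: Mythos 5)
Your proof is correct and its core argument — the reindexing of the sum by the bijection $(m,n)\mapsto(m,n)\gamma$ of $\Z^2\setminus\{0\}$, justified by absolute convergence — is the same as the paper's. You are, however, considerably more thorough: the paper merely asserts that absolute convergence is ``easy to see'' and does not address the condition at the cusp at all, whereas you prove the convergence lemma $|m\tau+n|\ge c_K\sqrt{m^2+n^2}$ and verify explicitly that $G_k(\tau)\to 2\zeta(k)$ as $\mathrm{Im}\,\tau\to\infty$, so that the Fourier expansion has no negative-index terms. That last point is a genuine gap in the paper's one-paragraph proof, since boundedness/holomorphy at $q=0$ is part of the definition of a modular form and does not follow from the transformation law alone. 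Your handling of the odd-$k$ case by the sign-flip $(m,n)\mapsto(-m,-n)$ matches the parenthetical alternative the paper offers.
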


\begin{proof}
It is easy to see that the series $E_k$ is absolutely convergent.
We have
$$
G_k\left(\frac {a\tau + b}{c\tau + d}\right) = \sum_{(m,n)\neq (0,0)} \frac 1{(m\frac {a\tau + b}{c\tau + d}+n)^k}
= \sum_{(m,n)\neq (0,0)} \frac {(c\tau+d)^k}{((ma+nc)\tau + mb+nd)^k}
$$
which equals $(c\tau+d)^k G_k(\tau)$
by reindexing the summation. The first statement then follows from modularity (or just observing that $(m,n)$ and $(-m,-n)$ terms cancel out).
\end{proof}

It is common to normalize the series $G_{k}$ to give it the constant term $1$. Moreover, one can write explicitly the 
$q$-expansions.
$$
E_k(\tau) = \frac {G_k(\tau)}{2\zeta(k)} = 1 - \frac {4k}{B_k} \sum_{n} \sum_{d|n} d^{k-1} q^n
$$
where $q=\ee^{2\pi\ii \tau}$, $\zeta(k) =\sum_{n\geq 1}\frac 1{n^k}$ and $B_k$ are the Bernoulli numbers.\footnote{ We do not prove it in general, but treat the $k=4$ case in Exercise 3.} In particular, we have
\begin{equation}\label{E4E6}
E_4(\tau) = 1 + 240  \sum_{n} \sum_{d|n} d^3 q^n,~~
E_6(\tau) = 1 - 504  \sum_{n} \sum_{d|n} d^5 q^n.
\end{equation}

\smallskip
The ring of modular forms has a remarkably simple structure.

\smallskip
\begin{thm}
Every modular form is a polynomial in $E_4(\tau)$ and $E_6(\tau)$ with constant coefficients.
\end{thm}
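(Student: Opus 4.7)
The plan is to argue by induction on the weight $k$, using a single technical input known as the \emph{valence formula}, which counts the zeros of a nonzero modular form of weight $k$. Once the valence formula is in hand, the theorem falls out almost combinatorially.

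First I would establish the valence formula: for a nonzero weight-$k$ modular form $f$,
\begin{equation*}
v_\infty(f) + \tfrac{1}{2} v_i(f) + \tfrac{1}{3} v_\rho(f) + \sum_{P \in \mathcal{F},\, P \neq i, \rho} v_P(f) = \tfrac{k}{12},
\end{equation*}
where $\mathcal{F}$ is the fundamental domain drawn in the text, $\rho = e^{2\pi \ii/3}$, and $v_\infty(f)$ is the order of vanishing in $q = e^{2\pi \ii \tau}$. This is proven exactly in the spirit of Proposition \ref{sumzerospoles}: integrate $\tfrac{1}{2\pi\ii} \oint \tfrac{f'}{f}\, d\tau$ over the boundary of a slightly truncated fundamental domain. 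The vertical sides cancel by the periodicity $f(\tau+1) = f(\tau)$; the two arcs that map to each other under $\tau \mapsto -1/\tau$ contribute $k/12$ because $f$ transforms with the factor $(c\tau+d)^k$; the little arcs around $i$, $\rho$ contribute the half and third (as these points have stabilizers of order $2$ and $3$ in $\mathrm{PSL}(2,\Z)$); and the top segment, pushed up toward $\Im\tau \to \infty$, contributes $-v_\infty(f)$. This is the only step that requires real work.

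Next I would identify the zeros of $E_4$ and $E_6$. Applying modularity for the matrix $S = \bigl(\begin{smallmatrix} 0 & -1 \\ 1 & 0\end{smallmatrix}\bigr)$ at its fixed point $\tau = \ii$ gives $E_k(\ii) = \ii^k E_k(\ii)$, so $E_6(\ii) = 0$. Applying modularity for $ST = \bigl(\begin{smallmatrix} 0 & -1 \\ 1 & 1\end{smallmatrix}\bigr)$ at its fixed point $\rho$ gives $E_k(\rho) = e^{\pi \ii k/3} E_k(\rho)$, so $E_4(\rho) = 0$. The valence formula forces these zeros to be simple and shows $E_4$, $E_6$ have no other zeros. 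Now define the discriminant $\Delta = \tfrac{1}{1728}(E_4^3 - E_6^2)$. Using the $q$-expansions \eqref{E4E6}, the constant term of $E_4^3 - E_6^2$ is $1 - 1 = 0$ and its $q$-coefficient is $3\cdot 240 - (-2)\cdot 504 = 1728$, so $\Delta = q + O(q^2)$. Thus $\Delta$ is a nonzero weight-$12$ cusp form with $v_\infty(\Delta) = 1$; the valence formula then forces $\Delta$ to vanish nowhere on $\mathcal{H}$.

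The induction is then easy. The valence formula gives $M_k = 0$ for even $k < 0$ or $k = 2$, and $M_0 = \C$. For even $k \geq 4$, choose nonnegative integers $a, b$ with $4a + 6b = k$ (one checks this is always possible for even $k \geq 4$). Given $f \in M_k$, let $c$ be the constant term of its $q$-expansion; then $f - c\, E_4^a E_6^b$ has vanishing constant term, hence is a cusp form. Since $\Delta$ is holomorphic, nowhere vanishing on $\mathcal{H}$, and has a simple zero at $\infty$, the map $g \mapsto \Delta g$ is an isomorphism from $M_{k-12}$ onto the space of cusp forms of weight $k$. So $f - c\, E_4^a E_6^b = \Delta \cdot g$ for some $g \in M_{k-12}$, and the inductive hypothesis expresses $g$, hence $f$, as a polynomial in $E_4$ and $E_6$. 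The main obstacle is really just the proof of the valence formula; everything else is bookkeeping built on top of it.
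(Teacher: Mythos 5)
Your argument is correct and takes essentially the same route as the paper: establish the valence formula by contour integration over the fundamental domain, observe that $\Delta = \tfrac{1}{1728}(E_4^3 - E_6^2)$ is a nonzero cusp form of weight $12$ that vanishes nowhere on $\mathcal{H}$, and use multiplication by $\Delta$ to set up an induction on the weight in steps of $12$. You have merely supplied the details (the zeros of $E_4$ and $E_6$ at $\rho$ and $\ii$, the low-weight base cases, the existence of $a,b\geq 0$ with $4a+6b=k$) that the paper's sketch delegates to Serre.
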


\begin{proof}
We will only sketch the argument, see \cite{Serre}. If we denote by $m_\infty$ the order of vanishing of a
nonzero weight $k$ modular $f(\tau)$ as a power series in $Q$, and by $m_p$ order of vanishing of it at a point $p\in \mathcal H$, then we have
\begin{equation}\label{sumSerre}
m_\infty + \frac 12 m_{\ii} + \frac 13 m_{\ee^{2\pi\ii/3}} +\tilde \sum_{p}m_p = \frac k{12}
\end{equation}
where $\tilde\sum$ is the sum over other zeros of $f$ in the fundamental domain.
This is not too hard to prove by integrating $\frac {f'}f$ over the boundary of the fundamental domain.

\smallskip
Then one can prove that $E_6^2- E_4^3$ is not vanishing anywhere on $\mathcal H$ and has a simple zero at $q=0$. One can use this to argue that the dimension of the space of weight $k+12$ modular forms is one larger than that of weight $k$ modular forms, and the rest is fairly easy.
\end{proof}

\smallskip
The set of orbits of ${\rm SL}(2,\Z)$ on $\mathcal H$ can be mapped to $\C\P^1$ by
\begin{equation}\label{E62E43}
\tau \mapsto (E_6^2(\tau):E_4^3(\tau)).
\end{equation}
Indeed, when you change $\tau$ to $\frac {a\tau +b}{c\tau+d}$, both $E_6^2$ and $E_4^3$ acquire a factor of $(c\tau +d)^{12}$. We can also argue that $(E_4^3 : E_6^2)$ determines the elliptic curve uniquely, up to isomorphism, for example by looking at the equation of the curve in its Weierstrass embedding into $\C\P^2$. This identifies $\mathcal \H/{\rm SL}(2,\Z)$ with $\C\P^1\setminus (1:1)$. We compactify  $\mathcal \H/{\rm SL}(2,\Z)$ to $\C\P^1$ by adding the {\blue cusp} $q=0$ (a.k.a. $\ii\infty$).

\smallskip
\begin{rem}
The map \eqref{E62E43} looks suspiciously like mapping to $\C\P^N$ using sections of a line bundle. Indeed, one can think of modular forms as sections of line bundles, but not on a variety but rather on a so-called Deligne-Mumford stack. This is beyond what we could cover in these notes.
\end{rem}

\smallskip
One is often interested in quotients of the upper half plane by other groups. The following are the most well-studied.
They are given by matrices $\gamma =\left( \begin{array}{cc}a&b\\c&d\end{array}\right)$ in ${\rm SL}(2,\Z)$ 
with certain properties modulo a fixed integer $n$. Specifically, we have subgroups 
$\Gamma(n)$, $\Gamma_1(n)$ and $\Gamma_0(n)$ of ${\rm SL}(2,\Z)$ given by the conditions
\begin{align*}
\gamma = \left( \begin{array}{cc}1&0\\0&1\end{array}\right)\,{\rm mod}\,n,~
 \gamma = \left( \begin{array}{cc}1&*\\0&1\end{array}\right)\,{\rm mod}\,n,~
 \gamma = \left( \begin{array}{cc}*&*\\0&*\end{array}\right)\,{\rm mod}\,n
\end{align*}
respectively. One can, and does, talk about modular forms of weight $k$ with respect to these groups. 
The quotients of $\mathcal H$ by these groups are compactified by adding a finite number of cusps, corresponding to the orbits of the group on $\Q\sqcup\{\ii\infty\}$. The resulting curves $X(n)$, $X_1(n)$ and $X_0(n)$ have genus $g$ that grows roughly as $c\,n^3$, $c\,n^2$ and $c\,n$ respectively.

\smallskip
We finish this section by a couple of fascinating observations that connect the theory of modular forms with that of the largest sporadic finite simple group $\mathbb M$ of size approximately $8\cdot 10^{53}$, known as the Monster. 
Consider the {\blue $j$-invariant} 
$$
J(\tau) = (12)^3 \frac {E_4^3}{E_4^3-E_6^2}.
$$
It has a Laurent power series expansion in $q=\ee^{2\pi\ii\tau}$ 
$$
J(\tau) = \frac 1q + 744 + 196884\,q + 21493760\,q^2 + \ldots
$$
and a remarkable observation, due to John McKay, is that the coefficients at small positive degrees of $q$ are {\blue small positive linear combinations} of dimensions of the irreducible complex representations of $\mathbb M$! For example, the smallest irrep of the Monster, other than
the trivial one-dimensional representation, has dimension $196883$.

\smallskip
Another interesting relation is the following. The involution $w_n:\mathcal H\to \mathcal H$ defined by 
$$\tau\to -\frac 1{n\tau}$$ 
normalizes $\Gamma_0(n)$ and thus acts on the modular curve $X_0(n)$. Then for prime $n$ the quotient $X_0(n)^+ = X_0(n)/(w_n)$ has genus zero if and only if $n$ divides the size of $\mathbb M$,
i.e.
$$
n\in\{
2,3,5,7,11,13,17 ,19 , 23 ,29 , 31, 41,47 ,59 ,71\}.
$$
Remarkably, both of these observations have something to do with so-called vertex operator algebras, which is an algebraic construction inspired by string theory.

\smallskip
It is fair to say that the theory of modular curves and modular forms is a fascinating topic, on the intersection of algebraic geometry, arithmetic geometry, analytic number theory, and even string theory. We barely scratch the surface here.

{\bf Exercise 1.} Prove that if two elliptic curves $\C/L$ and $\C/L'$ are isomorphic as complex manifolds, then there exists $\lambda \in \C$ such that $L'= \lambda L$. \emph{Hint:} Argue that the space of holomorphic one-forms on $\C/L$ is one-dimensional. Then (after arranging that the isomorphism preserves $0$) compare the pullback of the differential form $dz$
on $\C/L'$ with $dz$ on $\C/L$ and use $t=\int_{0}^t \,dz$.

{\bf Exercise 2.} Prove that $G_4(\ee^{2\pi\ii/3})=G_6(\ii)=0$.

{\bf Exercise 3.} Prove the formula for $E_4$ in \eqref{E4E6}.
\emph{Hint:} The constant term in $G_4(\tau)$ is $2\zeta(4) = \frac {\pi^4}{45}$.
By splitting the sum in Definition \ref{defG} into sums for fixed $m$, it suffices to show
$
\sum_{n\in \Z} \frac 1{(x + n)^4} = \frac {8\pi^4}3\sum_{r\geq 0} r^3 \ee^{2\pi\ii r x},
$
which follows by differentiation from the well-known formula
$
\sum_{n\in \Z} \frac 1{(x + n)^2}  =\frac {\pi^2}{\sin^2 \pi x}= - 4\pi^2 \sum_{r\geq 0} r \ee^{2\pi\ii r x}.
$

\section{Toric varieties.}\label{sec.toric}
In this section we will talk about the topic that is dear to my heart, that of toric varieties. They are a special class of algebraic varieties which are encoded by the combinatorial data of lattices and convex cones and polytopes. As always, we will work over the complex numbers.

\smallskip
Throughout this section, by a lattice $M$ we will simply mean a free abelian group $M$. It naturally sits
inside a real vector space $M_\R:=M \otimes_\Z \R$.

\smallskip
\begin{defn}\label{ratcone}
A rational convex polyhedral cone $\sigma \subseteq M_\R$ is defined to the positive span
$$
\sum_i\R_{\geq 0} v_i
$$
of a finite number of elements $v_i\in M$.
\end{defn}

\smallskip
\begin{rem}
One can equivalently define a convex polyhedral cone as an intersection of a finite number of rational subspaces
$$
\bigcap_i\{v\in M_\R,~\mu_i(v)\geq 0\}
$$
where $\mu_i$ are elements of the dual lattice ${\rm Hom}(M,\Z)$ in the dual vector space $M_\R^\vee$. It is rather tedious to prove that these two concepts are equivalent, so we will not do that. The equivalence is clear in dimension two, which will be our primary focus in this section.
\end{rem}

\smallskip
To every rational polyhedral cone $\sigma$ we can associate an {\blue affine toric variety} as follows.

\smallskip
\begin{defn}
The set $\sigma \cap M$ has a natural semigroup structure. We denote by $\C[\sigma \cap M]$ the corresponding semigroup ring. It is a vector space with a basis indexed by $m\in \sigma \cap M$. We denote the basis elements by
$[m]$ and define the multiplication by $[m_1][m_2]=[m_1+m_2]$, naturally extended to the whole semigroup ring.
\end{defn}

It is easy to see that if $\sigma = M_\R$, then $\sigma\cap M=M\cong \Z^{\rk M}$ and $\C[\sigma\cap M]$ is isomorphic to the Laurent polynomial ring in $\rk M$ variables. One can view all $\C[\sigma\cap M]$ as subrings of this ring.

\smallskip
\begin{prop}
The semigroup $\sigma\cap M$ is finitely generated, and therefore $\C[\sigma \cap M]$ is a finitely generated $\C$-algebra.
\end{prop}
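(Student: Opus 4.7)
The plan is to prove the classical Gordan's lemma: the generators of $\sigma$ as a convex cone can be used to cut out a compact ``fundamental parallelepiped'' whose lattice points generate $\sigma \cap M$ as a semigroup. Once the semigroup is finitely generated, the ring $\C[\sigma \cap M]$ is generated as a $\C$-algebra by the basis elements $[m]$ indexed by the semigroup generators, so finite generation of the algebra is immediate.

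First I would invoke Definition \ref{ratcone} to pick a finite set of lattice vectors $v_1,\ldots,v_k \in M$ such that $\sigma = \sum_i \R_{\geq 0} v_i$. Then I would consider the compact subset
\[
K := \Big\{\sum_{i=1}^k t_i v_i : 0 \leq t_i \leq 1\Big\} \subseteq M_\R.
\]
The intersection $K \cap M$ is finite, because $K$ is a bounded subset of the finite-dimensional real vector space $M_\R$ and $M$ is a discrete subgroup.

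Next, I claim that $K \cap M$ generates $\sigma \cap M$ as a semigroup. Given any $m \in \sigma \cap M$, write $m = \sum_i t_i v_i$ with $t_i \geq 0$, and split $t_i = n_i + s_i$ where $n_i = \lfloor t_i \rfloor \in \Z_{\geq 0}$ and $s_i \in [0,1)$. Then
\[
m = \sum_i n_i v_i + \sum_i s_i v_i,
\]
where $\sum_i n_i v_i \in M$ and therefore the remainder $\sum_i s_i v_i$ lies in $K \cap M$. Since each $v_i$ itself belongs to $K \cap M$ (take $t_i=1$ and all others zero), this expresses $m$ as a non-negative integer combination of elements of the finite set $K \cap M$, as desired.

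For the algebra statement, if $w_1,\ldots,w_N$ enumerate $K \cap M$, then every $[m] \in \C[\sigma \cap M]$ with $m = \sum_j a_j w_j$ and $a_j \in \Z_{\geq 0}$ equals $\prod_j [w_j]^{a_j}$, so $\C[\sigma\cap M]$ is generated by $[w_1],\ldots,[w_N]$. The only mildly delicate point is the set-theoretic fact that the positive span of $v_1,\ldots,v_k$ really does equal $\sigma$ with the cone structure used throughout, but this is given to us by Definition \ref{ratcone}. The main obstacle, if there is any, is purely bookkeeping: ensuring that the decomposition $m = \sum n_i v_i + (\text{small remainder})$ stays inside $M$ at each step, which is handled automatically by $m \in M$ and $\sum n_i v_i \in M$ forcing the remainder into $M$ as well.
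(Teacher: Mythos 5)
Your proof is correct and follows essentially the same route as the paper: decompose a general $m=\sum t_i v_i$ into floor and fractional parts, note the fractional remainder lands in a bounded region with finitely many lattice points, and adjoin the $v_i$ themselves. The only cosmetic difference is that you use the closed cube $[0,1]^k$, which absorbs the $v_i$ into $K\cap M$ directly, whereas the paper uses $[0,1)^k$ and lists the $v_i$ separately.
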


\begin{proof}
By Definition \ref{ratcone}, every lattice element $m\in \sigma\cap M$ can be written as a nonnegative linear combination
of the generators $v_i$ of $\sigma$
$$
m = \sum_{i=1}^k \alpha_i v_i,
$$
which we will rewrite as
$$
m = \sum_{i=1}^k \lfloor\alpha_i\rfloor v_i + \sum_{i=1}^k \{\alpha_i\} v_i.
$$
The set $\sum_{i=1}^k[0,1)v_i$ is bounded, so the set of elements in $\sigma\cap M$ that can be written as $\sum_{i=1}^k \{\alpha_i\} v_i$ is finite. This set, together with all of $v_i$, is then seen to generate the semigroup.
\end{proof}

\smallskip
\begin{defn}\label{affinetoric}
The affine toric variety associated to the cone $\sigma$ is defined as
$$
\A_\sigma:={\rm Spec}\,\C[\sigma \cap M].
$$
If someone does not want to use the language of schemes,\footnote{ That would be us.} they should think of $\A_\sigma$ as the set of maximal ideals in $\C[\sigma\cap M]$ or, even better, as a subvariety in $\C^r$ obtained by looking at $r$ generators of $\C[\sigma\cap M]$ and the ideal of their relations.
\end{defn}

\smallskip{\blue \hrule}
We will illustrate the construction with a few examples. In all of them, we will have $M=\Z^2$.
\begin{itemize}
\item Consider $\sigma =\R_{\geq 0}^2$. The semigroup $\sigma\cap M= \Z_{\geq 0}^2$ is freely generated by
$(1,0)$ and $(0,1)$. Therefore, the semigroup ring $\C[\sigma\cap M]$ is isomorphic to $\C[x_1,x_2]$ with $x_1=[(1,0)]$ and 
$x_2=[(0,1)]$, and $\A_\sigma \cong \C^2$.

\item Consider $\sigma = \R_{\geq 0}(1,0) + \R_{\geq 0}(1,2)$, see picture below.

\begin{tikzpicture}
\filldraw[gray!40!white] (0,0)--(3,6)--(6,6)--(6,0)--cycle;
\draw[step=1cm,gray,very thin] (-2,-1) grid (6,6);
\draw (0,0)--(6,0);
\draw (0,0)--(3,6);
\foreach \x in {0,1,2,3,4,5,6} \node[blue] at (\x,0) {$\bullet$};
\foreach \x in {1,2,3,4,5,6} \node[blue] at (\x,1) {$\bullet$};
\foreach \x in {1,2,3,4,5,6} \node[blue] at (\x,2) {$\bullet$};
\foreach \x in {2,3,4,5,6} \node[blue] at (\x,3) {$\bullet$};
\foreach \x in {2,3,4,5,6} \node[blue] at (\x,4) {$\bullet$};
\foreach \x in {3,4,5,6} \node[blue] at (\x,5) {$\bullet$};
\foreach \x in {3,4,5,6} \node[blue] at (\x,6) {$\bullet$};
\foreach \y in {0,1,2} \node[red] at (1,\y) {$\bullet$};
\end{tikzpicture}

The semigroup $\sigma\cap M$ is generated by $(1,0)$, $(1,2)$ and $(1,1)$. The corresponding generators 
$u=[(1,0)]$, $v=[(1,2)]$, $w=[(1,1)]$ satisfy $uv- w^2=0$, so we get precisely the singular surface in $\C^3$ with the $A_1$ singularity at the origin.

\item More generally, if $\sigma= \R_{\geq 0}(1,0) + \R_{\geq 0}(n-1,n)$, then $\C[\sigma\cap M]$ is generated by 
$u=[(1,0)]$, $v=[(n-1,n)]$, $w=[(1,1)]$ which satisfy the relation
$
uv - w^{n} = 0,
$
and we get 
$$
\A_\sigma \cong \{uv-w^n=0\}\subset \C^3,
$$
which is a surface $\C^2/(\Z/n\Z)$ with the $A_{n-1}$ singularity at the origin, considered in Section \ref{sec.ADE1}. 
\end{itemize}

\smallskip
\begin{rem}
The connection to (the beginning of) Section \ref{sec.ADE1} is even more clear if one considers a different lattice $M$, namely
$$
\{(a,b)\in \Z^2,~a=b\,{\rm mod}\,n\}
$$
and the cone $\R_{\geq 0}^2$. More generally, for any finite abelian subgroup $G$ of ${\rm GL}(n,\C)$, the quotient 
$\C^n/G$ is an affine toric variety. Indeed, one can diagonalize the action of $G$, and consider the lattice $M$ of multi-degrees of $G$-invariant Laurent monomials, and the cone $\sigma =\R_{\geq 0}^n$.
\end{rem}

The real power of toric geometry comes from being able to glue affine toric varieties to get more complicated toric varieties,
for example projective spaces. The relevant data is that of the {\blue fans} in the dual space.

\smallskip
\begin{defn}
Let $N\subseteq N_\R$ be a lattice. A fan $\Sigma$ in $N$ is a nonempty finite collection of rational polyhedral cones in $N_\R$ which satisfy the following.
\begin{itemize}
\item All cones $\sigma\in\Sigma$ satisfy $\sigma \cap (-\sigma)=\{{\mathbf 0}\}$. 
\item For all  $\sigma_1,\sigma_2\in \Sigma$, the intersection $\sigma_1\cap\sigma_2$ is a face\footnote{ A face of a cone  $\sigma$ is an intersection of $\sigma$ with the boundary of a linear halfspace that contains it.} in each $\sigma_i$.
\item For any $\sigma\in \Sigma$, all faces of $\sigma$ are in $\Sigma$. In particular, $\{{\mathbf 0}\}\in\Sigma$. 
\end{itemize}
\end{defn}

\smallskip
The stereotypical example of a fan is the following.
The lattice is $\Z^2$, and the fan $\Sigma$ consists of seven cones: three of dimension $2$, three of dimension $1$ and the zero cone $\{(0,0)\}$, namely
\begin{equation}\label{fanCP2}
\begin{array}{c}
\R_{\geq 0} (1,0)+\R_{\geq 0} (0,1),~\R_{\geq 0} (0,1)+\R_{\geq 0} (-1,-1),~
\R_{\geq 0} (-1,-1)+\R_{\geq 0} (1,0),\\
~\R_{\geq 0} (1,0),~\R_{\geq 0} (0,1),~\R_{\geq 0} (-1,-1),~\{(0,0)\}.
\end{array}
\end{equation}

\begin{tikzpicture}
\filldraw[blue!20!white](0,0)--(3,0)--(3,3)--(0,3)--cycle;
\filldraw[red!20!white](0,0)--(3,0)--(3,-3)--(-3,-3)--cycle;
\filldraw[green!20!white](0,0)--(0,3)--(-3,3)--(-3,-3)--cycle;
\draw[step=1cm,gray,very thin] (-3,-3) grid (3,3);
\draw[ultra thick](0,0)--(3,0);
\draw[ultra thick](0,0)--(0,3);
\draw[ultra thick](0,0)--(-3,-3);
\foreach \x in {-3,-2,-1,0,1,2,3} 
\foreach \y in {-3,-2,-1,0,1,2,3} 
\node at (\x,\y) {$\bullet$};
\node at (.2,0) [anchor=north]{$(0,0)$};
\end{tikzpicture}

The precise definition is a bit bothersome, but to every fan $\Sigma$ one can associate a toric variety $\P_\Sigma$ obtained by gluing affine toric varieties ${\rm Spec}\,\C[\sigma^\vee \cap M],~\sigma\in\Sigma$  where $M$ is the lattice dual to $N$ and $\sigma^\vee$ is the cone of linear functions that are nonnegative on $\sigma$. We will now give, obviously without proof, several examples to indicate how some of the varieties we have seen are in fact toric.
\begin{itemize}
\item Consider the lattice $N=\Z$ and the fan $\Sigma =\{\R_{\geq 0},\R_{\leq 0}, \{{\mathbf 0}\}\}$. 

\begin{tikzpicture}
\draw [blue,thick,->](0,0)--(4.5,0);
\draw [red,thick,->](0,0)--(-4.5,0);
\foreach \x in {-4,-3,-2,-1,0,1,2,3,4} 
\node at (\x,0) {$\bullet$};
\node at (0,0) [anchor=north]{$0$};
\end{tikzpicture}

The corresponding toric variety $\P_\Sigma$ is naturally isomorphic to $\C\P^1$. The three ${\rm Spec}\,\C[\sigma^\vee \cap M]$ are (in the usual notations) $x_0\neq 0$, $x_1\neq 0$ and their intersection.

\item For the fan of \eqref{fanCP2}, we have $\P_\Sigma\cong \C\P^2$. The maximum dimensional cones correspond to the usual charts $x_i\neq 0$, and the rest correspond to their intersections.

\item Consider the fan in $\Z^2$ that consists of the cones $\R_{\geq 0} (1,0)+\R_{\geq 0} (1,1)$, $\R_{\geq 0} (1,1)+\R_{\geq 0} (0,1)$ and their faces. 

\begin{tikzpicture}
\filldraw[blue!20!white](0,0)--(3,0)--(3,3)--cycle;
\filldraw[red!20!white](0,0)--(0,3)--(3,3)--cycle;
\draw[step=1cm,gray,very thin] (0,0) grid (3,3);
\draw[ultra thick](0,0)--(3,0);
\draw[ultra thick](0,0)--(0,3);
\draw[ultra thick](0,0)--(3,3);
\foreach \x in {0,1,2,3} 
\foreach \y in {0,1,2,3} 
\node at (\x,\y) {$\bullet$};
\node at (.2,0) [anchor=north]{$(0,0)$};
\end{tikzpicture}

\noindent
The corresponding variety $\P_\sigma$ is the blowup of $\C^2$ at the origin.
\end{itemize}

\smallskip
\begin{rem}
For $\sigma=\{{\mathbf 0}\}$, the dual cone is $M_\R$, so the affine toric variety is simply ${\rm Spec}\,\C[M]$, which is isomorphic to
$(\C^*)^{\rk N}$. The latter is known as the {\blue algebraic torus} because it is homotopic to $(S^1)^{\rk N}$. 
Moreover, the group structure on $(\C^*)^{\rk N}$ extends to the action of it on $\P_\Sigma$. This is where the term {\blue toric} comes from.
\end{rem}

\smallskip{\blue \hrule}
Some maps between toric varieties can be be described combinatorially. Suppose that we have a map of lattices
$\mu:N'\to N$ which induces the map $N'_\R\to N_\R$ which we also call $\mu$. Suppose we have  fans $\Sigma'$ and $\Sigma$ in respective lattices. If for every cone $\sigma'\in \Sigma'$ there exists a cone
$\sigma$ in $\Sigma$ such that $\mu(\sigma')\subseteq \sigma$, then there is a natural map $\P_{\Sigma'}\to \P_\Sigma$.
For example, the maps from the blowup of $\C^2$ at the origin to $\C^2$ and to $\C\P^1$ can be described in this way.

\begin{tikzpicture}
\filldraw[blue!20!white](0,0)--(3,0)--(3,3)--cycle;
\filldraw[red!20!white](0,0)--(0,3)--(3,3)--cycle;
\draw[step=1cm,gray,very thin] (0,0) grid (3,3);
\draw[ultra thick](0,0)--(3,0);
\draw[ultra thick](0,0)--(0,3);
\draw[ultra thick](0,0)--(3,3);
\foreach \x in {0,1,2,3} 
\foreach \y in {0,1,2,3} 
\node at (\x,\y) {$\bullet$};
\node at (.2,0) [anchor=north]{$(0,0)$};

\draw[very thick,->] (3.3,1.5)--(4.7,1.5);

\filldraw[green!20!white](5,0)--(8,0)--(8,3)--(5,3)--cycle;
\draw[step=1cm,gray,very thin] (5,0) grid (8,3);
\draw[ultra thick](5,0)--(8,0);
\draw[ultra thick](5,0)--(5,3);
\foreach \x in {5,6,7,8} 
\foreach \y in {0,1,2,3} 
\node at (\x,\y) {$\bullet$};
\node at (5.2,0) [anchor=north]{$(0,0)$};

\draw[thick,blue, ->] (-1,-1)--(1,-3);
\draw[thick,red, ->] (-1,-1)--(-3,1);

\foreach \y in {-2,-1,0,1,2} 
\node at (\y*1.4142/2-1,-\y*1.4142/2-1) {$\bullet$};
\node at (-1,-1)[anchor=east]{$0~$};

\draw[very thick,->] (-.25,-.25)--(-.8,-.8);

\end{tikzpicture}

\smallskip
Another example of a toric morphism is the following. Consider the fan in $\Z^3$ with seven cones
\begin{equation}\label{fanC30}
\begin{array}{c}
\R_{\geq 0} (1,0,0)+\R_{\geq 0} (0,1,0),~\R_{\geq 0} (0,1,0)+\R_{\geq 0} (0,0,1),~\\
\R_{\geq 0} (0,0,1)+\R_{\geq 0} (1,0,0),
~\R_{\geq 0} (1,0,0),~\R_{\geq 0} (0,1,0),~\R_{\geq 0} (0,0,1),~\{{\mathbf 0}\}.
\end{array}
\end{equation}
The map $\Z^3\to \Z^2$ given by $(a,b,c)\mapsto (a-c,b-c)$ sends these cones exactly into the same cones 
for the  $\C\P^2$ fan, see the very instructive picture below.

\hskip -10pt
\begin{tikzpicture}[scale = .8]
\filldraw[blue!20!white](0,0)--(3,0)--(3,3)--(0,3)--cycle;
\filldraw[red!20!white](0,0)--(3,0)--(3,-3)--(-3,-3)--cycle;
\filldraw[green!20!white](0,0)--(0,3)--(-3,3)--(-3,-3)--cycle;
\draw[step=1cm,gray,very thin] (-3,-3) grid (3,3);
\draw[ultra thick](0,0)--(3,0);
\draw[ultra thick](0,0)--(0,3);
\draw[ultra thick](0,0)--(-3,-3);
\node at (.4,0) [anchor=north]{$(0,0,0)$};

\draw[very thick,->] (3.3,0)--(4.7,0);

\filldraw[blue!20!white](8,0)--(11,0)--(11,3)--(8,3)--cycle;
\filldraw[red!20!white](8,0)--(11,0)--(11,-3)--(5,-3)--cycle;
\filldraw[green!20!white](8,0)--(8,3)--(5,3)--(5,-3)--cycle;
\draw[step=1cm,gray,very thin] (5,-3) grid (11,3);
\draw[ultra thick](8,0)--(11,0);
\draw[ultra thick](8,0)--(8,3);
\draw[ultra thick](8,0)--(5,-3);
\node at (8.2,0) [anchor=north]{$(0,0)$};

\node at (0,-3.1) [anchor = north] {$\C^3\setminus\{{\bf 0}\}$};
\node at (8,-3.1) [anchor = north] {$\C\P^2$};
\end{tikzpicture}

\noindent
The trick it to view the picture on the left as three-dimensional (the proper faces of the positive octant), while viewing the picture on the right as flat.
The corresponding morphism of toric varieties
is the natural quotient map $\C^3 \setminus \{{\bf 0}\} \to \C\P^2$. This idea of mimicking cones of a fan as some set of faces of the positive octant generalizes to the homogeneous coordinate ring construction of Cox \cite{Cox}.

\smallskip
When the map $\mu:N'\to N$ is an isomorphism, as in the case of the blowdown morphism above, the map
 $\P_{\Sigma'}\to \P_\Sigma$ is a birational morphism. This, in particular, allows one to find resolutions of toric singularities.
Namely, a toric variety $\P_\Sigma$ is smooth if and only if every cone $\sigma\in \Sigma$ is generated by a part of an integral basis of $N$. As a result, a resolution of singularities of $\P_\Sigma$ is given by subdividing all cones of $\Sigma$ into smaller cones that happen to satisfy the above smoothness property.
The best example of this is the resolution of the $A_{n-1}$ singularity. The dual cone to $\R_{\geq 0}^2$ with the lattice
$$
\{(a,b)\in \Z^2,~a=b\,{\rm mod}\,n\}
$$
is the cone $\R_{\geq 0}^2$ with the lattice
$$
N=\Z^2 + \Z(\frac 1n,-\frac 1n).
$$
The corresponding subdivision of the cone is given in the picture below, and corresponds precisely to the 
resolution of $A_{n-1}$ singularities by successive blowups that we considered in Section \ref{sec.ADE1}. The blue rays introduced in the middle of the cone correspond to the exceptional $\C\P^1$-s on the resolution.

\begin{tikzpicture}
\filldraw[red!20!white] (0,0)--(7.5,0).. controls(5,5)..(0,7.5)--cycle;
\foreach \x in {0,1,2,8,9,10} 
\node[] at (3-3*\x/10,0+3*\x/10) {$\bullet$};
\foreach \x in {0,1,2,3,4,16,17,18,19,20} 
\node[] at (6-3*\x/10,0+3*\x/10) {$\bullet$};
\foreach \x in {0,1,2,8,9,10} \draw[thick,blue] (0,0)--({2.5*(3-3*\x/10)},{2.5*(0+3*\x/10)}); 
\foreach \x in {0,10} \draw[thick] (0,0)--({2.5*(3-3*\x/10)},{2.5*(0+3*\x/10)}); 
\node at (4,4) {$\cdots$};
\node at (3,3) {$\cdots$};
\node at (2,2) {$\cdots$};
\node at (1,1) {$\cdots$};
\node[] at (0,0) {$\bullet$};
\node at (3,0) [anchor=north]{$(1,0)$};
\node at (0,3) [anchor=east]{$(0,1)$};
\node at (-.4,0) [anchor=north]{$(0,0)$};
\end{tikzpicture}

\smallskip
\begin{rem}
Among the ADE singularities, only $A_n$ singularities are toric. The groups in $D_{n\geq 4},E_6,E_7,E_8$ cases are not abelian, so the quotients are more complicated.
\end{rem} 

\smallskip
Toric varieties provide some natural examples of various pathologies. For instance, they can give examples of Weil divisors that are not Cartier. There are also compact toric  varieties that can not be embedded into a projective space. We refer the interested reader to the Fulton's book \cite{Fulton-toric}.

{\bf Exercise 1.} Let $\sigma =\R_{\geq 0}(1,0) + \R_{\geq 0}(1,\sqrt 2)$ be a non-rational polyhedral cone. Prove that the 
semigroup $\sigma \cap \Z^2$ is not finitely generated. Prove that the semigroup ring $\C[\sigma \cap \Z^2]$ is not Noetherian. 

{\bf Exercise 2.} Prove that the blowup of $\P^2$ at two points is isomorphic to the blowup ot $\C\P^1 \times \C\P^1$ at one point using toric geometry. \emph{Hint:} Subdivide the second and third cone in \eqref{fanCP2} to blow up two points on $\C\P^2$.

{\bf Exercise 3*.} Consider the action of $G=\Z/7\Z$ on $\C^2$ with the generator
$$
(x,y) \mapsto (\xi x, \xi^3 y)
$$
where $\xi=\ee^{2\pi\ii/7}$ is the seventh root of $1$. Compute $\C^2/G$ as a toric variety, and find its toric resolution of singularities.

\section{Final comments.}
In this section I  want to list several topics that unfortunately did not make it into these notes but could perhaps be included in an introductory course in algebraic geometry.

\smallskip
\begin{itemize}

\item
Probably the most important topic omitted here is Hodge theory. One can use it to talk about polarizations 
of abelian varieties. Torelli theorem can definitely be stated.

\item
Moduli spaces of curves could be briefly covered without too much trouble. One could even try to hint at the concept of Deligne-Mumford stack, probably in its differential geometry incarnation as an orbifold.

\item 
I have not touched upon the positive characteristics phenomena, but an expert could likely introduce them. 

\item
While I talked about rationality, the related concepts of unirationality and rational connectedness were omitted. One could definitely make a lecture out of that.

\end{itemize}

\end{document}